\documentclass[11pt,a4paper]{article}
\usepackage[utf8]{inputenc}
\usepackage{authblk}
\usepackage{xeCJK} 

\usepackage[margin=1in]{geometry}
\usepackage{amsmath, amssymb, amsthm, bm}
\usepackage{booktabs}
\usepackage{enumitem}
\usepackage{hyperref}
\usepackage{xcolor}
\usepackage{comment}

\hypersetup{
	colorlinks=true,
	linkcolor=blue,
	citecolor=teal,
	urlcolor=cyan
}

\newcommand{\coloneqq}{\mathrel{\mathop{:}}=}

\newtheorem{definition}{Definition}
\newtheorem{theorem}{Theorem}
\newtheorem{corollary}{Corollary}
\newtheorem{lemma}{Lemma}

\newtheorem{proposition}{Proposition}

\newtheorem{assumption}{Assumption}

\newtheorem{axiom}{Axiom}

\newtheorem{philosophical}[theorem]{Philosophical View}
\newtheorem{methodology}{Methodology}

\usepackage{etoolbox}

\theoremstyle{remark}
\newtheorem{remark}{Remark}

\pretocmd{\endremark}{\hfill\ensuremath{\blacktriangle}}{}{}

\usepackage{tikz}

\usetikzlibrary{shapes.geometric, arrows.meta, positioning, calc, decorations.pathmorphing}
\usetikzlibrary{3d, calc, arrows.meta}

\title{\textbf{The Second Edge Theorem: The Asymptotic Collapse of Sample-Dependent Information Geometry to the Canonical Flat Canvas of Conventional Statistics in Large Sample Limits}}

\author[1,2,8]{Bing Cheng}
\author[3]{Yi-Shuai Niu} 
\author[5,6,7]{Howell Tong}
\author[3,4]{Shing-Tung Yau}

\affil[1]{Academy of Mathematics and Systems Science, Chinese Academy of Sciences, Beijing, China;
	bc2@amss.ac.cn}
\affil[2]{AMSS Center for Forecasting Science, Chinese Academy of Sciences, Beijing, China}

\affil[3]{Beijing Institute of Mathematical Sciences and Applications (BIMSA), Beijing, China}
\affil[4]{Yau Mathematical Sciences Center, Tsinghua University, Beijing, China}

\affil[5]{Department of Statistics and Data Science, Tsinghua University, Beijing 100084, China}
\affil[6]{Paula and Gregory Chow Institute for the Studies in Economics, Xiamen University, Xiamen 361005, China}
\affil[7]{Department of Statistics, London School of Economics and Political Science, London WC2A 2AE, UK}

\affil[8] {State Key Laboratory of Mathematical Science, Academy of Mathematics and Systems Science, Chinese Academy of Sciences}

\date{\today}

\begin{document}
	
	\maketitle
	
\begin{abstract}
	This paper establishes the complete global proof of the \textbf{Second Edge Theorem}, proving that as the sample size grows to infinity, the sequence of curved, sample-dependent joint information-geometric manifolds---formulated by the parameter space, sample-scaled Fisher metric, and dual alpha-connections---undergoes an asymptotic phase transition, metric-topological shrinkage, and global geometric collapse onto the flat canonical tangent canvas of Conventional Statistics defined by the tangent space at the true parameter, the base Fisher metric, and the zero-curvature Levi-Civita connection.
	
	First, we discover and prove the universal tensor valence scaling law, which dictates that tensor fields across valences one through four degenerate according to the sample size raised to the power of one minus half the valence. Under this unified scaling law, score one-form fluctuations stabilize to a normalized Gaussian field, the sample-scaled Fisher metric freezes uniformly to the constant single-observation Fisher metric at the true parameter, Amari affine connections dissolve at an inverse square-root sample rate to zero Christoffel symbols, and intrinsic Riemann curvature undergoes accelerated annihilation down to identical zero at an inverse sample rate.
	
	Second, we resolve the long-standing geometric-operational duality by bridging differential-geometric manifold collapse with statistical decision theory. Through the formulation of the Taylor-Geometry Algebraic Functional Identity and the Double Completeness Architecture, we prove that Cheeger-Gromov spatial manifold flattening and Le Cam decision risk condensation are mathematically non-separable dual projections of the exact same underlying asymptotic phase transition.
	
	Third, by constructing a Fisher-Compatible Ehresmann Connection over the statistical fiber bundle, we extend the theory to over-parameterized and singular models without requiring ambient metric non-degeneracy. On the non-singular horizontal distribution, we demonstrate Gromov-Hausdorff leaf space collapse, accelerated annihilation of the restricted holonomy group diameter, and global uniformization of local asymptotic normality under the Le Cam deficiency distance.
	
	Fourth, by unifying the First Edge Theorem and the Second Edge Theorem, we establish the nested dual-edge hierarchy of statistical science, demonstrating that Conventional Statistics forms the boundary of Information Geometry, which itself forms the boundary of Statistical Mechanics and Geometry. This proves that Conventional Statistics is not a heuristic approximation, but the unique, zero-curvature thermodynamic attractor state toward which all regular parametric information manifolds dynamically collapse in the infinite-sample limit.
	
	Finally, by extending our horizon to Statistically  Meaningful Geometry (SMG), modern statistics transcends its classical role as a static theory of flat asymptotic inference. When formulated within the full SMG framework, modern statistics evolves into a dynamic, non-equilibrium field theory capable of explicitly modeling finite-sample fluctuations, phase transitions, topological structures, over-parametrization models, and gauge-invariant interactions—redefining statistical inference as the active, thermodynamic geometry of complex systems rather than a passive infinite-sample limit.

\end{abstract}	
\newpage
	\tableofcontents
	
	\newpage
	\section{Epistemological Foundations, Regularity Axioms, and Target Universes}
	\label{sec:module1}
	
	\subsection{Motivations, Backgrounds, and Architectural Targets}
	\label{subsec:mod1_motivation}
	
	Classical large-sample statistical theory implicitly operates under the foundational premise that as sample size $N \to \infty$, estimation, inference, and decision testing occur within a linear, flat Euclidean workspace. However, finite-sample statistical parametric families are intrinsically curved Riemannian-affine manifolds \cite{amari2000, rao1945}. Historical literature frequently treats the transition from curved finite-sample models to flat asymptotic limit experiments as an ad-hoc analytic convenience or a passive collection of scalar point limits \cite{fisher1922, vandervaart1998}. 
	
	To eliminate circular reasoning, topological ambiguities, and informal coordinate heuristics, this section constructs a rigorous axiomatic baseline. We define the ambient statistical universe, establish the natural linear vector space derivation isomorphism between differential tangent spaces and arithmetic coordinates, formalize the Triadic Variable Spectrum, and establish the Epistemological Equivalence Theorem proving that General Conventional Statistics ($\text{CS}$) is mathematically equivalent to the canonical flat tangent space $\mathcal{M}_\infty$.
	
	\subsection{Taxonomy of Concepts, Objects, and Mathematical Relationships}
	\label{subsec:mod1_taxonomy}
	
	To ensure complete logical transparency, Table~\ref{tab:mod1_summary} details the taxonomy of mathematical concepts, geometric objects, operational relationships, and foundational theorems established in this section.
	
	\begin{table}[h!]
		\centering
		\small
		\begin{tabular}{|p{3.2cm}|p{11.8cm}|}
			\hline
			\textbf{Framework Element} & \textbf{Detailed Specification in Section 1} \\ \hline
			\textbf{Core Concepts} & Ambient statistical universe, High-Order Local Regularity, Natural Tangent Derivation Isomorphism, Triadic Variable Spectrum, General Conventional Statistics ($\text{CS}$). \\ \hline
			\textbf{Mathematical Objects} & Universe $\mathcal{M}_{\text{univ}}$, manifolds $IG_1 = (\Theta, g^{(1)}, \nabla^{(\alpha)})$, $IG_N = (\Theta, G^{(N)}, \nabla^{(\alpha,N)})$, tangent derivation space $T_{\theta_0}IG_1$, frozen Fisher metric $g_0 \equiv g^{(1)}(\theta_0)$, flat tangent space $\mathcal{M}_\infty \equiv (T_{\theta_0}IG_1, g_0, \nabla^{(0)})$, log-likelihood field $\Lambda_N(h)$. \\ \hline
			\textbf{Core Relationships} & Tangent derivation map $T_{\theta_0}IG_1 \cong \mathbb{R}^d$ via derivation chart map $\Psi_\phi$; metric capacity scaling $G^{(N)}(\theta) = N g^{(1)}(\theta)$; Triadic Variable Spectrum $\mathcal{T}_N = (\mathcal{E}_N, \mathcal{S}_N, \mathcal{F}_N)$ coupling measure concentration $\beta_N = 1/\sqrt{N}$ with metric dilation $\alpha_N = \sqrt{N}$. \\ \hline
			\textbf{Prior Literature Results} & Fisher \cite{fisher1922, fisher1925} score derivations; Rao \cite{rao1945} Riemannian Fisher metric; Le Cam \cite{lecam1960, lecam1986} LAN shift experiments; Amari \cite{amari2000} dual $\alpha$-connections; van der Vaart \cite{vandervaart1998} asymptotic regularity. \\ \hline
			\textbf{New Theoretical Results} & Definition~\ref{def:ambient_spaces} (Ambient Statistical Universe); Assumption~\ref{asm:local_regularity} (High-Order Regularity); Proposition~\ref{prop:isomorphism} (Linear Vector Space Derivation Isomorphism); Definition~\ref{def:triadic_spectrum} (Triadic Variable Spectrum); Theorem~\ref{thm:theorem2_full} (Epistemological Equivalence Theorem $\text{CS} \iff \mathcal{M}_\infty$); Theorem~\ref{thm:lan_linear_quadratic_proof} (Rigorous Linear-Quadratic LAN Decomposition). 
			\\ \hline
		\end{tabular}
		\caption{Taxonomy of Concepts, Objects, Relationships, and Mathematical Results in Section~\ref{sec:module1}.}
		\label{tab:mod1_summary}
	\end{table}
	
	\subsection{Foundational Regularity Axioms and Ambient Statistical Spaces}
	\label{subsec:mod1_axioms}
	
	We begin by formalizing the overarching ambient mathematical universe containing all single-sample and product statistical spaces.
	
	\begin{definition}[Ambient Statistical Universe]
		\label{def:ambient_spaces}
		The ambient geometric and statistical universe is formalized as the structured tuple:
		\begin{equation}
			\mathcal{M}_{\text{univ}} \equiv \left( \mathcal{X}, \mu, \Theta, \mathcal{P}, IG_1, IG_N, (\mathcal{X}^N, \mathcal{A}^{\otimes N}, P_{\theta_0}^{\otimes N}) \right)
			\label{eq:universe_tuple}
		\end{equation}
		where $\mathcal{X}$ is a sample space endowed with $\sigma$-finite measure $\mu$, $\Theta \subset \mathbb{R}^d$ is an open parameter domain of dimension $d$, and $\mathcal{P} \equiv \left\{ P_\theta : dP_\theta = p(x;\theta)d\mu(x), \; \theta \in \Theta \right\}$ is a dominated parametric family. The single-sample information manifold $IG_1$ and $N$-sample joint product space sequence $IG_N$ are defined respectively by:
		\begin{equation}
			IG_1 \equiv (\Theta, g^{(1)}, \nabla^{(\alpha)}), \quad IG_N \equiv (\Theta, G^{(N)}, \nabla^{(\alpha, N)})
			\label{eq:ig1_ign_def}
		\end{equation}
		where $g^{(1)}$ is the single-sample Fisher Information Metric (FIM) tensor, $G^{(N)}(\theta) = N \cdot g^{(1)}(\theta)$ is the $N$-sample joint Fisher metric tensor, $\nabla^{(\alpha)}$ is Amari's dual connection on $IG_1$, and $\nabla^{(\alpha,N)}$ is the joint affine connection on $IG_N$.
	\end{definition}

\begin{lemma}[Additivity of the Fisher Information Metric under i.i.d. Product Measures]
	\label{lem:additivity_fisher_metric}
~

	Let $\mathcal{M}_{\mathrm{univ}} \equiv \left( \mathcal{X}, \mu, \Theta, \mathcal{P}, IG_1, IG_N, (\mathcal{X}^N, \mathcal{A}^{\otimes N}, P_{\theta_0}^{\otimes N}) \right)$ be the ambient statistical universe as given in Definition \ref{def:ambient_spaces}, where the $N$-sample product probability measure on $\mathcal{X}^N$ is given by $P_\theta^{\otimes N}$ with product density
	\begin{equation}
	p_N(X^N; \theta) = \prod_{k=1}^N p(X_k; \theta), \quad X^N = (X_1, X_2, \dots, X_N) \in \mathcal{X}^N.
	\end{equation}
	Then, the joint $N$-sample Fisher Information Metric tensor $G^{(N)}(\theta)$ on $IG_N$ and the single-sample Fisher Information Metric tensor $g^{(1)}(\theta)$ on $IG_1$ satisfy:
	\begin{equation}
	G^{(N)}(\theta) = N \cdot g^{(1)}(\theta), \quad \forall \theta \in \Theta.
	\end{equation}
\end{lemma}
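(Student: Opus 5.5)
The plan is to compute the joint Fisher metric directly from its definition as the covariance of the score, $G^{(N)}_{ij}(\theta) = \mathbb{E}_{P_\theta^{\otimes N}}\!\left[\partial_i \ell_N(\theta)\,\partial_j \ell_N(\theta)\right]$, where $\ell_N(\theta) = \log p_N(X^N;\theta)$. The product structure of the density turns the log-likelihood into the additive sum $\ell_N(\theta) = \sum_{k=1}^N \log p(X_k;\theta)$. On the support of $P_\theta^{\otimes N}$ we therefore have $\partial_i \ell_N = \sum_{k=1}^N s_i(X_k;\theta)$, with single-observation scores $s_i(x;\theta) = \partial_i \log p(x;\theta)$. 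I will work under the High-Order Local Regularity assumption (Assumption~\ref{asm:local_regularity}), which I take to guarantee differentiability of $p(x;\theta)$ in $\theta$, square-integrability of the scores, and interchange of differentiation and integration.

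The first step is to establish the zero-mean score identity $\mathbb{E}_{P_\theta}[s_i(X;\theta)] = 0$. Differentiating $\int p(x;\theta)\,d\mu(x) = 1$ under the integral sign gives $\int \partial_i p\,d\mu = 0$, and $\partial_i p = p\, s_i$ wherever $p>0$. This is the one place where the regularity hypothesis is genuinely used. I expect it to be the main obstacle only in the technical sense of justifying the interchange, for instance by a dominated-convergence bound on $\sup_{\theta'\in U}|\partial_i p(x;\theta')|$ over a neighbourhood $U$ of $\theta$. If that is not directly available from the axioms, I would instead assume differentiability in quadratic mean and use the $L^2(\mu)$-derivative of $\sqrt{p}$.

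The second step is to expand the product of the summed scores into diagonal and off-diagonal terms:
\begin{equation*}
\partial_i\ell_N\,\partial_j\ell_N=\sum_{k=1}^N s_i(X_k)s_j(X_k)+\sum_{k\neq m} s_i(X_k)s_j(X_m).
\end{equation*}
Under $P_\theta^{\otimes N}$ the coordinates $X_k$ are i.i.d. with law $P_\theta$. By independence (Fubini on the product space) and square-integrability, each cross term has expectation $\mathbb{E}[s_i(X_k)]\,\mathbb{E}[s_j(X_m)] = 0$ by the first step. Each of the $N$ diagonal terms has expectation $\mathbb{E}_{P_\theta}[s_i s_j] = g^{(1)}_{ij}(\theta)$. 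Summing gives $G^{(N)}_{ij}(\theta) = N g^{(1)}_{ij}(\theta)$ for every $\theta\in\Theta$ and all indices, which is the claim.

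As a consistency remark, one could alternatively use the Hessian form $G^{(N)}_{ij} = -\mathbb{E}[\partial_i\partial_j \ell_N]$, where linearity of expectation yields the result immediately. That route, however, requires second-order interchange, so I would present the covariance argument as the primary proof and mention the Hessian form only as a corollary under the stronger regularity.
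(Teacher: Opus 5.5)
Your proposal is correct and follows essentially the same route as the paper: you write the joint score as a sum of single-observation scores, then split $\mathbb{E}[\partial_i\ell_N\,\partial_j\ell_N]$ into $N$ diagonal terms, each equal to $g^{(1)}_{ij}(\theta)$, and off-diagonal terms that factor by independence into products of zero-mean scores. Your extra care in justifying the interchange of derivative and integral is a refinement rather than a change of method. The paper simply invokes ``standard regularity conditions'' there, and Assumption~\ref{asm:local_regularity} only supplies this near $\theta_0$, not for all $\theta\in\Theta$.
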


\begin{proof}
	Let $\theta = (\theta_1, \theta_2, \dots, \theta_d)^\top \in \Theta \subset \mathbb{R}^d$. The single-sample Fisher Information Metric components $g_{ij}^{(1)}(\theta)$ on $IG_1$ are defined by:
	\[
	g_{ij}^{(1)}(\theta) \equiv \mathbb{E}_{X \sim P_\theta} \left[ \frac{\partial \log p(X; \theta)}{\partial \theta_i} \frac{\partial \log p(X; \theta)}{\partial \theta_j} \right], \quad 1 \le i, j \le d.
	\]
	For the $N$-sample product space, the joint log-likelihood function $\ell_N(X^N; \theta) \equiv \log p_N(X^N; \theta)$ decomposes linearly into a sum of single-sample log-likelihoods $\ell(X_k; \theta) \equiv \log p(X_k; \theta)$:
	\[
	\ell_N(X^N; \theta) = \log \left( \prod_{k=1}^N p(X_k; \theta) \right) = \sum_{k=1}^N \log p(X_k; \theta) = \sum_{k=1}^N \ell(X_k; \theta).
	\]
	By linearity of partial differentiation, the components of the joint score vector are:
	\[
	\frac{\partial \ell_N(X^N; \theta)}{\partial \theta_i} = \sum_{k=1}^N \frac{\partial \ell(X_k; \theta)}{\partial \theta_i}.
	\]
	The $N$-sample joint Fisher Information Metric tensor $G_{ij}^{(N)}(\theta)$ on $IG_N$ is defined as the covariance matrix of the joint score vector:
	\[
	G_{ij}^{(N)}(\theta) \equiv \mathbb{E}_{X^N \sim P_\theta^{\otimes N}} \left[ \frac{\partial \ell_N(X^N; \theta)}{\partial \theta_i} \frac{\partial \ell_N(X^N; \theta)}{\partial \theta_j} \right].
	\]
	Substituting the score expansion into the expectation:
	\[
	G_{ij}^{(N)}(\theta) = \mathbb{E}_{X^N \sim P_\theta^{\otimes N}} \left[ \left( \sum_{k=1}^N \frac{\partial \ell(X_k; \theta)}{\partial \theta_i} \right) \left( \sum_{m=1}^N \frac{\partial \ell(X_m; \theta)}{\partial \theta_j} \right) \right] = \sum_{k=1}^N \sum_{m=1}^N \mathbb{E}_{X^N \sim P_\theta^{\otimes N}} \left[ \frac{\partial \ell(X_k; \theta)}{\partial \theta_i} \frac{\partial \ell(X_m; \theta)}{\partial \theta_j} \right].
	\]
	We evaluate the expectation for two cases:
	\begin{enumerate}
		\item \textbf{Diagonal terms ($k = m$):} Since $X_k \sim P_\theta$, each term equals the single-sample Fisher metric:
		\[
		\mathbb{E}_{X^N \sim P_\theta^{\otimes N}} \left[ \frac{\partial \ell(X_k; \theta)}{\partial \theta_i} \frac{\partial \ell(X_k; \theta)}{\partial \theta_j} \right] = \mathbb{E}_{X_k \sim P_\theta} \left[ \frac{\partial \ell(X_k; \theta)}{\partial \theta_i} \frac{\partial \ell(X_k; \theta)}{\partial \theta_j} \right] = g_{ij}^{(1)}(\theta).
		\]
		\item \textbf{Off-diagonal terms ($k \neq m$):} Since $X_k$ and $X_m$ are independent under the product measure $P_\theta^{\otimes N}$, expectation factors into the product of individual expectations:
		\[
		\mathbb{E}_{X^N \sim P_\theta^{\otimes N}} \left[ \frac{\partial \ell(X_k; \theta)}{\partial \theta_i} \frac{\partial \ell(X_m; \theta)}{\partial \theta_j} \right] = \mathbb{E}_{X_k \sim P_\theta} \left[ \frac{\partial \ell(X_k; \theta)}{\partial \theta_i} \right] \cdot \mathbb{E}_{X_m \sim P_\theta} \left[ \frac{\partial \ell(X_m; \theta)}{\partial \theta_j} \right].
		\]
		Under standard regularity conditions, the expectation of the score function vanishes for any single observation:
		\[
		\mathbb{E}_{X_k \sim P_\theta} \left[ \frac{\partial \ell(X_k; \theta)}{\partial \theta_i} \right] = \int_{\mathcal{X}} \frac{\partial \log p(x; \theta)}{\partial \theta_i} p(x; \theta) \, d\mu(x) = \frac{\partial}{\partial \theta_i} \int_{\mathcal{X}} p(x; \theta) \, d\mu(x) = \frac{\partial}{\partial \theta_i}(1) = 0.
		\]
		Therefore, for all $k \neq m$, the cross terms evaluate to $0 \cdot 0 = 0$.
		\end{enumerate}
			
Combining the diagonal and off-diagonal terms:
			\[
			G_{ij}^{(N)}(\theta) = \sum_{k=1}^N g_{ij}^{(1)}(\theta) + \sum_{k \neq m} 0 = N \cdot g_{ij}^{(1)}(\theta).
			\]
			In coordinate-free matrix notation across all indices $1 \le i, j \le d$, this establishes:
			\[
			G^{(N)}(\theta) = N \cdot g^{(1)}(\theta).
			\]
\end{proof}

	To guarantee that local geometric operations and stochastic expansions remain well-behaved, we enforce High-Order Local Regularity around the background parameter.
	
	\begin{assumption}[High-Order Local Regularity]
		\label{asm:local_regularity}
		The parametric family $\mathcal{P}$ satisfies the following conditions across an open neighborhood $\mathcal{U}(\theta_0) \subset \text{Int}(\Theta)$ centered at the true background environmental parameter $\theta_0$:
		\begin{enumerate}[label=(\roman*)]
			\item \textbf{Smooth $L_2$-Differentiability:} The mapping $\theta \mapsto \sqrt{p(x;\theta)}$ is three times continuously differentiable in $L_2(\mu)$.
			\item \textbf{Strict Metric Positivity:} The single-sample Fisher Information Metric tensor field, defined component-wise by:
			\begin{equation}
				g_{ij}^{(1)}(\theta) \equiv \mathbb{E}_\theta \left[ \frac{\partial \log p(X;\theta)}{\partial \theta^i} \frac{\partial \log p(X;\theta)}{\partial \theta^j} \right]
				\label{eq:fim_component_def}
			\end{equation}
			is strictly positive-definite ($\lambda_{\min}(g^{(1)}(\theta_0)) > 0$), smooth, and uniformly bounded for all $\theta \in \mathcal{U}(\theta_0)$.
			\item \textbf{Uniform Third-Order Integrability:} Third-order log-density derivatives satisfy the uniform envelope domination bound:
			\begin{equation}
				\sup_{\theta \in \mathcal{U}(\theta_0)} \mathbb{E}_\theta \left| \frac{\partial^3 \log p(X;\theta)}{\partial \theta^i \partial \theta^j \partial \theta^k} \right|^2 < \infty, \quad \forall i, j, k \in \{1, \dots, d\}
				\label{eq:third_order_bound}
			\end{equation}
		\end{enumerate}
	\end{assumption}
	
	\subsection{Natural Linear Vector Space Isomorphism $T_{\theta_0}IG_1 \cong \mathbb{R}^d$}
	\label{subsec:mod1_isomorphism}
	
	Let $C^\infty(\Theta)$ denote the algebra of smooth real-valued functions defined on an open neighborhood of $\theta_0$. Tangent vectors $v \in T_{\theta_0}IG_1$ are defined coordinate-freely as linear derivations $v: C^\infty(\Theta) \to \mathbb{R}$ satisfying $v(f f') = v(f)f'(\theta_0) + f(\theta_0)v(f')$.
	
	\begin{proposition}[Natural Linear Vector Space Derivation Isomorphism]
		\label{prop:isomorphism}
		Let $\phi = (\theta^1, \dots, \theta^d)$ be a local coordinate chart mapping $\mathcal{U}(\theta_0) \subset \Theta$ into $\mathbb{R}^d$. There exists a unique linear derivation isomorphism $\Psi_\phi: T_{\theta_0}IG_1 \to \mathbb{R}^d$ mapping derivations $v = \sum_{i=1}^d v^i \left.\frac{\partial}{\partial \theta^i}\right|_{\theta_0}$ to Cartesian $d$-tuples:
		\begin{equation}
			\Psi_\phi(v) = (v^1, v^2, \dots, v^d)^T \in \mathbb{R}^d, \quad \Psi_\phi^{-1}(u) = \sum_{i=1}^d u^i \left.\frac{\partial}{\partial \theta^i}\right|_{\theta_0}
			\label{eq:isomorphism_maps}
		\end{equation}
	\end{proposition}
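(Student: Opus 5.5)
To prove Proposition~\ref{prop:isomorphism}, I will show that the coordinate derivations $\partial_i|_{\theta_0} \equiv \left.\frac{\partial}{\partial \theta^i}\right|_{\theta_0}$, $i=1,\dots,d$, form a basis of the derivation space $T_{\theta_0}IG_1$. The isomorphism $\Psi_\phi$ is then simply the coordinate map with respect to this basis.

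\textbf{Step 1: each $\partial_i|_{\theta_0}$ is a derivation.} The map $f \mapsto \partial (f\circ\phi^{-1})/\partial \theta^i$ evaluated at $\phi(\theta_0)$ is linear and satisfies the Leibniz rule, by the ordinary product rule.

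\textbf{Step 2: the values on coordinates.} Every derivation kills constants, since $v(1)=v(1\cdot 1)=2v(1)$ forces $v(1)=0$. Also $\partial_i|_{\theta_0}(\theta^j)=\delta_i^j$. Linear independence follows: if $\sum_i c^i\partial_i|_{\theta_0}=0$, apply it to $\theta^j$ to get $c^j=0$.

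\textbf{Step 3: spanning (the main step).} Given $f\in C^\infty$ near $\theta_0$, I will use Hadamard's lemma on a convex (star-shaped) coordinate ball inside $\mathcal U(\theta_0)$:
\begin{equation*}
f(\theta)=f(\theta_0)+\sum_{j=1}^d(\theta^j-\theta_0^j)\,h_j(\theta), \qquad h_j(\theta)=\int_0^1 \partial_j f\bigl(\theta_0+t(\theta-\theta_0)\bigr)\,dt .
\end{equation*}
Here $h_j$ is smooth and $h_j(\theta_0)=\partial_j f(\theta_0)$. Applying $v$, using the Leibniz rule, and using that $(\theta^j-\theta_0^j)$ vanishes at $\theta_0$, we get
\begin{equation*}
v(f)=\sum_j v(\theta^j)\,\partial_j f(\theta_0).
\end{equation*}
Hence $v=\sum_j v^j\partial_j|_{\theta_0}$ with $v^j\equiv v(\theta^j)$.

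The only subtlety is locality: $v$ acts on functions defined on a neighborhood, while Hadamard's lemma holds only on the ball. I handle this with a smooth bump function $\chi$ equal to $1$ near $\theta_0$ and supported in the ball. The Leibniz rule gives $v(f)=v(\chi f)$, because $v((1-\chi)f)=v\bigl((1-\chi)\cdot(1-\chi')f\bigr)=0$ when $\chi'$ is chosen so that both factors vanish at $\theta_0$. I expect this locality argument to be the main technical obstacle, although it is standard.

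\textbf{Step 4: conclusion.} Define $\Psi_\phi(v)=(v(\theta^1),\dots,v(\theta^d))^T$. It is linear, and by Steps 2 and 3 it is bijective, with inverse $u\mapsto\sum_i u^i\partial_i|_{\theta_0}$, which is exactly \eqref{eq:isomorphism_maps}. For uniqueness: any linear map sending $\sum_i v^i\partial_i|_{\theta_0}$ to $(v^1,\dots,v^d)^T$ is determined by its values on a basis, so it coincides with $\Psi_\phi$. The components $v^i$ are themselves uniquely determined, since $v^i=v(\theta^i)$.
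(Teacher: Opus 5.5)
Your proof is correct and follows the same route as the paper. In both, $\Psi_\phi$ is the coordinate map with respect to the basis $\{\partial_i|_{\theta_0}\}$, with inverse $u\mapsto\sum_i u^i\partial_i|_{\theta_0}$.

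The difference is in completeness. The paper simply asserts that these coordinate derivations form a linearly independent spanning set, whereas you prove it: independence via $\partial_i|_{\theta_0}(\theta^j)=\delta_i^j$, and spanning via Hadamard's lemma plus a bump-function locality argument. One small fix in the locality step: $\chi'$ must also be supported inside the set where $\chi\equiv 1$, so that $(1-\chi)(1-\chi')=1-\chi$. With that, your version is the more complete of the two.
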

The map $\Psi_\phi$ and its inverse $\Psi_\phi^{-1}$ establish a formal \textbf{linear vector space isomorphism} between the abstract tangent space $T_{\theta_0} IG_1$ (whose elements are derivations) and the concrete vector space $\mathbb{R}^d$. They allow one to rigorously transfer computations back and forth between abstract geometric derivations on the manifold $IG_1$ and standard linear algebra in $\mathbb{R}^d$.
	
\begin{proof}
	Let $\{e_i\}_{i=1}^d$ represent the standard canonical basis of $\mathbb{R}^d$. For the coordinate chart $\phi$, the partial algebraic derivations evaluated at $\theta_0$ form a natural basis for $T_{\theta_0}IG_1$:
	\begin{equation}
		\label{eq:partial_derivations_basis}
		\partial_i \big|_{\theta_0} \equiv \frac{\partial}{\partial \theta^i} \bigg|_{\theta_0}, \quad i \in \{1, \dots, d\}
	\end{equation}
	Any tangent vector $v \in T_{\theta_0}IG_1$ expands uniquely as $v = v^i \partial_i \big|_{\theta_0}$ (summing over repeated indices). We construct the linear bijection $\Psi_\phi$ and its inverse $\Psi_\phi^{-1}$ by setting:
	\begin{align}
		\label{eq:isomorphism_forward}
		\Psi_\phi(v) &= (v^1, v^2, \dots, v^d)^T \in \mathbb{R}^d \\
		\label{eq:isomorphism_inverse}
		\Psi_\phi^{-1}(u) &= u^i \partial_i \big|_{\theta_0}, \quad \forall u = (u^1, \dots, u^d)^T \in \mathbb{R}^d
	\end{align}
	Linearity follows directly from derivation operations. Bijectivity is guaranteed because the basis derivations $\{\partial_i \big|_{\theta_0}\}$ defined in Equation~\eqref{eq:partial_derivations_basis} form a linearly independent spanning set of dimension $d = \dim(\Theta) = \dim(\mathbb{R}^d)$.
\end{proof}

\begin{definition}[Metric-Induced Inner Product Space and Isometric Isomorphism]
~

	Let $(T_{\theta_0} IG_1, g_0)$ be the tangent derivation space at $\theta_0 \in \Theta$ equipped with the frozen metric tensor $g_0 \coloneqq g^{(1)}(\theta_0)$, whose component matrix relative to the coordinate basis $\left\{ \left.\frac{\partial}{\partial \theta^i}\right|_{\theta_0} \right\}_{i=1}^d$ is given by:
	\[
	g_{ij}^{(1)}(\theta_0) \coloneqq g_0 \left( \left.\frac{\partial}{\partial \theta^i}\right|_{\theta_0}, \left.\frac{\partial}{\partial \theta^j}\right|_{\theta_0} \right), \quad \forall i, j \in \{1, \dots, d\}.
	\]
	Let $\Psi_\phi : T_{\theta_0} IG_1 \to \mathbb{R}^d$ denote the natural coordinate isomorphism mapping a derivation $v = \sum_{i=1}^d v^i \left.\frac{\partial}{\partial \theta^i}\right|_{\theta_0}$ to its component column vector $(v^1, \dots, v^d)^T$.
	
\begin{enumerate}

\item \textbf{Induced Inner Product on $\mathbb{R}^d$:} \\

We equip the derivation coordinate space $\mathbb{R}^d$ with the inner product $\langle \cdot, \cdot \rangle_{g_0}$, defined as the pullback of $g_0$ under the inverse isomorphism $\Psi_\phi^{-1}$:
	\[
	\langle x, y \rangle_{g_0} \coloneqq \left( (\Psi_\phi^{-1})^* g_0 \right)(x, y) = g_0\left( \Psi_\phi^{-1}(x), \Psi_\phi^{-1}(y) \right) = \sum_{i=1}^d \sum_{j=1}^d x^i y^j g_{ij}^{(1)}(\theta_0),
	\]
	for all coordinate vectors $x = (x^1, \dots, x^d)^T, \, y = (y^1, \dots, y^d)^T \in \mathbb{R}^d$. Specifically:
\begin{itemize}
	\item $g_0$ is the tensor being pulled back.\footnote{{\bf The Pullback of a Tensor Field / Metric}  Let $M$ and $N$ be smooth manifolds, $F: M \to N$ a smooth map, and $g$ a covariant tensor field (such as a Riemannian metric) on $N$. The \textbf{pullback} of $g$ under $F$, denoted $F^*g$, is a covariant tensor field on $M$ defined for any point $p \in M$ and tangent vectors $X, Y \in T_p M$ by:
		\[
		(F^* g)_p(X, Y) = g_{F(p)}\!\left( dF_p(X),\, dF_p(Y) \right),
		\]
		where $dF_p: T_p M \to T_{F(p)} N$ is the differential (pushforward) of $F$ at $p$ \cite{lee2013}. 
		
		In the case where $F = \Psi_\phi^{-1}$, the map pulling back the metric $g_0$ is the inverse isomorphism $\Psi_\phi^{-1}$, and the pulled-back tensor on the domain space is $(\Psi_\phi^{-1})^* g_0$.}
	\item $\Psi_\phi^{-1}$ is the mapping under which the pullback is performed.
\end{itemize}
In operator notation, the resulting inner product on $\mathbb{R}^d$ is given by the pullback tensor $(\Psi_\phi^{-1})^* g_0$.

\item \textbf{Isometric Isomorphism Property:} \\
	The inner product space $(\mathbb{R}^d, \langle \cdot, \cdot \rangle_{g_0})$ renders $\Psi_\phi : (T_{\theta_0} IG_1, g_0) \to (\mathbb{R}^d, \langle \cdot, \cdot \rangle_{g_0})$ an \textbf{isometric isomorphism}, preserving inner products across spaces:
	\begin{equation}
		g_0(v, w) = \langle \Psi_\phi(v), \Psi_\phi(w) \rangle_{g_0}, \quad \forall v, w \in T_{\theta_0} IG_1.
	\end{equation}
\end{enumerate}
\end{definition}

\subsection{The Triadic Variable Spectrum}

To decouple sample-size-dependent empirical measure concentration from structural parameter capacity expansion, we formalize the Triadic Variable Spectrum governing the dynamic metamorphosis of $IG_N$. This framework explicitly balances the micro-scale statistical fluctuations governed by the Central Limit Theorem against the macro-scale geometric dilation of the underlying statistical manifold.

\begin{definition}[Triadic Variable Spectrum $\mathcal{T}_N$]
	\label{def:triadic_spectrum}
	
	The dynamic tracking mechanism governing $IG_N$ is formalized as the interacting triple:
	\begin{equation}
		\mathcal{T}_N \equiv \left( \mathcal{E}_N, \mathcal{S}_N, \mathcal{F}_N \right)
	\end{equation}
	where each component variable represents an operational layer defined as follows:
	\begin{enumerate}
		\item[\textnormal{(i)}] \textbf{Environment Variable ($\mathcal{E}_N$):} 
		\[
		\mathcal{E}_N \equiv \left( P_N(X^N), \theta_0, \beta_N \right)
		\]
		tracks micro-empirical measure concentration of the sample sequence $X^N \sim P_N(X^N)$ around the true parameter $\theta_0$ at the fluctuation rate $\beta_N \equiv 1/\sqrt{N}$.
		
		\item[\textnormal{(ii)}] \textbf{System Variable ($\mathcal{S}_N$):} 
		\[
		\mathcal{S}_N \equiv \left( G_{ij}^{(N)}(\theta), \alpha_N \right)
		\]
		tracks macro-informational capacity expansion $G_{ij}^{(N)}(\theta) = N \cdot  g_{ij}^{(1)}(\theta)$ governed by the additive metric dilation speed $\alpha_N \equiv \sqrt{N}$.
		
		\item[\textnormal{(iii)}] \textbf{Mechanism Variable ($\mathcal{F}_N$):} 
		\[
		\mathcal{F}_N \equiv \left( h, \psi_N(h), \widetilde{\Gamma}_{ijk}^{(\alpha, N)}(h) \right)
		\]
		tracks the bridging geometry via localized coordinate dilation $\psi_N(h) = \theta_0 + \frac{h}{\sqrt{N}}$ and the pulled-back connection symbols $\widetilde{\Gamma}_{ijk}^{(\alpha, N)}(h) \equiv \psi_N^* \Gamma_{ijk}^{(N)}(h)$. We will discuss them in more details of   following subsections because they are critical in this paper to bridge statistics and differential geometry frameworks.
	\end{enumerate}
\end{definition}

By establishing the operational interplay across $\mathcal{T}_N$, the micro-scale convergence rate $\beta_N = 1/\sqrt{N}$ perfectly offsets the macro-scale metric expansion $\alpha_N = \sqrt{N}$ through the pulling-back mechanism $\psi_N$, yielding a stable, non-degenerate asymptotic geometric limit for $IG_N$ as $N \to \infty$.


\subsection{Epistemological Equivalence Theorem: $\text{CS} \iff \mathcal{M}_\infty$}
	\label{subsec:epistemological_equivalence}
	
	In classical large-sample statistical theory, the transition from finite-sample curved statistical parametric families to flat asymptotic limit experiments is often treated as an informal analytic convenience or a passive collection of scalar point limits \cite{fisher1922, vandervaart1998}. To eliminate topological ambiguities and establish a rigorous axiomatic baseline, this section formalizes the \textbf{Epistemological Equivalence Theorem}, proving that the operational paradigm of Conventional Statistics ($\text{CS}$) is mathematically identical to the canonical flat tangent space $\mathcal{M}_\infty$.
	
	To ensure complete logical transparency and avoid circular reasoning, our exposition follows a four-step canonical architecture:
	\begin{enumerate}
		\item \textbf{Step 1:} Operational characterization of Conventional Statistics ($\text{CS}_{\text{stat}}$) from the perspective of classical asymptotic decision theory (Fisher, Le Cam, Wald, Wilks, Rao).
		\item \textbf{Step 2:} Differential-geometric axiomatization of the target space $\mathcal{M}_\infty$ and the Constitutional Axioms of Geometric Conventional Statistics ($\text{CS}_{\text{geom}}$).
		\item \textbf{Step 3:} Rigorous phase transition mechanics establishing metric freezing, connection dissolution, and LAN linear-quadratic expansion.
\item \textbf{Step 4:} Deduction of the Operational-Geometric Bridge Lemma and the Complemented Epistemological Equivalence Theorem  $\text{CS}_{\text{stat}} \iff \text{CS}_{\text{geom}} \iff \mathcal{M}_\infty$.
	\end{enumerate}
	
	\subsubsection{Step 1: The Operational Definition of Conventional Statistics ($\text{CS}_{\text{stat}}$)}
	\label{subsubsec:cs_stat_definition}
	
	Before invoking differential geometry, we formalize the operational paradigm of Conventional Statistics ($\text{CS}_{\text{stat}}$) as established by Fisher \cite{fisher1922, fisher1925}, Le Cam \cite{lecam1960, lecam1986}, and van der Vaart \cite{vandervaart1998}.
	
	\begin{definition}[Operational Paradigm of Conventional Statistics ($\text{CS}_{\text{stat}}$)]
		\label{def:cs_stat_operational}
		A localized statistical workspace centered at a base parameter $\theta_0 \in \mathrm{Int}(\Theta)$ belongs to the \textbf{Conventional Statistics Paradigm} ($\text{CS}_{\text{stat}}$) if local inference over compact displacement cages $h \in \mathbb{K} \subset \mathbb{R}^d$ satisfies the following three operational principles:
		\begin{enumerate}[label=\textnormal{(\roman*)}]
			\item \textbf{Fisher's Score Linearity and Quadratic Likelihood:} The localized log-likelihood ratio process
			\begin{equation}
				\Lambda_N(h) \equiv \sum_{n=1}^N \log \frac{p(X_n; \theta_0 + h/\sqrt{N})}{p(X_n; \theta_0)}
				\label{eq:Lambda_N_definition}
			\end{equation}
			admits a strictly linear-quadratic representation:
			\begin{equation}
				\Lambda_N(h) = h^\top \Delta_N(\theta_0) - \frac{1}{2} h^\top g^{(1)}(\theta_0) h + o_p(1),
				\label{eq:fisher_quadratic_likelihood}
			\end{equation}
			where $\Delta_N(\theta_0) \equiv \frac{1}{\sqrt{N}} \sum_{n=1}^N \nabla_\theta \log p(X_n; \theta_0)$ is the normalized score vector.
			
			\item \textbf{Le Cam's Local Asymptotic Normality (LAN) Limit Experiment:} As $N \to \infty$, the sequence of statistical experiments locally converges in the sense of Le Cam to a Gaussian shift experiment:
			\begin{equation}
				\mathcal{E}_\infty \equiv \left( \mathbb{R}^d, \, \mathcal{B}^d, \, \left\{ \mathcal{N}\left(h, \, [g^{(1)}(\theta_0)]^{-1}\right) : h \in \mathbb{R}^d \right\} \right).
				\label{eq:lecam_lan_experiment}
			\end{equation}
			
			\item \textbf{Wald-Wilks-Rao Test Trinity Invariance:} The three classical hypothesis test statistics---Rao's Score Test ($W_S$), Wilks' Likelihood Ratio Test ($W_{LR}$), and Wald's Test ($W_W$)---are identically equal and quadratic in the local score vector $V \sim \mathcal{N}(0, g_0)$:
			\begin{equation}
				W_S = W_{LR} = W_W = V^\top [g^{(1)}(\theta_0)]^{-1} V + o_p(1).
				\label{eq:trinity_invariance}
			\end{equation}
		\end{enumerate}
		where the variable $h \in \mathbb{K} \subset \mathbb{R}^d$ represents the \textbf{localizing parameter displacement vector}, defined formally via the asymptotic scaling relation $\theta_N = \theta_0 + \frac{h}{\sqrt{N}} \iff h = \sqrt{N}(\theta_N - \theta_0)$.
	\end{definition}
	
	The statistical interpretations of the variable $h$ are:
	\begin{itemize}
		\item \textbf{Canonical Asymptotic Rate Scaling ($\sqrt{N}$-Fluctuation Scale):} The parameter $h$ normalizes local parameter deviations from $\theta_0$ by the canonical $\sqrt{N}$ sampling rate governed by the Central Limit Theorem, zooming in on contiguous alternatives $P_{\theta_0 + h/\sqrt{N}}$.
		\item \textbf{Mean Shift of Asymptotic Gaussian Experiment:} In Le Cam's LAN framework, $h$ serves as the location parameter (mean shift) of the limiting Gaussian experiment $Y \sim \mathcal{N}\left(h, [g^{(1)}(\theta_0)]^{-1}\right)$.
		\item \textbf{Tangent Vector Coordinate:} Differential-geometrically, $h$ acts as the Cartesian coordinate of local displacements on the flat tangent space $T_{\theta_0}\Theta$. Restricting $h$ to a compact cage $\mathbb{K} \subset \mathbb{R}^d$ isolates the local geometry from non-linear global manifold pathologies.
	\end{itemize}
	
	\subsubsection{Step 2: Target Space $\mathcal{M}_\infty$ and Geometric Axiomatization ($\text{CS}_{\text{geom}}$)}
	\label{subsubsec:cs_geom_definition}
	
	We now introduce the exact differential-geometric representation ($\text{CS}_{\text{geom}}$) corresponding to the operational paradigm defined in Step 1.
	
	\begin{definition}[Canonical Asymptotic Flat Tangent Space $\mathcal{M}_\infty$]
		\label{def:m_infty_canonical}
		The \textbf{Canonical Asymptotic Flat Tangent Space} is defined as the flat Riemannian-affine triple:
		\begin{equation}
			\mathcal{M}_\infty \equiv \left( T_{\theta_0}IG_1, \, g_0, \, \nabla^{(0)} \right)
			\label{eq:m_infty_canonical_tuple}
		\end{equation}
		anchored at $\theta_0 \in \mathrm{Int}(\Theta)$, equipped with the position-invariant Fisher metric $g_0 \equiv g^{(1)}(\theta_0)$ and the flat Levi-Civita connection $\nabla^{(0)}$ characterized by identically vanishing Christoffel symbols ($\Gamma_{ijk}^{(0)} \equiv 0$) and zero Riemann curvature tensor ($\mathcal{R}_{ijmn}^{(0)} \equiv 0$).
	\end{definition}
	
	\begin{definition}[Constitutional Axioms of Geometric Conventional Statistics ($\text{CS}_{\text{geom}}$)]
		\label{def:cs_geom_axioms}
		A statistical workspace $\mathcal{M}_{\mathrm{stat}} \equiv (\mathcal{N}, g, \nabla)$ anchored at $\theta_0 \in \mathrm{Int}(\Theta)$ is a \textbf{Geometric Conventional Statistics Universe} ($\text{CS}$) corresponding to the Fisher-Le Cam operational paradigm (Definition~\ref{def:cs_stat_operational}) if and only if it fulfills three constitutional pillars:
		\begin{enumerate}[leftmargin=*, label={\textbf{Pillar \Roman*:}}]
			\item \textbf{Flat Tangent Action:} The underlying domain $\mathcal{N}$ acts strictly on local displacement vectors $h = \sqrt{N}(\theta - \theta_0) \in \mathbb{R}^d$, forming a vector space isomorphic to the flat tangent space $T_{\theta_0}IG_1$ anchored at $\theta_0$.
			\item \textbf{Frozen Metric Field:} The metric tensor field $g(h) \equiv g_0 = g^{(1)}(\theta_0)$ is spatially invariant across compact coordinate cages $\mathbb{K} \subset \mathbb{R}^d$.
			\item \textbf{Flat Affine Parallel Transport:} The connection $\nabla$ has identically zero Christoffel symbols ($\Gamma_{ijk} \equiv 0$) and zero Riemann curvature ($\mathcal{R}_{ijmn} \equiv 0$).
		\end{enumerate}
	\end{definition}
	
	\subsubsection{Step 3: Phase Transition Mechanics and Preliminary Proofs}
	\label{subsubsec:phase_transition_proofs}
	
	To establish the bridge between operational statistics and differential geometry, we prove two fundamental phase-transition lemmas under High-Order Local Regularity (Assumption 1).
	
	\begin{lemma}[Asymptotic Metric Freezing under LAN Scaling]
		\label{lem:metric_freezing_proof}
		Let $g^{(1)}(\theta)$ be a $C^2$-smooth single-sample Fisher Information Metric tensor field on $\mathcal{U}(\theta_0) \subset \Theta$. Under the local coordinate dilation $\psi_N(h) = \theta_0 + \frac{h}{\sqrt{N}}$, define the pulled-back local metric field $\widetilde{g}_N(h) \equiv \psi_N^* G^{(N)}(h)$ on a compact coordinate cage $\mathbb{K} \subset \mathbb{R}^d$. Then:
		\begin{equation}
			\sup_{h \in \mathbb{K}} \left\| \widetilde{g}_N(h) - g_0 \right\|_\infty = O\left(\frac{1}{\sqrt{N}}\right) \longrightarrow 0 \quad \text{as } N \to \infty,
			\label{eq:metric_freezing_bound}
		\end{equation}
		where $g_0 \equiv g^{(1)}(\theta_0)$.
	\end{lemma}
	
	\begin{proof}
		By tensor pullback mechanics under the dilation map $\psi_N(h) = \theta_0 + \frac{h}{\sqrt{N}}$, the multivariable Jacobian matrix is $J(\psi_N) = \frac{1}{\sqrt{N}} \mathbf{I}_{d \times d}$. Combining this with the additivity relation $G^{(N)}(\theta) = N g^{(1)}(\theta)$ yields:
		\begin{equation}
			\widetilde{g}_{ij, N}(h) = \sum_{a, b=1}^d \left(\frac{1}{\sqrt{N}} \delta_i^a\right) \left(\frac{1}{\sqrt{N}} \delta_j^b\right) N g_{ab}^{(1)}\left(\theta_0 + \frac{h}{\sqrt{N}}\right) = g_{ij}^{(1)}\left(\theta_0 + \frac{h}{\sqrt{N}}\right).
			\label{eq:metric_cancellation}
		\end{equation}
		Applying a multivariate Taylor series expansion to $g_{ij}^{(1)}\left(\theta_0 + \frac{h}{\sqrt{N}}\right)$ around $h = 0$:
		\begin{equation}
			g_{ij}^{(1)}\left(\theta_0 + \frac{h}{\sqrt{N}}\right) = g_{ij}^{(1)}(\theta_0) + \frac{1}{\sqrt{N}} \sum_{k=1}^d h_k \int_0^1 \frac{\partial g_{ij}^{(1)}}{\partial \theta_k}\left(\theta_0 + \frac{t h}{\sqrt{N}}\right) dt.
			\label{eq:taylor_expansion_metric}
		\end{equation}
		Subtracting $g_{ij, 0} \equiv g_{ij}^{(1)}(\theta_0)$ and taking the supremum norm over the compact cage $\mathbb{K}$:
		\begin{equation}
			\sup_{h \in \mathbb{K}} \left| \widetilde{g}_{ij, N}(h) - g_{ij}^{(1)}(\theta_0) \right| \le \frac{1}{\sqrt{N}} \left( \sup_{h \in \mathbb{K}} \|h\|_2 \right) \cdot \max_{i,j,k} \sup_{\theta \in \mathcal{U}(\theta_0)} \left| \frac{\partial g_{ij}^{(1)}(\theta)}{\partial \theta_k} \right|.
			\label{eq:sup_norm_metric_bound}
		\end{equation}
		Because $\mathbb{K}$ is compact, $C_\mathbb{K} \equiv \sup_{h \in \mathbb{K}} \|h\|_2 < \infty$. Smoothness of $g^{(1)}$ on the closed neighborhood $\overline{\mathcal{U}}(\theta_0)$ guarantees $M_g \equiv \max_{i,j,k} \sup_{\theta \in \mathcal{U}(\theta_0)} \left| \frac{\partial g_{ij}^{(1)}(\theta)}{\partial \theta_k} \right| < \infty$. Therefore:
		\begin{equation}
			\sup_{h \in \mathbb{K}} \left\| \widetilde{g}_N(h) - g_0 \right\|_\infty \le \frac{C_\mathbb{K} M_g}{\sqrt{N}} = O\left(\frac{1}{\sqrt{N}}\right).
			\label{eq:metric_freezing_final}
		\end{equation}
		Taking $N \to \infty$ proves that $\widetilde{g}_N(h)$ converges uniformly to the constant metric $g_0$ across $\mathbb{K}$, satisfying Pillar II.
	\end{proof}
	
	\begin{lemma}[Curvature and Connection Collapse Lemma]
		\label{lem:curvature_collapse_proof}
		Let $IG_1 = (\Theta, g^{(1)}, \nabla^{(\alpha)})$ be an information manifold with $C^3$-smooth metric $g^{(1)}$ and $\alpha$-connection $\nabla^{(\alpha)}$ whose Christoffel symbols are $\Gamma_{ijk}^{(\alpha)}(\theta)$. Let $\widetilde{\Gamma}_{ijk}^{(\alpha, N)}(h) \equiv [\psi_N^* \nabla^{(\alpha,N)}]_{ijk}(h)$ and $\widetilde{\mathcal{R}}_{ijmn}^{(\alpha, N)}(h) \equiv [\psi_N^* \mathcal{R}^{(\alpha,N)}]_{ijmn}(h)$ denote the pulled-back connection symbols and Riemann curvature components on $\mathbb{K}$. Then:
		\begin{enumerate}[label=\textnormal{(\roman*)}]
			\item $\sup_{h \in \mathbb{K}} \left| \widetilde{\Gamma}_{ijk}^{(\alpha, N)}(h) \right| = O\left(\frac{1}{\sqrt{N}}\right) \longrightarrow 0 \quad \text{as } N \to \infty$.
			\item $\sup_{h \in \mathbb{K}} \left| \widetilde{\mathcal{R}}_{ijmn}^{(\alpha, N)}(h) \right| = O\left(\frac{1}{N}\right) \longrightarrow 0 \quad \text{as } N \to \infty$.
		\end{enumerate}
	\end{lemma}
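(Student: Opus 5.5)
The plan is to prove both parts by exact scaling identities, in the same way as Lemma~\ref{lem:metric_freezing_proof}: the factor $N$ coming from the product structure is multiplied by the Jacobian factors of the dilation $\psi_N$, and what remains is a fixed power of $N$ times a continuous function evaluated at $\theta_0+h/\sqrt N$.

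\textbf{Step 1: the joint connection is $N$ times the single-sample one.} Write the lowered $\alpha$-symbols as
\[
\Gamma^{(\alpha)}_{ijk}=\mathbb{E}_\theta\big[(\partial_i\partial_j\ell+\tfrac{1-\alpha}{2}\partial_i\ell\,\partial_j\ell)\,\partial_k\ell\big].
\]
Repeat the argument of Lemma~\ref{lem:additivity_fisher_metric} with $\ell_N=\sum_n\ell(X_n;\theta)$. In the expansion, every cross term with distinct sample indices contains a factor $\mathbb{E}_\theta[\partial_k\ell(X_m;\theta)]=0$ for some single observation $m$, so it vanishes. Hence
\[
\Gamma^{(\alpha,N)}_{ijk}=N\,\Gamma^{(\alpha)}_{ijk}.
\]
Raising an index with $(G^{(N)})^{-1}=N^{-1}(g^{(1)})^{-1}$ then shows that the mixed symbols $\Gamma^{(\alpha,N)\,k}_{ij}$ coincide with $\Gamma^{(\alpha)\,k}_{ij}$.

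\textbf{Step 2: pull back under the dilation.} The map $\psi_N$ is affine, so the inhomogeneous second-derivative term in the Christoffel transformation law vanishes. The symbols therefore transform tensorially under $J(\psi_N)=N^{-1/2}\mathbf I$.

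For the lowered symbols this gives
\[
\widetilde\Gamma^{(\alpha,N)}_{ijk}(h)=N^{-3/2}\cdot N\,\Gamma^{(\alpha)}_{ijk}(\theta_0+h/\sqrt N)=N^{-1/2}\,\Gamma^{(\alpha)}_{ijk}(\theta_0+h/\sqrt N).
\]

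For the curvature, the mixed symbols are unchanged, so the $(1,3)$ curvature of $\nabla^{(\alpha,N)}$ equals that of $\nabla^{(\alpha)}$. Lowering its index with $G^{(N)}$ gives $\mathcal R^{(\alpha,N)}_{ijmn}=N\,\mathcal R^{(\alpha)}_{ijmn}$. Pulling back this four-covariant tensor costs $N^{-2}$, so
\[
\widetilde{\mathcal R}^{(\alpha,N)}_{ijmn}(h)=N^{-1}\,\mathcal R^{(\alpha)}_{ijmn}(\theta_0+h/\sqrt N).
\]

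\textbf{Step 3: uniform bounds on the cage.} Since $\mathbb K$ is compact, $\psi_N(\mathbb K)$ lies inside a fixed closed ball $\overline B\subset\mathcal U(\theta_0)$ once $N\ge N_0$. It then suffices to show that $\Gamma^{(\alpha)}_{ijk}$ and $\mathcal R^{(\alpha)}_{ijmn}$ are continuous, and hence bounded, on $\overline B$. This gives (i) with constant $\max_{\overline B}|\Gamma^{(\alpha)}|$ and (ii) with constant $\max_{\overline B}|\mathcal R^{(\alpha)}|$.

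\textbf{Main obstacle: regularity.} The scaling identities are mechanical; the real work is justifying the regularity needed in Step 3.

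For $\Gamma^{(\alpha)}$, finiteness and continuity follow from Assumption~\ref{asm:local_regularity}. Cauchy--Schwarz, the second-moment envelope (iii), and $L_2$-differentiability of $\sqrt p$ together give dominated convergence under the integral.

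For $\mathcal R^{(\alpha)}$, I need first derivatives of $\Gamma^{(\alpha)}$. The Levi-Civita part $\Gamma^{(0)}$ is handled by the stated $C^3$ smoothness of $g^{(1)}$. The remainder is $-\tfrac{\alpha}{2}T_{ijk}$, where $T_{ijk}=\mathbb{E}[\partial_i\ell\,\partial_j\ell\,\partial_k\ell]$ is the skewness tensor. I would try to control $\partial_m T_{ijk}$ by differentiating under the integral, using the third-order $L_2$-differentiability of $\sqrt p$ together with (iii). If that falls short, I would state explicitly the natural reading of the lemma's hypothesis that the $\alpha$-connection is $C^1$ on $\mathcal U(\theta_0)$ and invoke it at this point.
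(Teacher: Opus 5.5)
Your proof is correct, but it follows a different route from the paper's proof of this lemma.

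The paper never uses additivity of the Amari symbols. It starts from the pulled-back metric $\widetilde g_N(h)=g^{(1)}(\theta_0+h/\sqrt N)$ of Lemma~\ref{lem:metric_freezing_proof} and writes the first-kind Christoffel symbols of $\widetilde g_N$ in $h$-coordinates. It then counts chain-rule factors, each $h$-derivative contributing $N^{-1/2}$. In the curvature, the derivative terms $\partial\widetilde\Gamma$ and the quadratic terms $\widetilde g_N^{st}\widetilde\Gamma\widetilde\Gamma$ are therefore both $O(1/N)$. This is purely deterministic calculus, and its regularity comes for free from the $C^3$ hypothesis on $g^{(1)}$. However, it only treats the metric-derived (Levi-Civita) symbols, so strictly speaking it proves the $\alpha=0$ case.

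Your route rests on three ingredients: the identity $\Gamma^{(\alpha,N)}_{ijk}=N\Gamma^{(\alpha)}_{ijk}$, tensorial transformation under the affine map $\psi_N$, and the observation that $\nabla^{(\alpha,N)}=\nabla^{(\alpha)}$ as affine connections, so the curvature's factor $N$ comes only from lowering an index. This covers every $\alpha$ at once and gives exact identities rather than power counts. It is the mechanism the paper itself uses later in Theorems~\ref{thm:connection_dissolution} and~\ref{thm:curvature_annihilation}.

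Its price is the regularity of the skewness part $-\tfrac{\alpha}{2}T_{ijk}$, which you rightly flag and which the paper never confronts. The paper simply asserts boundedness of $\Gamma^{(\alpha,1)}$ and $\mathcal R^{(\alpha,1)}$ "by Assumption~\ref{asm:local_regularity}". The issue already arises at the $C^0$ level: $T_{ijk}=\mathbb{E}[\partial_i\ell\,\partial_j\ell\,\partial_k\ell]$ is a third moment of the score. Cauchy--Schwarz with the second-moment bounds you cite does not control it. Your fallback hypothesis, that $\nabla^{(\alpha)}$ is $C^1$ on $\mathcal U(\theta_0)$, should therefore be invoked for part (i) as well as part (ii).
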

	
	\begin{proof}
		\textbf{Proof of (i):} The Christoffel symbols of the first kind for the pulled-back metric $\widetilde{g}_N(h)$ are defined by:
		\begin{equation}
			\widetilde{\Gamma}_{ij, k}^{(N)}(h) = \frac{1}{2} \left( \frac{\partial \widetilde{g}_{ik, N}(h)}{\partial h^j} + \frac{\partial \widetilde{g}_{jk, N}(h)}{\partial h^i} - \frac{\partial \widetilde{g}_{ij, N}(h)}{\partial h^k} \right).
			\label{eq:christoffel_def}
		\end{equation}
		Applying the chain rule to $\widetilde{g}_{ik, N}(h) = g_{ik}^{(1)}\left(\theta_0 + \frac{h}{\sqrt{N}}\right)$:
		\begin{equation}
			\frac{\partial \widetilde{g}_{ik, N}(h)}{\partial h^j} = \frac{1}{\sqrt{N}} \left. \frac{\partial g_{ik}^{(1)}(\theta)}{\partial \theta^j} \right|_{\theta = \theta_0 + \frac{h}{\sqrt{N}}}.
			\label{eq:chain_rule_christoffel}
		\end{equation}
		Substituting Equation~\eqref{eq:chain_rule_christoffel} into Equation~\eqref{eq:christoffel_def} yields:
		\begin{equation}
			\widetilde{\Gamma}_{ij, k}^{(N)}(h) = \frac{1}{\sqrt{N}} \Gamma_{ij, k}^{(1)}\left(\theta_0 + \frac{h}{\sqrt{N}}\right).
			\label{eq:connection_dissolution_scaling}
		\end{equation}
		Since $\Gamma_{ij, k}^{(1)}(\theta)$ is continuous and bounded on compact sets, taking the supremum over $\mathbb{K}$ gives:
		\begin{equation}
			\sup_{h \in \mathbb{K}} \left| \widetilde{\Gamma}_{ij, k}^{(N)}(h) \right| \le \frac{1}{\sqrt{N}} \sup_{\theta \in \mathcal{U}(\theta_0)} \left| \Gamma_{ij, k}^{(1)}(\theta) \right| = O\left(\frac{1}{\sqrt{N}}\right) \longrightarrow 0.
		\end{equation}
		
		\textbf{Proof of (ii):} The Riemann curvature tensor components are given by:
		\begin{equation}
			\widetilde{\mathcal{R}}_{ijmn}^{(N)}(h) = \frac{\partial \widetilde{\Gamma}_{in, j}^{(N)}}{\partial h^m} - \frac{\partial \widetilde{\Gamma}_{im, j}^{(N)}}{\partial h^n} + \sum_{s,t} \widetilde{g}_N^{st} \left( \widetilde{\Gamma}_{mj, s}^{(N)} \widetilde{\Gamma}_{ni, t}^{(N)} - \widetilde{\Gamma}_{nj, s}^{(N)} \widetilde{\Gamma}_{mi, t}^{(N)} \right).
			\label{eq:riemann_curvature_components}
		\end{equation}
		Differentiating $\widetilde{\Gamma}_{in, j}^{(N)}(h)$ with respect to $h^m$ introduces a second spatial derivative chain rule factor of $1/\sqrt{N}$:
		\begin{equation}
			\frac{\partial \widetilde{\Gamma}_{in, j}^{(N)}(h)}{\partial h^m} = \frac{\partial}{\partial h^m} \left[ \frac{1}{\sqrt{N}} \Gamma_{in, j}^{(1)}\left(\theta_0 + \frac{h}{\sqrt{N}}\right) \right] = \frac{1}{N} \left. \frac{\partial \Gamma_{in, j}^{(1)}(\theta)}{\partial \theta^m} \right|_{\theta = \theta_0 + \frac{h}{\sqrt{N}}}.
			\label{eq:second_derivative_chain_rule}
		\end{equation}
		Furthermore, the quadratic connection products scale as $O(1/\sqrt{N}) \times O(1/\sqrt{N}) = O(1/N)$. Thus:
		\begin{equation}
			\sup_{h \in \mathbb{K}} \left| \widetilde{\mathcal{R}}_{ijmn}^{(N)}(h) \right| = O\left(\frac{1}{N}\right) \longrightarrow 0,
		\end{equation}
		proving Pillar III ($\Gamma \equiv 0$ and $\mathcal{R} \equiv 0$) identically in the limit workspace.
	\end{proof}
	
	\begin{theorem}[Rigorous Linear-Quadratic LAN Decomposition]
		\label{thm:lan_linear_quadratic_proof}
		Under High-Order Local Regularity, the localized log-likelihood ratio process $$\Lambda_N(h) \equiv \sum_{n=1}^N \log \frac{p(X_n; \theta_0 + h/\sqrt{N})}{p(X_n; \theta_0)}$$ admits the exact expansion:
		\begin{equation}
			\Lambda_N(h) = h^\top \Delta_N(\theta_0) - \frac{1}{2} h^\top g_0 h + \mathcal{R}_N(h),
			\label{eq:lan_expansion_exact}
		\end{equation}
		where $\Delta_N(\theta_0) \equiv \frac{1}{\sqrt{N}} \sum_{n=1}^N \nabla_\theta \log p(X_n; \theta_0) \xrightarrow{d} \mathcal{N}(0, g_0)$, and the non-linear stochastic remainder satisfies:
		\begin{equation}
			\sup_{h \in \mathbb{K}} |\mathcal{R}_N(h)| = \mathcal{O}_p\left(\frac{1}{\sqrt{N}}\right) \xrightarrow{P_{\theta_0}} 0.
			\label{eq:remainder_stochastic_bound}
		\end{equation}
	\end{theorem}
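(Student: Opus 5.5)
We plan to prove the decomposition by a third-order Taylor expansion of the single-observation log-likelihood $\ell(x;\theta)=\log p(x;\theta)$ around $\theta_0$, summed over $n$, and then to control each term uniformly over the compact cage $\mathbb{K}$ using Assumption~\ref{asm:local_regularity}. For $N$ large enough that $\theta_0+h/\sqrt{N}\in\mathcal{U}(\theta_0)$ for all $h\in\mathbb{K}$, the integral form of Taylor's theorem gives
\begin{equation*}
\Lambda_N(h)=h^\top\Delta_N(\theta_0)+\frac{1}{2}h^\top\Big(\frac{1}{N}\sum_{n=1}^N\nabla^2_\theta\ell(X_n;\theta_0)\Big)h+\frac{1}{6N^{3/2}}\sum_{n=1}^N\sum_{i,j,k}h^ih^jh^k\,\partial_{ijk}\ell(X_n;\bar\theta_{n,h}),
\end{equation*}
with $\bar\theta_{n,h}$ on the segment between $\theta_0$ and $\theta_0+h/\sqrt{N}$. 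We then define $\mathcal{R}_N(h)$ as the sum of two pieces. The first is the Hessian fluctuation $\frac12 h^\top(\bar H_N+g_0)h$, where $\bar H_N$ is the empirical mean Hessian. The second is the cubic term.

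For the linear term, we note that the score has mean zero (as shown in the proof of Lemma~\ref{lem:additivity_fisher_metric}) and covariance $g_0$, which is finite by Assumption~\ref{asm:local_regularity}(ii). The multivariate CLT then gives $\Delta_N(\theta_0)\xrightarrow{d}\mathcal{N}(0,g_0)$.

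For the Hessian piece, we use the information identity $\mathbb{E}_{\theta_0}[-\nabla^2\ell]=g_0$. This follows from differentiating $\int p\,d\mu=1$ twice, with the interchange justified by the $L_2$-differentiability in (i). Under a finite second moment of the Hessian entries, Chebyshev's inequality gives $\bar H_N+g_0=O_p(N^{-1/2})$ entrywise. Since $\sup_{\mathbb{K}}\|h\|^2<\infty$, this piece is $O_p(N^{-1/2})$ uniformly on $\mathbb{K}$.

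For the cubic piece, we bound it by $\frac{C_{\mathbb{K}}^3}{6\sqrt{N}}\cdot\frac1N\sum_n M(X_n)$, where $M(x)=\sup_{\theta\in\mathcal{U}}\max_{ijk}|\partial_{ijk}\ell(x;\theta)|$ is an envelope. If $\mathbb{E}_{\theta_0}M<\infty$, the law of large numbers (or Markov's inequality) gives $\frac1N\sum M(X_n)=O_p(1)$, hence this piece is $O_p(N^{-1/2})$ uniformly.

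The main obstacle is exactly this envelope step. Assumption~\ref{asm:local_regularity}(iii) as stated gives $\sup_\theta\mathbb{E}_\theta|\partial_{ijk}\ell|^2<\infty$, which is a supremum of expectations under varying measures. What we need is an expectation of a supremum under $P_{\theta_0}$, because the intermediate points $\bar\theta_{n,h}$ depend on the data and on $h$. We would first try to read (iii) in its intended "uniform envelope domination" sense, i.e.\ $\mathbb{E}_{\theta_0}\sup_{\theta\in\mathcal{U}}|\partial_{ijk}\ell|^2<\infty$, and state this interpretation explicitly. Failing that, we would obtain a dominating function by applying a mean-value bound to $\partial_{ijk}\ell$ over a shrinking neighbourhood. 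In the same spirit, we would make explicit the second-moment condition on the Hessian at $\theta_0$ that the Chebyshev step requires, since (ii)--(iii) imply it only under this envelope reading.

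Once these integrability points are fixed, the uniform bound $\sup_{\mathbb{K}}|\mathcal{R}_N(h)|=O_p(N^{-1/2})$, and hence convergence to $0$ in $P_{\theta_0}$-probability, follows by adding the two pieces.
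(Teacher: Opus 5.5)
Your proposal is correct and follows essentially the paper's own route: a Taylor expansion of $\Lambda_N(h)$ summed over $n$, the multivariate CLT for $\Delta_N(\theta_0)$, a law of large numbers for the averaged Hessian, and the bound $\frac{1}{N}\sum_n M(X_n)=O_p(1)$ for the cubic term, where the paper likewise tacitly reads Assumption~\ref{asm:local_regularity}(iii) as an envelope condition under $P_{\theta_0}$ (this reading is stated explicitly, as $M_3$, in the proof of Theorem~\ref{thm:generalized_horizontal_log_likelihood}), so adopting that reading is the right choice, whereas your fallback of a mean-value bound on $\partial_{ijk}\ell$ would require fourth derivatives that the assumptions do not provide. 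On the Hessian piece your treatment is sharper than the paper's: the paper invokes only the WLLN there, which gives $o_p(1)$ but not the claimed $O_p(N^{-1/2})$ for $\sup_{h\in\mathbb{K}}|\mathcal{R}_N(h)|$, and it is your Chebyshev step, under an explicitly stated finite second moment of $\nabla^2_\theta\ell(X;\theta_0)$, that actually delivers the stated rate.
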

	
	\begin{proof}
		Performing a second-order multivariate Taylor expansion of $\log p\left(X_n; \theta_0 + \frac{h}{\sqrt{N}}\right)$ around $h = 0$:
		\begin{equation}
			\log p\left(X_n; \theta_0 + \frac{h}{\sqrt{N}}\right) - \log p(X_n; \theta_0) = \frac{h^i}{\sqrt{N}} \partial_i \log p(X_n;\theta_0) + \frac{h^i h^j}{2N} \partial_i \partial_j \log p(X_n;\theta_0) + R_n^{(3)}(h).
			\label{eq:taylor_log_density}
		\end{equation}
		Summing over $n = 1, \dots, N$:
		\begin{equation}
			\Lambda_N(h) = h^\top \left( \frac{1}{\sqrt{N}} \sum_{n=1}^N \nabla_\theta \log p(X_n;\theta_0) \right) + \frac{1}{2} h^\top \left( \frac{1}{N} \sum_{n=1}^N \nabla_\theta^2 \log p(X_n;\theta_0) \right) h + \sum_{n=1}^N R_n^{(3)}(h).
			\label{eq:summed_log_density}
		\end{equation}
		By the Multivariate Central Limit Theorem, $\Delta_N(\theta_0) \xrightarrow{d} \mathcal{N}(0, g_0)$. By the Weak Law of Large Numbers, $\frac{1}{N} \sum_{n=1}^N \nabla_\theta^2 \log p(X_n;\theta_0) \xrightarrow{P_{\theta_0}} \mathbb{E}_{\theta_0}[\nabla_\theta^2 \log p(X;\theta_0)] = -g_0$. 
		
		By uniform envelope domination of third-order derivatives (Assumption 1), the remainder satisfies:
		\begin{equation}
			\sup_{h \in \mathbb{K}} \left| \sum_{n=1}^N R_n^{(3)}(h) \right| \le \frac{\|h\|_2^3}{6\sqrt{N}} \left( \frac{1}{N} \sum_{n=1}^N M(X_n) \right) = \mathcal{O}_p\left(\frac{1}{\sqrt{N}}\right),
			\label{eq:third_order_remainder_bound}
		\end{equation}
		completing the proof of Equation~\eqref{eq:lan_expansion_exact}.
	\end{proof}
	
\subsubsection{Step 4: The Operational-Geometric Bridge and Full Equivalence Theorem}
\label{subsubsec:bridge_and_full_theorem}
Continuing from the foundational definitions and phase-transition mechanics established above, we now present the pivotal \textbf{Operational-Geometric Bridge Lemma} (Lemma~\ref{lem:operational_geometric_bridge}), the complete \textbf{Complemented Epistemological Equivalence Theorem} (Theorem~\ref{thm:theorem2_full}), the structured \textbf{Defenses to Modern Statistical Objections} (Section~\ref{subsec:statistical_implications}).

To close the logical loop between the decision-theoretic operational paradigm ($\text{CS}_{\text{stat}}$) and the differential-geometric target space ($\mathcal{M}_\infty$), we establish the equivalence between operational principles and geometric constitutional pillars.

\begin{lemma}[Operational-Geometric Bridge Lemma: $\text{CS}_{\text{stat}} \iff \text{CS}_{\text{geom}}$]
	\label{lem:operational_geometric_bridge}
	Under High-Order Local Regularity (Assumption 1), a localized statistical workspace satisfies the operational paradigm of Conventional Statistics $\text{CS}_{\text{stat}}$ (Definition~\ref{def:cs_stat_operational}) if and only if its underlying geometric triple $\mathcal{M}_{\text{stat}} \equiv (\mathcal{N}, g, \nabla)$ fulfills the three constitutional pillars of $\text{CS}_{\text{geom}}$ (Definition~\ref{def:cs_geom_axioms}).
\end{lemma}
\begin{remark}

This Operational-Geometric Bridge Lemma establishes a formal, bi-directional logical equivalence ($\text{CS}_{\text{stat}} \iff \text{CS}_{\text{geom}}$) between the classical decision-theoretic operational paradigm of statistics and the differential geometry of information manifolds. This equivalence carries several profound statistical implications:

\begin{enumerate}
	\item \textbf{Ontological Justification for Local Asymptotic Normality (LAN):} 
	The lemma proves that Le Cam's Local Asymptotic Normality (LAN) and Fisher score linearity are not merely passive analytical heuristics or convenient linear approximations. Instead, they are the direct operational expressions of a flat, metric-frozen geometric space $(\mathcal{N}, g_0, \nabla^{(0)})$. This provides an exact geometric mechanism explaining why curved parametric families locally behave like linear Gaussian shift experiments under $1/\sqrt{N}$ rate scaling.
	
	\item \textbf{Geometric Mechanics of the Classical Test Trinity Equivalence:} 
	In finite-sample regimes, the three classical hypothesis test statistics—Rao's Score Test ($W_S$), Wilks' Likelihood Ratio Test ($W_{LR}$), and Wald's Test ($W_W$)—diverge due to non-zero Christoffel symbols $\Gamma_{ijk}^{(\alpha)}$ and Amari-Riemann curvature $\mathcal{R}_{ijmn}^{(\alpha)}$. Lemma \ref{lem:operational_geometric_bridge} shows that the asymptotic identity $W_S = W_{LR} = W_W + o_p(1)$ holds if and only if parallel transport is flat ($\Gamma_{ijk} \equiv 0$). Thus, flat affine transport is both necessary and sufficient for the unconditional invariance of the classical test trinity.
	
	\item \textbf{Elimination of Second-Order Information Loss:} 
	By enforcing a position-invariant metric field $g(h) \equiv g_0$ across local parameter perturbations (Pillar II), spatial metric derivatives vanish ($\nabla g = 0$). In asymptotic estimation theory, this sets Efron's exponential and mixture statistical curvatures ($\gamma_e, \gamma_m$) identically to zero, implying that first-order efficient estimators suffer zero second-order information loss within the localized contiguous neighborhood.
	
	\item \textbf{Universal Asymptotic Risk Bounds:} 
	Because the geometric workspace $\mathcal{M}_{\text{stat}}$ is isomorphic to a flat inner-product space equipped with frozen Fisher metric $g_0$, local decision rules and risk bounds derived under quadratic loss functions become globally uniform and shift-invariant across compact parameter displacement cages $\mathbb{K} \subset \mathbb{R}^d$.
\end{enumerate}

\end{remark}

\begin{proof}
	We establish bi-directional necessity and sufficiency step-by-step.
	
	\paragraph{\textbf{Necessity ($\text{CS}_{\text{stat}} \implies \text{CS}_{\text{geom}}$):}}
	Assume the localized workspace satisfies operational principles (i)--(iii) of Definition~\ref{def:cs_stat_operational}:
	\begin{enumerate}[label=(\alph*)]
\item \textbf{Verification of Pillar I (Flat Tangent Action):} 
In $\text{CS}_{\text{stat}}$, local statistical inference is parametrized over compact displacement cages $h = \sqrt{N}(\theta - \theta_0) \in \mathbb{K} \subset \mathbb{R}^d$. The domain of parameter perturbations is the linear vector space $\mathbb{R}^d$. By Proposition~\ref{prop:isomorphism},
		$\mathbb{R}^d$ is naturally isometric and isomorphic to the tangent derivation space $T_{\theta_0} IG_1$. Thus, the underlying domain $\mathcal{N}$ acts strictly on tangent vectors $h \in T_{\theta_0} IG_1 \cong \mathbb{R}^d$, fulfilling Pillar I.
		
		\item \textbf{Verification of Pillar II (Frozen Metric Field):} 
		Principle (i) enforces that the localized log-likelihood ratio process $\Lambda_N(h)$ is strictly quadratic with constant matrix $g^{(1)}(\theta_0) \equiv g_0$. By Le Cam's Local Asymptotic Normality (LAN) principle (ii), the limiting experiment has a spatially invariant covariance structure $g_0^{-1}$ across all local displacements $h \in \mathbb{K}$. Differential-geometrically, the metric tensor field $g(h)$ representing local Fisher information capacity must be spatially uniform across $\mathbb{K}$:
		\begin{equation}
			g(h) \equiv g_0 = g^{(1)}(\theta_0), \quad \forall h \in \mathbb{K},
		\end{equation}
		satisfying Pillar II.
		
\item \textbf{Verification of Pillar III (Flat Affine Parallel Transport):} 
	Principle (iii) enforces the exact identity of the test trinity: $W_S = W_{LR} = W_W + o_p(1)$ across all contiguous alternatives $h \in \mathbb{K}$. In information geometry \cite{amari1985, amari2000, kass1997}, the Taylor expansions of these three test statistics up to order $\mathcal{O}_p(N^{-1/2})$ reveal discrepancies governed by the dual Christoffel connections $\Gamma_{ijk}^{(\alpha)}$:
		\begin{align}
			W_{LR}(h) - W_S(h) &= \frac{1}{3\sqrt{N}} \sum_{i,j,k=1}^d \Gamma_{ijk}^{(1)}(\theta_0) h^i h^j h^k + o_p\left(N^{-1/2}\right), \label{eq:test_diff_1} \\
			W_W(h) - W_{LR}(h) &= \frac{1}{3\sqrt{N}} \sum_{i,j,k=1}^d \Gamma_{ijk}^{(-1)}(\theta_0) h^i h^j h^k + o_p\left(N^{-1/2}\right). \label{eq:test_diff_2}
		\end{align}
	Unconditional asymptotic identity $W_S = W_{LR} = W_W + o_p(1)$ for all displacement vectors $h \in \mathbb{K}$ requires the cubic forms in Equations~\eqref{eq:test_diff_1} and \eqref{eq:test_diff_2} to vanish identically. This implies that the Christoffel symbols vanish:
		\begin{equation}
			\Gamma_{ijk}^{(\alpha)}(\theta_0) \equiv 0, \quad \forall \alpha \in [-1, 1],
		\end{equation}
		which in turn forces the Amari-Riemann curvature tensor to vanish identically:
		\begin{equation}
			\mathcal{R}_{ijmn}^{(\alpha)}(\theta_0) \equiv 0.
		\end{equation}
		Hence, parallel transport is path-independent and flat ($\nabla = \nabla^{(0)}$), fulfilling Pillar III.
	\end{enumerate}
	
	\paragraph{\textbf{Sufficiency ($\text{CS}_{\text{geom}} \implies \text{CS}_{\text{stat}}$):}}
	Assume $\mathcal{M}_{\text{stat}} \equiv (\mathcal{N}, g, \nabla)$ fulfills Pillars I, II, and III of Definition~\ref{def:cs_geom_axioms}:
	\begin{enumerate}[label=(\alph*)]
		\item By Pillar I ($\mathcal{N} \cong \mathbb{R}^d$), local displacements are vectors $h \in \mathbb{R}^d$. By Theorem~\ref{thm:lan_linear_quadratic_proof}, any smooth probability density process on a flat tangent workspace admits the expansion $\Lambda_N(h) = h^\top \Delta_N(\theta_0) - \frac{1}{2} h^\top g_0 h + o_p(1)$, satisfying principle (i).
		\item By Pillar II ($g(h) \equiv g_0$), the metric capacity is position-invariant, guaranteeing that the limit experiment is the Gaussian shift experiment $\mathcal{E}_\infty = (\mathbb{R}^d, \mathcal{B}^d, \{\mathcal{N}(h, g_0^{-1})\})$, satisfying principle (ii).
		\item By Pillar III ($\Gamma_{ijk} \equiv 0, \mathcal{R}_{ijmn} \equiv 0$), parallel transport is flat. The geodesic distances along all $\alpha$-connections coincide identically with Euclidean distance under $g_0$, forcing $W_S = W_{LR} = W_W + o_p(1)$, satisfying principle (iii).
	\end{enumerate}
	Therefore, $\text{CS}_{\text{stat}} \iff \text{CS}_{\text{geom}}$.
\end{proof}

\begin{lemma}[Geometric Isomorphism Lemma: $\text{CS}_{\text{geom}} \iff \mathcal{M}_\infty$]
	\label{lem:lemma5_isomorphism}
~

	A localized statistical workspace triple $\mathcal{M}_{\mathrm{stat}} \equiv (\mathcal{N}, g, \nabla)$ centered at $\theta_0 \in \mathrm{Int}(\Theta)$ fulfills the three Constitutional Pillars of Geometric Conventional Statistics $\text{CS}_{\text{geom}}$ (Definition 6) if and only if $\mathcal{M}_{\mathrm{stat}}$ is isometric and affine-isomorphic to the canonical asymptotic flat tangent space $\mathcal{M}_\infty \equiv \left(T_{\theta_0}IG_1, g_0, \nabla^{(0)}\right)$ (Definition 5).
\end{lemma}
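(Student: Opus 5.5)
The proof is a bi-directional argument at the level of triples. For each direction we produce, or consume, an explicit map $\Phi:\mathcal{N}\to T_{\theta_0}IG_1$ that is a linear isomorphism, pulls $g_0$ back to $g$, and intertwines $\nabla$ with $\nabla^{(0)}$. The natural candidate is the composite $\Phi=\Psi_\phi^{-1}\circ\iota$. Here $\iota:\mathcal{N}\to\mathbb{R}^d$ is the linear identification of the displacement domain guaranteed by Pillar I, and $\Psi_\phi$ is the derivation isomorphism of Proposition~\ref{prop:isomorphism}.

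\emph{Sufficiency} ($\text{CS}_{\text{geom}}\Rightarrow\mathcal{M}_\infty$). First, Pillar I gives $\iota$, so $\Phi$ is a linear bijection. Second, for isometry we evaluate $(\Phi^*g_0)(u,w)=g_0(\Psi_\phi^{-1}\iota u,\Psi_\phi^{-1}\iota w)=\langle \iota u,\iota w\rangle_{g_0}$, using the isometric-isomorphism property of the metric-induced inner product. Pillar II says $g(h)\equiv g_0$ at every point of the cage, so in coordinates $h$ the component matrices of $g$ and $\Phi^*g_0$ agree pointwise. Because $\Phi$ is linear, its differential at every point is $\Phi$ itself, so this pointwise identity is exactly the tensor identity $\Phi^*g_0=g$. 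Third, for the affine property we use that $\Phi$ is linear, hence its second derivatives vanish. The transformation law for Christoffel symbols then reduces to tensorial pullback, $\Gamma^{\Phi^*\nabla^{(0)}}=\Phi^*\Gamma^{(0)}=0$. Pillar III gives $\Gamma\equiv0$ for $\nabla$, so $\Phi^*\nabla^{(0)}=\nabla$, and both curvatures vanish.

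\emph{Necessity} ($\mathcal{M}_\infty\Rightarrow\text{CS}_{\text{geom}}$). Suppose $\Phi:\mathcal{N}\to T_{\theta_0}IG_1$ is an isometric affine isomorphism. Then $\Psi_\phi\circ\Phi$ identifies $\mathcal{N}$ with $\mathbb{R}^d$, which gives Pillar I. The metric is $g=\Phi^*g_0$, the pullback of a constant tensor under an affine map, so its components in the affine coordinates $h$ are constant and equal to $A^\top g_0A$, where $A$ is the linear part of $\Phi$. Pillar II requires more than constancy: it demands $g(h)\equiv g_0$ itself. We obtain this by choosing the coordinate normalization $h=\sqrt N(\theta-\theta_0)$ that defines $\mathcal{N}$ in Pillar I, which forces $A=\mathrm{I}$ after composing with $\Psi_\phi$. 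For Pillar III, affine maps preserve the vanishing of Christoffel symbols in affine coordinates, and curvature is natural under pullback, so $\Gamma\equiv0$ and $\mathcal{R}=\Phi^*\mathcal{R}^{(0)}=0$.

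The main obstacle is the necessity step for Pillar II, which requires the literal identity $g(h)\equiv g_0$ rather than mere constancy. An abstract isometry only yields $g$ congruent to $g_0$. I would resolve this by reading ``isometric'' relative to the fixed coordinate identification $\Psi_\phi$, so that the isomorphism is normalized to be the identity in the $h$-coordinates. I would state explicitly that Pillar II is understood up to this canonical linear identification. The curvature and connection claims are routine once linearity of $\Phi$ is in hand.
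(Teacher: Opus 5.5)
Your proposal is correct and follows essentially the paper's own argument. The paper also takes $\Phi=\Psi_\phi^{-1}$, checks $\Phi^*g_0=g$ on the coordinate basis via Pillar II, and checks $\Phi_*(\nabla_XY)=\nabla^{(0)}_{\Phi_*X}\Phi_*Y$ via Pillar III (your Christoffel transformation-law argument is the same computation); for necessity it simply transports the flat, frozen structure of $\mathcal{M}_\infty$ back through $\Phi$.

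The normalization caveat you raise is genuinely needed: the paper's necessity step uses it tacitly when it asserts that under the pullback $\Phi^*$, $g(h)\equiv g_0$. Note also that the normalization is what makes $\Phi$ affine (indeed linear) in the $h$-coordinates. Your Pillar II and Pillar III steps presuppose this, since an abstract affine isomorphism need not be affine in pre-given coordinates $h$.
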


\begin{proof}
	We construct a step-by-step bi-directional proof covering domain diffeomorphism, metric tensor pullback isometry, and affine connection structure preservation.
	
	\subsubsection*{Part 1: Sufficiency ($\text{CS}_{\text{geom}} \implies \mathcal{M}_\infty$)}
	
	Assume $\mathcal{M}_{\mathrm{stat}} \equiv (\mathcal{N}, g, \nabla)$ satisfies Pillars I, II, and III of Definition 6.
	
	\paragraph{\textbf{Step 1: Construction of the Global Diffeomorphism $\Phi$.}}
	By Pillar I (Flat Tangent Action), the underlying domain $\mathcal{N}$ acts on local displacement vectors $h = (h^1, \dots, h^d)^\top \in \mathbb{K} \subset \mathbb{R}^d$. By Proposition 1 (Linear Vector Space Derivation Isomorphism), there exists a unique linear derivation isomorphism $\Psi_\phi^{-1}: \mathbb{R}^d \to T_{\theta_0}IG_1$ given by:
	\begin{equation}
		\Psi_\phi^{-1}(h) \coloneqq \sum_{i=1}^d h^i \left.\frac{\partial}{\partial \theta^i}\right|_{\theta_0} \in T_{\theta_0}IG_1.
		\label{eq:derivation_map_proof}
	\end{equation}
	Define the candidate isomorphism map $\Phi: \mathcal{N} \to T_{\theta_0}IG_1$ pointwise by $\Phi(h) \coloneqq \Psi_\phi^{-1}(h)$. Because $\Phi$ is a linear bijection between finite-dimensional vector spaces of identical dimension $d = \dim(\Theta)$, its Jacobian matrix in standard coordinate bases is the $d \times d$ identity matrix $\mathbf{I}_{d \times d}$. Consequently, $\Phi$ is smooth ($C^\infty$) with a smooth inverse $\Phi^{-1} = \Psi_\phi$, rendering $\Phi$ a global smooth diffeomorphism between manifolds $\mathcal{N}$ and $T_{\theta_0}IG_1$.
	
	\paragraph{\textbf{Step 2: Proof of Riemannian Isometry ($\Phi^* g_0 = g$).}}
	To prove that $\Phi$ is a Riemannian isometry, we must show that the pullback of the frozen Fisher metric tensor $g_0$ under $\Phi$ coincides identically with the metric tensor field $g$ on $\mathcal{N}$:
	\begin{equation}
		(\Phi^* g_0)_h(u, v) = g(h)(u, v), \quad \forall h \in \mathbb{K}, \; \forall u, v \in T_h \mathcal{N} \cong \mathbb{R}^d.
		\label{eq:isometry_goal}
	\end{equation}
	By definition of the dual covariant tensor pullback, for coordinate basis vector fields $\left\{ \frac{\partial}{\partial h^i} \right\}_{i=1}^d$ on $\mathcal{N}$:
	\begin{equation}
		(\Phi^* g_0)_h \left( \frac{\partial}{\partial h^i}, \frac{\partial}{\partial h^j} \right) \coloneqq (g_0)_{\Phi(h)} \left( d\Phi_h \left(\frac{\partial}{\partial h^i}\right), \, d\Phi_h \left(\frac{\partial}{\partial h^j}\right) \right).
		\label{eq:pullback_def_step}
	\end{equation}
	Applying the differential pushforward $d\Phi_h$ to the derivation basis yields:
	\begin{equation}
		d\Phi_h \left(\frac{\partial}{\partial h^i}\right) = \left.\frac{\partial}{\partial \theta^i}\right|_{\theta_0} \in T_{\theta_0}IG_1.
		\label{eq:pushforward_basis}
	\end{equation}
	Substituting Equation~\eqref{eq:pushforward_basis} into Equation~\eqref{eq:pullback_def_step} and applying Definition 2 (Metric-Induced Inner Product Space):
	\begin{equation}
		(\Phi^* g_0)_h \left( \frac{\partial}{\partial h^i}, \frac{\partial}{\partial h^j} \right) = g_0 \left( \left.\frac{\partial}{\partial \theta^i}\right|_{\theta_0}, \left.\frac{\partial}{\partial \theta^j}\right|_{\theta_0} \right) \coloneqq g_{ij}^{(1)}(\theta_0).
		\label{eq:pullback_component_eval}
	\end{equation}
	By Pillar II (Frozen Metric Field), the metric field $g(h)$ on $\mathcal{N}$ is spatially uniform across $\mathbb{K}$ and satisfies $g_{ij}(h) \equiv g_{ij}^{(1)}(\theta_0)$ for all $h \in \mathbb{K}$. Therefore:
	\begin{equation}
		(\Phi^* g_0)_h \left( \frac{\partial}{\partial h^i}, \frac{\partial}{\partial h^j} \right) = g_{ij}(h), \quad \forall i, j \in \{1, \dots, d\},
		\label{eq:metric_identity_proven}
	\end{equation}
	which proves that $\Phi^* g_0 = g$ everywhere on $\mathcal{N}$. Hence, $\Phi: (\mathcal{N}, g) \to \left(T_{\theta_0}IG_1, g_0\right)$ is an exact Riemannian isometry.
	
	\paragraph{\textbf{Step 3: Proof of Affine Isomorphism ($\Phi_* (\nabla_X Y) = \nabla^{(0)}_{\Phi_* X} (\Phi_* Y)$).}}
	Let $X = X^i \frac{\partial}{\partial h^i}$ and $Y = Y^j \frac{\partial}{\partial h^j}$ be smooth vector fields on $\mathcal{N}$. The general coordinate expression for the affine connection $\nabla$ on $\mathcal{N}$ is given by:
	\begin{equation}
		\nabla_X Y = \left( X^i \frac{\partial Y^k}{\partial h^i} + X^i Y^j \Gamma_{ij}^k(h) \right) \frac{\partial}{\partial h^k},
		\label{eq:general_affine_connection}
	\end{equation}
	where $\Gamma_{ij}^k(h)$ are the Christoffel symbols of $\nabla$. By Pillar III (Flat Affine Parallel Transport), the Christoffel symbols vanish identically across $\mathcal{N}$:
	\begin{equation}
		\Gamma_{ij}^k(h) \equiv 0, \quad \forall h \in \mathbb{K}, \; \forall i, j, k \in \{1, \dots, d\}.
		\label{eq:pillar_iii_zero_christoffel}
	\end{equation}
	Thus, Equation~\eqref{eq:general_affine_connection} simplifies strictly to the directional derivative:
	\begin{equation}
		\nabla_X Y = \left( X^i \frac{\partial Y^k}{\partial h^i} \right) \frac{\partial}{\partial h^k}.
		\label{eq:simplified_connection}
	\end{equation}
	Pushing forward $\nabla_X Y$ under $\Phi$ via Equation~\eqref{eq:pushforward_basis}:
	\begin{equation}
		\Phi_* (\nabla_X Y) = \left( X^i \frac{\partial Y^k}{\partial h^i} \right) \left.\frac{\partial}{\partial \theta^k}\right|_{\theta_0}.
		\label{eq:pushed_forward_connection}
	\end{equation}
	Now consider the flat Levi-Civita connection $\nabla^{(0)}$ on $T_{\theta_0}IG_1$. By Definition \ref{def:m_infty_canonical}, its Christoffel symbols in Cartesian derivation coordinates are identically zero ($\Gamma_{ij}^{(0)k} \equiv 0$). Evaluating $\nabla^{(0)}$ on the pushed-forward vector fields $\Phi_* X = X^i \left.\frac{\partial}{\partial \theta^i}\right|_{\theta_0}$ and $\Phi_* Y = Y^j \left.\frac{\partial}{\partial \theta^j}\right|_{\theta_0}$:
	\begin{equation}
		\nabla^{(0)}_{\Phi_* X} (\Phi_* Y) = \left( X^i \frac{\partial Y^k}{\partial h^i} \right) \left.\frac{\partial}{\partial \theta^k}\right|_{\theta_0}.
		\label{eq:target_flat_connection_eval}
	\end{equation}
	Comparing Equation~\eqref{eq:pushed_forward_connection} and Equation~\eqref{eq:target_flat_connection_eval} establishes:
	\begin{equation}
		\Phi_* (\nabla_X Y) = \nabla^{(0)}_{\Phi_* X} (\Phi_* Y), \quad \forall X, Y \in \mathfrak{X}(\mathcal{N}).
		\label{eq:affine_isomorphism_proven}
	\end{equation}
	Furthermore, the Riemann curvature tensor associated with $\nabla$ evaluates to:
	\begin{equation}
		\mathcal{R}(X, Y)Z \coloneqq \nabla_X \nabla_Y Z - \nabla_Y \nabla_X Z - \nabla_{[X, Y]} Z \equiv 0,
		\label{eq:zero_curvature_proven}
	\end{equation}
	matching the zero curvature tensor $\mathcal{R}^{(0)} \equiv 0$ of $\mathcal{M}_\infty$. Thus, $\Phi$ is an affine isomorphism.
	
	Combining Steps 1, 2, and 3 proves that $\mathcal{M}_{\mathrm{stat}} \equiv (\mathcal{N}, g, \nabla)$ is isometric and affine-isomorphic to $\mathcal{M}_\infty \equiv \left(T_{\theta_0}IG_1, g_0, \nabla^{(0)}\right)$.

	\subsubsection*{Part 2: Necessity ($\mathcal{M}_\infty \implies \text{CS}_{\text{geom}}$)}
	
	Conversely, assume $\mathcal{M}_{\mathrm{stat}}$ is isometric and affine-isomorphic to $\mathcal{M}_\infty \equiv \left(T_{\theta_0}IG_1, g_0, \nabla^{(0)}\right)$ via an isomorphism $\Phi$. We verify Pillars I, II, and III directly on $\mathcal{M}_\infty$:
	
	\begin{enumerate}[label=\arabic*.]
		\item \textbf{Verification of Pillar I (Flat Tangent Action):} 
		The base manifold of $\mathcal{M}_\infty$ is $T_{\theta_0}IG_1$. By Proposition \ref{prop:isomorphism}, $T_{\theta_0}IG_1 \cong \mathbb{R}^d$, acting on displacement vectors $h = \sqrt{N}(\theta - \theta_0) \in \mathbb{R}^d$. Hence, Pillar I holds.
		
		\item \textbf{Verification of Pillar II (Frozen Metric Field):} 
		The metric tensor of $\mathcal{M}_\infty$ is defined as $g_0 \coloneqq g^{(1)}(\theta_0)$, which is position-invariant across all displacement coordinates $h \in \mathbb{K} \subset \mathbb{R}^d$. Under the pullback $\Phi^*$, $g(h) \equiv g_0 = g^{(1)}(\theta_0)$ holds everywhere. Hence, Pillar II holds.
		
		\item \textbf{Verification of Pillar III (Flat Affine Parallel Transport):} 
		The connection $\nabla^{(0)}$ is the canonical flat Levi-Civita connection on $T_{\theta_0}IG_1$. In standard derivation coordinates, its Christoffel symbols vanish identically ($\Gamma_{ijk}^{(0)} \equiv 0$), and its Riemann curvature tensor vanishes identically ($\mathcal{R}_{ijmn}^{(0)} \equiv 0$). Hence, Pillar III holds.
	\end{enumerate}
	
	Combining Part 1 and Part 2 completes the full, step-by-step mathematical proof of this Lemma.
\end{proof}

\begin{theorem}[Epistemological Equivalence Theorem]
	\label{thm:theorem2_full}
	Under High-Order Local Regularity, the decision-theoretic operational paradigm $\text{CS}_{\text{stat}}$, the geometric constitutional universe $\text{CS}_{\text{geom}}$, and the canonical flat tangent space $\mathcal{M}_\infty$ are mutually equivalent:
	\begin{equation}
		\text{CS}_{\text{stat}} \iff \text{CS}_{\text{geom}} \iff \mathcal{M}_\infty.
	\end{equation}
\end{theorem}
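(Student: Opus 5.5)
The plan is to obtain Theorem~\ref{thm:theorem2_full} by chaining the two bi-directional lemmas already established. Lemma~\ref{lem:operational_geometric_bridge} gives $\text{CS}_{\text{stat}} \iff \text{CS}_{\text{geom}}$ under High-Order Local Regularity (Assumption~\ref{asm:local_regularity}). Lemma~\ref{lem:lemma5_isomorphism} gives $\text{CS}_{\text{geom}} \iff \mathcal{M}_\infty$, in the sense that a workspace satisfies the three pillars exactly when it is isometric and affine-isomorphic to $(T_{\theta_0}IG_1, g_0, \nabla^{(0)})$. Logical equivalence is transitive, so the triple equivalence follows once we check that both lemmas are applied to the same object. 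That object is a localized workspace $\mathcal{M}_{\mathrm{stat}} = (\mathcal{N}, g, \nabla)$ anchored at the same $\theta_0$, using the same displacement parametrization $h = \sqrt{N}(\theta-\theta_0)$ over compact cages $\mathbb{K}$.

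The key steps, in order, are as follows.
\begin{enumerate}
\item Fix $\theta_0 \in \mathrm{Int}(\Theta)$ and the chart $\phi$, so that $\Psi_\phi$ from Proposition~\ref{prop:isomorphism} is the common identification $T_{\theta_0}IG_1 \cong \mathbb{R}^d$ in both lemmas. Equip $\mathbb{R}^d$ with the inner product $\langle\cdot,\cdot\rangle_{g_0}$ via the pullback definition, so that $\Psi_\phi$ is an isometry.
\item State the forward chain. If $\text{CS}_{\text{stat}}$ holds, the necessity half of Lemma~\ref{lem:operational_geometric_bridge} yields Pillars I--III. Part 1 of Lemma~\ref{lem:lemma5_isomorphism} then constructs the map $\Phi = \Psi_\phi^{-1}$, which is an isometry and an affine isomorphism onto $\mathcal{M}_\infty$.
\item State the reverse chain. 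If such an isomorphism onto $\mathcal{M}_\infty$ exists, Part 2 of Lemma~\ref{lem:lemma5_isomorphism} yields the pillars. The sufficiency half of Lemma~\ref{lem:operational_geometric_bridge} then recovers principles (i)--(iii). Principle (i) is supplied concretely by Theorem~\ref{thm:lan_linear_quadratic_proof}.
\item Record the direct equivalence $\text{CS}_{\text{stat}} \iff \mathcal{M}_\infty$ as the composite of the two chains.
\end{enumerate}

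To make the statement substantive rather than purely formal, I would also remark that the actual sequence of pulled-back workspaces $(\mathbb{R}^d, \widetilde g_N, \widetilde\nabla^{(\alpha,N)})$ approaches $\mathcal{M}_\infty$. This follows from Lemma~\ref{lem:metric_freezing_proof}, which gives $\sup_{\mathbb{K}}\|\widetilde g_N - g_0\|_\infty = O(N^{-1/2})$, and from Lemma~\ref{lem:curvature_collapse_proof}, which gives $\Gamma = O(N^{-1/2})$ and $\mathcal{R} = O(N^{-1})$. Hence the pillars hold in the limit, and this limiting object is the one to which the equivalence applies.

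The main obstacle is the mismatch between the two lemmas. The operational principles are asymptotic statements, holding up to $o_p(1)$, while the pillars are exact identities such as $g(h) \equiv g_0$ and $\Gamma \equiv 0$. To reconcile them, I would define the workspace attached to $\text{CS}_{\text{stat}}$ as the $N\to\infty$ limit of the pulled-back triples. The rate bounds above guarantee that this limit exists uniformly on each $\mathbb{K}$ and is exactly flat with a frozen metric. With that convention fixed once, both lemmas speak about the same limiting structure and the transitivity argument goes through. If this identification proved delicate, a fallback would be to state the theorem at the level of equivalence classes of workspaces, identifying those whose pulled-back metric and connection differences vanish uniformly on compacts.
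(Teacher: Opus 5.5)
Your proposal is correct and follows the paper's proof, which obtains the theorem purely by transitivity: it chains Lemma~\ref{lem:operational_geometric_bridge} ($\text{CS}_{\text{stat}} \iff \text{CS}_{\text{geom}}$) with Lemma~\ref{lem:lemma5_isomorphism} ($\text{CS}_{\text{geom}} \iff \mathcal{M}_\infty$). Your extra bookkeeping is not in the paper but does not change the route; it consists of fixing a common identification $\Psi_\phi$ and taking the $N\to\infty$ limit of the pulled-back triples as the workspace, so that the $o_p(1)$ principles match the exact pillars.
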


\begin{proof}
	The proof follows directly from transitivity using Lemma \ref{lem:operational_geometric_bridge} and Lemma \ref{lem:lemma5_isomorphism}:
	\[
		\text{CS}_{\text{stat}} \iff [\text{Lemma } \ref{lem:operational_geometric_bridge}]{\quad \text{Operational Bridge} \quad} \text{CS}_{\text{geom}}(\mathcal{M}_{\text{stat}}) 
		\]
\begin{equation}		
		\iff [\text{Lemma } \ref{lem:lemma5_isomorphism}]{\quad \text{Geometric Isomorphism} \quad} \mathcal{M}_\infty.
	\end{equation}
	Combining both lemmas closes the epistemological loop, establishing that Conventional Statistics is mathematically identical to the flat tangent space $\mathcal{M}_\infty$.
\end{proof}


	\subsubsection{Statistical Implications of Epistemological Equivalence}
	\label{subsec:statistical_implications}
	
	The full epistemological equivalence established in Theorem~\ref{thm:theorem2_full} 
	($\text{CS}_{\text{stat}} \iff \text{CS}_{\text{geom}} \iff \mathcal{M}_\infty$) 
	transcends a purely formal unification. By establishing that classical decision-theoretic 
	statistics is mathematically identical to the flat, metric-frozen tangent space $\mathcal{M}_\infty$, 
	we uncover three fundamental statistical implications governing asymptotic inference:
	
	\begin{description}[leftmargin=*]
		\item[\textbf{Implication I: Geometric Mechanism of Le Cam's LAN Limits}] 
		In classical asymptotic decision theory, Le Cam's Local Asymptotic Normality (LAN) paradigm 
		asserts that regular statistical experiments converge locally to Gaussian shift experiments 
		$\mathcal{N}(h, g_0^{-1})$. In Amari's information geometry, exponential families are $e$-flat 
		($\alpha=1$) and mixture families are $m$-flat ($\alpha=-1$). Theorem~\ref{thm:theorem2_full} 
		demonstrates that a Gaussian shift experiment with fixed covariance $g_0^{-1}$ is 
		\textbf{simultaneously $e$-flat and $m$-flat} ($\nabla^{(\alpha)}$-flat for all $\alpha \in [-1, 1]$) 
		with identically vanishing Amari-Riemann curvature ($\mathcal{R}^{(\alpha)} \equiv 0$). 
		Thus, Pillar III ($\Gamma \equiv 0, \mathcal{R} \equiv 0$) provides the exact information-geometric 
		characterization of Le Cam's limiting experiment.
		
		\item[\textbf{Implication II: Complete Elimination of Second-Order Information Loss}] 
		In asymptotic estimation theory, first-order efficiency (attaining the Cram\'er-Rao bound) 
		depends strictly on the point evaluation of the metric tensor $g_0 \equiv g^{(1)}(\theta_0)$. 
		Spatial metric variations $\nabla g \neq 0$ do not alter first-order asymptotic variance; 
		rather, as shown by Efron and Amari, they generate exponential and mixture statistical 
		curvatures $(\gamma_e, \gamma_m)$ that dictate second-order information loss:
		\begin{equation}
			\Delta I_N \equiv I_N(\theta_0) - I_{\mathrm{Fisher}}(\hat{\theta}_N) 
			= \gamma_e^2 + \frac{1}{2}\gamma_m^2 + \mathcal{O}(N^{-1}).
			\label{eq:second_order_info_loss_impl}
		\end{equation}
		Because Theorem~\ref{thm:theorem2_full} forces the metric field to be spatially frozen 
		$g(h) \equiv g_0$ across contiguous neighborhoods (Pillar II), spatial derivatives vanish 
		$(\nabla g = 0)$, setting $\gamma_e = \gamma_m = 0$. Consequently, first-order efficient 
		estimators in $\text{CS}_{\text{stat}}$ suffer \textbf{zero second-order information loss} 
		$(\Delta I_N = 0)$ precisely because the target workspace is $\mathcal{M}_\infty$.
		
		\item[\textbf{Implication III: Unconditional Identity of the Classical Test Trinity}] 
		The three pillars of classical hypothesis testing---Rao's Score Test ($W_S$), Wilks' 
		Likelihood Ratio Test ($W_{LR}$), and Wald's Test ($W_W$)---are asymptotically equivalent. 
		Information-geometrically, these three test statistics correspond to measuring divergence 
		distances along different $\alpha$-geodesics ($\alpha = 1$ for Score, $\alpha = 0$ for LRT, 
		and $\alpha = -1$ for Wald). On a curved finite-sample statistical manifold, these tests 
		diverge due to non-zero Christoffel symbols $\Gamma_{ijk}^{(\alpha)}$. Theorem~\ref{thm:theorem2_full} 
		proves that the classical test trinity becomes \textbf{unconditionally identical} 
		($W_S = W_{LR} = W_W + o_p(1)$) if and only if the underlying workspace exhibits flat affine 
		parallel transport ($\Gamma_{ijk} \equiv 0$).
	\end{description}
	

	\section{The Localized Microscope Map $\psi_N$, Jacobian Scaffolding, and Cotangent Pullbacks}
	\label{sec:module2}
In this section, we will develop the localized dilation microscope map $\psi_N$, the multivariable Jacobian scaffold $J(\psi_N)$, and the dual cotangent pullback operator $\psi_N^*$. We demonstrate how the microscope map resolves ${R}^d, g_0)$ from the dynamically contracting and diverging information manifold sequence $IG_N = (\Theta, G^{(N)}, \nabla^{(\alpha,N)})$. We prove the \textbf{Universal Tensor Valence Scaling Law} $N^{1-r/2}$ for all $r$-covariant $i.i.d.$ additive tensor fields. Applying this fundamental law across vthe \emph{Moving Target Crisis} by decoupling the stationary observer canvas $\mathcal{M}_\infty \equiv ( r \in \{1, 2, 3, 4\}$, we establish the exact, step-by-step mathematical degeneration rules governing the four core information-geometric fields: score 1-form fluctuation ($r=1$, scaling $N^{1/2}$), Fisher metric stabilization ($r=2$, scaling $N^0=1$), Amari connection dissolution ($r=3$, scaling $N^{-1/2}$), and accelerated quadratic Riemann curvature annihilation ($r=4$, scaling $N^{-1}$). All results build continuously upon the regularity axioms and epistemological framework established in Section~\ref{sec:module1}.
	
	In Section~\ref{sec:module1}, we established the ambient statistical universe $\mathcal{M}_{\text{univ}}$ (Definition~\ref{def:ambient_spaces}), High-Order Local Regularity (Assumption~\ref{asm:local_regularity}), the natural derivation tangent space isomorphism $T_{\theta_0} IG_1 \cong \mathbb{R}^d$ (Proposition~\ref{prop:isomorphism}), and the Epistemological Equivalence Theorem asserting that Conventional Statistics ($\text{CS}$) is mathematically identical to the flat canonical tangent canvas $\mathcal{M}_\infty \equiv \left( T_{\theta_0} IG_1, g_0, \nabla^{(0)} \right)$ (Theorem~\ref{thm:theorem2_full}). 
	
	To construct the global phase transition $IG_N \to \mathcal{M}_\infty$, we must analyze the asymptotic behavior of the $N$-sample joint information manifold sequence $IG_N = (\Theta, G^{(N)}, \nabla^{(\alpha,N)})$ as $N \to \infty$. However, direct analysis on the fixed parameter domain $\Theta$ encounters severe analytical barriers. In this section, we construct the localized dilation microscope map $\psi_N$, derive its multivariable Jacobian scaffold $J(\psi_N)$, establish the dual cotangent pullback operator $\psi_N^*$, and prove the exact degeneration laws for all four fundamental information-geometric fields.

	\subsection{Resolution of the Moving Target Crisis}
	\label{subsec:mod2_moving_target}
	
	Under $i.i.d.$ product sampling from a dominated family $\mathcal{P}$, direct asymptotic evaluation of the sequence $IG_N = (\Theta, G^{(N)}, \nabla^{(\alpha,N)})$ encounters an intrinsic, dual-faceted analytical impasse defined as the \textbf{Moving Target Crisis}:
	
	\begin{enumerate}[label=\arabic*.]
		\item \textbf{Domain Collapse (Microscopic Signal Shrinkage):} By the Law of Large Numbers and the Central Limit Theorem, active sample probability mass concentrates around the background true parameter state $\theta_0 \in \text{Int}(\Theta)$ into a rapidly contracting topological coordinate neighborhood $\mathcal{U}_N(\theta_0) \subset \Theta$ whose geometric diameter shrinks at rate $\mathcal{O}_p(N^{-1/2})$. As $N \to \infty$, the active parameter domain contracts topologically to a single singular point $\{\theta_0\}$.
		
		\item \textbf{Metric Divergence (Macro-Capacity Explosion):} By the additivity lemma for $i.i.d.$ product measures, the joint $N$-sample Fisher Information Metric tensor field satisfies $G_{ab}^{(N)}(\theta) = N \cdot  g_{ab}^{(1)}(\theta)$ for all $\theta \in \Theta$. Consequently, every component of the Fisher metric tensor diverges to infinity across the domain at linear rate $\mathcal{O}(N)$:
		\begin{equation}
			\lim_{N \to \infty} G_{ab}^{(N)}(\theta) = \infty, \quad \forall \theta \in \mathcal{U}_N(\theta_0).
			\label{eq:metric_divergence_impasse}
		\end{equation}
	\end{enumerate}
	
	Taking pointwise limits $\lim_{N \to \infty} G^{(N)}(\theta)$ directly on the unscaled parameter domain $\Theta$ fails because the domain contracts to a zero-dimensional point while the metric tensor components blow up. 
	
	We resolve this crisis by decoupling the stationary observer tracking canvas $\mathcal{M}_\infty \equiv (\mathbb{R}^d, g_0)$ from the dynamic space $IG_N$ via a localized dilation microscope map $\psi_N$, transporting covariant geometric fields in reverse from $IG_N$ back to $\mathcal{M}_\infty$ via the dual cotangent pullback operator $\psi_N^*$.
	
	\subsection{Microscope Map Mechanics and Jacobian Scaffold}
	\label{subsec:mod2_mechanics}
	
	Let $\mathcal{M}_\infty \equiv (T_{\theta_0} IG_1, g_0, \nabla^{(0)})$ be the canonical flat tangent space anchored at $\theta_0 \in \text{Int}(\Theta)$, equipped with the frozen metric $g_0 \equiv g^{(1)}(\theta_0)$ and standard Cartesian coordinates $h = (h^1, \dots, h^d)^T \in \mathbb{R}^d$ via the natural derivation isomorphism $\Psi_\phi$ (Proposition~\ref{prop:isomorphism}).
	
	\begin{definition}[Localized Dilation Microscope Map]
		\label{def:microscope_map}
		Let $\mathbb{K} \subset \mathcal{M}_\infty \cong (\mathbb{R}^d, g_0)$ be a fixed, $N$-invariant compact coordinate cage enclosing the origin $h = \mathbf{0}$. For each sample size $N \ge 1$, the forward \textbf{localized dilation microscope map} $\psi_N : \mathbb{K} \to \Theta$ is defined pointwise by:
		\begin{equation}
			\psi_N(h) \equiv \theta_0 + \frac{h}{\sqrt{N}} \in \Theta \subset \mathbb{R}^d.
			\label{eq:microscope_map_def}
		\end{equation}
		The image $\psi_N(\mathbb{K}) \equiv \mathcal{U}_N(\theta_0) \subset \Theta$ defines a contracting coordinate neighborhood centered at $\theta_0$ with diameter $\text{diam}(\mathcal{U}_N(\theta_0)) = \mathcal{O}(N^{-1/2})$.
	\end{definition}
	
	\begin{lemma}[Multivariable Spatial Jacobian Matrix Scaffold]
		\label{lem:jacobian_scaffold}
		The differential push-forward map $d\psi_N : T_h \mathcal{M}_\infty \to T_{\psi_N(h)} IG_N$ acts as an isotropic linear scaling transformation. In local Cartesian coordinates, its multivariable spatial Jacobian matrix $J(\psi_N) \in \mathbb{R}^{d \times d}$ is spatially uniform across $\mathbb{K}$ and given by:
		\begin{equation}
			J_i^a(h) \equiv \frac{\partial \psi_N^a(h)}{\partial h^i} = \frac{1}{\sqrt{N}} \delta_i^a, \quad \forall a, i \in \{1, \dots, d\}.
			\label{eq:jacobian_scaffold_formula}
		\end{equation}
	\end{lemma}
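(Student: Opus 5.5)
The plan is a direct computation of the differential of the affine map $\psi_N$ in the Cartesian coordinates fixed earlier, followed by an identification of $d\psi_N$ as a scalar multiple of the identity. First I fix coordinates on both sides. On the domain $\mathcal{M}_\infty$, I use the Cartesian coordinates $h=(h^1,\dots,h^d)^T$ furnished by the derivation isomorphism $\Psi_\phi$ of Proposition~\ref{prop:isomorphism}. On the target $IG_N$, I use the chart $\phi=(\theta^1,\dots,\theta^d)$ on $\mathcal{U}(\theta_0)\subset\Theta\subset\mathbb{R}^d$. Both $IG_N$ and $IG_1$ share the base manifold $\Theta$ by Definition~\ref{def:ambient_spaces}, so the same chart serves. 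In these coordinates the component functions of $\psi_N$ in Definition~\ref{def:microscope_map} read
\[
\psi_N^a(h)=\theta_0^a+\frac{1}{\sqrt{N}}\sum_{i=1}^d \delta^a_i h^i, \qquad a=1,\dots,d,
\]
which exhibits $\psi_N$ as an affine map: a constant translation plus a linear part.

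Second, I differentiate componentwise. The constant $\theta_0^a$ contributes nothing, and the linear part gives $\partial\psi_N^a/\partial h^i=\delta^a_i/\sqrt{N}$. This derivative is independent of $h$, which yields both the formula in Equation~\eqref{eq:jacobian_scaffold_formula} and the spatial uniformity across $\mathbb{K}$. I also need to check that $\psi_N(\mathbb{K})\subset\mathcal{U}(\theta_0)$, so that the chart is valid on the image. Compactness of $\mathbb{K}$ gives $\sup_{h\in\mathbb{K}}\|h\|_2<\infty$, so this inclusion holds for all $N$ beyond some $N_0$. I would note this threshold explicitly as a mild caveat: for small $N$ the formula remains valid in the global coordinates of $\mathbb{R}^d\supset\Theta$, but $\psi_N(h)$ may leave $\mathcal{U}(\theta_0)$.

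Third, I translate the matrix into the statement about the push-forward. By the chain rule, $d\psi_N(\partial/\partial h^i|_h)=\sum_a J^a_i(h)\,\partial/\partial\theta^a|_{\psi_N(h)}=N^{-1/2}\,\partial/\partial\theta^i|_{\psi_N(h)}$. Under the coordinate identifications of both tangent spaces with $\mathbb{R}^d$, this says $d\psi_N=N^{-1/2}\mathbf{I}_{d\times d}$. That is a scalar multiple of the identity, hence an isotropic linear scaling, and it is invertible with inverse $\sqrt{N}\,\mathbf{I}$.

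There is no genuine analytic obstacle here. The only delicate point is bookkeeping: I must make sure that "isotropic" is read relative to the coordinate identification, since the metrics $g_0$ and $G^{(N)}$ on the two sides differ. The scaling is isotropic in the Euclidean coordinate sense, and it is this matrix that Lemma~\ref{lem:metric_freezing_proof} already used implicitly. I would therefore remark that the result is consistent with, and indeed justifies, the Jacobian $J(\psi_N)=N^{-1/2}\mathbf{I}$ invoked there.
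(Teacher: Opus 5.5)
Your proposal is correct and matches the paper's proof. Like the paper, you write $\psi_N^a(h)=\theta_0^a+h^a/\sqrt{N}$ in the ambient Cartesian coordinates, differentiate componentwise, and note that the result $N^{-1/2}\delta^a_i$ does not depend on $h$. Your extra bookkeeping is sound and makes explicit what the paper leaves implicit: the chain-rule form $d\psi_N(\partial/\partial h^i)=N^{-1/2}\,\partial/\partial\theta^i$, the caveat that $\psi_N(\mathbb{K})\subset\mathcal{U}(\theta_0)$ holds only for large $N$, and the point that ``isotropic'' refers to the coordinate identification rather than to the metrics.
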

	
	\begin{proof}
		Represent component $a$ of the vector-valued map $\psi_N(h)$ explicitly in Cartesian coordinates:
		\begin{equation}
			\psi_N^a(h) = \theta_0^a + \frac{1}{\sqrt{N}} h^a.
			\label{eq:psi_component_exp}
		\end{equation}
		Differentiating $\psi_N^a(h)$ with respect to the local spatial displacement coordinate $h^i$:
		\begin{equation}
			J_i^a(h) = \frac{\partial \psi_N^a(h)}{\partial h^i} = \frac{\partial}{\partial h^i} \left( \theta_0^a + \frac{1}{\sqrt{N}} h^a \right) = \frac{\partial \theta_0^a}{\partial h^i} + \frac{1}{\sqrt{N}} \frac{\partial h^a}{\partial h^i}.
			\label{eq:jacobian_derivation_step1}
		\end{equation}
		Because the background parameter $\theta_0$ is a fixed constant vector, $\frac{\partial \theta_0^a}{\partial h^i} = 0$. By definition of Cartesian coordinates, $\frac{\partial h^a}{\partial h^i} = \delta_i^a$, where $\delta_i^a$ is the Kronecker delta. Substituting these back into Equation~\eqref{eq:jacobian_derivation_step1}:
		\begin{equation}
			J_i^a(h) = 0 + \frac{1}{\sqrt{N}} \delta_i^a = \frac{1}{\sqrt{N}} \delta_i^a.
			\label{eq:jacobian_derivation_step2}
		\end{equation}
		In matrix notation, $J(\psi_N) = \frac{1}{\sqrt{N}} \mathbf{I}_{d \times d}$, where $\mathbf{I}_{d \times d}$ is the $d \times d$ identity matrix. Because this matrix is completely independent of $h$, the Jacobian scaffold is spatially uniform across the entire canvas $\mathbb{K}$.
	\end{proof}

\begin{proposition}[Resolution Mechanics of the Microscope Map]
	\label{prop:resolution_mechanics}
	The microscope map $\psi_N : \mathcal{M}_\infty \to IG_N$ resolves the Moving Target Crisis through a dual mathematical mechanism:
	\begin{enumerate}[label=\roman*.]
		\item \textbf{Domain Fixation:} The inverse map $\psi_N^{-1}(\theta) = \sqrt{N}(\theta - \theta_0)$ maps the shrinking parameter neighborhood $\mathcal{U}_N(\theta_0) \subset \Theta$ back onto the stationary, $N$-invariant coordinate cage $\mathbb{K} \subset \mathcal{M}_\infty$, eliminating domain collapse.
		
		\item \textbf{Exact Dilation Matching and SNR Constancy:} The spatial expansion factor $\sqrt{N}$ of the coordinate transform $\psi_N^{-1}$ and the dual cotangent pullback operator $\psi_N^*$ pull back the diverging joint Fisher metric $G^{(N)}$ to a stabilized metric tensor $\widetilde{G}^{(N)} \equiv \psi_N^* G^{(N)}$ on $\mathcal{M}_\infty$. This counteracts the probability mass concentration speed $\mathcal{O}(N^{-1/2})$, maintaining a constant Signal-to-Noise Ratio ($\mathrm{SNR}$) across $N$:
		\begin{equation}
			\small 
			\mathrm{SNR}_N(h) \sim \sqrt{\text{Capacity} \times \text{Displacement}^2} = \sqrt{\mathcal{O}(N) \times \left(\frac{h}{\sqrt{N}}\right)^2} = \sqrt{\mathcal{O}(N) \times \mathcal{O}\left(\frac{1}{N}\right)} = \mathcal{O}(1).
			\label{eq:snr_constancy_revised}
		\end{equation}
	\end{enumerate}
\end{proposition}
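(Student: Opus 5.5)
The proof splits along the two mechanisms (i) and (ii) of Proposition~\ref{prop:resolution_mechanics}. Both follow from explicit coordinate computations using the affine form of $\psi_N$ (Definition~\ref{def:microscope_map}), the Jacobian scaffold of Lemma~\ref{lem:jacobian_scaffold}, and the additivity $G^{(N)} = N g^{(1)}$ of Lemma~\ref{lem:additivity_fisher_metric}.

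\textbf{Domain fixation (i).} Since $\psi_N(h) = \theta_0 + h/\sqrt{N}$ is an affine bijection of $\mathbb{R}^d$, I will first write down its inverse $\psi_N^{-1}(\theta) = \sqrt{N}(\theta - \theta_0)$ and check both compositions directly. By the definition $\mathcal{U}_N(\theta_0) \equiv \psi_N(\mathbb{K})$, this gives $\psi_N^{-1}(\mathcal{U}_N(\theta_0)) = \mathbb{K}$ for every $N$. I also need $\mathcal{U}_N(\theta_0) \subset \mathcal{U}(\theta_0)$, which is where Assumption~\ref{asm:local_regularity} applies. Because $\mathbb{K}$ is compact, its radius $C_{\mathbb{K}}$ is finite, so every point of $\mathcal{U}_N(\theta_0)$ lies within distance $C_{\mathbb{K}}/\sqrt{N}$ of $\theta_0$. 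Hence this inclusion holds for all $N$ beyond some $N_0$. The image cage is then the fixed, $N$-invariant set $\mathbb{K}$, even though its preimage under $\psi_N^{-1}$ has diameter $\mathcal{O}(N^{-1/2})$. This removes the domain collapse in the Moving Target Crisis.

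\textbf{Dilation matching (ii).} I will compute the pullback in components exactly as in the proof of Lemma~\ref{lem:metric_freezing_proof}:
\[
\widetilde{G}^{(N)}_{ij}(h) = J_i^a J_j^b\, G^{(N)}_{ab}(\psi_N(h)) = \tfrac{1}{N}\cdot N\, g^{(1)}_{ij}\bigl(\theta_0 + h/\sqrt{N}\bigr).
\]
The factor $N^{-1}$ from the two Jacobian entries cancels the factor $N$ from additivity. Lemma~\ref{lem:metric_freezing_proof} then gives uniform convergence to $g_0$ on $\mathbb{K}$, which is the stabilized metric claimed.

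For the SNR statement, I will make the heuristic precise by defining $\mathrm{SNR}_N(h)^2$ as the squared length, in $G^{(N)}$, of the displacement vector $\theta_N - \theta_0 = h/\sqrt{N}$, evaluated at $\theta_0$. This equals $N\, (h/\sqrt{N})^\top g^{(1)}(\theta_0)(h/\sqrt{N}) = h^\top g_0 h$. So $\mathrm{SNR}_N(h)^2$ does not depend on $N$, it is bounded on $\mathbb{K}$ because $g^{(1)}$ is uniformly bounded, and it is bounded away from zero for $h \neq 0$ because $\lambda_{\min}(g_0) > 0$. If the metric is instead evaluated along the segment from $\theta_0$ to $\theta_N$, the result is $h^\top g^{(1)}(\theta_0 + th/\sqrt{N})h$ for some $t \in [0,1]$. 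By the Taylor bound in Lemma~\ref{lem:metric_freezing_proof}, this differs from $h^\top g_0 h$ by $\mathcal{O}(N^{-1/2})$, so it is still $\mathcal{O}(1)$. This can also be cross-checked against the quadratic term $-\tfrac12 h^\top g_0 h$ in Theorem~\ref{thm:lan_linear_quadratic_proof}.

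The main obstacle is not any calculation but the fact that ``SNR'' is informal. The key step is to fix the precise definition above, as the metric norm of the localized displacement, so that the $\mathcal{O}(1)$ claim becomes an exact identity plus a controlled remainder, rather than resting on an order-of-magnitude heuristic.
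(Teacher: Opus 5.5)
Your proposal is correct. It follows a somewhat different route from the paper, mainly in how it makes the informal SNR claim precise.

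\textbf{The paper's proof.} It treats only (ii). It reads $\mathrm{SNR}_N(h)$ as the intrinsic geodesic distance $d_{G^{(N)}}(\theta_0,\theta_0+h/\sqrt{N})$ and expands that distance to first order as $\sqrt{\delta\theta^\top G^{(N)}(\theta_0)\,\delta\theta}$ plus a remainder. Substituting $\delta\theta=h/\sqrt{N}$ then gives $\sqrt{h^\top g_0 h}+o(1)$. It does not check the domain-fixation claim (i), and it does not compute the components of $\widetilde{G}^{(N)}$ inside this proof; that is left to Lemma~\ref{lem:metric_freezing_proof} and Theorem~\ref{thm:metric_stabilization}.

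\textbf{Your proof.} You take the SNR to be the $G^{(N)}(\theta_0)$-norm of the displacement vector. This turns the $\mathcal{O}(1)$ claim into the exact identity $h^\top g_0 h$. You add the explicit inverse-map check for (i), the $N^{-1}\cdot N$ cancellation in components, and a straight-segment estimate.

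\textbf{What each buys.} Your version is elementary and exact, needs no remainder bookkeeping, and matches the statement's heuristic ``Capacity $\times$ Displacement$^2$'' literally. The paper's version yields a coordinate-invariant quantity, a distance rather than a tangent-vector norm, which is the form the later Gromov--Hausdorff argument uses. Its cost is a first-order expansion of the distance function whose remainder is stated loosely: since $d_{G^{(N)}}=\sqrt{N}\,d_{g^{(1)}}$, the error term should carry a factor $\sqrt{N}$, although the final $o(1)$ is still right.

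If you wanted the paper's intrinsic statement from your setup, note that the straight-segment length only bounds the geodesic distance from above. The matching lower bound needs uniform positivity of $g^{(1)}$ near $\theta_0$, as in Lemma~\ref{lem:geodesic_distortion_bound}.
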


\begin{proof}
	Under $G^{(N)}(\theta) = N \cdot g^{(1)}(\theta)$, the intrinsic Riemannian geodesic distance between $\theta_0$ and a local parameter state $\theta = \theta_0 + \delta\theta$ on $IG_N$ expands as:
	\begin{equation}
		d_{G^{(N)}}(\theta_0, \theta_0 + \delta\theta) = \sqrt{\delta\theta^T G^{(N)}(\theta_0) \delta\theta} + o(\|\delta\theta\|) = \sqrt{N}\sqrt{\delta\theta^T g^{(1)}(\theta_0)\delta\theta} + o(\|\delta\theta\|).
		\label{eq:geodesic_expansion_ign_revised}
	\end{equation}
	Evaluating this distance for local contiguous alternatives $\delta\theta = \psi_N(h) - \theta_0 = \frac{h}{\sqrt{N}}$, the intrinsic Riemannian distance under the pulled-back Fisher metric field $\widetilde{G}^{(N)} \equiv \psi_N^* G^{(N)}$ on $\mathcal{M}_\infty$ satisfies:
	\begin{equation}
		\small 
		d_{\psi_N^* G^{(N)}}(\mathbf{0}, h) \equiv d_{G^{(N)}}\left(\theta_0, \theta_0 + \frac{h}{\sqrt{N}}\right) = \sqrt{N}\sqrt{\left(\frac{h}{\sqrt{N}}\right)^T g^{(1)}(\theta_0)\left(\frac{h}{\sqrt{N}}\right)} + o\left(\frac{1}{\sqrt{N}}\right) = \sqrt{h^T g_0 h} + o(1).
		\label{eq:pulled_back_distance_proof_revised}
	\end{equation}
	Thus, the pulled-back distance $d_{\psi_N^* G^{(N)}}(\mathbf{0}, h)$ on $\mathcal{M}_\infty$ is $\mathcal{O}(1)$ and non-degenerate for every fixed $h \in \mathbb{K}$, proving exact dilation matching.
\end{proof}

	\subsection{Cotangent Pullback Mechanics and Universal Scaling Laws}
	\label{subsec:mod2_pullbacks}
	
Having constructed the forward microscope diffeomorphism $\psi_N : \mathcal{M}_\infty \to IG_N$ (which maps the compact coordinate cage $\mathbb{K} \subset \mathcal{M}_\infty$ onto the contracting neighborhood $\mathcal{U}_N(\theta_0) \subset IG_N$), we employ its dual pullback operator $\psi_N^*$ to transport covariant geometric fields ({\bf tensors, differential forms, connections, and curvatures}) in reverse from $IG_N$ back to $\mathcal{M}_\infty$.
	
	\begin{definition}[Dual Cotangent Pullback Operator]
		\label{def:cotangent_pullback}
		Let $T^{(N)}$ be a purely $r$-covariant tensor field defined on $IG_N$. The \textbf{dual cotangent pullback operator} \footnote{For a rigorous geometric treatment of the general pullback operator on covariant tensor fields and differential forms under smooth mappings, see Lee~\cite{lee2013}, Chapters 12 and 13.}
		\begin{equation}
			\psi_N^* : \Gamma\left( \bigotimes_{m=1}^r T^* IG_N \right) \longrightarrow \Gamma\left( \bigotimes_{m=1}^r T^* \mathcal{M}_\infty \right)
			\label{eq:pullback_operator_mapping}
		\end{equation}
		constructs a pulled-back $r$-covariant tensor field $\widetilde{T}^{(N)} \equiv \psi_N^* T^{(N)}$ on $\mathcal{M}_\infty$ whose component representation at $h \in \mathbb{K}$ is defined by contracting $T^{(N)}$ against $r$ copies of the multivariable Jacobian scaffold matrix $J_i^a(h)$:
		\begin{equation}
			\left[\psi_N^* T^{(N)}\right]_{i_1 i_2 \dots i_r}(h) \equiv \sum_{a_1, a_2, \dots, a_r = 1}^d \left( \prod_{m=1}^r J_{i_m}^{a_m}(h) \right) T_{a_1 a_2 \dots a_r}^{(N)}\left( \psi_N(h) \right).
			\label{eq:pullback_tensor_component_def}
		\end{equation}
	\end{definition}
	
	\begin{theorem}[Universal Tensor Valence Scaling Law $N^{1-r/2}$]
		\label{thm:universal_scaling_law}
		Let $T^{(N)}$ be an $r$-covariant joint tensor field on $IG_N$ satisfying $i.i.d.$ linear additivity $T^{(N)}(\theta) = N \cdot T^{(1)}(\theta)$, where $T^{(1)}$ is the single-sample tensor field on $IG_1$. The pulled-back tensor field $\widetilde{T}^{(N)} \equiv \psi_N^* T^{(N)}$ on $\mathcal{M}_\infty$ obeys the universal asymptotic scaling law:
		\begin{equation}
			\widetilde{T}_{i_1 i_2 \dots i_r}^{(N)}(h) = N^{1 - \frac{r}{2}} \cdot T_{i_1 i_2 \dots i_r}^{(1)}\left( \theta_0 + \frac{h}{\sqrt{N}} \right).
			\label{eq:universal_scaling_formula}
		\end{equation}
	\end{theorem}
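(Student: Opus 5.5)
The proof is a direct computation from the component definition of the dual cotangent pullback (Definition~\ref{def:cotangent_pullback}), combined with the Jacobian scaffold of Lemma~\ref{lem:jacobian_scaffold}. The plan is to substitute the explicit form of $J_i^a(h)$ into Equation~\eqref{eq:pullback_tensor_component_def}, collapse the $r$ index sums with the Kronecker deltas, and then apply the additivity hypothesis $T^{(N)} = N\, T^{(1)}$. This mirrors the special case $r=2$ already carried out in the proof of Lemma~\ref{lem:metric_freezing_proof} (Equation~\eqref{eq:metric_cancellation}), now done for arbitrary valence.

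\textbf{Step 1 (Jacobian substitution).} Since $J_{i_m}^{a_m}(h) = N^{-1/2}\delta_{i_m}^{a_m}$ is independent of $h$, the product of the $r$ Jacobian factors is
\begin{equation}
\prod_{m=1}^r J_{i_m}^{a_m}(h) = N^{-r/2}\prod_{m=1}^r \delta_{i_m}^{a_m}.
\end{equation}

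\textbf{Step 2 (index contraction).} Performing the sums over $a_1,\dots,a_r$ one at a time, each Kronecker delta forces $a_m = i_m$. This yields $\widetilde{T}^{(N)}_{i_1\dots i_r}(h) = N^{-r/2}\, T^{(N)}_{i_1\dots i_r}(\psi_N(h))$.

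\textbf{Step 3 (additivity).} Invoke $T^{(N)}(\theta) = N\, T^{(1)}(\theta)$ at $\theta = \psi_N(h) = \theta_0 + h/\sqrt{N}$, which lies in $\Theta$ for $h\in\mathbb{K}$ by Definition~\ref{def:microscope_map}. Then $N\cdot N^{-r/2} = N^{1-r/2}$, which gives Equation~\eqref{eq:universal_scaling_formula}.

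The main point requiring care is the scope of the hypothesis rather than the algebra. For genuinely tensorial objects the law is immediate, namely the score one-form ($r=1$), the Fisher metric (the case $r=2$, where additivity is Lemma~\ref{lem:additivity_fisher_metric}), and the Amari--Chentsov cubic tensor $T_{ijk}$ ($r=3$). Connection symbols and curvature components are not covariant tensors, however, so they are not literally covered by Definition~\ref{def:cotangent_pullback}. The fully covariant Riemann tensor $\mathcal{R}_{ijmn}$ is a 4-covariant tensor, but it is not additive: one has $\mathcal{R}^{(N)} = N\mathcal{R}^{(1)}$ only when it is lowered with $G^{(N)}$. Under the affine change $\psi_N$, the inhomogeneous term in the transformation law of $\Gamma$ vanishes, since $\partial^2\psi_N/\partial h\partial h = 0$. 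The lowered symbols $\Gamma_{ij,k}$ therefore transform exactly like a 3-covariant tensor, and here additivity $\Gamma^{(N)}_{ij,k} = N\Gamma^{(1)}_{ij,k}$ holds because the $\alpha$-connections are built linearly from expectations of log-likelihood derivatives.

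I would therefore state the theorem for additive $r$-covariant tensors exactly as written, and add a remark on the non-tensorial objects. For the connection, the affine transformation argument above applies directly. For the curvature, I would verify the scaling through Equation~\eqref{eq:riemann_curvature_components} as in Lemma~\ref{lem:curvature_collapse_proof}: the lowered curvature of $G^{(N)}$ equals $N$ times that of $g^{(1)}$, which is again an additive 4-covariant field, so the general formula yields $N^{-1}$.
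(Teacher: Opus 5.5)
Your proof is correct and is the same computation as the paper's: substitute $J_{i_m}^{a_m}=N^{-1/2}\delta_{i_m}^{a_m}$ and $T^{(N)}=N\,T^{(1)}$ into the pullback formula, collapse the Kronecker deltas, and collect $N\cdot N^{-r/2}=N^{1-r/2}$. The only difference is that the paper applies additivity before contracting rather than after. Your scope remark is a worthwhile addition that the paper omits, since it is precisely the affineness of $\psi_N$ that licenses the paper's later tensorial treatment of $\Gamma_{ij,k}$. You should, however, phrase the curvature point more carefully: it is the $(1,3)$ curvature that is $N$-invariant, while the $(0,4)$ curvature of $IG_N$, lowered with $G^{(N)}$ by definition, \emph{is} additive.
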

	
	\begin{proof}
		Substitute the Jacobian scaffold matrix $J_{i_m}^{a_m}(h) = \frac{1}{\sqrt{N}} \delta_{i_m}^{a_m}$ (Lemma~\ref{lem:jacobian_scaffold}) and the additivity relation $T_{a_1 \dots a_r}^{(N)}(\theta) = N T_{a_1 \dots a_r}^{(1)}(\theta)$ into Definition~\ref{def:cotangent_pullback}:
		\begin{align}
			\left[\psi_N^* T^{(N)}\right]_{i_1 i_2 \dots i_r}(h) &= \sum_{a_1, a_2, \dots, a_r = 1}^d \left( \prod_{m=1}^r \frac{1}{\sqrt{N}} \delta_{i_m}^{a_m} \right) \left[ N \cdot  T_{a_1 a_2 \dots a_r}^{(1)}\left( \theta_0 + \frac{h}{\sqrt{N}} \right) \right] \label{eq:scaling_proof_step1} \\
			&= \left( \prod_{m=1}^r \frac{1}{\sqrt{N}} \right) \sum_{a_1, \dots, a_r = 1}^d \left( \prod_{m=1}^r \delta_{i_m}^{a_m} \right) \left[ N \cdot T_{a_1 \dots a_r}^{(1)}\left( \theta_0 + \frac{h}{\sqrt{N}} \right) \right] \label{eq:scaling_proof_step2} \\
			&= \left( \frac{1}{(\sqrt{N})^r} \right) \cdot N \cdot T_{i_1 i_2 \dots i_r}^{(1)}\left( \theta_0 + \frac{h}{\sqrt{N}} \right) \label{eq:scaling_proof_step3} \\
			&= N^{1} \cdot N^{-\frac{r}{2}} \cdot T_{i_1 i_2 \dots i_r}^{(1)}\left( \theta_0 + \frac{h}{\sqrt{N}} \right) \label{eq:scaling_proof_step4} \\
			&= N^{1 - \frac{r}{2}} \cdot T_{i_1 i_2 \dots i_r}^{(1)}\left( \theta_0 + \frac{h}{\sqrt{N}} \right). \label{eq:scaling_proof_step5}
		\end{align}
		Equation~\eqref{eq:scaling_proof_step5} establishes the theorem.
	\end{proof}
	
	\begin{remark}
		The exponent $1 - r/2$ reflects a precise physical competition between two opposing geometric forces:
		\begin{itemize}
			\item The factor $N^1$ represents \textbf{sample information capacity growth}, scaling linearly with sample size $N$ under $i.i.d.$ observation accumulation.
			\item The factor $N^{-r/2} = (N^{-1/2})^r$ represents \textbf{spatial cotangent contraction}, arising from contracting $r$ dual cotangent vector slots against the Jacobian scaffold $J(\psi_N) = N^{-1/2} \mathbf{I}$.
		\end{itemize}
	\end{remark}
	
	\subsection{Systematic Degeneration of Four Geometric Fields}
	\label{subsec:mod2_degeneration}
	
	Applying Theorem~\ref{thm:universal_scaling_law} across valences $r \in \{1, 2, 3, 4\}$ yields the exact, deterministic degeneration rules governing all four fundamental information-geometric fields.
	
	\subsubsection{Object 1: Score 1-Form Transformation ($r=1$, Scaling $N^{1/2}$)}

\begin{lemma}[Distributional Limit of the Score $1$-Form at the Origin]
	Let $dL^{(N)} \equiv \sum_{n=1}^N d\log p(X_n; \theta)$ be the joint log-likelihood score $1$-form under $i.i.d.$ sampling. At $h = \mathbf{0}$, the pulled-back score $1$-form field $\psi_N^*(dL^{(N)})(0)$ on $T_{\mathbf{0}}^* \mathcal{M}_\infty$ converges in distribution to the Gaussian $1$-form field:
	\begin{equation}
		\psi_N^*\left( dL^{(N)} \right)(0) \xrightarrow{d} \Delta_\infty \equiv \sum_{a=1}^d Z_a dh^a \sim \mathcal{N}\left(\mathbf{0}, g_0\right).
		\label{eq:score_limit_at_zero}
	\end{equation}
	\label{lm:normal_limit_lemma}
\end{lemma}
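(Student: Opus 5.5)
The plan is to reduce the statement to the multivariate Central Limit Theorem by computing the pulled-back score $1$-form explicitly at the origin. The joint score $1$-form $dL^{(N)}$ is a random $1$-covariant tensor field on $IG_N$ with components
\[
\bigl(dL^{(N)}\bigr)_a(\theta) = \sum_{n=1}^N \partial_a \log p(X_n;\theta).
\]
This is an $i.i.d.$ sum of $N$ single-observation $1$-forms. It therefore has the additive structure underlying Theorem~\ref{thm:universal_scaling_law} with valence $r=1$, understood sample-wise rather than as a deterministic identity $T^{(N)} = N T^{(1)}$. Accordingly, I will not invoke the theorem verbatim. I will instead apply Definition~\ref{def:cotangent_pullback} directly with the Jacobian scaffold $J_i^a = N^{-1/2}\delta_i^a$ from Lemma~\ref{lem:jacobian_scaffold}. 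This gives, at $h=\mathbf{0}$ (where $\psi_N(\mathbf{0})=\theta_0$),
\[
\bigl[\psi_N^*(dL^{(N)})\bigr]_i(\mathbf{0}) = \frac{1}{\sqrt{N}}\sum_{n=1}^N \partial_i \log p(X_n;\theta_0) = \bigl(\Delta_N(\theta_0)\bigr)_i .
\]
The pulled-back $1$-form at the origin is thus exactly the normalized score vector of Theorem~\ref{thm:lan_linear_quadratic_proof}, written in the coframe $\{dh^i\}$ of $T^*_{\mathbf{0}}\mathcal{M}_\infty$.

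Next I will verify the moment conditions for the CLT. The summands $U_n \equiv \nabla_\theta \log p(X_n;\theta_0)$ are $i.i.d.$ under $P_{\theta_0}^{\otimes N}$. Their mean is zero by the interchange of differentiation and integration used in the proof of Lemma~\ref{lem:additivity_fisher_metric}. Their covariance is $\mathbb{E}_{\theta_0}[U_nU_n^\top] = g^{(1)}(\theta_0) = g_0$ by \eqref{eq:fim_component_def}, and it is finite by Assumption~\ref{asm:local_regularity}(ii). The Lindeberg--L\'evy multivariate CLT, applied through the Cram\'er--Wold device, then gives $\Delta_N(\theta_0) \xrightarrow{d} Z=(Z_1,\dots,Z_d)^\top \sim \mathcal{N}(\mathbf{0},g_0)$.

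Finally, I will transfer this convergence to the cotangent space. The map $\mathbb{R}^d \to T^*_{\mathbf{0}}\mathcal{M}_\infty$, $u\mapsto \sum_a u_a\,dh^a$, is a fixed linear isomorphism, the dual of $\Psi_\phi$ from Proposition~\ref{prop:isomorphism}, and it does not depend on $N$. The continuous mapping theorem therefore yields $\psi_N^*(dL^{(N)})(0) \xrightarrow{d} \Delta_\infty = \sum_a Z_a\,dh^a$. The covariance $\mathbb{E}[\Delta_\infty\otimes\Delta_\infty] = g_0$ identifies the Gaussian law on the cotangent space as claimed.

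The main obstacle is justifying that the score has mean zero and covariance equal to the Fisher metric. Both require differentiating under the integral sign, which must be derived from the $L_2$-differentiability of $\sqrt{p}$ in Assumption~\ref{asm:local_regularity}(i) rather than assumed outright. I would handle this with the standard quadratic-mean-differentiability argument: the score equals $2\,\partial_i\sqrt{p}/\sqrt{p}$ on $\{p>0\}$, so it lies in $L_2(P_{\theta_0})$, and differentiating $\int p\,d\mu=1$ along $L_2$ limits gives mean zero. The remaining steps are then routine.
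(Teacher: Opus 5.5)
Your proposal is correct and follows essentially the same route as the paper's proof. Both evaluate the pullback at $h=\mathbf{0}$ to obtain $V^{(N)}=N^{-1/2}\sum_{n}\nabla_\theta\log p(X_n;\theta_0)$, check the mean-zero and covariance-$g_0$ moment conditions, apply the multivariate CLT, and transfer the limit to $T^*_{\mathbf{0}}\mathcal{M}_\infty$ by the continuous mapping theorem through the fixed isomorphism $\mathbb{R}^d\to T^*_{\mathbf{0}}\mathcal{M}_\infty$. Your two additions only tighten steps the paper asserts without justification: you compute the pullback directly instead of citing the scaling law $T^{(N)}=N\,T^{(1)}$, which the random score does not satisfy pathwise, and you derive the zero mean from Assumption~\ref{asm:local_regularity}(i).
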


\begin{proof}
	The proof follows from four direct mathematical steps:
	
	\begin{enumerate}
		\item \textbf{Coefficient Vector Extraction:}
		Evaluating the pulled-back score $1$-form $\psi_N^*(dL^{(N)})(h) = \sum_{a=1}^d \left( \frac{1}{\sqrt{N}} \sum_{n=1}^N \frac{\partial \log p(X_n; \theta_0 + h/\sqrt{N})}{\partial \theta^a} \right) dh^a$ at $h = \mathbf{0}$ yields:
		\begin{equation}
			\psi_N^*\left( dL^{(N)} \right)(0) = \sum_{a=1}^d V_a^{(N)} dh^a, \quad \text{where } V^{(N)} \equiv \frac{1}{\sqrt{N}} \sum_{n=1}^N S_n \in \mathbb{R}^d,
		\end{equation}
		and $S_n \equiv \nabla_\theta \log p(X_n; \theta_0)$ denotes the single-sample score vector for observation $X_n$.
		
		\item \textbf{Moment Conditions of Single-Observation Scores:}
		Under High-Order Local Regularity, the $i.i.d.$ score vectors $\{S_n\}_{n=1}^N$ satisfy zero expectation (unbiasedness) and covariance equal to the single-sample Fisher Information Metric $g_0 \equiv g^{(1)}(\theta_0)$:
		\begin{equation}
			\mathbb{E}_{\theta_0}[S_n] = \mathbf{0}, \quad \text{and} \quad \mathrm{Var}_{\theta_0}(S_n) = \mathbb{E}_{\theta_0}\left[ S_n S_n^T \right] = g_0.
		\end{equation}
		
		\item \textbf{Multivariate Central Limit Theorem (CLT):}
		By the Classical Multivariate CLT applied to $V^{(N)} = \frac{1}{\sqrt{N}} \sum_{n=1}^N S_n$, the coefficient vector converges weakly in $\mathbb{R}^d$:
		\begin{equation}
			V^{(N)} \xrightarrow{d} Z \sim \mathcal{N}_d\left(\mathbf{0}, g_0\right).
		\end{equation}
		
		\item \textbf{Cotangent Space Isomorphism and Weak Limit:}
		Applying the Continuous Mapping Theorem under the linear isomorphism mapping coordinate vectors $Z \in \mathbb{R}^d$ to cotangent elements $\sum_{a=1}^d Z_a dh^a \in T_{\mathbf{0}}^* \mathcal{M}_\infty$, we establish the normal distribution relationship 
		\begin{equation}
			\psi_N^*\left( dL^{(N)} \right)(0) = \sum_{a=1}^d V_a^{(N)} dh^a \xrightarrow{d} \sum_{a=1}^d Z_a dh^a \equiv \Delta_\infty \sim \mathcal{N}\left(\mathbf{0}, g_0\right).
		\end{equation}
	\end{enumerate}
	This completes the proof.
\end{proof}
	
	\begin{theorem}[Object 1: Score 1-Form Fluctuation Field]
		\label{thm:score_transformation}
		Under High-Order Local Regularity (Assumption~\ref{asm:local_regularity}), the parameter differential 1-form $d\theta^a$ pulls back to $\mathcal{M}_\infty$ as $\psi_N^*(d\theta^a) = \frac{1}{\sqrt{N}} dh^a$. The joint log-likelihood score differential 1-form $dL^{(N)} \equiv \sum_{n=1}^N d \log p(X_n; \theta)$ pulls back as the normalized fluctuation field:
		\begin{equation}
			\psi_N^*\left( dL^{(N)} \right)(h) = \sum_{a=1}^d \left( \frac{1}{\sqrt{N}} \sum_{n=1}^N \frac{\partial \log p(X_n; \theta_0 + h/\sqrt{N})}{\partial \theta^a} \right) dh^a.
			\label{eq:score_pullback_explicit}
		\end{equation}
		At $h = \mathbf{0}$, $\psi_N^*(dL^{(N)})(0) \xrightarrow{d} \Delta_\infty \sim \mathcal{N}(\mathbf{0}, g_0)$, exhibiting normalized $\mathcal{O}_p(1)$ stochastic fluctuations matching Theorem~\ref{thm:lan_linear_quadratic_proof}.
	\end{theorem}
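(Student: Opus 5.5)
The plan has three parts: a pullback computation for the coordinate 1-forms, a pullback computation for the score 1-form, and a reduction of the distributional claim to Lemma~\ref{lm:normal_limit_lemma}.

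\textbf{Coordinate 1-forms.} I would first treat the coordinate differentials $d\theta^a$ as 1-covariant tensor fields and apply Definition~\ref{def:cotangent_pullback} with $r=1$. By Lemma~\ref{lem:jacobian_scaffold} the Jacobian is $J_i^a = N^{-1/2}\delta_i^a$. Hence
\[
\psi_N^*(d\theta^a) = \sum_i J_i^a\, dh^i = N^{-1/2}\, dh^a .
\]
Equivalently, this is $d(\theta^a\circ\psi_N) = d(\theta_0^a + h^a/\sqrt{N})$, because pullback commutes with the exterior derivative.

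\textbf{Score 1-form.} I would write $dL^{(N)} = \sum_a \partial_a L^{(N)}(\theta)\, d\theta^a$, with $\partial_a L^{(N)} = \sum_{n=1}^N \partial_a \log p(X_n;\theta)$. Pullback is $C^\infty$-linear: it composes the coefficient functions with $\psi_N$ and replaces each $d\theta^a$ by $\psi_N^*(d\theta^a)$. This gives
\[
\psi_N^*(dL^{(N)})(h) = \sum_a \Bigl(\sum_{n} \partial_a \log p(X_n;\theta_0 + h/\sqrt{N})\Bigr) N^{-1/2}\, dh^a ,
\]
which is exactly \eqref{eq:score_pullback_explicit}. As a consistency check, this agrees with Theorem~\ref{thm:universal_scaling_law} for $r=1$: the factor $N^{1-1/2} = N^{1/2}$ multiplies the empirical-average score. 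The additivity $T^{(N)} = N\,T^{(1)}$ holds here only in the empirical (random) sense, with $T^{(1)}$ replaced by the sample mean. I would remark on this rather than invoke the theorem literally.

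\textbf{Distributional limit.} Setting $h=\mathbf 0$, the coefficient vector becomes $V^{(N)} = N^{-1/2}\sum_n S_n$. This is precisely the object treated in Lemma~\ref{lm:normal_limit_lemma}, so the convergence $\psi_N^*(dL^{(N)})(0) \xrightarrow{d} \Delta_\infty \sim \mathcal N(0,g_0)$ follows directly from that lemma. To connect with Theorem~\ref{thm:lan_linear_quadratic_proof}, I would note that $V^{(N)} = \Delta_N(\theta_0)$ is the same normalized score that appears in the LAN expansion. It is therefore $\mathcal O_p(1)$, for example by Chebyshev's inequality, since $\mathbb E\|V^{(N)}\|^2 = \operatorname{tr} g_0 < \infty$.

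\textbf{Main obstacle.} The only delicate point is justifying the use of the score's moment identities, namely differentiating under the integral so that $\mathbb E S_n = 0$ and $\operatorname{Var} S_n = g_0$. I would derive these from the $L_2$-differentiability of $\sqrt{p}$ in Assumption~\ref{asm:local_regularity}(i), as in the standard DQM argument. Beyond that, the result is a direct assembly of earlier lemmas.
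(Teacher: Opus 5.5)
Your proposal is correct and follows essentially the same route as the paper. The paper likewise pulls back $d\theta^a$ via the Jacobian $N^{-1/2}\delta_i^a$, substitutes term by term into $dL^{(N)}=\sum_n d\ell_n$, evaluates at $h=\mathbf{0}$ to obtain $V^{(N)}=N^{-1/2}\sum_n S_n$, and concludes via the zero-mean and covariance-$g_0$ identities, the multivariate CLT and Lemma~\ref{lm:normal_limit_lemma}. Your additions are sound refinements of steps the paper treats only formally: you note that the $r=1$ case of Theorem~\ref{thm:universal_scaling_law} applies only to the empirical score, you give the Chebyshev bound $\mathbb{E}\|V^{(N)}\|^2=\operatorname{tr} g_0$ for the $\mathcal{O}_p(1)$ claim, and you derive the moment identities from differentiability in quadratic mean rather than a formal interchange of derivative and integral.
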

	
	\begin{proof}
		Setting $r=1$ in Theorem~\ref{thm:universal_scaling_law}, $N^{1 - 1/2} = N^{1/2} = \sqrt{N}$. For a single observation $X_n$, the score 1-form is $d\ell_n(\theta) = \sum_{a=1}^d \frac{\partial \log p(X_n;\theta)}{\partial \theta^a} d\theta^a$. Under $i.i.d.$ sampling, $dL^{(N)}(\theta) = \sum_{n=1}^N d\ell_n(\theta)$. Pulling back under $\psi_N(h) = \theta_0 + h/\sqrt{N}$:
		\begin{align}
			\psi_N^*\left( dL^{(N)} \right)(h) &= \sum_{n=1}^N \sum_{a=1}^d \frac{\partial \log p(X_n; \psi_N(h))}{\partial \theta^a} \psi_N^*(d\theta^a) \nonumber \\
			&= \sum_{n=1}^N \sum_{a=1}^d \frac{\partial \log p(X_n; \theta_0 + h/\sqrt{N})}{\partial \theta^a} \left( \frac{1}{\sqrt{N}} dh^a \right) \nonumber \\
			&= \sum_{a=1}^d \left( \frac{1}{\sqrt{N}} \sum_{n=1}^N \frac{\partial \log p(X_n; \theta_0 + h/\sqrt{N})}{\partial \theta^a} \right) dh^a.
		\end{align}
		
Evaluating the above equation at $h = \mathbf{0}$, the pulled-back score 1-form reduces to:
\begin{equation}
	\psi_N^*\left( dL^{(N)} \right)(0) = \sum_{a=1}^d \left( \frac{1}{\sqrt{N}} \sum_{n=1}^N \frac{\partial \log p(X_n; \theta_0)}{\partial \theta^a} \right) dh^a = \sum_{a=1}^d V_a^{(N)} dh^a,
	\label{eq:score_pullback_at_zero}
\end{equation}
where $V^{(N)} \equiv \frac{1}{\sqrt{N}} \sum_{n=1}^N S_n \in \mathbb{R}^d$ denotes the vector of component coefficients, and $S_n \equiv \nabla_\theta \log p(X_n; \theta_0)$ is the single-observation score vector.

Under the High-Order Local Regularity in Assumption  \ref{asm:local_regularity}, the independent and identically distributed ($i.i.d.$) random vectors $\{S_n\}_{n=1}^N$ satisfy:
\begin{enumerate}
	\item \textbf{Zero Expectation (Unbiasedness):} 
	\begin{equation}
		\mathbb{E}_{\theta_0}[S_n] = \int \nabla_\theta \log p(x; \theta_0) \, p(x; \theta_0) \, dx = \nabla_\theta \int p(x; \theta_0) \, dx = \mathbf{0}.
	\end{equation}
	\item \textbf{Covariance Structure (Fisher Information):} 
	\begin{equation}
		\operatorname{Var}_{\theta_0}(S_n) = \mathbb{E}_{\theta_0}\left[ S_n S_n^T \right] = g^{(1)}(\theta_0) \equiv g_0.
	\end{equation}
\end{enumerate}

Applying the classical Multivariate Central Limit Theorem to $V^{(N)} = \frac{1}{\sqrt{N}} \sum_{n=1}^N S_n$, we obtain weak convergence of the coefficient vector in $\mathbb{R}^d$:
\begin{equation}
	V^{(N)} \xrightarrow{d} Z \sim \mathcal{N}\left(\mathbf{0}, g_0\right).
\end{equation}

Finally, by the Continuous Mapping Theorem under the linear isomorphism mapping vector components in $\mathbb{R}^d$ to cotangent basis elements in $T_{\mathbf{0}}^* \mathcal{M}_\infty$, the random 1-form converges in distribution to the Gaussian 1-form field $\Delta_\infty \equiv \sum_{a=1}^d Z_a dh^a$, by Lemma \ref{lm:normal_limit_lemma}:
\begin{equation}
	\psi_N^*\left( dL^{(N)} \right)(0) = \sum_{a=1}^d V_a^{(N)} dh^a \xrightarrow{d} \sum_{a=1}^d Z_a dh^a \equiv \Delta_\infty \sim \mathcal{N}\left(\mathbf{0}, g_0\right),
\end{equation}
which completes the proof.

\end{proof}
	
	\subsubsection{Object 2: Fisher Metric Tensor Stabilization ($r=2$, Scaling $N^0 = 1$)}
	
	\begin{theorem}[Object 2: Uniform Fisher Metric Stabilization]
		\label{thm:metric_stabilization}
		Under High-Order Local Regularity (Assumption~\ref{asm:local_regularity}), the pulled-back joint Fisher Information Metric tensor field $\widetilde{G}^{(N)}(h) \equiv \psi_N^* G^{(N)}(h)$,  locally in $h$-coordinates on $\mathcal{M}_\infty$,  exhibits exact algebraic capacity-cotangent cancellation ($N^{1 - 2/2} = N^0 = 1$):
		\begin{equation}
			\widetilde{g}_{ij}^{(N)}(h) \equiv \left[\psi_N^* G^{(N)}\right]_{ij}(h) = g_{ij}^{(1)}\left( \theta_0 + \frac{h}{\sqrt{N}} \right).
			\label{eq:metric_cancellation_formula}
		\end{equation}
		Furthermore, $\widetilde{G}^{(N)}(h)$ converges uniformly over every compact coordinate cage $\mathbb{K} \subset \mathcal{M}_\infty$ to the constant, non-singular limit metric $g_0 \equiv g^{(1)}(\theta_0)$:
		\begin{equation}
			\lim_{N \to \infty} \sup_{h \in \mathbb{K}} \left\| \widetilde{G}^{(N)}(h) - g_0 \right\|_\infty = \mathcal{O}\left(N^{-1/2}\right) \longrightarrow 0.
			\label{eq:metric_uniform_stabilization_mod2}
		\end{equation}
	\end{theorem}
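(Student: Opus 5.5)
The plan is to obtain the exact cancellation identity by specializing the Universal Tensor Valence Scaling Law (Theorem~\ref{thm:universal_scaling_law}) to valence $r=2$. The uniform convergence bound will then follow by repeating the Taylor argument of Lemma~\ref{lem:metric_freezing_proof} in the present notation.

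\textbf{Step 1 (exact cancellation).} By Lemma~\ref{lem:additivity_fisher_metric}, $G^{(N)}(\theta)=N\,g^{(1)}(\theta)$ for every $\theta\in\Theta$. Hence $G^{(N)}$ is a $2$-covariant field satisfying the i.i.d.\ linear additivity hypothesis of Theorem~\ref{thm:universal_scaling_law}, with $T^{(1)}=g^{(1)}$. Setting $r=2$ gives the prefactor $N^{1-2/2}=1$, which is exactly \eqref{eq:metric_cancellation_formula}. As a sanity check, this agrees with the direct Jacobian computation \eqref{eq:metric_cancellation}, using $J(\psi_N)=N^{-1/2}\mathbf I$ from Lemma~\ref{lem:jacobian_scaffold}.

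\textbf{Step 2 (uniform rate).} Fix a compact cage $\mathbb K$ and set $C_{\mathbb K}=\sup_{h\in\mathbb K}\|h\|_2$. Since $\theta_0\in\mathrm{Int}(\Theta)$, there is $N_0$ such that $\psi_N(\mathbb K)\subset\overline{\mathcal U'}\subset\mathcal U(\theta_0)$ for all $N\ge N_0$, where $\mathcal U'$ is a fixed closed ball around $\theta_0$. Assumption~\ref{asm:local_regularity}(ii) makes $g^{(1)}$ smooth on $\mathcal U(\theta_0)$, so $M_g=\max_{i,j,k}\sup_{\overline{\mathcal U'}}|\partial_k g^{(1)}_{ij}|<\infty$. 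The integral form of the mean value theorem, as in \eqref{eq:taylor_expansion_metric}, then gives for $N\ge N_0$
\begin{equation*}
\sup_{h\in\mathbb K}\bigl\|\widetilde G^{(N)}(h)-g_0\bigr\|_\infty\le \frac{\sqrt d\,C_{\mathbb K}M_g}{\sqrt N}.
\end{equation*}
This bound is $\mathcal O(N^{-1/2})\to0$.

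\textbf{Step 3 (non-singularity of the limit).} Assumption~\ref{asm:local_regularity}(ii) gives $\lambda_{\min}(g_0)>0$. Eigenvalues depend continuously on the matrix entries, so the uniform bound of Step 2 implies $\lambda_{\min}(\widetilde G^{(N)}(h))\ge\tfrac12\lambda_{\min}(g_0)$ on $\mathbb K$ for all large $N$. Thus the pulled-back metrics are eventually uniformly non-degenerate and converge to a genuine Riemannian metric.

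\textbf{Main obstacle.} The only delicate point is the localization in Step 2. We must ensure that $\psi_N(\mathbb K)$ lies in a compact subset of $\mathcal U(\theta_0)$ on which the derivative bound $M_g$ is finite, because smoothness on an open set alone does not bound derivatives up to its boundary. Shrinking to a closed ball $\overline{\mathcal U'}$ and discarding finitely many $N$ handles this. Note also that the displayed statement writes ``$\lim_{N\to\infty}\sup\cdots=\mathcal O(N^{-1/2})$''. I will interpret it as the finite-$N$ rate bound together with a limit equal to $0$.
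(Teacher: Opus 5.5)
Your proposal is correct and follows the paper's own route: you specialize the Universal Tensor Valence Scaling Law to $r=2$ via the additivity lemma, then reuse the Taylor/mean-value bound of Lemma~\ref{lem:metric_freezing_proof} to get the uniform $\mathcal{O}(N^{-1/2})$ rate. Three of your additions are refinements of that argument rather than a different one: localizing to a closed ball for large $N$, keeping the $\sqrt{d}$ factor that arises from passing from $\sum_k |h_k|$ to $\|h\|_2$, and the eventual uniform non-degeneracy in Step~3. The paper's proof glosses over all three.
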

	
	\begin{proof}
		Setting $r=2$ in Theorem~\ref{thm:universal_scaling_law}, $N^{1 - 2/2} = N^0 = 1$. Contracting $G_{ab}^{(N)}(\theta) = N g_{ab}^{(1)}(\theta)$ against two copies of $J_i^a(h) = N^{-1/2} \delta_i^a$:
		\begin{equation}
			\widetilde{g}_{ij}^{(N)}(h) = \sum_{a, b = 1}^d \left( \frac{1}{\sqrt{N}} \delta_i^a \right) \left( \frac{1}{\sqrt{N}} \delta_j^b \right) \left[ N g_{ab}^{(1)}\left( \theta_0 + \frac{h}{\sqrt{N}} \right) \right] = g_{ij}^{(1)}\left( \theta_0 + \frac{h}{\sqrt{N}} \right).
			\label{eq:metric_cancellation_proof_step}
		\end{equation}
		Applying Lemma~\ref{lem:metric_freezing_proof} from Section~\ref{sec:module1}, the Taylor expansion around $h = \mathbf{0}$ yields:
		\begin{equation}
			\sup_{h \in \mathbb{K}} \left| \widetilde{g}_{ij}^{(N)}(h) - g_{ij}^{(1)}(\theta_0) \right| \le \frac{1}{\sqrt{N}} \left( \sup_{h \in \mathbb{K}} \|h\|_2 \right) \cdot \max_{i,j,k} \sup_{\theta \in \mathcal{U}(\theta_0)} \left| \frac{\partial g_{ij}^{(1)}(\theta)}{\partial \theta^k} \right| \le \frac{C_\mathbb{K} M_g}{\sqrt{N}}.
		\end{equation}
		Taking $N \to \infty$ proves uniform stabilization at rate $\mathcal{O}(N^{-1/2})$.
	\end{proof}
	
	\subsubsection{Object 3: Amari Affine Connection Dissolution ($r=3$, Scaling $N^{-1/2}$)}
	
	\begin{theorem}[Object 3: Amari Connection Dissolution]
		\label{thm:connection_dissolution}
		Under High-Order Local Regularity (Assumption~\ref{asm:local_regularity}), the pulled-back Amari $\alpha$-connection Christoffel symbols $\widetilde{\Gamma}_{ijk}^{(\alpha, N)}(h) \equiv \left[\psi_N^* \nabla^{(\alpha,N)}\right]_{ijk}(h)$ scale as $N^{1 - 3/2} = N^{-1/2}$:
		\begin{equation}
			\widetilde{\Gamma}_{ijk}^{(\alpha, N)}(h) = \frac{1}{\sqrt{N}} \Gamma_{ijk}^{(\alpha, 1)}\left( \theta_0 + \frac{h}{\sqrt{N}} \right) = \mathcal{O}\left(\frac{1}{\sqrt{N}}\right).
			\label{eq:connection_scaling_formula}
		\end{equation}
		As $N \to \infty$, connection friction dissolves uniformly to zero across $\mathbb{K}$, collapsing non-Euclidean parallel transport into flat Euclidean transport:
		\begin{equation}
			\lim_{N \to \infty} \sup_{h \in \mathbb{K}} \left| \widetilde{\Gamma}_{ijk}^{(\alpha, N)}(h) \right| = 0 \equiv \Gamma_{ijk}^{(0)}.
			\label{eq:connection_dissolution_limit}
		\end{equation}
	\end{theorem}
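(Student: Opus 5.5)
The plan is to treat the Christoffel symbols of the first kind of the joint $\alpha$-connection as a 3-covariant array satisfying $i.i.d.$ additivity, and then invoke Theorem~\ref{thm:universal_scaling_law} with $r=3$.

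\textbf{Step 1: additivity.} I first establish that
\[
\Gamma_{abc}^{(\alpha,N)}(\theta) = N\,\Gamma_{abc}^{(\alpha,1)}(\theta).
\]
Amari's formula gives
\[
\Gamma_{abc}^{(\alpha)} = \mathbb{E}_\theta\!\left[\left(\partial_a\partial_b \ell + \tfrac{1-\alpha}{2}\,\partial_a\ell\,\partial_b\ell\right)\partial_c\ell\right].
\]
Applied to $\ell_N=\sum_n \ell(X_n;\theta)$, this follows exactly as in the proof of Lemma~\ref{lem:additivity_fisher_metric}. Expanding the products, each term contains at most three distinct indices. Every cross term with an isolated index $n$ factors out $\mathbb{E}_\theta[\partial\ell(X_n)]=0$ by independence. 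The only potentially surviving cross term is $\mathbb{E}[\partial_a\ell_k\,\partial_b\ell_m\,\partial_c\ell_m]$ with $k\neq m$, and it also vanishes because $\partial_a\ell_k$ has mean zero. Hence only diagonal terms remain, giving the factor $N$. Equivalently, one can write $\Gamma^{(\alpha,N)}_{abc} = [G^{(N)}]$-Levi-Civita part $-\tfrac{\alpha}{2}T^{(N)}_{abc}$, where both $G^{(N)}$ and the skewness tensor $T^{(N)}$ are additive; for $T^{(N)}$, the third cumulant of a sum of i.i.d. mean-zero vectors is additive.

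\textbf{Step 2: how the pullback acts.} Since $\psi_N$ is affine (Lemma~\ref{lem:jacobian_scaffold}), its second derivatives vanish. The inhomogeneous term $\partial^2\psi_N$ in the connection transformation law therefore drops out, and the pulled-back connection coefficients transform exactly like a 3-covariant tensor via Definition~\ref{def:cotangent_pullback}. Setting $r=3$ in Theorem~\ref{thm:universal_scaling_law} then yields
\[
\widetilde{\Gamma}^{(\alpha,N)}_{ijk}(h)=N^{-1/2}\,\Gamma^{(\alpha,1)}_{ijk}\!\left(\theta_0+\tfrac{h}{\sqrt N}\right).
\]
The $\alpha=0$ case of this formula agrees with equation~\eqref{eq:connection_dissolution_scaling} in Lemma~\ref{lem:curvature_collapse_proof}.

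\textbf{Step 3: uniform bound.} For $N$ large, $\psi_N(\mathbb{K})\subset\mathcal{U}(\theta_0)$. I then bound $\sup_{\theta\in\mathcal{U}(\theta_0)}|\Gamma^{(\alpha,1)}_{ijk}(\theta)|$ using Assumption~\ref{asm:local_regularity}. For the terms $\mathbb{E}[\partial_a\partial_b\ell\,\partial_c\ell]$ and $\mathbb{E}[\partial_a\ell\,\partial_b\ell\,\partial_c\ell]$, I apply Cauchy--Schwarz together with the uniform bounds on second-order and score moments. These moment bounds are obtained from $L_2$-differentiability of $\sqrt p$ and metric smoothness, possibly shrinking $\mathcal{U}(\theta_0)$ to a compact closure. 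Multiplying by $N^{-1/2}$ gives $\sup_{h\in\mathbb{K}}|\widetilde{\Gamma}|\le C/\sqrt N\to 0$, which is the limit~\eqref{eq:connection_dissolution_limit}.

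\textbf{Main obstacle.} I expect the hardest part to be the justification in Step 3. Assumption~\ref{asm:local_regularity} controls third derivatives and the metric explicitly, but it does not directly state uniform third moments of the score. I would try deriving these by differentiating $g^{(1)}_{ij}(\theta)=\int \partial_i\ell\,\partial_j\ell\,p\,d\mu$ under the integral, using the $C^3$ property of $\sqrt p$ in $L_2$. This expresses $\partial_k g_{ij}=\Gamma^{(1)}_{ijk}+\Gamma^{(-1)}_{jki}$-type combinations, so boundedness of $\partial g$ combined with the dual-connection identity gives boundedness of the $\alpha$-family. If that route fails, I would add uniform boundedness of $T^{(1)}$ as an explicit regularity hypothesis.
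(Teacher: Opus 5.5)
Your route is the paper's route: additivity $\Gamma^{(\alpha,N)}_{abc}=N\,\Gamma^{(\alpha,1)}_{abc}$, then contraction against three copies of $J(\psi_N)=N^{-1/2}\mathbf{I}$ (the case $r=3$ of Theorem~\ref{thm:universal_scaling_law}), then a uniform constant $M_\Gamma$ bounding $\Gamma^{(\alpha,1)}$ near $\theta_0$. Your Steps~1 and~2 are correct, and they fill in what the paper leaves implicit. The paper cites Amari for additivity and never remarks that Christoffel symbols are not tensors. Your observation that $\partial^2\psi_N=0$ removes the inhomogeneous term is exactly what licenses applying Definition~\ref{def:cotangent_pullback} to them.

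The step that would not go through as written is the bound in Step~3.

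\emph{Cauchy--Schwarz route.} Bounding $\mathbb{E}_\theta[\partial_a\partial_b\ell\,\partial_c\ell]$ and $T_{abc}=\mathbb{E}_\theta[\partial_a\ell\,\partial_b\ell\,\partial_c\ell]$ this way needs second moments of the Hessian and higher (third or fourth) moments of the score. Assumption~\ref{asm:local_regularity} does not supply these. $L_2$-smoothness of $\sqrt p$ bounds second moments of $\partial\ell$ and of the combination $\partial_a\partial_b\ell+\tfrac12\partial_a\ell\,\partial_b\ell$. That suffices for $\Gamma^{(0)}_{ab,c}=4\int\partial_a\partial_b\sqrt p\;\partial_c\sqrt p\,d\mu$, but not for $T$.

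\emph{Duality-identity fallback.} This also fails, because $T$ cancels out of the identity. Since $\Gamma^{(\pm\alpha)}=\Gamma^{(0)}\mp\tfrac{\alpha}{2}T$ with $T$ totally symmetric, we get $\partial_k g_{ij}=\Gamma^{(\alpha)}_{ki,j}+\Gamma^{(-\alpha)}_{kj,i}=\Gamma^{(0)}_{ki,j}+\Gamma^{(0)}_{kj,i}$. So bounded $\partial g$ controls only the Levi-Civita part.

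\emph{A working fix.} No extra hypothesis is needed. Use item~(iii) of the assumption together with the second- and third-order Bartlett identities. These come from differentiating $\mathbb{E}_\theta[\partial_i\partial_j\ell]=-g_{ij}$ once and $\int p\,d\mu=1$ three times, under the same interchange conventions the paper uses elsewhere. They give
\[
\mathbb{E}_\theta[\partial_i\partial_j\ell\,\partial_k\ell]=-\partial_k g_{ij}-\mathbb{E}_\theta[\partial_i\partial_j\partial_k\ell],\qquad T_{ijk}=2\,\mathbb{E}_\theta[\partial_i\partial_j\partial_k\ell]+\partial_i g_{jk}+\partial_j g_{ik}+\partial_k g_{ij}.
\]
Hence $\Gamma^{(\alpha,1)}_{ij,k}=\mathbb{E}_\theta[\partial_i\partial_j\ell\,\partial_k\ell]+\tfrac{1-\alpha}{2}T_{ijk}$ is bounded on $\mathcal{U}(\theta_0)$ by two quantities:
\begin{itemize}
\item $\sup|\partial g|$, which is finite by item~(ii) on a compact closure;
\item $\sup_\theta(\mathbb{E}_\theta|\partial^3\ell|^2)^{1/2}$, which is finite by item~(iii).
\end{itemize}
The paper's own proof simply asserts $M_\Gamma<\infty$ ``by Assumption~1''. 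This derivation is what both your argument and the paper's are missing.
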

	
	\begin{proof}
		Setting $r=3$ in Theorem~\ref{thm:universal_scaling_law}, $N^{1 - 3/2} = N^{-1/2}$. Under $i.i.d.$ sampling, Amari's $\alpha$-connection Christoffel symbols of the first kind scale linearly: $\Gamma_{abc}^{(\alpha, N)}(\theta) = N \cdot  \Gamma_{abc}^{(\alpha, 1)}(\theta)$ \cite{amari1985}. Contracting against three copies of $J_i^a(h) = N^{-1/2} \delta_i^a$:
		\begin{align}
			\widetilde{\Gamma}_{ijk}^{(\alpha, N)}(h) &= \sum_{a, b, c = 1}^d \left( \frac{1}{\sqrt{N}} \delta_i^a \right) \left( \frac{1}{\sqrt{N}} \delta_j^b \right) \left( \frac{1}{\sqrt{N}} \delta_k^c \right) \left[ N \cdot \Gamma_{abc}^{(\alpha, 1)}\left( \theta_0 + \frac{h}{\sqrt{N}} \right) \right] \nonumber \\
			&= \frac{N}{N^{3/2}} \Gamma_{ijk}^{(\alpha, 1)}\left( \theta_0 + \frac{h}{\sqrt{N}} \right) = \frac{1}{\sqrt{N}} \Gamma_{ijk}^{(\alpha, 1)}\left( \theta_0 + \frac{h}{\sqrt{N}} \right).
		\end{align}
		By Assumption~\ref{asm:local_regularity}, $\Gamma_{ijk}^{(\alpha, 1)}(\theta)$ is continuous and uniformly bounded on $\overline{\mathcal{U}}(\theta_0)$ by $M_\Gamma < \infty$. Taking the uniform supremum over $\mathbb{K}$:
		\begin{equation}
			\sup_{h \in \mathbb{K}} \left| \widetilde{\Gamma}_{ijk}^{(\alpha, N)}(h) \right| \le \frac{M_\Gamma}{\sqrt{N}} = \mathcal{O}\left(N^{-1/2}\right) \xrightarrow{N \to \infty} 0.
		\end{equation}
		This establishes uniform dissolution of connection friction.
	\end{proof}
	
	\subsubsection{Object 4: Accelerated Quadratic Curvature Annihilation ($r=4$, Scaling $N^{-1}$)}
	
	\begin{theorem}[Object 4: Accelerated Quadratic Curvature Annihilation]
		\label{thm:curvature_annihilation}
		Under High-Order Local Regularity (Assumption~\ref{asm:local_regularity}), the pulled-back Riemann curvature tensor field $\widetilde{\mathcal{R}}_{ijmn}^{(\alpha, N)}(h) \equiv \left[\psi_N^* \mathcal{R}^{(\alpha,N)}\right]_{ijmn}(h)$ scales as $N^{1 - 4/2} = N^{-1}$:
		\begin{equation}
			\widetilde{\mathcal{R}}_{ijmn}^{(\alpha, N)}(h) = \frac{1}{N} \mathcal{R}_{ijmn}^{(\alpha, 1)}\left( \theta_0 + \frac{h}{\sqrt{N}} \right) = \mathcal{O}_p\left(\frac{1}{N}\right).
			\label{eq:curvature_scaling_formula}
		\end{equation}
		Riemann curvature undergoes accelerated quadratic decay ($\mathcal{O}_p(N^{-1})$)---decaying twice as fast as connection dissolution ($\mathcal{O}(N^{-1/2})$)---annihilating all intrinsic manifold curvature:
		\begin{equation}
			\lim_{N \to \infty} \sup_{h \in \mathbb{K}} \left\| \widetilde{\mathcal{R}}_{ijmn}^{(\alpha, N)}(h) \right\| = 0 \equiv \mathcal{R}_{ijmn}^{(0)}.
			\label{eq:curvature_annihilation_limit}
		\end{equation}
	\end{theorem}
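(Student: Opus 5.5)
The plan is to reduce the statement to the Universal Tensor Valence Scaling Law (Theorem~\ref{thm:universal_scaling_law}) with $r=4$. That theorem applies only to covariant tensor fields obeying $i.i.d.$ linear additivity, so the substantive step is to prove that the fully covariant $\alpha$-curvature of $IG_N$ satisfies
\[
\mathcal{R}^{(\alpha,N)}_{abcd}(\theta)=N\,\mathcal{R}^{(\alpha,1)}_{abcd}(\theta).
\]
Once this is established, contracting against four copies of the Jacobian scaffold $J^a_i=N^{-1/2}\delta^a_i$ (Lemma~\ref{lem:jacobian_scaffold}) gives the factor $N\cdot N^{-2}=N^{-1}$. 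This yields the formula $\widetilde{\mathcal{R}}^{(\alpha,N)}_{ijmn}(h)=N^{-1}\mathcal{R}^{(\alpha,1)}_{ijmn}(\theta_0+h/\sqrt{N})$ directly.

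For the additivity step, the quadratic term of the Riemann tensor involves $G_{(N)}^{st}=N^{-1}g_{(1)}^{st}$, so the scaling must be checked honestly rather than assumed. I would argue as follows.
\begin{enumerate}
\item Start from the first-kind symbols. Under $i.i.d.$ sampling, $\Gamma^{(\alpha,N)}_{abc}=N\,\Gamma^{(\alpha,1)}_{abc}$, as used in Theorem~\ref{thm:connection_dissolution}. This holds because the metric part and the skewness tensor $T_{abc}=\mathbb{E}[\partial_a\ell\,\partial_b\ell\,\partial_c\ell]$ are both additive; for $T_{abc}$ the cross terms vanish by independence and the zero-mean score.
\item Raise an index with the inverse metric:
\[
\Gamma^{(\alpha,N)\,c}_{ab}=G_{(N)}^{cs}\Gamma^{(\alpha,N)}_{abs}=N^{-1}g_{(1)}^{cs}\,N\,\Gamma^{(\alpha,1)}_{abs}=\Gamma^{(\alpha,1)\,c}_{ab}.
\]
So the connection $\nabla^{(\alpha,N)}$ coincides with $\nabla^{(\alpha,1)}$ as an affine connection.
\item The $(1,3)$ curvature, built only from the second-kind symbols and their derivatives, is therefore identical for $N$ and $1$.
\item Lower the index with $G^{(N)}=Ng^{(1)}$. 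This produces exactly one factor of $N$, which gives the claimed additivity.
\end{enumerate}
As a consistency check, the result should agree with the coordinate computation in Lemma~\ref{lem:curvature_collapse_proof}(ii). There, the derivative terms pick up $N^{-1}$ by the chain rule, and the quadratic terms pick up $N^{-1/2}\cdot N^{-1/2}$ with $\widetilde g_N^{st}$ bounded. I would also note that the pullback of the curvature under the diffeomorphism $\psi_N$ equals the curvature of the pulled-back connection, so the two routes agree.

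For the uniform limit, note that $\theta_0+h/\sqrt{N}\in\overline{\mathcal{U}}(\theta_0)$ for all $h\in\mathbb{K}$ once $N$ is large. Because $g^{(1)}$ is smooth and positive definite there (Assumption~\ref{asm:local_regularity}(ii)), the components $\mathcal{R}^{(\alpha,1)}_{ijmn}$ are continuous on this compact closure. They are therefore bounded by some $M_{\mathcal{R}}<\infty$, and hence
\[
\sup_{h\in\mathbb{K}}\big|\widetilde{\mathcal{R}}^{(\alpha,N)}_{ijmn}(h)\big|\le \frac{M_{\mathcal{R}}}{N}\to 0 .
\]
Two caveats apply here. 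First, the bound is deterministic, so the stated $\mathcal{O}_p$ is in fact an $\mathcal{O}$. Second, continuity of $\mathcal{R}^{(\alpha,1)}$ needs two derivatives of $\Gamma$, i.e.\ roughly third derivatives of the log-density. I would take this regularity from the $C^3$ hypothesis in Lemma~\ref{lem:curvature_collapse_proof} together with Assumption~\ref{asm:local_regularity}(i),(iii), making it explicit if necessary. The ratio $N^{-1}/N^{-1/2}=N^{-1/2}$ against Theorem~\ref{thm:connection_dissolution} then gives the ``accelerated'' comparison.

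The main obstacle is the additivity step. One must avoid naively treating the quadratic $\Gamma\Gamma$ term as additive, and correctly track the single factor of $N$ from index lowering. I would first try the invariance of the second-kind symbols described above, since it dispatches this cleanly.
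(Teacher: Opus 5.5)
Your proposal is correct and follows the same route as the paper. You establish the additivity $\mathcal{R}^{(\alpha,N)}_{abcd}=N\,\mathcal{R}^{(\alpha,1)}_{abcd}$, contract against four Jacobian factors $N^{-1/2}\delta^a_i$ to get $N\cdot N^{-2}=N^{-1}$, and bound uniformly on $\mathbb{K}$ by $M_{\mathcal{R}}/N$. The only difference is in the additivity step. The paper substitutes $G_{(N)}^{st}=N^{-1}g_{(1)}^{st}$ and $\Gamma^{(\pm\alpha,N)}_{abc}=N\,\Gamma^{(\pm\alpha,1)}_{abc}$ directly into the first-kind coordinate formula for the $(0,4)$ tensor. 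You instead observe that the second-kind symbols, and hence the $(1,3)$ curvature, are $N$-invariant, so the single factor of $N$ comes from lowering one index. This is equivalent bookkeeping, and your remark that the resulting bound is deterministic ($\mathcal{O}$ rather than $\mathcal{O}_p$) is accurate.
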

	
	\begin{proof}
		Setting $r=4$ in Theorem~\ref{thm:universal_scaling_law}, $N^{1 - 4/2} = N^{-1}$. We provide a detailed step-by-step verification using the coordinate formula for the Riemann curvature tensor.
		
		In local coordinates on $IG_N$, the $(0,4)$-covariant Riemann curvature tensor $\mathcal{R}_{ijmn}^{(\alpha, N)}$ expands as \cite{amari1985, frankel2011}:
		\begin{equation}
			\mathcal{R}_{ijmn}^{(\alpha, N)}(\theta) = \frac{\partial \Gamma_{nj, i}^{(\alpha, N)}(\theta)}{\partial \theta^m} - \frac{\partial \Gamma_{mj, i}^{(\alpha, N)}(\theta)}{\partial \theta^n} + \sum_{s,t} G_{(N)}^{st}(\theta) \left( \Gamma_{mj, s}^{(\alpha, N)} \Gamma_{ni, t}^{(-\alpha, N)} - \Gamma_{nj, s}^{(\alpha, N)} \Gamma_{mi, t}^{(-\alpha, N)} \right).
			\label{eq:riemann_full_formula_ign}
		\end{equation}
		Under $i.i.d.$ sampling, $G_{(N)}^{st}(\theta) = \frac{1}{N} g_{(1)}^{st}(\theta)$ and $\Gamma_{abc}^{(\alpha, N)}(\theta) = N \cdot  \Gamma_{abc}^{(\alpha, 1)}(\theta)$, where $G_{(N)}^{st}(\theta)$ is the $(s,t)$-component of the inverse (contravariant) $N$-sample joint Fisher Information Metric tensor on $IG_N$. Substituting these into Equation~\eqref{eq:riemann_full_formula_ign}:
		\begin{align}
			\mathcal{R}_{ijmn}^{(\alpha, N)}(\theta) &= N \frac{\partial \Gamma_{nj, i}^{(\alpha, 1)}(\theta)}{\partial \theta^m} - N \frac{\partial \Gamma_{mj, i}^{(\alpha, 1)}(\theta)}{\partial \theta^n} \nonumber \\
			&\quad + \sum_{s,t} \left( \frac{1}{N} g_{(1)}^{st}(\theta) \right) \left[ N \Gamma_{mj, s}^{(\alpha, 1)}(\theta) \right] \left[ N \Gamma_{ni, t}^{(-\alpha, 1)}(\theta) - N \Gamma_{nj, s}^{(\alpha, 1)}(\theta) \Gamma_{mi, t}^{(-\alpha, 1)}(\theta) \right] \nonumber \\
			&= N \left[ \frac{\partial \Gamma_{nj, i}^{(\alpha, 1)}(\theta)}{\partial \theta^m} - \frac{\partial \Gamma_{mj, i}^{(\alpha, 1)}(\theta)}{\partial \theta^n} + \sum_{s,t} g_{(1)}^{st}(\theta) \left( \Gamma_{mj, s}^{(\alpha, 1)} \Gamma_{ni, t}^{(-\alpha, 1)} - \Gamma_{nj, s}^{(\alpha, 1)} \Gamma_{mi, t}^{(-\alpha, 1)} \right) \right] \nonumber \\
			&= N \cdot \mathcal{R}_{ijmn}^{(\alpha, 1)}(\theta).
			\label{eq:riemann_linear_growth}
		\end{align}
Let $\widetilde{\mathcal{R}}_{ijmn}^{(\alpha, N)}(h)$ is the $(i,j,m,n)$-component of the pulled-back $(0,4)$-covariant Riemann curvature tensor field (associated with the Amari $\alpha$-connection) defined on the tangent canvas $\mathcal{M}_\infty$. Now pull back $\mathcal{R}^{(\alpha, N)}$ under $\psi_N(h) = \theta_0 + h/\sqrt{N}$ by contracting against four copies of $J_i^a(h) = N^{-1/2} \delta_i^a$:
		\begin{align}
			\widetilde{\mathcal{R}}_{ijmn}^{(\alpha, N)}(h) \equiv \left[\psi_N^* \mathcal{R}^{(\alpha,N)}\right]_{ijmn}(h) &= \sum_{a,b,c,d=1}^d \frac{\partial \psi_N^a(h)}{\partial h^i} \frac{\partial \psi_N^b(h)}{\partial h^j} \frac{\partial \psi_N^c(h)}{\partial h^m} \frac{\partial \psi_N^d(h)}{\partial h^n} \mathcal{R}_{abcd}^{(\alpha,N)}\big(\psi_N(h)\big) \nonumber \\ 
			&= {\small \sum_{a,b,c,d=1}^d \left( \frac{1}{\sqrt{N}} \delta_i^a \right) \left( \frac{1}{\sqrt{N}} \delta_j^b \right) \left( \frac{1}{\sqrt{N}} \delta_m^c \right) \left( \frac{1}{\sqrt{N}} \delta_n^d \right) \mathcal{R}_{abcd}^{(\alpha, N)}\left( \theta_0 + \frac{h}{\sqrt{N}} \right)} \nonumber \\
			&= \left( \frac{1}{\sqrt{N}} \right)^4 \left[ N \cdot \mathcal{R}_{ijmn}^{(\alpha, 1)}\left( \theta_0 + \frac{h}{\sqrt{N}} \right) \right] \nonumber \\
			&= \frac{1}{N^2} \cdot N \cdot \mathcal{R}_{ijmn}^{(\alpha, 1)}\left( \theta_0 + \frac{h}{\sqrt{N}} \right) \nonumber \\
			&= \frac{1}{N} \mathcal{R}_{ijmn}^{(\alpha, 1)}\left( \theta_0 + \frac{h}{\sqrt{N}} \right).
			\label{eq:riemann_pullback_final_step}
		\end{align}
		By Assumption~\ref{asm:local_regularity}, $\|\mathcal{R}^{(\alpha, 1)}(\theta)\|_\infty \le M_R < \infty$ on $\overline{\mathcal{U}}(\theta_0)$. Taking the uniform supremum over $\mathbb{K}$:
		\begin{equation}
			\sup_{h \in \mathbb{K}} \left\| \widetilde{\mathcal{R}}^{(\alpha, N)}(h) \right\|_\infty \le \frac{M_R}{N} = \mathcal{O}_p\left(N^{-1}\right) \xrightarrow{N \to \infty} 0.
		\end{equation}
		This proves accelerated quadratic curvature decay ($\mathcal{O}_p(N^{-1})$).
	\end{proof}
	
	\subsection{Summary of Section 2 Degeneration Laws}
	\label{subsec:mod2_summary_table}
	
	Table~\ref{tab:degeneration_summary} summarizes the universal scaling law and exact degeneration rules established across the four fundamental geometric fields in Section 2.
	
	\begin{table}[h!]
		\centering
		\small
		\begin{tabular}{|p{1.6cm}|p{2.8cm}|c|c|p{3.5cm}|}
			\hline
			\textbf{Geometric Object} & \textbf{Geometric Tensor Field} & \textbf{Valence ($r$)} & \textbf{Scaling Law ($N^{1-r/2}$)} & \textbf{Asymptotic Limit on $\mathcal{M}_\infty$} \\ \hline
			\textbf{Obj 1} & Score 1-Form $dL^{(N)}$ & $r=1$ & $N^{1 - 1/2} = N^{1/2}$ & Gaussian Fluctuation $\Delta_\infty \sim \mathcal{N}(\mathbf{0}, g_0)$ \\ \hline
			\textbf{Obj 2} & Fisher Metric $G^{(N)}$ & $r=2$ & $N^{1 - 2/2} = N^0 = 1$ & Frozen Metric $g_0 \equiv g^{(1)}(\theta_0)$ \\ \hline
			\textbf{Obj 3} & Amari Connection $\nabla^{(\alpha,N)}$ & $r=3$ & $N^{1 - 3/2} = N^{-1/2}$ & Dissolved Transport $\Gamma_{ijk}^{(0)} \equiv 0$ \\ \hline
			\textbf{Obj 4} & Riemann Curvature $\mathcal{R}^{(\alpha,N)}$ & $r=4$ & $N^{1 - 4/2} = N^{-1}$ & Flat Canvas $\mathcal{R}_{ijmn}^{(0)} \equiv 0$ \\ \hline
		\end{tabular}
		\caption{Summary of Universal Tensor Scaling Laws and Field Degenerations on $\mathcal{M}_\infty$.}
		\label{tab:degeneration_summary}
	\end{table}
	
	The systematic decay of connection friction at rate $\mathcal{O}(N^{-1/2})$ and quadratic curvature at rate $\mathcal{O}(N^{-1})$ provides the explicit differential-geometric foundation required for next Section, where we axiomatize manifold convergence paradigms (Geometric Priority vs. Statistical Operationalism) and prove the Double Completeness Architecture.

\section{Foundational Axiomatization of Manifold Convergence: Geometric Priority vs. Statistical Operationalism}
\label{sec:module3}


\subsection{The Primary Ontological Question: Geometric Priority vs. Statistical Operationalism}
\label{subsec:ontological_question}


Having established the localized tensor degeneration laws in Section~\ref{sec:module2}—namely, score $1$-form fluctuation normalization ($\mathcal{O}_p(1)$), Fisher metric stabilization ($\mathcal{O}(N^{-1/2})$), Amari connection dissolution ($\mathcal{O}(N^{-1/2})$), and accelerated quadratic curvature annihilation ($\mathcal{O}_p(N^{-1})$)—we arrive at the pivotal architectural threshold of Section \ref{sec:module3}. To synthesize these localized field limits into a unified global theory of manifold convergence $IG_N \to \mathcal{M}_\infty$, we must raise and answer a foundational ontological question:

\begin{quote}
	\textbf{The Primary Ontological Question:} \textit{When defining the asymptotic limit of a sequence of joint information manifolds $IG_N = (\Theta, G^{(N)}, \nabla^{(\alpha,N)})$, which paradigm holds foundational ontological priority? Is differential geometry primary—such that the physical spatial flattening of the Riemannian-affine manifold drives statistical decision efficiency downstream? Or is statistical decision theory primary—such that operational risk condensation under Le Cam deficiency distance is the sole physical reality, reducing differential geometry to an auxiliary mathematical visualization tool?}
\end{quote}

\subsubsection*{1. Formulation of the Ontological Dichotomy}

Establishing the primary position dictates both the causal arrow of explanation and the axiomatic direction of mathematical proof across asymptotic statistics:

\begin{enumerate}[label=\textnormal{(\roman*)}]
	\item \textbf{The Geometric Priority Hypothesis (Way 1):} 
	Grounded in differential geometry and geometric analysis (Cartan, Cheeger, Gromov), this view posits that the statistical manifold $IG_N$ is an objective, physical geometric space. Metric stabilization, parallel transport flattening, and curvature annihilation are intrinsic spatial properties. Under this paradigm, classical statistical limit theorems—such as Local Asymptotic Normality (LAN), the Central Limit Theorem, and Cramér-Rao efficiency—are \textit{downstream operational manifestations} of spatial curvature collapse ($IG_N \xrightarrow{\text{geom}} \mathcal{M}_\infty$).
	
	\item \textbf{The Statistical Operationalism Hypothesis (Way 2):} 
	Grounded in asymptotic decision theory (Le Cam, Wald, van der Vaart), this view asserts that data distributions, decision rules, and loss functions are the sole operational realities. Information manifolds are merely formal coordinate representations constructed via Taylor expansions of log-likelihood ratios. Curvature collapse is not a physical process, but a mathematical byproduct of sample averaging under Le Cam deficiency convergence ($\mathcal{E}_N \xrightarrow{\text{oper}} \mathcal{E}_\infty$).
\end{enumerate}

\begin{table}[h!]
	\centering
	\small
	\begin{tabular}{|p{2.4cm}| p{6.8cm}| p{6.0cm}|}
		\toprule
		\textbf{Dimension} & \textbf{Geometric Priority (Way 1)} & \textbf{Statistical Operationalism (Way 2)} \\ \hline 
		\midrule
		\textbf{Ontological Baseline} & Smooth metric-affine tensor fields on $T_{\theta_0}IG_1$ & Sequence of statistical decision experiments $\mathcal{E}_N$ \\ \hline 
		\addlinespace
		\textbf{Primary Convergence} & Cheeger-Gromov $C^\infty$-manifold collapse: & Le Cam experiment deficiency $\Delta$ distance collapse: \\
		& $\lim_{N \to \infty} \left( \psi_N^* G^{(N)},~ \psi_N^* \nabla^{(\alpha,N)}, ~\psi_N^* \mathcal{R}^{(\alpha,N)} \right) = (g_0, \nabla^{(0)}, 0)~~$ & $~~~~~\lim_{N \to \infty} \Delta\left(\mathcal{E}_N(\mathbb{K}),~ \mathcal{E}_\infty(\mathbb{K})\right) = 0$ \\ \hline 
		\addlinespace
		\textbf{Causal Arrow} & Spatial manifold flattening $\implies$ Risk condensation & Decision risk condensation $\implies$ Tensor field stabilization \\
		\bottomrule
	\end{tabular}
	\caption{Divergent axiomatic deductive paths under alternative primary paradigms.}
	\label{tab:ontological_deductive_paths}
\end{table}

\subsubsection*{2. The Rigorous Mathematical Answer: A Unified Duality Paradigm}

To resolve this foundational question, Section \ref{sec:module3} introduces a radical departure from traditional  precedent in both differential geometry and statistics worlds. Historically, both classical geometers and mathematical statisticians have operated under a biased, reductionist ontological dichotomy:
\begin{itemize}
	\item \textbf{Geometric Monism (The Geometer's Bias):} Classical differential geometers (e.g., Cartan, Cheeger, Gromov) implicitly assumed that Riemannian-affine manifolds represent the primary physical reality, viewing statistical decision rules, estimators, and empirical distributions as downstream computational secondary effects.
	\item \textbf{Statistical Reductionism (The Statistician's Bias):} Asymptotic decision theorists (e.g., Le Cam, Wald, van der Vaart) asserted that decision risk bounds and probability measures are the sole operational reality, dismissing geometric concepts—such as Fisher metrics, Amari connections, and Riemann curvature—as mere auxiliary artifacts of Taylor expansion approximations.
\end{itemize}		
In this paper, we assert that \textbf{this historical dichotomy is fundamentally flawed}. Neither paradigm holds unilateral ontological priority. In Section \ref{sec:module3}, we establish a {\bf Unified Geometric-Operational Duality Paradigm}: differential geometric structure and statistical decision operationalism are non-separable dual manifestations—literally \textit{two sides of the exact same underlying mathematical coin}—governing the asymptotic phase transition $IG_N \to \mathcal{M}_\infty$.
		
		This new ontological synthesis is formalized through the three foundational mathematical pillars of Section \ref{sec:module3}:
		
\begin{itemize}
\item \textbf{Pillar I: Category-Theoretic Duality (Trajectory Duality):} 
	The limit process $IG_N \to \mathcal{M}_\infty$ is an asymptotic phase transition driven by the fluctuation parameter $\beta_N = N^{-1/2}$. Under a category-theoretic lens, the geometric pathway of tensor field collapse in the category of smooth manifolds $\mathbf{Man}$ and the operational pathway of risk deficiency condensation in the Le Cam category $\mathbf{Exp}$ are functorially isomorphic. They trace the exact same evolutionary trajectory viewed through two dual categorical perspectives. \footnote{In category theory, $\mathbf{Man}$ denotes the canonical category whose objects are smooth manifolds and whose morphisms are smooth maps \cite{maclane1998, lee2018}. Analogously, $\mathbf{Exp}$ denotes the Le Cam category of statistical experiments, where objects are statistical models $\mathcal{E} = (\Omega, \mathcal{F}, \{P_\theta\}_{\theta \in \Theta})$ sharing a common parameter space $\Theta$, and morphisms are Markov transitions (randomization operators) that preserve operational decision risk bounds \cite{lecam1986, torgersen1991}.}
			
\item \textbf{Pillar II: Algebraic Equivalence via the Taylor-Geometry Identity:} 
			By the Taylor-Geometry Algebraic Functional Identity (Lemma~\ref{lem:pillar_2_taylor_geometry}), the localized Radon-Nikodym log-likelihood process $\Lambda_N(h)$ driving Le Cam's deficiency distance $\Delta$ is algebraically bound to the pulled-back geometric tensor fields ($r \in \{1,2,3,4\}$):
			\begin{equation}
				\Lambda_N(h) = \sum_{a=1}^d h^a \left[ \psi_N^*(dL^{(N)}) \right]_a(0) - \frac{1}{2} \sum_{i,j=1}^d h^i h^j \left[ \psi_N^* G^{(N)} \right]_{ij}(0) + \mathcal{R}_N(h),
			\end{equation}
			where the non-linear stochastic remainder $\mathcal{R}_N(h)$ is strictly controlled by the pulled-back connection symbols $\widetilde{\Gamma}_{ijk}^{(\alpha,N)}$ and Riemann curvature tensor $\widetilde{\mathcal{R}}_{ijmn}^{(\alpha,N)}$. Statistical likelihood operations and manifold geometry are thus algebraically co-dependent and structurally inseparable.
			
			\item \textbf{Pillar III: The Geometric-Operational Equivalence Theorem:} 
			Because decision deficiency $\Delta$ and Cheeger-Gromov metric-affine tensor collapse are functionally tied through $\Lambda_N(h)$, geometric field stabilization is both \textbf{necessary and sufficient} for operational decision convergence:
			\begin{equation}
				IG_N \xrightarrow{\text{geom}} \mathcal{M}_\infty \quad \Longleftrightarrow \quad IG_N \xrightarrow{\text{oper}} \mathcal{M}_\infty.
			\end{equation}
		\end{itemize}
		
		\textbf{Synthesis:} 
Differential geometry provides the \textit{structural-differential substrate} (the spatial topology, parallel transport, and curvature dynamics), while statistical decision theory provides its \textit{operational-observational measurement} (the decision risk, loss bounds, and estimation efficiency). They are co-primary, intrinsically synchronized, and mutually deterministic.


\subsection{Asymptotic Dual Phase Transition Paradigm: Physical Mechanics and Temperature Analogue}
\label{subsec:mod3_phase_transition_mechanics}

The transition of the sequence of $N$-sample joint information manifolds $IG_N = (\Theta, G^{(N)}, \nabla^{(\alpha,N)})$ onto the flat tangent canvas $\mathcal{M}_\infty$ as $N \to \infty$ is not merely a passive analytic limit of scalar parameters. Instead, it constitutes a genuine, structural {\it dual phase transition} spanning differential geometry and mathematical statistics \cite{amari1985, lecam1986}.

In statistical thermodynamics and quantum field theory \cite{landau1980, goldenfeld1992}, {\it a physical phase transition} is driven by a tuning parameter (e.g., temperature $T \to T_c$), during which a disordered, highly fluctuating macroscopic system undergoes structural condensation, symmetry restoration, and the suppression of non-linear fluctuations. In information geometry, the inverse sample size scaling parameter 
\begin{equation}
	\beta_N \equiv \frac{1}{\sqrt{N}}
	\label{eq:temperature_parameter_def}
\end{equation}
acts as the {\bf fluctuation temperature parameter}, while the large-sample limit $N \to \infty$ represents the {\it thermodynamic limit}.

\begin{definition}[Evolutionary Dynamics vs. Static Target State]
	\label{def:process_vs_state}
	The asymptotic metamorphosis $IG_N \to \mathcal{M}_\infty$ consists of two distinct mathematical entities:
	\begin{enumerate}[label=\textnormal{(\roman*)}]
		\item \textbf{The Limiting Target State (Destination):} The canonical flat tangent space $\mathcal{M}_\infty \equiv (T_{\theta_0} IG_1, g_0, \nabla^{(0)})$, representing a static, non-curved Euclidean-Gaussian workspace where classical decision theory operates.
		\item \textbf{The Asymptotic Evolutionary Dynamic (The Phase Transition Process):} The continuous, $N$-parameterized sequence of geometric-statistical state transformations $\{IG_N\}_{N=1}^\infty$. As the sample size $N$ increases, the scaling factor $\beta_N = 1/\sqrt{N}$ systematically cools the system, quenching non-Euclidean geometric distortions and non-Gaussian statistical fluctuations.
	\end{enumerate}
\end{definition}

As summarized in Table~\ref{tab:dual_phase_transition_mechanics}, this evolutionary process manifests through two dual, mutually synchronized operational perspectives.

\begin{table}[h!]
	\centering
	\small
	\begin{tabular}{|p{3.2cm}|p{5.8cm}|p{5.8cm}|}
		\hline
		\textbf{Physical Feature} & \textbf{Geometric Phase Transition (Way 1)} & \textbf{Statistical Phase Transition (Way 2)} \\ \hline
		\textbf{Microscopic State ($N=1$)} & Curved Riemannian-Affine Manifold $IG_1$ with non-zero curvature $\mathcal{R}^{(\alpha)}$ and path-dependent transport $\Gamma^{(\alpha)}$. & Asymmetric, non-Gaussian finite-sample experiment with non-linear likelihood ratios. \\ \hline
		\textbf{Thermodynamic Tuning} & Localized dilation map $\psi_N(h) = \theta_0 + \frac{h}{\sqrt{N}}$ zooming into tangent space $T_{\theta_0} IG_1$. & Rate scaling $\sqrt{N}(\theta - \theta_0) = h$ isolating contiguous alternative distributions $P_{\theta_0 + h/\sqrt{N}}$. \\ \hline
		\textbf{Cooling Mechanism} & Spatial cotangent contraction scaling $N^{-r/2}$ against $r$-covariant tensor fields. & Central Limit Theorem measure concentration suppressing higher-order cumulants. \\ \hline
		\textbf{Order Parameters} & Connection symbols $\widetilde{\Gamma}^{(\alpha,N)} \sim \mathcal{O}(N^{-1/2})$, Riemann curvature $\widetilde{\mathcal{R}}^{(\alpha,N)} \sim \mathcal{O}_p(N^{-1})$. & Non-linear likelihood remainder process $\mathcal{R}_N(h) \sim \mathcal{O}_p(N^{-1/2})$. \\ \hline
		\textbf{Condensed Target Phase ($N \to \infty$)} & Flat Euclidean space $\mathcal{M}_\infty$ with zero curvature ($\mathcal{R}^{(0)} \equiv 0$) and zero connection friction ($\Gamma^{(0)} \equiv 0$). & Canonical Gaussian Shift Experiment $\mathcal{E}_\infty = \{\mathcal{N}(h, g_0^{-1}) : h \in \mathbb{R}^d\}$ (LAN, Local Asymptotic Normality). \\ \hline
	\end{tabular}
	\caption{Duality Breakdown of the Asymptotic Information-Geometric Phase Transition.}
	\label{tab:dual_phase_transition_mechanics}
\end{table}

\subsection{Axiomatic Formalization of the Dual Transitions: Game Rules and Observables}
\label{subsec:mod3_axioms}

To prevent circular reasoning and establish a sharp mathematical foundation, we formalize the governing rules for both observable manifestations of the phase transition through two self-contained, independent axiomatic paradigms.

\subsubsection{Way 1: The Geometric Priority Paradigm (Cartan-Cheeger-Gromov)}
\label{subsubsec:way1_geometric_priority}

Under Geometric Priority, the information manifold sequence $IG_N$ is treated as an objective sequence of physical-geometric spaces \cite{cheeger1970, gromov2007}. Geometric field stabilization, connection flattening, and curvature annihilation occur intrinsically, independent of downstream decision tasks.

\begin{axiom}[Geometric Priority Axiom]
	\label{ax:geom_priority}
	The sequence of joint information-geometric manifolds $IG_N = (\Theta, G^{(N)}, \nabla^{(\alpha,N)})$ converges geometrically to the canonical flat tangent space $\mathcal{M}_\infty \equiv (T_{\theta_0} IG_1, g_0, \nabla^{(0)})$, denoted by $IG_N \xrightarrow{\text{geom}} \mathcal{M}_\infty$, if and only if the pulled-back tensor fields under the localized microscope map $\psi_N(h) = \theta_0 + \frac{h}{\sqrt{N}}$ satisfy uniform convergence across every compact coordinate cage $\mathbb{K} \subset \mathcal{M}_\infty \cong \mathbb{R}^d$:
	\begin{align}
		\lim_{N \to \infty} \sup_{h \in \mathbb{K}} \left\| \left[\psi_N^* G^{(N)}\right](h) - g_0 \right\|_\infty &= 0 \quad \text{\textnormal{(Uniform Metric Stabilization)}}, \label{eq:ax1_metric_stabilization} \\
		\lim_{N \to \infty} \sup_{h \in \mathbb{K}} \left\| \left[\psi_N^* \nabla^{(\alpha,N)}\right](h) - \nabla^{(0)} \right\|_\infty &= 0 \quad \text{\textnormal{(Uniform Connection Dissolution)}}, \label{eq:ax1_connection_dissolution} \\
		\lim_{N \to \infty} \sup_{h \in \mathbb{K}} \left\| \left[\psi_N^* \mathcal{R}^{(\alpha,N)}\right](h) \right\|_\infty &= 0 \quad \text{\textnormal{(Accelerated Curvature Annihilation)}}. \label{eq:ax1_curvature_annihilation}
	\end{align}
\end{axiom}

\subsubsection{Way 2: The Statistical Operationalism Paradigm (Le Cam Deficiency Distance)}
\label{subsubsec:way2_statistical_operationalism}

Under Statistical Operationalism, decision theory is primary \cite{lecam1960, lecam1986}. Information manifolds are operational representations whose convergence is defined strictly by the uniform convergence of decision-theoretic risk bounds under Le Cam's deficiency distance $\Delta$.

\begin{definition}[Localized Statistical Experiments]
	\label{def:localized_experiments}
	Let $\mathbb{K} \subset \mathcal{M}_\infty \cong \mathbb{R}^d$ be a compact coordinate cage enclosing the origin $h = \mathbf{0}$.
	\begin{enumerate}[label=\textnormal{(\roman*)}]
		\item The {\it empirical localized statistical experiment sequence} $\mathcal{E}_N(\mathbb{K})$ associated with $IG_N$ is defined by:
		\begin{equation}
			\mathcal{E}_N(\mathbb{K}) \equiv \left( \mathcal{X}^N, \, \mathcal{A}^{\otimes N}, \, \left\{ P_{\theta_0 + h/\sqrt{N}}^{(N)} : h \in \mathbb{K} \right\} \right).
			\label{eq:empirical_experiment_def}
		\end{equation}
		\item The canonical Gaussian shift experiment  $\mathcal{E}_\infty(\mathbb{K})$ associated with the target canvas $\mathcal{M}_\infty$ is defined by:
		\begin{equation}
			\mathcal{E}_\infty(\mathbb{K}) \equiv \left( \mathbb{R}^d, \, \mathcal{B}^d, \, \left\{ \mathcal{N}\left(h, \, g_0^{-1}\right) : h \in \mathbb{K} \right\} \right).
			\label{eq:gaussian_experiment_def}
		\end{equation}
	\end{enumerate}
\end{definition}

\begin{axiom}[Statistical Operationalism Axiom]
	\label{ax:stat_oper}
	The sequence of joint information manifolds $IG_N$ converges operationally to the Conventional Statistics workspace $\mathcal{M}_\infty \equiv \text{CS}$, denoted by $IG_N \xrightarrow{\text{oper}} \mathcal{M}_\infty$, if and only if for every compact coordinate cage $\mathbb{K} \subset \mathbb{R}^d$, Le Cam's experiment deficiency distance $\Delta$ between $\mathcal{E}_N(\mathbb{K})$ and $\mathcal{E}_\infty(\mathbb{K})$ vanishes asymptotically \cite{lecam1986, lecam2000}:
	\begin{equation}
		\lim_{N \to \infty} \Delta\left( \mathcal{E}_N(\mathbb{K}), \, \mathcal{E}_\infty(\mathbb{K}) \right) = 0.
		\label{eq:ax2_lecam_deficiency_limit}
	\end{equation}
\end{axiom}

\subsection{The Double Completeness Architecture and Taylor-Geometry Identity}
\label{subsec:mod3_double_completeness}

To unify Way 1 (Geometric Priority) and Way 2 (Statistical Operationalism), first we establish the {\it Double Completeness Architecture}. We prove that Élie Cartan's Structural Determinacy Theorem (Pillar I) and the Taylor-Geometry Algebraic Functional Identity (Pillar II) lock differential geometry and decision theory into a single functional framework.

\begin{proposition}[Pillar I: Differential-Geometric Completeness]
	\label{prop:pillar_1_completeness}
	Let $\mathcal{M}_\infty \equiv (T_{\theta_0} IG_1, g_0, \nabla^{(0)})$ be the target flat space. The fulfillment of Axiom~\ref{ax:geom_priority} uniquely and completely determines the smooth geometric limit of the sequence $\psi_N^*(IG_N)$ in the Cheeger-Gromov $C^\infty$-topology \cite{cheeger1970, gromov2007}:
	\begin{equation}
		\psi_N^*(IG_N) \xrightarrow{C^\infty} \mathcal{M}_\infty \equiv \left( T_{\theta_0} IG_1, \, g_0, \, \nabla^{(0)} \right).
		\label{eq:cheeger_gromov_convergence_limit}
	\end{equation}
	No additional geometric tensors or higher-order structural fields are required to specify the limiting manifold.
\end{proposition}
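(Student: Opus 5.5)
The plan is to read Proposition~\ref{prop:pillar_1_completeness} as two claims, existence and uniqueness of the limit, and to make precise what ``Cheeger--Gromov $C^\infty$-convergence'' means in this setting. Because every $\psi_N^*(IG_N)$ is carried by the same fixed chart $\mathbb{K}\subset\mathbb{R}^d$, no diffeomorphism normalization is needed: the maps $\psi_N$ already act as the comparison diffeomorphisms required by the Cheeger--Gromov definition. Pointed smooth convergence of $(\mathbb{K},\widetilde g_N,\widetilde\nabla_N,\mathbf{0})$ to $(\mathbb{K},g_0,\nabla^{(0)},\mathbf{0})$ then reduces to one concrete requirement. For every compact $\mathbb{K}$ and every order $k\ge 0$, we need $\sup_{\mathbb{K}}|\partial_h^k(\widetilde g_N-g_0)|\to 0$ and $\sup_{\mathbb{K}}|\partial_h^k\widetilde\Gamma^{(\alpha,N)}|\to 0$.

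For existence, the $k=0$ cases are exactly the first two limits of Axiom~\ref{ax:geom_priority}, which are supplied by Theorems~\ref{thm:metric_stabilization} and~\ref{thm:connection_dissolution}. For higher derivatives I would use the explicit pulled-back formulas $\widetilde g_{ij}^{(N)}(h)=g^{(1)}_{ij}(\theta_0+h/\sqrt N)$ and $\widetilde\Gamma^{(\alpha,N)}_{ijk}(h)=N^{-1/2}\Gamma^{(\alpha,1)}_{ijk}(\theta_0+h/\sqrt N)$. By the chain rule,
\[
\partial_h^k\widetilde g^{(N)}=N^{-k/2}(\partial_\theta^k g^{(1)})\circ\psi_N,\qquad \partial_h^k\widetilde\Gamma^{(\alpha,N)}=N^{-(k+1)/2}(\partial_\theta^k\Gamma^{(\alpha,1)})\circ\psi_N .
\]
So each order $k$ gains an extra factor $N^{-k/2}$. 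This is the same mechanism as the valence law of Theorem~\ref{thm:universal_scaling_law}: every $h$-derivative costs a factor $N^{-1/2}$. Once the $\theta$-derivatives are bounded on $\overline{\mathcal U}(\theta_0)$, all $C^k$ seminorms converge. The curvature limit in Axiom~\ref{ax:geom_priority} is then automatic, since $\widetilde{\mathcal R}$ is a polynomial in $\widetilde g^{-1}$, $\widetilde\Gamma$ and $\partial\widetilde\Gamma$. This agrees with Theorem~\ref{thm:curvature_annihilation}.

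For uniqueness and completeness I would argue as follows. The $C^\infty_{\mathrm{loc}}$ topology on pairs (metric, connection) over $\mathbb{K}$ is Hausdorff, so a limit, if it exists, is unique. Any other candidate limit that is isometric-affinely equivalent to this one is identified with $\mathcal M_\infty$ by Lemma~\ref{lem:lemma5_isomorphism}. A Cartan-type structural determinacy argument shows that nothing beyond $(g,\nabla)$ is needed. In a chart, a torsion-free metric-affine structure is fully specified by the components $g_{ij}$ and $\Gamma_{ij}^k$. All derived invariants, namely curvature, covariant derivatives of curvature, and the dual connection $\nabla^{(-\alpha)}$ obtained from $g$ and $\nabla^{(\alpha)}$, are universal polynomial expressions in these components and their derivatives. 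They therefore converge once the components converge in $C^\infty$, and they vanish at the flat limit.

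The main obstacle is regularity. Assumption~\ref{asm:local_regularity} only supplies three $L_2$-derivatives of $\sqrt p$ and third-order moment bounds. Boundedness of $\partial_\theta^k g^{(1)}$ and $\partial^k_\theta\Gamma^{(\alpha,1)}$ for all $k$ is therefore not available. My first attempt would be to state honestly that $C^\infty$ convergence holds under a strengthened $C^\infty$ regularity hypothesis, while the given assumption yields $C^{1}$ convergence of the metric and $C^0$ convergence of the connection. That is exactly the order needed to control curvature through Theorem~\ref{thm:curvature_annihilation}. An alternative would be to exploit the fact that the limit is constant. One could then try interpolating between the uniform $C^0$ bounds and uniform boundedness of higher $h$-derivatives, which still scale as $N^{-k/2}$ wherever those derivatives exist.
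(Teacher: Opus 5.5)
Your argument is correct, and it takes a genuinely different and more rigorous route than the paper. The paper's proof cites Cartan's structural determinacy and lists the three sup-norm limits of Theorems~\ref{thm:metric_stabilization}, \ref{thm:connection_dissolution} and~\ref{thm:curvature_annihilation}. It then invokes Cheeger--Gromov compactness on the strength of ``uniform $C^\infty$-bounds''. But those theorems control only the $k=0$ seminorms, and Cheeger--Gromov theory takes bounds on all derivatives of curvature (and an injectivity-radius bound) as \emph{input} rather than producing them.

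Your fixed-chart reduction makes a compactness theorem unnecessary. The chain-rule identities $\partial_h^k\widetilde g^{(N)}=N^{-k/2}(\partial_\theta^k g^{(1)})\circ\psi_N$ and $\partial_h^k\widetilde\Gamma^{(\alpha,N)}=N^{-(k+1)/2}(\partial_\theta^k\Gamma^{(\alpha,1)})\circ\psi_N$ supply exactly the higher-order control the paper asserts without derivation. They give convergence of the whole sequence in every $C^k$ seminorm, with explicit rates. Your Hausdorff-uniqueness argument, together with the remark that curvature, its covariant derivatives and the dual connection are universal polynomials in the jets of $(g,\Gamma)$, turns the paper's appeal to Cartan into a precise completeness statement. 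The paper's route buys brevity; yours gives an actual proof and an explicit account of the regularity it consumes.

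That account needs one correction. Assumption~\ref{asm:local_regularity}(ii) explicitly declares $g^{(1)}$ smooth on $\mathcal U(\theta_0)$. So $\partial_\theta^k g^{(1)}$ is bounded on a fixed closed ball containing $\psi_N(\mathbb K)$ for all large $N$. Under the stated hypotheses, $\widetilde g^{(N)}\to g_0$ and the $\alpha=0$ (Levi-Civita) symbols $\widetilde\Gamma^{(0,N)}\to 0$ in every $C^k$. Your fallback of $C^1$ convergence for the metric undersells this; even part (i) alone makes $g^{(1)}_{ij}=4\langle\partial_i\sqrt p,\partial_j\sqrt p\rangle_{L_2(\mu)}$ of class $C^2$.

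The genuine shortfall is confined to $\alpha\neq 0$. There $\Gamma^{(\alpha,1)}_{ijk}=\Gamma^{(0,1)}_{ijk}-\tfrac{\alpha}{2}T_{ijk}$ with $T_{ijk}=\mathbb E_\theta[\partial_i\ell\,\partial_j\ell\,\partial_k\ell]$, and nothing in Assumption~\ref{asm:local_regularity} controls the regularity of $T$. So the honest extra hypothesis is $T\in C^\infty$ near $\theta_0$, not full $C^\infty$ regularity of the model.

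Drop the interpolation alternative. Interpolation inequalities presuppose bounds on the higher derivatives they interpolate toward, so they cannot produce $C^\infty$ convergence from finite regularity. Wherever those derivatives exist, your direct chain-rule bound already suffices.
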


\begin{remark}
~
\begin{enumerate}
\item This Proposition (\emph{\bf Pillar I: Differential-Geometric Completeness}) serves as the completeness bridge for Way 1 (\emph{Geometric Priority}). It establishes that satisfying the conditions of the Geometric Priority Axiom is \textbf{necessary and sufficient} to uniquely and completely determine the smooth geometric limit of the sequence of pulled-back information-geometric manifolds $\psi_N^*(IG_N)$ in the Cheeger-Gromov $C^\infty$-topology onto the target canonical flat space $\mathcal{M}_\infty \equiv (T_{\theta_0}IG_1, g_0, \nabla^{(0)})$.

\item \textbf{Mechanism (Élie Cartan's Structural Determinacy Theorem):} A smooth Riemannian-affine manifold $(M, g, \nabla)$ is locally uniquely determined up to isometric affine diffeomorphisms by its metric tensor $g$, its connection symbols $\Gamma$, and its Riemann curvature tensor $\mathcal{R}$.
	
\item \textbf{Implications from this  Proposition:} By combining Axiom 1's \ref{ax:geom_priority} uniform field limits with Cheeger-Gromov compactness, this proposition proves \textbf{structural completeness}: {\it no additional higher-order geometric tensor fields or hidden topological degrees of freedom exist beyond these three fields to specify the limiting manifold $\mathcal{M}_\infty$.}
\end{enumerate}
\end{remark}

\begin{proof}
	By Élie Cartan's Structural Determinacy Theorem \cite{frankel2011, lee2013}, a smooth Riemannian-affine manifold $(M, g, \nabla)$ is locally uniquely determined up to isometric affine diffeomorphisms by its metric tensor field $g$, its Christoffel connection symbols $\Gamma_{ij}^k$, and its Riemann curvature tensor field $\mathcal{R}_{jmn}^i$.
	
	Under High-Order Local Regularity (Assumption~\ref{asm:local_regularity}):
	\begin{enumerate}[label=\arabic*.]
		\item By Theorem~\ref{thm:metric_stabilization}, the pulled-back Fisher metric field $\widetilde{G}^{(N)}(h) \equiv \psi_N^* G^{(N)}(h)$ satisfies $$\sup_{h \in \mathbb{K}} \|\widetilde{G}^{(N)}(h) - g_0\|_\infty = \mathcal{O}(N^{-1/2}) \to 0,$$ establishing uniform metric stabilization to the frozen tensor $g_0 \equiv g^{(1)}(\theta_0)$.
		\item By Theorem~\ref{thm:connection_dissolution}, the pulled-back Amari connection symbols satisfy $$\sup_{h \in \mathbb{K}} |\widetilde{\Gamma}_{ijk}^{(\alpha,N)}(h)| = \mathcal{O}(N^{-1/2}) \to 0,$$ forcing the affine connection to dissolve uniformly into the flat Levi-Civita connection $\nabla^{(0)}$ with zero Christoffel symbols $\Gamma_{ijk}^{(0)} \equiv 0$.
		\item By Theorem~\ref{thm:curvature_annihilation}, the pulled-back Riemann curvature tensor satisfies $$\sup_{h \in \mathbb{K}} \|\widetilde{\mathcal{R}}_{ijmn}^{(\alpha,N)}(h)\|_\infty = \mathcal{O}_p(N^{-1}) \to 0,$$ confirming accelerated quadratic annihilation of all intrinsic non-Euclidean curvature.
	\end{enumerate}
	According to Cheeger-Gromov compactness and convergence theory for smooth Riemannian manifolds \cite{cheeger1970, gromov2007}, uniform $C^\infty$-bounds on metric components $g_{ij}$, connection symbols $\Gamma_{ij}^k$, and curvature components $\mathcal{R}_{jmn}^i$ guarantee that the sequence of pulled-back manifolds $\psi_N^*(IG_N) = (\mathbb{K}, \widetilde{G}^{(N)}, \widetilde{\nabla}^{(\alpha,N)})$ converges smoothly in the Cheeger-Gromov topology to the canonical flat space $\mathcal{M}_\infty$. This completes the proof of Pillar I.
\end{proof}

Now, we prove that the decision-theoretic log-likelihood process $\Lambda_N(h)$ driving Le Cam's deficiency distance $\Delta$ is algebraically built from these exact same four geometric fields.

\begin{lemma}[Pillar II: Taylor-Geometry Algebraic Functional Identity]
	\label{lem:pillar_2_taylor_geometry}
	Under High-Order Local Regularity (Assumption~\ref{asm:local_regularity}), the localized Radon-Nikodym log-likelihood ratio process 
	\begin{equation}
		\Lambda_N(h) \equiv \log \frac{d P_{\theta_0 + h/\sqrt{N}}^{(N)}}{d P_{\theta_0}^{(N)}}(X^N)
		\label{eq:radon_nikodym_ratio_def}
	\end{equation}
	admits an exact algebraic functional representation built exclusively from the four pulled-back geometric tensors ($r \in \{1, 2, 3, 4\}$): \footnote{
		The variable $r \in \{1, 2, 3, 4\}$ appears in the text as a shorthand reference to the tensor rank (or valence) of each of the four pulled-back geometric tensors used to build the representation:
		\begin{itemize}
			\item $r = 1$: The pulled-back score 1-form $\left[\psi_N^*(dL^{(N)})\right]_a$ (1 index)
			
			\item $r = 2$: The pulled-back Fisher metric $\left[\psi_N^* G^{(N)}\right]_{ij}$ (2 indices)
			
			\item $r = 3$: The pulled-back connection $\left[\psi_N^* \nabla^{(\alpha,N)}\right]_{ijk}$ (3 indices)
			
			\item $r = 4$: The pulled-back curvature tensor $\left[\psi_N^* \mathcal{R}^{(\alpha,N)}\right]_{ijmn}$ (4 indices)
		\end{itemize}
	}
	\begin{equation}
		\Lambda_N(h) = \sum_{a=1}^d h^a \left[ \psi_N^*(dL^{(N)}) \right]_a(\mathbf{0}) - \frac{1}{2} \sum_{i,j=1}^d h^i h^j \left[ \psi_N^* G^{(N)} \right]_{ij}(\mathbf{0}) + \mathcal{R}_N(h),
		\label{eq:taylor_geometry_exact_identity}
	\end{equation}
	where $\psi_N^*(dL^{(N)})(\mathbf{0}) \xrightarrow{d} \Delta_\infty \sim \mathcal{N}(\mathbf{0}, g_0)$ is the pulled-back score $1$-form at $h = \mathbf{0}$ (Theorem~\ref{thm:score_transformation}), $\psi_N^* G^{(N)}(\mathbf{0}) = g_0$ is the frozen Fisher metric (Theorem~\ref{thm:metric_stabilization}), and the non-linear stochastic remainder process $\mathcal{R}_N(h)$ satisfies the uniform geometric envelope bound across $\mathbb{K}$:
	\begin{equation}
		\sup_{h \in \mathbb{K}} |\mathcal{R}_N(h)| \le \frac{\|h\|_2^3}{6} \sup_{h \in \mathbb{K}} \left| \left[ \psi_N^* \nabla^{(\alpha,N)} \right]_{ijk}(h) \right| + \mathcal{O}_p\left( \sup_{h \in \mathbb{K}} \left\| \left[ \psi_N^* \mathcal{R}^{(\alpha,N)} \right]_{ijmn}(h) \right\|_\infty \right).
		\label{eq:remainder_geometry_explicit_bound}
	\end{equation}
\end{lemma}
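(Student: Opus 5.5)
The plan is to derive the identity \eqref{eq:taylor_geometry_exact_identity} as a rewriting of the Taylor expansion already carried out in Theorem~\ref{thm:lan_linear_quadratic_proof}, read through the pullback dictionary of Section~\ref{sec:module2}. First, by Lemma~\ref{lem:jacobian_scaffold} and Theorem~\ref{thm:score_transformation}, the linear term $h^\top\Delta_N(\theta_0)$ of \eqref{eq:lan_expansion_exact} equals $\sum_a h^a[\psi_N^*(dL^{(N)})]_a(\mathbf 0)$ exactly, since the coefficients of the pulled-back score $1$-form at the origin are $V^{(N)}=\Delta_N(\theta_0)$. Second, by Theorem~\ref{thm:metric_stabilization} evaluated at $h=\mathbf 0$, $[\psi_N^*G^{(N)}]_{ij}(\mathbf 0)=g^{(1)}_{ij}(\theta_0)=g_{0,ij}$ exactly. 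The quadratic term $-\tfrac12 h^\top g_0 h$ is therefore literally $-\tfrac12 h^ih^j[\psi_N^*G^{(N)}]_{ij}(\mathbf 0)$. Defining $\mathcal R_N(h)$ as $\Lambda_N(h)$ minus these two terms makes \eqref{eq:taylor_geometry_exact_identity} an identity by construction. The distributional statement for the score is Lemma~\ref{lm:normal_limit_lemma}.

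The content is then the bound \eqref{eq:remainder_geometry_explicit_bound}. I would split $\mathcal R_N(h)$ into three parts, using the second-order Taylor expansion \eqref{eq:taylor_log_density} with integral-form third-order remainder:
\begin{enumerate}
\item the Hessian fluctuation $A_N(h)=\tfrac12 h^\top\big(\tfrac1N\sum_n\nabla^2_\theta\log p(X_n;\theta_0)+g_0\big)h$;
\item the mean part $B_N(h)$ of the cubic remainder, obtained by replacing $\partial^3_{ijk}\log p(X_n;\theta)$ by its expectation;
\item the centered cubic fluctuation $C_N(h)$.
\end{enumerate}

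For $B_N$, differentiating the identity $\mathbb E_\theta[\partial_i\partial_j\ell]=-g_{ij}$ under the integral (justified by Assumption~\ref{asm:local_regularity}(i),(iii)) gives
\[
\mathbb E_\theta[\partial_i\partial_j\partial_k\ell]=-\partial_kg_{ij}-\mathbb E_\theta[\partial_i\partial_j\ell\,\partial_k\ell]=-\partial_kg_{ij}-\Gamma^{(1)}_{ij,k}.
\]
By Amari's formula $\Gamma^{(\alpha)}_{ij,k}=\Gamma^{(1)}_{ij,k}+\tfrac{1-\alpha}{2}T_{ijk}$ and $\partial_kg_{ij}=\Gamma^{(0)}_{ki,j}+\Gamma^{(0)}_{kj,i}$, this is a fixed linear combination of $\alpha$-connection symbols. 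After the $h^ih^jh^k/N^{3/2}$ scaling and summation over $n$ (factor $N$), $B_N$ becomes $\tfrac{1}{6}h^ih^jh^k$ times $N\cdot N^{-3/2}\Gamma^{(\cdot,1)}(\theta_0+th/\sqrt N)$. By Theorem~\ref{thm:universal_scaling_law} and Theorem~\ref{thm:connection_dissolution}, this is exactly the pulled-back symbol $[\psi_N^*\nabla^{(\alpha,N)}]_{ijk}$ at an intermediate point. This yields the first term $\tfrac{\|h\|_2^3}{6}\sup_{\mathbb K}|[\psi_N^*\nabla^{(\alpha,N)}]_{ijk}|$, up to a dimensional constant from the index sum, which I would absorb into the norm convention.

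For $A_N$ and $C_N$ I would use the multivariate CLT and Chebyshev's inequality, with the $L^2$ envelope of Assumption~\ref{asm:local_regularity}(iii) and uniformity over the compact $\mathbb K$ via the mean-value envelope $M(X)$ of \eqref{eq:third_order_remainder_bound}. This gives $A_N=\mathcal O_p(N^{-1/2})$ and $C_N=\mathcal O_p(N^{-1})$ uniformly on $\mathbb K$. The $C_N$ rate comes from the $N^{-3/2}$ scaling times a centered sum of order $\sqrt N$, which matches the valence-$4$ rate $N^{1-4/2}$ of Theorem~\ref{thm:curvature_annihilation}. I would write that term as $\mathcal O_p(\sup_{\mathbb K}\|\psi_N^*\mathcal R^{(\alpha,N)}\|_\infty)$ in the sense of matching rates.

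The main obstacle is $A_N$. It is a stochastic term of order $N^{-1/2}$ that is genuinely not geometric: it is the empirical-minus-expected Hessian, the observed-information fluctuation. It is neither dominated by the curvature rate $N^{-1}$ nor algebraically a connection symbol. My first attempt would be to use the dual-connection identity $\mathbb E[\partial_i\partial_j\ell\,\partial_k\ell]=\Gamma^{(1)}_{ij,k}$ to show that the covariance of $A_N$ with the score is connection-controlled, and to absorb $A_N$ into the connection term at the same $N^{-1/2}$ order. Failing that, I would state the envelope bound \eqref{eq:remainder_geometry_explicit_bound} in the weaker form "$\mathcal O_p$ of the connection rate", and flag that the curvature-rate term only captures the cubic fluctuation $C_N$.
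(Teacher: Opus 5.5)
Your diagnosis of $A_N$ is correct. It does not merely block this route: it shows that the envelope bound is false as stated, so the step where you absorb $A_N$ into the connection term cannot succeed. Whenever $h^\top\nabla^2_\theta\log p\,h$ is non-constant, $\sqrt{N}A_N(h)=\tfrac12 h^\top\bigl(N^{-1/2}\sum_n(\nabla^2_\theta\log p(X_n;\theta_0)+g_0)\bigr)h$ has a non-degenerate Gaussian limit. The stated right-hand side is a deterministic $\mathcal{O}(N^{-1/2})$ term plus $\mathcal{O}_p(N^{-1})$, so the inequality fails with probability bounded away from zero.

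A concrete counterexample is the logistic location family $p(x;\theta)=e^{-(x-\theta)}/(1+e^{-(x-\theta)})^2$ with $d=1$. Here $\partial_\theta\ell=\tanh((x-\theta)/2)$ is odd and $\partial_\theta^2\ell=-\tfrac12\operatorname{sech}^2((x-\theta)/2)$ is even and non-constant. Hence $\Gamma^{(1)}_{11,1}=\mathbb{E}[\partial^2_\theta\ell\,\partial_\theta\ell]=0$ and $T_{111}=0$, so $\Gamma^{(\alpha)}\equiv0$ for every $\alpha$, and the curvature vanishes identically because $d=1$. The right-hand side is then $0+\mathcal{O}_p(0)$, while $\sqrt N\sup_{\mathbb K}|\mathcal{R}_N|$ has a non-degenerate limit. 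The covariance you planned to exploit is zero here, yet $A_N$ is not.

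The same example disposes of reading $C_N$ as $\mathcal{O}_p(\sup\|\widetilde{\mathcal{R}}^{(\alpha,N)}\|)$ "by matching rates": $C_N\neq0$ almost surely while the curvature is identically zero. Separately, a uniform $\mathcal{O}_p(N^{-1})$ rate for $C_N$ on $\mathbb{K}$ needs a Lipschitz envelope for $\partial^3\ell$, which goes beyond Assumption~\ref{asm:local_regularity}(iii). With an integrable envelope and continuity you only get $o_p(N^{-1/2})$. The paper's proof has the same hole: it notes that the Hessian-deviation term is $\mathcal{O}_p(N^{-1/2})$ by the law of large numbers, then asserts the envelope bound without that term appearing in it.

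Your $B_N$ step also does not yield the stated form, and the mismatch is not a dimensional constant. Your identity is right. Contracting it with $h^ih^jh^k$ gives $\mathbb{E}[\partial_{ijk}\ell]h^ih^jh^k=-(3\Gamma^{(1)}_{ij,k}+T_{ijk})h^ih^jh^k=-3\Gamma^{(1/3)}_{ij,k}h^ih^jh^k$. The mean cubic term is therefore
\[
-\tfrac12\,[\psi_N^*\nabla^{(1/3,N)}]_{ijk}(\mathbf{0})\,h^ih^jh^k+o(N^{-1/2}),
\]
which is tied to $\alpha=1/3$ with constant $\tfrac12$, not to an arbitrary $\alpha$ with $\tfrac16$. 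For example, in the Poisson family with natural parameter, $\Gamma^{(1)}\equiv0$ and $d=1$. Yet $\mathcal{R}_N(h)=-Ne^{\theta_0}\bigl(e^{h/\sqrt N}-1-h/\sqrt N-h^2/(2N)\bigr)\neq0$ deterministically, so the bound fails for $\alpha=1$.

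What you can actually prove is the following:
\begin{itemize}
\item the identity itself, which holds by construction as you say;
\item $\sup_{\mathbb K}|\mathcal{R}_N|=\mathcal{O}_p(N^{-1/2})$, as in Theorem~\ref{thm:lan_linear_quadratic_proof};
\item the decomposition $\mathcal{R}_N(h)=A_N(h)-\tfrac12[\psi_N^*\nabla^{(1/3,N)}]_{ijk}(\mathbf{0})h^ih^jh^k+o_p(N^{-1/2})$, uniformly on $\mathbb{K}$.
\end{itemize}
In this decomposition $A_N$ is an irreducibly non-geometric observed-minus-expected-information term, and no curvature term appears at order $N^{-1/2}$.

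So drop the covariance attempt and present your fallback, with the corrected cubic term, as the correct version of the lemma. The paper's argument has the same skeleton as yours, but it links the cubic term to the connection only by an unquantified "proportionality" and never isolates $A_N$ or $C_N$. It therefore does not establish the stated bound either.
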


\begin{proof}
	Let $h \in \mathbb{K} \subset \mathbb{R}^d$ be an arbitrary displacement vector. For each observation $X_n$ ($n = 1, \dots, N$), define the single-observation log-likelihood function $l_n(\theta) \equiv \log p(X_n; \theta)$. The joint localized log-likelihood ratio process is:
	\begin{equation}
		\Lambda_N(h) = \sum_{n=1}^N \left( l_n\left(\theta_0 + \frac{h}{\sqrt{N}}\right) - l_n(\theta_0) \right).
		\label{eq:lambda_n_sum_proof}
	\end{equation}
	
	Performing a third-order multivariate Taylor expansion of $l_n\left(\theta_0 + \frac{h}{\sqrt{N}}\right)$ around $h = \mathbf{0}$:
	\begin{align}
		l_n\left(\theta_0 + \frac{h}{\sqrt{N}}\right) - l_n(\theta_0) &= \frac{1}{\sqrt{N}} \sum_{a=1}^d h^a \frac{\partial l_n(\theta_0)}{\partial \theta^a} + \frac{1}{2N} \sum_{a,b=1}^d h^a h^b \frac{\partial^2 l_n(\theta_0)}{\partial \theta^a \partial \theta^b} \nonumber \\
		&\quad + \frac{1}{6 N^{3/2}} \sum_{a,b,c=1}^d h^a h^b h^c \int_0^1 3(1-t)^2 \frac{\partial^3 l_n\left(\theta_0 + \frac{t h}{\sqrt{N}}\right)}{\partial \theta^a \partial \theta^b \partial \theta^c} dt.
		\label{eq:taylor_expansion_third_order}
	\end{align}
	
	Summing Equation~\eqref{eq:taylor_expansion_third_order} over $n = 1, \dots, N$ and substituting the pulled-back score $1$-form expression from Theorem~\ref{thm:score_transformation}:
	\begin{equation}
		\sum_{n=1}^N \frac{1}{\sqrt{N}} \sum_{a=1}^d h^a \frac{\partial l_n(\theta_0)}{\partial \theta^a} = \sum_{a=1}^d h^a \left[ \psi_N^*(dL^{(N)}) \right]_a(\mathbf{0}).
		\label{eq:score_term_match}
	\end{equation}
	
	For the second-order term, adding and subtracting the expected Hessian matrix $\mathbb{E}_{\theta_0}\left[\frac{\partial^2 l_n(\theta_0)}{\partial \theta^a \partial \theta^b}\right] = -g_{ab}^{(1)}(\theta_0)$:
	\begin{equation}
		\small 
		\frac{1}{2N} \sum_{n=1}^N \sum_{a,b=1}^d h^a h^b \frac{\partial^2 l_n(\theta_0)}{\partial \theta^a \partial \theta^b} = -\frac{1}{2} \sum_{a,b=1}^d h^a h^b g_{ab}^{(1)}(\theta_0) + \frac{1}{2} \sum_{a,b=1}^d h^a h^b \left( \frac{1}{N} \sum_{n=1}^N \frac{\partial^2 l_n(\theta_0)}{\partial \theta^a \partial \theta^b} + g_{ab}^{(1)}(\theta_0) \right).
		\label{eq:hessian_term_split}
	\end{equation}
	By Theorem~\ref{thm:metric_stabilization}, $g_{ab}^{(1)}(\theta_0) = [\psi_N^* G^{(N)}]_{ab}(\mathbf{0})$. The second term in Equation~\eqref{eq:hessian_term_split} converges to zero in probability at rate $\mathcal{O}_p(N^{-1/2})$ by the Law of Large Numbers.
	
	For the third-order term, Amari's $\alpha$-connection Christoffel symbols of the first kind on $IG_1$ are defined by \cite{amari1985, amari2000}:
	\begin{equation}
		\small 
		\Gamma_{abc}^{(\alpha, 1)}(\theta) = \mathbb{E}_\theta \left[ \frac{\partial^2 \log p(X;\theta)}{\partial \theta^a \partial \theta^b} \frac{\partial \log p(X;\theta)}{\partial \theta^c} \right] + \frac{1-\alpha}{2} \mathbb{E}_\theta \left[ \frac{\partial \log p(X;\theta)}{\partial \theta^a} \frac{\partial \log p(X;\theta)}{\partial \theta^b} \frac{\partial \log p(X;\theta)}{\partial \theta^c} \right].
		\label{eq:amari_christoffel_def}
	\end{equation}
	Under the pulled-back connection scaling law from Theorem~\ref{thm:connection_dissolution}, $$[\psi_N^* \nabla^{(\alpha,N)}]_{abc}(h) = \frac{1}{\sqrt{N}} \Gamma_{abc}^{(\alpha,1)}\left(\theta_0 + \frac{h}{\sqrt{N}}\right).$$ Thus, the expected third-order log-density derivative is directly proportional to the pulled-back connection symbols $\widetilde{\Gamma}_{abc}^{(\alpha,N)}(h)$.
	
	Finally, non-Euclidean path integrability distortions and parallel transport holonomy are governed by the Riemann curvature tensor field $\widetilde{\mathcal{R}}_{ijmn}^{(\alpha,N)}(h) \equiv [\psi_N^* \mathcal{R}^{(\alpha,N)}]_{ijmn}(h)$ via the Frobenius Integrability Theorem \cite{frankel2011}. By Theorem~\ref{thm:curvature_annihilation}, these curvature distortions decay at the accelerated quadratic rate $\mathcal{O}_p(N^{-1})$.
	
	Combining Equations~\eqref{eq:score_term_match}--\eqref{eq:amari_christoffel_def} and taking the supremum over the compact cage $\mathbb{K}$ yields the exact functional identity Equation~\eqref{eq:taylor_geometry_exact_identity} and remainder bound Equation~\eqref{eq:remainder_geometry_explicit_bound}, completing the proof of Pillar II.
\end{proof}

\begin{remark}[{\bf The Double Completeness Architecture}]
	\label{rem:double_completeness_architecture}
	The unification of Pillar I (Differential-Geometric Completeness) and Pillar II (Taylor-Geometry Algebraic Functional Identity) establishes the {\it Double Completeness Architecture}:
	\begin{itemize}
		\item {\bf Pillar I} guarantees that the four pulled-back geometric tensor fields ($r \in \{1, 2, 3, 4\}$) completely determine the asymptotic differential geometry of the manifold sequence $\psi_N^*(IG_N)$.
		\item {\bf Pillar II} guarantees that the same four pulled-back geometric tensor fields completely determine the asymptotic log-likelihood process $\Lambda_N(h)$ and decision-theoretic risk bounds of the statistical experiment sequence $\mathcal{E}_N(\mathbb{K})$.
	\end{itemize}
	{\it Consequently, no ``fifth'' degree of freedom, hidden topological anomaly, or unobserved geometric distortion exists outside these four fields.}
\end{remark}

\section{Proof of the Geometric-Operational Equivalence Theorem to prove Pillar III}
\label{sec:equivalence_proof}

In Section~\ref{sec:module3}, we established the dual axiomatic framework for manifold convergence: \textbf{Way 1 (Geometric Priority, Axiom~\ref{ax:geom_priority})} and \textbf{Way 2 (Statistical Operationalism, Axiom~\ref{ax:stat_oper})}. In this section, we present the rigorous, step-by-step mathematical proof of the {\it Geometric-Operational Equivalence Theorem}  (Theorem~\ref{thm:geom_oper_equivalence}), establishing that geometric curvature collapse and statistical decision risk condensation are two mathematically equivalent projections of a single underlying phase transition.

\begin{lemma}[Le Cam Experiment Deficiency Limit Lemma]
	\label{lem:lecam_deficiency_equivalence}

~	
	
Let $\mathcal{E}_N(\mathbb{K}) = (\mathcal{X}^N, \mathcal{A}^{\otimes N}, \{P_{\theta_0 + h/\sqrt{N}}^{(N)} : h \in \mathbb{K}\})$ be the localized statistical experiment sequence and $\mathcal{E}_\infty(\mathbb{K}) = (\mathbb{R}^d, \mathcal{B}^d, \{\mathcal{N}(h, g_0^{-1}) : h \in \mathbb{K}\})$ be the limit Gaussian shift experiment on a compact set $\mathbb{K} \subset \mathbb{R}^d$. 

If the localized log-likelihood ratio process $\Lambda_N(h) \equiv \log \frac{d P_{\theta_0 + h/\sqrt{N}}^{(N)}}{d P_{\theta_0}^{(N)}}$ satisfies uniform stochastic expansion over $\mathbb{K}$:
\begin{equation}
	\Lambda_N(h) = h^\top \Delta_N(\theta_0) - \frac{1}{2} h^\top g_0 h + o_{P_{\theta_0}^{(N)}}(1), \quad \text{with } \Delta_N(\theta_0) \xrightarrow{d} \mathcal{N}(0, g_0),
	\label{eq:lan_stochastic_expansion_lemma}
\end{equation}
then the probability measures $P_{\theta_0 + h/\sqrt{N}}^{(N)}$ and $P_{\theta_0}^{(N)}$ are mutually contiguous ($P_{\theta_0 + h/\sqrt{N}}^{(N)} \triangleleft \triangleright P_{\theta_0}^{(N)}$),\footnote{Two sequences of probability measures $\{P_N\}$ and $\{Q_N\}$ are \textit{mutually contiguous} (denoted $P_N \triangleleft \triangleright Q_N$) if for any sequence of measurable events $A_N$, $P_N(A_N) \to 0$ if and only if $Q_N(A_N) \to 0$ as $N \to \infty$. In asymptotic statistics, mutual contiguity serves as an asymptotic analogue to mutual absolute continuity (equivalence of measures), guaranteeing that asymptotically negligible events under $P_{\theta_0}^{(N)}$ remain asymptotically negligible under $P_{\theta_0 + h/\sqrt{N}}^{(N)}$, and vice versa.} and the Le Cam experiment deficiency distance converges to zero: \footnote{The Le Cam experiment deficiency distance $\Delta(\mathcal{E}, \mathcal{F}) \equiv \max\{\delta(\mathcal{E}, \mathcal{F}), \delta(\mathcal{F}, \mathcal{E})\}$ measures the operational distance between two statistical experiments $\mathcal{E} = (\mathcal{X}, \mathcal{A}, \{P_\theta : \theta \in \Theta\})$ and $\mathcal{F} = (\mathcal{Y}, \mathcal{B}, \{Q_\theta : \theta \in \Theta\})$. The deficiency $\delta(\mathcal{E}, \mathcal{F}) = \inf_K \sup_{\theta \in \Theta} \Vert{}K P_\theta - Q_\theta\Vert{}_{\text{TV}}$ quantifies the minimal worst-case total variation error in approximating the distributions of $\mathcal{F}$ by applying a Markov transition kernel (randomization operator) $K$ to $\mathcal{E}$. A vanishing deficiency distance ($\Delta(\mathcal{E}_N, \mathcal{E}_\infty) \to 0$) guarantees that any statistical decision rule in $\mathcal{E}_N$ can be asymptotically matched by a rule in $\mathcal{E}_\infty$ with identical risk bounds under arbitrary bounded loss functions.}
\begin{equation}
	\lim_{N \to \infty} \Delta\left( \mathcal{E}_N(\mathbb{K}), \, \mathcal{E}_\infty(\mathbb{K}) \right) = 0.
	\label{eq:lecam_deficiency_zero_lemma}
\end{equation}
\end{lemma}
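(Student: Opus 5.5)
The plan has two parts: first contiguity via Le Cam's First Lemma, then deficiency convergence via Le Cam's convergence-of-experiments theory. The deficiency part goes through finite-dimensional likelihood-ratio convergence and is upgraded to uniformity over the compact cage $\mathbb{K}$ using equicontinuity in $h$.

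\textbf{Step 1 (contiguity).} Fix $h\in\mathbb{K}$. By the assumed expansion \eqref{eq:lan_stochastic_expansion_lemma} and Slutsky's lemma,
\[
\Lambda_N(h)\xrightarrow{d}\mathcal{N}\!\left(-\tfrac12 h^\top g_0 h,\; h^\top g_0 h\right)
\quad\text{under } P^{(N)}_{\theta_0}.
\]
A normal law $\mathcal{N}(-\sigma^2/2,\sigma^2)$ satisfies $\mathbb{E}\,e^{Z}=1$. Le Cam's First Lemma therefore gives $P^{(N)}_{\theta_0+h/\sqrt N}\triangleleft P^{(N)}_{\theta_0}$. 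Its other half gives the reverse contiguity, because the limit puts no mass at $-\infty$.

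\textbf{Step 2 (finite-dimensional convergence of experiments).} Take finitely many points $h_1,\dots,h_k\in\mathbb{K}$. Apply the multivariate CLT jointly to $\Delta_N(\theta_0)$; by linearity in $h$, the vector $(\Lambda_N(h_1),\dots,\Lambda_N(h_k))$ converges under $P^{(N)}_{\theta_0}$ to $(h_j^\top Z-\frac12 h_j^\top g_0 h_j)_{j}$ with $Z\sim\mathcal{N}(0,g_0)$. This is exactly the law of the log-likelihood ratios of the Gaussian shift $\mathcal{N}(h,g_0^{-1})$ at $h=0$, with sufficient statistic $Y=g_0^{-1}Z$. Le Cam's Third Lemma, together with Step 1, transfers this to every base point $h_j$. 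Le Cam's theorem on weak convergence of experiments (van der Vaart, Thm.~9.4 / Le Cam 1986) then gives $\Delta(\mathcal{E}_N(F),\mathcal{E}_\infty(F))\to0$ for every finite $F\subset\mathbb{K}$, since the parameter set is finite and both experiments are dominated.

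\textbf{Step 3 (upgrade to compact $\mathbb{K}$; the main obstacle).} Deficiency over an infinite parameter set is not implied by finite-dimensional convergence alone. I would obtain the needed equicontinuity from the total variation bound
\[
\|P^{(N)}_{\theta_0+h/\sqrt N}-P^{(N)}_{\theta_0+h'/\sqrt N}\|_{TV}\le 2\sqrt{N}\,H(P_{\theta_0+h/\sqrt N},P_{\theta_0+h'/\sqrt N}).
\]
Under Assumption~\ref{asm:local_regularity}(i), $L_2$-differentiability of $\sqrt{p}$, the right-hand side is bounded by $C\|h-h'\|$ uniformly in $N$. The corresponding bound holds trivially for the Gaussian shift, where $\|\mathcal{N}(h,g_0^{-1})-\mathcal{N}(h',g_0^{-1})\|_{TV}\le C\|h-h'\|$.

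Given $\varepsilon>0$, choose a finite $\varepsilon$-net $F$ of $\mathbb{K}$. Take the Markov kernel $K_N$ that is nearly optimal for $F$ in Step 2 and apply it at every $h$ via the nearest net point $h_F$. The triangle inequality then gives
\[
\sup_{h\in\mathbb{K}}\|K_NP_h-Q_h\|\le \delta_N(F)+2C\varepsilon,
\]
and symmetrically for the reverse deficiency. Letting $N\to\infty$ and then $\varepsilon\to0$ yields \eqref{eq:lecam_deficiency_zero_lemma}.

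The delicate point is that the kernel found for the net $F$ does not depend on $h$, so the interpolation over $\mathbb{K}$ is legitimate. The Lipschitz continuity in $h$, uniform in $N$, must come from the Hellinger differentiability in the regularity assumption, and not from the $o_p(1)$ expansion alone.
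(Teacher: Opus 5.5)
Your proof is correct and has the same skeleton as the paper's: contiguity by Le Cam's First Lemma, $\Delta$-convergence on finite subsets, and extension to $\mathbb{K}$ through a finite net at cost $C\varepsilon$. Two of the justifications differ.

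In Step~1 you use both halves of the First Lemma: $\mathbb{E}e^{Z}=1$ gives $P^{(N)}_{\theta_0+h/\sqrt N}\triangleleft P^{(N)}_{\theta_0}$, and almost-sure positivity of the limiting ratio gives the converse. The paper infers \emph{mutual} contiguity from the mean-one condition alone, which yields only one direction.

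In Step~3 the paper only gestures at continuity of $h\mapsto h^\top g_0h$ and the uniform $o_p(1)$ remainder. You instead supply an explicit total-variation Lipschitz bound, uniform in $N$, from $H(P^{\otimes N},Q^{\otimes N})\le\sqrt N\,H(P,Q)$ and the $L_2$-differentiability of $\sqrt p$, and you pair it with the TV-contractivity of a single $h$-independent kernel. This makes the interpolation rigorous and quantitative. The price is that you import the i.i.d.\ product structure and Assumption~\ref{asm:local_regularity}(i), which hold in the paper's setting but are not hypotheses of the lemma.

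Your closing remark, that equicontinuity cannot come from the expansion alone, is too strong. The net argument needs only asymptotic equicontinuity, and that follows from the expansion itself because it is assumed uniform over $\mathbb{K}$. Take $h_N\to h$ and $h_N'\to h'$ in $\mathbb{K}$. Under $P^{(N)}_{\theta_0}$ the ratios $e^{\Lambda_N(h_N)},e^{\Lambda_N(h_N')}$ converge jointly in law to the Gaussian-shift ratios. They are uniformly integrable, since their means tend to $1$ (i.e.\ the singular parts vanish) by contiguity. Hence $\|P^{(N)}_{\theta_0+h_N/\sqrt N}-P^{(N)}_{\theta_0+h_N'/\sqrt N}\|_{\mathrm{TV}}\to\|\mathcal{N}(h,g_0^{-1})-\mathcal{N}(h',g_0^{-1})\|_{\mathrm{TV}}$, and a compactness/subsequence argument gives the required bound.

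That is the route implicit in the paper's Step~3, and it proves the lemma for any uniformly LAN sequence. Your route is tied to the regular i.i.d.\ model but gives an explicit non-asymptotic modulus.
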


\begin{proof}
The proof proceeds in three rigorous measure-theoretic steps:

\paragraph{Step 1: Contiguity Verification via Le Cam's First Lemma.}
From Equation \eqref{eq:lan_stochastic_expansion_lemma}, under $P_{\theta_0}^{(N)}$, the scalar random variable $\Lambda_N(h)$ converges weakly to a univariate Gaussian distribution:
\begin{equation}
	\Lambda_N(h) \xrightarrow{d} \Lambda_\infty(h) \sim \mathcal{N}\left( -\frac{1}{2} \sigma^2, \, \sigma^2 \right), \quad \text{where } \sigma^2 \equiv h^\top g_0 h.
\end{equation}
The expectation of the limiting Radon-Nikodym derivative $L(h) \equiv \exp(\Lambda_\infty(h))$ satisfies:
\begin{equation}
	\mathbb{E}\left[ e^{\Lambda_\infty(h)} \right] = \exp\left( -\frac{1}{2}\sigma^2 + \frac{1}{2}\sigma^2 \right) = e^0 = 1.
\end{equation}
By Le Cam's First Lemma, an expectation of 1 for the limit likelihood ratio guarantees that the sequence $P_{\theta_0 + h/\sqrt{N}}^{(N)}$ is mutually contiguous with $P_{\theta_0}^{(N)}$ ($\forall h \in \mathbb{K}$).

\paragraph{Step 2: Finite-Dimensional Convergence of Experiments.}
Let $\{h_1, h_2, \dots, h_k\} \subset \mathbb{K}$ be any finite collection of displacement vectors. The joint log-likelihood vector satisfies:
\begin{equation}
	\begin{pmatrix} \Lambda_N(h_1) \\ \vdots \\ \Lambda_N(h_k) \end{pmatrix} \xrightarrow{d} \begin{pmatrix} h_1^\top Z - \frac{1}{2} h_1^\top g_0 h_1 \\ \vdots \\ h_k^\top Z - \frac{1}{2} h_k^\top g_0 h_k \end{pmatrix} \quad \text{under } P_{\theta_0}^{(N)}, \quad Z \sim \mathcal{N}(0, g_0).
\end{equation}
By Le Cam's Representation Theorem, finite-dimensional weak convergence of log-likelihood ratio processes implies that the finite sub-experiments $\mathcal{E}_N(\{h_1, \dots, h_k\})$ converge in deficiency distance to $\mathcal{E}_\infty(\{h_1, \dots, h_k\})$.

\paragraph{Step 3: Uniform Compact Extension via Local Randomization.}
Because $\mathbb{K}$ is compact, for every $\epsilon > 0$, there exists a finite $\delta$-grid $\mathbb{K}_\delta = \{h_1, \dots, h_m\} \subset \mathbb{K}$ such that $\sup_{h \in \mathbb{K}} \min_{1 \le j \le m} \|h - h_j\| < \delta$.
The uniform continuity of the Gaussian covariance mapping $h \mapsto h^\top g_0 h$ together with $o_{P_{\theta_0}^{(N)}}(1)$ stochastic remainder bounds over $\mathbb{K}$ establishes that the randomization Markov kernels $K_N: \mathcal{X}^N \to \mathcal{P}(\mathbb{R}^d)$ constructed on the finite grid extend uniformly to the full compact set $\mathbb{K}$:
\begin{equation}
	\lim_{N \to \infty} \Delta\left( \mathcal{E}_N(\mathbb{K}), \, \mathcal{E}_\infty(\mathbb{K}) \right) \le \limsup_{N \to \infty} \Delta\left( \mathcal{E}_N(\mathbb{K}_\delta), \, \mathcal{E}_\infty(\mathbb{K}_\delta) \right) + C \delta = C \delta.
\end{equation}
Taking $\delta \to 0$ proves Equation \eqref{eq:lecam_deficiency_zero_lemma}.
\end{proof}

\begin{lemma}[Connection Dissolution Identity and Asymptotic Flatness]
	\label{lem:connection_dissolution}
	Let $(M, g, \nabla^{(\alpha)})$ be a smooth $d$-dimensional information manifold with $N$-sample log-likelihood $L^{(N)}(\theta) = \sum_{m=1}^N \ell(X_m; \theta)$ for i.i.d. observations $X_1, \dots, X_N \sim P_{\theta_0}$. Define the local scaling map $\psi_N: \mathbb{K} \to M$ on a compact set $\mathbb{K} \subset \mathbb{R}^d$ by $\psi_N(h) = \theta_0 + \frac{h}{\sqrt{N}}$, and let $\Lambda_N(h) \equiv L^{(N)}\left(\theta_0 + \frac{h}{\sqrt{N}}\right) - L^{(N)}(\theta_0)$ denote the localized log-likelihood ratio process.
	
	Under High-Order Local Regularity (Assumption \ref{asm:local_regularity}), the pulled-back $\alpha$-connection $3$-tensor field $\left[\psi_N^* \nabla^{(\alpha,N)}\right]_{ijk}(h)$ satisfies:
	\begin{enumerate}
		\item \textbf{Explicit Scaling Representation:} For all $h \in \mathbb{K}$,
		\begin{equation}
			\left[\psi_N^* \nabla^{(\alpha,N)}\right]_{ijk}(h) = \frac{1}{\sqrt{N}} \Gamma_{ijk}^{(\alpha,1)}\left(\theta_0 + \frac{h}{\sqrt{N}}\right),
		\end{equation}
		where $\Gamma_{ijk}^{(\alpha,1)}(\theta)$ denotes the single-sample $\alpha$-connection Christoffel symbol of the first kind.
		
		\item \textbf{Asymptotic Connection Dissolution:} As $N \to \infty$, the pulled-back connection vanishes uniformly on compact sets to the flat Euclidean connection $\nabla^{(0)}$:
		\begin{equation}
			\lim_{N \to \infty} \sup_{h \in \mathbb{K}} \left| \left[\psi_N^* \nabla^{(\alpha,N)}\right]_{ijk}(h) \right| = 0 \equiv \Gamma_{ijk}^{(0)}.
		\end{equation}
		
		\item \textbf{Third-Order Cumulant Identity:} The third derivative of the expected localized log-likelihood ratio evaluated at $h = 0$ satisfies:
		\begin{equation}
			\left. \frac{\partial^3 \mathbb{E}_{\theta_0}[\Lambda_N(h)]}{\partial h^i \partial h^j \partial h^k} \right|_{h=0} = -\sqrt{N} \cdot \left[\psi_N^* \nabla^{(\alpha,N)}\right]_{ijk}(0) = -\Gamma_{ijk}^{(\alpha,1)}(\theta_0).
		\end{equation}
	\end{enumerate}
\end{lemma}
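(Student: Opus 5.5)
Items 1 and 2 are essentially restatements of Theorem~\ref{thm:connection_dissolution}, so I would dispatch them first. For item 1, I would use the $i.i.d.$ additivity $\Gamma^{(\alpha,N)}_{abc}(\theta)=N\,\Gamma^{(\alpha,1)}_{abc}(\theta)$. This holds because each expectation in \eqref{eq:amari_christoffel_def} is a joint cumulant of additive score and Hessian sums, and cross terms between independent observations vanish by $\mathbb{E}_\theta[\partial_a\ell]=0$, exactly as in Lemma~\ref{lem:additivity_fisher_metric}. I would then contract against three copies of the Jacobian scaffold of Lemma~\ref{lem:jacobian_scaffold}, i.e.\ apply Theorem~\ref{thm:universal_scaling_law} with $r=3$. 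For item 2, Assumption~\ref{asm:local_regularity} bounds $\Gamma^{(\alpha,1)}$ on $\overline{\mathcal U}(\theta_0)$ by some $M_\Gamma$. Since $\psi_N(\mathbb K)\subset\mathcal U(\theta_0)$ for $N$ large, this gives $\sup_{\mathbb K}|\cdot|\le M_\Gamma/\sqrt N\to0$.

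For item 3, I would compute directly. Set $\phi(\theta)\equiv\mathbb{E}_{\theta_0}[\ell(X;\theta)]$. Then
\[
\mathbb{E}_{\theta_0}[\Lambda_N(h)]=N\bigl(\phi(\theta_0+h/\sqrt N)-\phi(\theta_0)\bigr)=-N\,\mathrm{KL}(P_{\theta_0}\|P_{\theta_0+h/\sqrt N}).
\]
Differentiation under the integral is justified by the third-order envelope bound \eqref{eq:third_order_bound}. The chain rule then gives
\[
\partial^3_{h^ih^jh^k}\mathbb{E}_{\theta_0}[\Lambda_N]\big|_{h=0}=N\cdot N^{-3/2}\,\mathbb{E}_{\theta_0}[\partial_{ijk}\ell(X;\theta_0)].
\]
Next, I would express $\mathbb{E}_{\theta_0}[\partial_{ijk}\ell]$ through connection symbols. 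Differentiating the Bartlett identity $\mathbb{E}_\theta[\partial_{ij}\ell]=-g_{ij}$ in $\theta^k$ gives
\[
\mathbb{E}[\partial_{ijk}\ell]=-\partial_kg_{ij}-\Gamma^{(1)}_{ij,k}.
\]
Differentiating $g_{ij}=\mathbb{E}[\partial_i\ell\,\partial_j\ell]$ gives
\[
\partial_kg_{ij}=\Gamma^{(1)}_{ki,j}+\Gamma^{(1)}_{kj,i}+T_{ijk}.
\]
Combining these and using $\Gamma^{(-1)}=\Gamma^{(1)}+T$ from \eqref{eq:amari_christoffel_def} yields the exact identity
\[
\mathbb{E}_{\theta_0}[\partial_{ijk}\ell]=-\bigl(\Gamma^{(1)}_{ij,k}+\Gamma^{(1)}_{ik,j}+\Gamma^{(-1)}_{jk,i}\bigr)(\theta_0).
\]
This is a symmetrized combination of $\alpha$-connection symbols, in which $\alpha$ only shifts the weight on $T$. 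Finally, I would rewrite each single-sample symbol as $\sqrt N$ times the pulled-back symbol at $h=0$, using item 1.

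The main obstacle is reconciling this honest computation with the displayed normalization. The computation produces the factor $N^{-1/2}\,\mathbb{E}[\partial^3\ell]=-N^{-1/2}\,(\text{sym. }\Gamma)(\theta_0)$, which equals minus a symmetrized sum of $[\psi_N^*\nabla^{(\alpha,N)}](0)$ rather than $-\sqrt N\,[\psi_N^*\nabla](0)$. It also involves a three-term symmetrized combination rather than a single $\Gamma^{(\alpha,1)}_{ijk}$. I would therefore first state and prove the exact version above, which still reads as ``the third cumulant of $\Lambda_N$ is the pulled-back connection'' and vanishes at rate $N^{-1/2}$. I would then recover the displayed form by adopting two conventions: (a) $\Gamma^{(\alpha,1)}_{ijk}$ denotes the totally symmetrized cubic form contracted with $h^ih^jh^k$, which is the only part that enters $\Lambda_N$; and (b) the third derivative is taken of the unlocalized quantity $\sqrt N\,\mathbb{E}[\Lambda_N]$, equivalently in the $\theta$-scale. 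I would flag explicitly that without these conventions the literal equality fails by a factor $\sqrt N$ and by the symmetrization.
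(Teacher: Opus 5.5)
Your handling of items 1 and 2 is the paper's own argument: additivity $\Gamma^{(\alpha,N)}_{abc}=N\,\Gamma^{(\alpha,1)}_{abc}$, contraction with three Jacobian factors $N^{-1/2}\delta^a_i$, and then the bound $M_\Gamma/\sqrt N$ once $\psi_N(\mathbb{K})\subset\mathcal{U}(\theta_0)$. Your cumulant justification of the additivity is more explicit than the paper's bare assertion.

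For item 3, your computation is correct, and it shows that item 3 is false as stated, not merely mis-normalized. The left side $\partial^3_h\mathbb{E}_{\theta_0}[\Lambda_N]|_{h=0}=N^{-1/2}\,\mathbb{E}_{\theta_0}[\partial_{ijk}\ell]$ is $O(N^{-1/2})$. The right side $-\sqrt N\,[\psi_N^*\nabla^{(\alpha,N)}]_{ijk}(0)=-\Gamma^{(\alpha,1)}_{ijk}(\theta_0)$ does not depend on $N$. Take the Poisson family in its natural parameter, $\ell=x\theta-e^{\theta}$. Then $\ell'''=-e^{\theta}$, $\Gamma^{(1)}_{11,1}=0$, $T_{111}=e^{\theta}$ and $\Gamma^{(\alpha)}_{11,1}=\tfrac{1-\alpha}{2}e^{\theta}$. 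The display becomes $-N^{-1/2}e^{\theta_0}=-\tfrac{1-\alpha}{2}e^{\theta_0}$, which cannot hold for all $N$ for any $\alpha$.

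The paper reaches the display only through three errors:
\begin{enumerate}
\item It asserts $\Gamma^{(\alpha,1)}_{ijk}=-\mathbb{E}[\partial_{ijk}\ell]+\tfrac{1-\alpha}{2}T_{ijk}$, which contradicts its own definition \eqref{eq:amari_christoffel_def}. Your Bartlett computation shows the correct relation carries an extra $-\partial_k g_{ij}$.
\item It then uses $\mathbb{E}[\partial_{ijk}\ell]=-\Gamma^{(\alpha,1)}_{ijk}$ ``at $\alpha=1$'', which the Poisson example refutes.
\item Its final display turns $-N^{-1/2}\Gamma^{(\alpha,1)}_{ijk}(\theta_0)$ into $-\sqrt N\,[\psi_N^*\nabla^{(\alpha,N)}]_{ijk}(0)$. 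By item 1, the former equals $-[\psi_N^*\nabla^{(\alpha,N)}]_{ijk}(0)$.
\end{enumerate}
So the discrepancy you flag is real, and it sits unacknowledged in the paper's proof.

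The step of your proposal that does not work is the final recovery by convention. Convention (b) only rescales the left-hand side and leaves the statement's middle term untouched. Convention (a) cannot work for general $\alpha$. Rewriting your identity via $\Gamma^{(\alpha)}=\Gamma^{(1)}+\tfrac{1-\alpha}{2}T$ gives $\mathbb{E}[\partial_{ijk}\ell]=-\bigl(\Gamma^{(\alpha)}_{ij,k}+\Gamma^{(\alpha)}_{ik,j}+\Gamma^{(\alpha)}_{jk,i}\bigr)+\tfrac{1-3\alpha}{2}T_{ijk}$. The left side does not depend on $\alpha$, and the residual $\tfrac{1-3\alpha}{2}T_{ijk}$ vanishes in general only at $\alpha=1/3$. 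Reading $\Gamma^{(\alpha,1)}_{ijk}$ as ``the symmetrized form'' therefore changes what the statement says rather than fixing a notation.

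What you can prove, and should present as the corrected item 3, is
\[
\left.\frac{\partial^3\mathbb{E}_{\theta_0}[\Lambda_N(h)]}{\partial h^i\partial h^j\partial h^k}\right|_{h=0}=-\Bigl([\psi_N^*\nabla^{(1/3,N)}]_{ij,k}+[\psi_N^*\nabla^{(1/3,N)}]_{ik,j}+[\psi_N^*\nabla^{(1/3,N)}]_{jk,i}\Bigr)(0)=O\bigl(N^{-1/2}\bigr),
\]
or equivalently your form in terms of $\Gamma^{(1)}$ and $\Gamma^{(-1)}$. This still carries the intended message: the third cumulant of $\Lambda_N$ is a pulled-back connection and vanishes at rate $N^{-1/2}$.
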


\begin{proof}
	We proceed in three structural stages.
	
	\paragraph{Stage 1: Tensor Pull-Back and $N$-Sample Additivity.}
	The map $\psi_N: h \mapsto \theta(h) = \theta_0 + \frac{h}{\sqrt{N}}$ induces smooth coordinate transformations between local coordinates $h^i$ on $\mathbb{K}$ and manifold coordinates $\theta^a$ on $M$. The components of the Jacobi matrix of $\psi_N$ are:
	\begin{equation}
		J^a_i(h) \equiv \frac{\partial \theta^a}{\partial h^i} = \frac{1}{\sqrt{N}} \delta^a_i,
	\end{equation}
	where $\delta^a_i$ is the Kronecker delta.
	
	Applying the standard covariant pull-back transformation law to the rank-3 lower-index connection Christoffel symbols $\Gamma_{abc}^{(\alpha,N)}(\theta)$ yields:
	\begin{equation}
		\left[\psi_N^* \nabla^{(\alpha,N)}\right]_{ijk}(h) = \sum_{a,b,c=1}^d \frac{\partial \theta^a}{\partial h^i} \frac{\partial \theta^b}{\partial h^j} \frac{\partial \theta^c}{\partial h^k} \Gamma_{abc}^{(\alpha,N)}(\psi_N(h)) = \frac{1}{N^{3/2}} \Gamma_{ijk}^{(\alpha,N)}\left(\theta_0 + \frac{h}{\sqrt{N}}\right).
		\label{eq:proof_pullback_law}
	\end{equation}
	
	Because the $N$-sample log-likelihood is the sum of $N$ independent, identically distributed single-sample log-likelihoods $L^{(N)}(\theta) = \sum_{m=1}^N \ell(X_m;\theta)$, the metric tensor and $\alpha$-connection coefficients scale linearly with sample size $N$:
	\begin{equation}
		\Gamma_{ijk}^{(\alpha,N)}(\theta) = N \cdot \Gamma_{ijk}^{(\alpha,1)}(\theta).
		\label{eq:proof_additivity}
	\end{equation}
	
	Substituting \eqref{eq:proof_additivity} into \eqref{eq:proof_pullback_law} gives:
	\begin{equation}
		\left[\psi_N^* \nabla^{(\alpha,N)}\right]_{ijk}(h) = \frac{1}{N^{3/2}} \cdot N \cdot \Gamma_{ijk}^{(\alpha,1)}\left(\theta_0 + \frac{h}{\sqrt{N}}\right) = \frac{1}{\sqrt{N}} \Gamma_{ijk}^{(\alpha,1)}\left(\theta_0 + \frac{h}{\sqrt{N}}\right),
	\end{equation}
	completing the proof of Part 1.
	
	\paragraph{Stage 2: Uniform Dissolution to Flat Geometry.}
	Under High-Order Local Regularity (Assumption 1), the single-sample connection coefficients $\theta \mapsto \Gamma_{ijk}^{(\alpha,1)}(\theta)$ are $C^1$-continuous on the compact subset $V \subset M$ containing $\theta_0$. Thus, there exists a uniform bound $C \equiv \sup_{\theta \in V} \left| \Gamma_{ijk}^{(\alpha,1)}(\theta) \right| < \infty$.
	
	For sufficiently large $N$, $\psi_N(\mathbb{K}) \subset V$. Taking the supremum over $h \in \mathbb{K}$:
	\begin{equation}
		\sup_{h \in \mathbb{K}} \left| \left[\psi_N^* \nabla^{(\alpha,N)}\right]_{ijk}(h) \right| \le \frac{1}{\sqrt{N}} \sup_{\theta \in V} \left| \Gamma_{ijk}^{(\alpha,1)}(\theta) \right| = \frac{C}{\sqrt{N}}.
	\end{equation}
	Taking $N \to \infty$, $\frac{C}{\sqrt{N}} \to 0$, establishing uniform convergence to zero (the flat Euclidean connection $\Gamma_{ijk}^{(0)} = 0$). This completes Part 2.
	
	\paragraph{Stage 3: Third Variation of Expectation and Information Geometry Identity.}
	Consider the expected localized log-likelihood ratio under $P_{\theta_0}$:
	\begin{equation}
		\mathbb{E}_{\theta_0}[\Lambda_N(h)] = N \cdot \mathbb{E}_{\theta_0} \left[ \ell\left(X; \theta_0 + \frac{h}{\sqrt{N}}\right) - \ell(X; \theta_0) \right].
	\end{equation}
	By differentiation under the integral sign (justified by Assumption 1 local dominance bounds), applying the chain rule $\frac{\partial}{\partial h^i} = \frac{1}{\sqrt{N}} \frac{\partial}{\partial \theta^i}$ three times gives:
	\begin{equation}
		\frac{\partial^3 \mathbb{E}_{\theta_0}[\Lambda_N(h)]}{\partial h^i \partial h^j \partial h^k} = N \left(\frac{1}{\sqrt{N}}\right)^3 \mathbb{E}_{\theta_0} \left[ \left. \frac{\partial^3 \ell(X; \theta)}{\partial \theta^i \partial \theta^j \partial \theta^k} \right|_{\theta = \theta_0 + \frac{h}{\sqrt{N}}} \right].
	\end{equation}
	Evaluating at $h = 0$:
	\begin{equation}
		\left. \frac{\partial^3 \mathbb{E}_{\theta_0}[\Lambda_N(h)]}{\partial h^i \partial h^j \partial h^k} \right|_{h=0} = \frac{1}{\sqrt{N}} \mathbb{E}_{\theta_0} \left[ \partial_i \partial_j \partial_k \ell(X; \theta_0) \right].
		\label{eq:proof_third_deriv_eval}
	\end{equation}
	
	By Amari's fundamental information-geometric identity, the single-sample connection Christoffel symbols of the $\alpha$-connection are related to log-density derivatives by:
	\begin{equation}
		\Gamma_{ijk}^{(\alpha,1)}(\theta_0) = -\mathbb{E}_{\theta_0} \left[ \partial_i \partial_j \partial_k \ell(X; \theta_0) \right] + \frac{1-\alpha}{2} T_{ijk}(\theta_0),
	\end{equation}
	where $T_{ijk}(\theta_0) = \mathbb{E}_{\theta_0}[\partial_i \ell \cdot \partial_j \ell \cdot \partial_k \ell]$ is the Amari skewness tensor. Under the canonical exponential family or flat connection duality ($\alpha = 1$), or directly for the standard 1-connection formulation, $\mathbb{E}_{\theta_0}\left[\partial_i \partial_j \partial_k \ell(X;\theta_0)\right] = -\Gamma_{ijk}^{(\alpha,1)}(\theta_0)$.
	
	Substituting this identity into \eqref{eq:proof_third_deriv_eval}:
	\begin{equation}
		\left. \frac{\partial^3 \mathbb{E}_{\theta_0}[\Lambda_N(h)]}{\partial h^i \partial h^j \partial h^k} \right|_{h=0} = -\frac{1}{\sqrt{N}} \Gamma_{ijk}^{(\alpha,1)}(\theta_0).
	\end{equation}
	Recalling from Part 1 that $\left[\psi_N^* \nabla^{(\alpha,N)}\right]_{ijk}(0) = \frac{1}{\sqrt{N}} \Gamma_{ijk}^{(\alpha,1)}(\theta_0)$, we obtain:
	\begin{equation}
		\left. \frac{\partial^3 \mathbb{E}_{\theta_0}[\Lambda_N(h)]}{\partial h^i \partial h^j \partial h^k} \right|_{h=0} = -\sqrt{N} \left[ \psi_N^* \nabla^{(\alpha,N)} \right]_{ijk}(0) = -\Gamma_{ijk}^{(\alpha,1)}(\theta_0),
	\end{equation}
	which establishes Part 3 and completes the proof.
\end{proof}

\begin{lemma}[Accelerated Curvature Annihilation]
	\label{lem:accelerated_curvature_annihilation}
	Let $\mathcal{M}_N = (M, g^{(N)}, \nabla^{(\alpha,N)})$ be an information manifold sequence defined on a parameter domain $M \subset \mathbb{R}^d$ corresponding to $N$ independent and identically distributed (i.i.d.) observations $X_1, \dots, X_N \sim P_\theta$. Let $\mathcal{R}^{(\alpha,N)}$ denote the type-$(0,4)$ Riemann-Christoffel curvature tensor of Amari's $\alpha$-connection $\nabla^{(\alpha,N)}$. Let $\mathbb{K} \subset \mathbb{R}^d$ be a compact set, and let $\psi_N: \mathbb{K} \to M$ be the local coordinate scaling map defined by $\psi_N(h) = \theta_0 + \frac{h}{\sqrt{N}}$.
	
	Under High-Order Local Regularity (Assumption \ref{asm:local_regularity}), the pulled-back Riemann curvature tensor field $\left[\psi_N^* \mathcal{R}^{(\alpha,N)}\right]_{ijmn}(h)$ satisfies:
	\begin{enumerate}
		\item \textbf{Explicit $\mathcal{O}(N^{-1})$ Scaling Identity:} For all $h \in \mathbb{K}$,
		\begin{equation}
			\left[ \psi_N^* \mathcal{R}^{(\alpha,N)} \right]_{ijmn}(h) = \frac{1}{N} \mathcal{R}_{ijmn}^{(\alpha,1)}\left( \theta_0 + \frac{h}{\sqrt{N}} \right),
		\end{equation}
		where $\mathcal{R}_{ijmn}^{(\alpha,1)}(\theta)$ is the single-sample Riemann curvature tensor.
		
		\item \textbf{Uniform Asymptotic Annihilation:} As $N \to \infty$, the pulled-back curvature tensor vanishes uniformly on $\mathbb{K}$ at rate $\mathcal{O}(N^{-1})$ to the target flat Euclidean curvature $\mathcal{R}^{(0)} \equiv 0$:
		\begin{equation}
			\lim_{N \to \infty} \sup_{h \in \mathbb{K}} \left\| \left[ \psi_N^* \mathcal{R}^{(\alpha,N)} \right](h) \right\|_\infty = 0 \equiv \mathcal{R}_{ijmn}^{(0)}.
		\end{equation}
	\end{enumerate}
\end{lemma}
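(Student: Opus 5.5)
The plan is to prove Part 1 as an exact algebraic identity and then derive Part 2 from it by a uniform bound. To avoid tracking the quadratic terms of \eqref{eq:riemann_full_formula_ign} one by one, I would argue intrinsically. Write the $\alpha$-connection of $IG_N$ through its Christoffel symbols of the second kind, $\Gamma^{(\alpha,N)\,k}_{ab} = \sum_c G_{(N)}^{kc}\Gamma^{(\alpha,N)}_{abc}$. The additivity relations $G^{(N)} = N g^{(1)}$ (Lemma~\ref{lem:additivity_fisher_metric}) and $\Gamma^{(\alpha,N)}_{abc} = N\,\Gamma^{(\alpha,1)}_{abc}$ hold because both expectations in \eqref{eq:amari_christoffel_def} are additive over independent coordinates. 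The cross terms vanish exactly as in Lemma~\ref{lem:additivity_fisher_metric}, since every cross term carries a single-observation score factor of mean zero, and the skewness tensor $T_{abc}$ is additive for the same reason.

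\textbf{Key observation.} Together these give $\Gamma^{(\alpha,N)\,k}_{ab} = N^{-1}g_{(1)}^{kc}\cdot N\Gamma^{(\alpha,1)}_{abc} = \Gamma^{(\alpha,1)\,k}_{ab}$. Thus the affine connection $\nabla^{(\alpha,N)}$ coincides with $\nabla^{(\alpha)}$ as an operator on vector fields. Consequently its $(1,3)$ curvature tensor $\mathcal{R}^{(\alpha,N)\,i}{}_{jmn}$ is independent of $N$. The type-$(0,4)$ tensor is obtained by lowering an index with $G^{(N)}$, so $\mathcal{R}^{(\alpha,N)}_{ijmn} = N\,\mathcal{R}^{(\alpha,1)}_{ijmn}$. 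This reproduces \eqref{eq:riemann_linear_growth} without expanding the quadratic products. As a sanity check I would also confirm it directly in \eqref{eq:riemann_full_formula_ign}: the derivative terms carry a factor $N$, and the quadratic terms carry $N^{-1}\cdot N\cdot N = N$.

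\textbf{Pullback.} Since $\mathcal{R}^{(\alpha,N)}$ is a genuine $4$-covariant tensor field satisfying $i.i.d.$ linear additivity, Theorem~\ref{thm:universal_scaling_law} with $r=4$ applies, using the Jacobian scaffold of Lemma~\ref{lem:jacobian_scaffold}. It gives $[\psi_N^*\mathcal{R}^{(\alpha,N)}]_{ijmn}(h) = N^{-1}\mathcal{R}^{(\alpha,1)}_{ijmn}(\theta_0+h/\sqrt{N})$, which is Part 1. No extra terms arise from the pullback because $\psi_N$ is affine: its second derivatives vanish, so the pullback of the curvature equals the curvature of the pulled-back connection. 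The computation is the one displayed in \eqref{eq:riemann_pullback_final_step}.

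\textbf{Part 2 and the main obstacle.} For Part 2, fix a compact neighbourhood $V\subset\mathcal{U}(\theta_0)$ of $\theta_0$. Since $\mathbb{K}$ is bounded, $\psi_N(\mathbb{K})\subset V$ for all $N$ large. It then suffices to show $M_R\equiv\sup_{\theta\in V}\|\mathcal{R}^{(\alpha,1)}(\theta)\|_\infty<\infty$, which yields $\sup_{h\in\mathbb{K}}\|[\psi_N^*\mathcal{R}^{(\alpha,N)}](h)\|_\infty\le M_R/N\to0$. I expect this boundedness to be the main obstacle. It needs $\Gamma^{(\alpha,1)}_{abc}$ to be $C^1$ and $g^{(1)}$ to be continuous with a continuous inverse on $V$. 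Continuity of the inverse follows from strict positivity and smoothness of $g^{(1)}$ in Assumption~\ref{asm:local_regularity}(ii), shrinking $V$ if necessary so that $\lambda_{\min}$ stays bounded away from zero. The derivative of $\Gamma^{(\alpha,1)}$, however, involves derivatives of expectations of products of up to third-order log-density derivatives. I would first try to obtain it by differentiating under the integral sign, using the $L_2$ envelope \eqref{eq:third_order_bound} together with Cauchy--Schwarz. If that falls short of one more derivative, I would state explicitly that the smoothness of the single-sample structure $(g^{(1)},\nabla^{(\alpha)})$ on $\overline{\mathcal{U}}(\theta_0)$, implicit in Assumption~\ref{asm:local_regularity} and already used in Theorem~\ref{thm:curvature_annihilation}, is what guarantees continuity, and hence boundedness on the compact set $V$, of $\mathcal{R}^{(\alpha,1)}$.
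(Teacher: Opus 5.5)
Your proof is correct and has the same three-step skeleton as the paper's: first the linear growth $\mathcal{R}^{(\alpha,N)}_{ijmn}=N\,\mathcal{R}^{(\alpha,1)}_{ijmn}$, then Theorem~\ref{thm:universal_scaling_law} with $r=4$, then a uniform bound on a compact neighbourhood $V\supset\psi_N(\mathbb{K})$ for large $N$. Where you differ is in how the linear growth is obtained. The paper substitutes $G^{(N)}=Ng^{(1)}$, $(G^{(N)})^{-1}=N^{-1}(g^{(1)})^{-1}$ and $\Gamma^{(\alpha,N)}_{abc}=N\Gamma^{(\alpha,1)}_{abc}$ into a first-kind coordinate formula for the $(0,4)$ tensor and counts powers of $N$ term by term, which is your ``sanity check''. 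Your main argument is intrinsic: the second-kind symbols do not depend on $N$, so $\nabla^{(\alpha,N)}=\nabla^{(\alpha)}$ as an affine connection and the $(1,3)$ curvature is unchanged; the single factor $N$ enters only when an index is lowered with $G^{(N)}$. This explains the scaling rather than merely verifying it, and it does not depend on the exact form of the quadratic terms. For $\alpha\neq0$ those terms involve the dual $(-\alpha)$-symbols, as in \eqref{eq:riemann_full_formula_ign}, whereas the proof of this lemma writes them with $\Gamma^{(\alpha,N)}$ alone. Only their degree-one homogeneity in $N$ matters, and your route never has to decide which form is correct. You also justify $\Gamma^{(\alpha,N)}_{abc}=N\Gamma^{(\alpha,1)}_{abc}$ via the vanishing mean-zero cross terms, in both the $\mathbb{E}[\partial^2\ell\,\partial\ell]$ part and the skewness part, whereas the paper only asserts it. In exchange, the paper's computation stays entirely within the first-kind formalism of Section~\ref{sec:module2} and never treats the connection as an operator. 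Part~2 is identical in both. Your observation that $M_{\mathcal{R}}<\infty$ needs $C^1$ control of $\Gamma^{(\alpha,1)}$ is well taken: differentiating the skewness term brings in fourth-order score moments, which Assumption~\ref{asm:local_regularity} does not obviously supply. The paper simply asserts continuity of $\mathcal{R}^{(\alpha,1)}$ on $V$, so it relies on the same implicit smoothness of $(g^{(1)},\nabla^{(\alpha)})$ that your fallback makes explicit.
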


\begin{proof}
	We structure the proof into four rigorous mathematical stages.
	
	\paragraph{Stage 1: Tensor Pull-Back under Local Diffeomorphism.}
	The scaling map $\psi_N: h \mapsto \theta(h) = \theta_0 + \frac{h}{\sqrt{N}}$ is a smooth local diffeomorphism from $\mathbb{K}$ into $M$. Its Jacobi matrix has constant diagonal entries:
	\begin{equation}
		J^a_i(h) \equiv \frac{\partial \theta^a}{\partial h^i} = \frac{1}{\sqrt{N}} \delta^a_i.
	\end{equation}
	Applying the covariant tensor pull-back formula to the $4$-index covariant Riemann-Christoffel curvature tensor $\mathcal{R}^{(\alpha,N)}$ gives:
	\begin{equation}
		\left[ \psi_N^* \mathcal{R}^{(\alpha,N)} \right]_{ijmn}(h) = \sum_{a,b,c,d=1}^d \frac{\partial \theta^a}{\partial h^i} \frac{\partial \theta^b}{\partial h^j} \frac{\partial \theta^c}{\partial h^m} \frac{\partial \theta^d}{\partial h^n} \mathcal{R}_{abcd}^{(\alpha,N)}\left( \psi_N(h) \right).
		\label{eq:pullback_def}
	\end{equation}
	Substituting $J^a_i(h) = \frac{1}{\sqrt{N}} \delta^a_i$ into \eqref{eq:pullback_def} factors out the fourfold Jacobian product:
	\begin{equation}
		\left[ \psi_N^* \mathcal{R}^{(\alpha,N)} \right]_{ijmn}(h) = \left( \frac{1}{\sqrt{N}} \right)^4 \mathcal{R}_{ijmn}^{(\alpha,N)}\left( \theta_0 + \frac{h}{\sqrt{N}} \right) = \frac{1}{N^2} \mathcal{R}_{ijmn}^{(\alpha,N)}\left( \theta_0 + \frac{h}{\sqrt{N}} \right).
		\label{eq:jacobian_scaling}
	\end{equation}
	
	\paragraph{Stage 2: Sample-Size Linearity ($N$-Sample Additivity).}
	For $N$ i.i.d. observations, the $N$-sample log-likelihood function is $L^{(N)}(\theta) = \sum_{k=1}^N \ell(X_k; \theta)$. Consequently, the metric $G^{(N)}(\theta) = N \cdot G^{(1)}(\theta)$ and connection symbols $\Gamma_{ijk}^{(\alpha,N)}(\theta) = N \cdot \Gamma_{ijk}^{(\alpha,1)}(\theta)$ scale linearly with sample size $N$.
	
	The lower-index Riemann-Christoffel curvature tensor $\mathcal{R}_{ijmn}^{(\alpha,N)}$ is defined via Christoffel symbols by:
	\begin{equation}
		\mathcal{R}_{ijmn}^{(\alpha,N)} = \partial_m \Gamma_{inj}^{(\alpha,N)} - \partial_n \Gamma_{imj}^{(\alpha,N)} + \sum_{r,s} \left( g^{(N)} \right)^{rs} \left( \Gamma_{smj}^{(\alpha,N)} \Gamma_{irn}^{(\alpha,N)} - \Gamma_{snj}^{(\alpha,N)} \Gamma_{irm}^{(\alpha,N)} \right).
	\end{equation}
	Since $\left( g^{(N)} \right)^{rs} = \frac{1}{N} (g^{(1)})^{rs}$ while $\Gamma_{ijk}^{(\alpha,N)} = N \cdot \Gamma_{ijk}^{(\alpha,1)}$, the product term scales as $\frac{1}{N} \cdot N^2 = N$. Therefore, the entire expression obeys exact linear additivity:
	\begin{equation}
		\mathcal{R}_{ijmn}^{(\alpha,N)}(\theta) = N \cdot \mathcal{R}_{ijmn}^{(\alpha,1)}(\theta).
		\label{eq:curvature_additivity}
	\end{equation}
	
	\paragraph{Stage 3: Application of Universal Tensor Scaling Law ($r=4$).}
	By Theorem~\ref{thm:universal_scaling_law}, any rank-$r$ covariant tensor field $T^{(N)}$ satisfying $N$-linear additivity $T^{(N)} = N \cdot T^{(1)}$ obeys the pullback relation:
	\begin{equation}
		\left[ \psi_N^* T^{(N)} \right]_{i_1 \dots i_r}(h) = \frac{1}{N^{r/2 - 1}} T_{i_1 \dots i_r}^{(1)}\left( \theta_0 + \frac{h}{\sqrt{N}} \right).
	\end{equation}
	Substituting $r = 4$ for the 4-index Riemann curvature tensor and combining Equations~\eqref{eq:jacobian_scaling} and \eqref{eq:curvature_additivity}:
	\begin{equation}
		\left[ \psi_N^* \mathcal{R}^{(\alpha,N)} \right]_{ijmn}(h) = \frac{1}{N^2} \cdot N \cdot \mathcal{R}_{ijmn}^{(\alpha,1)}\left( \theta_0 + \frac{h}{\sqrt{N}} \right) = \frac{1}{N} \mathcal{R}_{ijmn}^{(\alpha,1)}\left( \theta_0 + \frac{h}{\sqrt{N}} \right),
	\end{equation}
	which proves Part 1.
	
	\paragraph{Stage 4: Compact Uniform Convergence.}
	Under Assumption 1 (High-Order Local Regularity), the single-sample curvature components $\theta \mapsto \mathcal{R}_{ijmn}^{(\alpha,1)}(\theta)$ are $C^0$-continuous on a compact neighborhood $V \subset M$ containing $\theta_0$. Thus, the maximum tensor norm is bounded by a finite constant:
	\begin{equation}
		M_{\mathcal{R}} \equiv \sup_{\theta \in V} \left\| \mathcal{R}^{(\alpha,1)}(\theta) \right\|_\infty < \infty.
	\end{equation}
	For all $N \ge N_0$ such that $\psi_N(\mathbb{K}) \subset V$, taking the supremum over $h \in \mathbb{K}$ yields:
	\begin{equation}
		\sup_{h \in \mathbb{K}} \left\| \left[ \psi_N^* \mathcal{R}^{(\alpha,N)} \right](h) \right\|_\infty \le \frac{1}{N} \sup_{\theta \in V} \left\| \mathcal{R}^{(\alpha,1)}(\theta) \right\|_\infty \le \frac{M_{\mathcal{R}}}{N}.
	\end{equation}
	Taking the limit as $N \to \infty$:
	\begin{equation}
		\lim_{N \to \infty} \sup_{h \in \mathbb{K}} \left\| \left[ \psi_N^* \mathcal{R}^{(\alpha,N)} \right](h) \right\|_\infty \le \lim_{N \to \infty} \frac{M_{\mathcal{R}}}{N} = 0 \equiv \mathcal{R}_{ijmn}^{(0)},
	\end{equation}
	which proves Part 2 and completes the proof of the lemma.
\end{proof}

\begin{theorem}[Geometric-Operational Equivalence Theorem]
	\label{thm:geom_oper_equivalence}
	Under High-Order Local Regularity (Assumption~\ref{asm:local_regularity}), the Geometric Priority Paradigm (\textnormal{Axiom~\ref{ax:geom_priority}}) and the Statistical Operationalism Paradigm (\textnormal{Axiom~\ref{ax:stat_oper}}) are logically and mathematically equivalent across every compact coordinate cage $\mathbb{K} \subset \mathbb{R}^d$:
	\begin{equation}
		IG_N \xrightarrow{\text{geom}} \mathcal{M}_\infty \iff IG_N \xrightarrow{\text{oper}} \mathcal{M}_\infty.
		\label{eq:theorem_equivalence_main}
	\end{equation}
\end{theorem}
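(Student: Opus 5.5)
The plan is to prove the two implications separately, with the Taylor-Geometry Algebraic Functional Identity (Lemma~\ref{lem:pillar_2_taylor_geometry}) as the common link. The pulled-back fields are deterministic functions of $N$ and $h$, and the deficiency distance is a deterministic functional of the experiments. Both sides of \eqref{eq:theorem_equivalence_main} are therefore statements about the same sequence $\{IG_N\}$ under the fixed regularity of Assumption~\ref{asm:local_regularity}. The equivalence will come from showing that each side is reduced to, and recovered from, the uniform LAN expansion of $\Lambda_N(h)$ over $\mathbb{K}$.

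\emph{Direction ($\Rightarrow$).} Assume the three uniform limits \eqref{eq:ax1_metric_stabilization}--\eqref{eq:ax1_curvature_annihilation} of Axiom~\ref{ax:geom_priority}. Substitute them into the identity \eqref{eq:taylor_geometry_exact_identity} as follows.
\begin{enumerate}
\item At $h=\mathbf{0}$ the quadratic coefficient $[\psi_N^*G^{(N)}]_{ij}(\mathbf{0})$ equals $g_{0,ij}$ exactly (Theorem~\ref{thm:metric_stabilization}).
\item The linear coefficient is the pulled-back score $1$-form, which converges to $\mathcal{N}(\mathbf{0},g_0)$ by Lemma~\ref{lm:normal_limit_lemma}.
\item The remainder bound \eqref{eq:remainder_geometry_explicit_bound} is controlled by $\sup_{\mathbb{K}}|\widetilde\Gamma^{(\alpha,N)}|$ and $\sup_{\mathbb{K}}\|\widetilde{\mathcal{R}}^{(\alpha,N)}\|_\infty$. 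By hypothesis both tend to zero, so $\sup_{h\in\mathbb{K}}|\mathcal{R}_N(h)|=o_p(1)$.
\end{enumerate}
This gives the uniform expansion \eqref{eq:lan_stochastic_expansion_lemma}. Lemma~\ref{lem:lecam_deficiency_equivalence} then yields $\Delta(\mathcal{E}_N(\mathbb{K}),\mathcal{E}_\infty(\mathbb{K}))\to0$, which is Axiom~\ref{ax:stat_oper}. For the remainder to be genuinely controlled, rather than merely formally written in geometric terms, I will cross-check it against the direct bound \eqref{eq:third_order_remainder_bound} of Theorem~\ref{thm:lan_linear_quadratic_proof}.

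\emph{Direction ($\Leftarrow$).} Assume $\Delta(\mathcal{E}_N(\mathbb{K}),\mathcal{E}_\infty(\mathbb{K}))\to0$ for every compact $\mathbb{K}$. The first step is to extract from deficiency convergence the weak convergence of the log-likelihood process: the law of $(\Lambda_N(h))_{h}$ under $P_{\theta_0}^{(N)}$ converges to that of the Gaussian shift $h^\top Z-\tfrac12 h^\top g_0h$. This is the standard converse in Le Cam theory, since deficiency convergence implies convergence of likelihood-ratio distributions in the sense of weak convergence of experiments. Next, compare with the Taylor-Geometry identity. Its quadratic coefficient is $\tfrac12 h^\top[\psi_N^*G^{(N)}](\mathbf{0})h$, and the limiting quadratic is $\tfrac12h^\top g_0h$. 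Scanning over $h$ then identifies the frozen metric as $g_0$.

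To pass from the origin to the whole cage, I will use the exact valence formula of Theorem~\ref{thm:universal_scaling_law}. It says $[\psi_N^*G^{(N)}](h)=g^{(1)}(\theta_0+h/\sqrt N)$, so $C^1$-regularity of $g^{(1)}$ (Lemma~\ref{lem:metric_freezing_proof}) upgrades convergence at $h=\mathbf{0}$ to uniform convergence on $\mathbb{K}$. The connection and curvature conditions follow in the same way from the explicit representations in Lemma~\ref{lem:connection_dissolution} and Lemma~\ref{lem:accelerated_curvature_annihilation}. Those give $O(N^{-1/2})$ and $O(N^{-1})$ bounds whose constants depend only on sup-norms of $\Gamma^{(\alpha,1)}$ and $\mathcal{R}^{(\alpha,1)}$ over $\overline{\mathcal{U}}(\theta_0)$, and these norms are finite by Assumption~\ref{asm:local_regularity}.

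\emph{Main obstacle.} The difficulty is the ($\Leftarrow$) direction. The geometric conditions are already implied by Assumption~\ref{asm:local_regularity} alone, so a proof that simply invokes Section~\ref{sec:module2} establishes the converse only trivially, as a consequence of both sides holding. To make the implication substantive, I would try to show that deficiency convergence forces the metric part of Axiom~\ref{ax:geom_priority} \emph{through} the identity \eqref{eq:taylor_geometry_exact_identity}. The key step there is that the limiting quadratic coefficient must equal the covariance of the limiting score. This is the Le Cam first-lemma condition $\mathbb{E}e^{\Lambda_\infty}=1$, which pins the variance term to $g_0$. The connection and curvature parts are not determined by the limit experiment itself, since they vanish in the limit. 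For those I would accept that they are supplied by the scaling law together with regularity, and I would state this dependence explicitly rather than claim they follow from the deficiency alone.
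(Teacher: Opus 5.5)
Your proposal follows essentially the same route as the paper's proof. The forward direction substitutes the three geometric limits into the Taylor--Geometry identity (Lemma~\ref{lem:pillar_2_taylor_geometry}), adds the score CLT to get the uniform LAN expansion, and concludes with Lemma~\ref{lem:lecam_deficiency_equivalence}; the converse goes through Le Cam's representation theorem to match the limiting quadratic with $g_0$, then recovers metric uniformity, connection dissolution and curvature annihilation from the explicit scaling representations. Your caveat correctly describes the paper's own sufficiency argument, where Lemmas~\ref{lem:connection_dissolution} and~\ref{lem:accelerated_curvature_annihilation} supply those bounds from Assumption~\ref{asm:local_regularity} alone rather than from the deficiency hypothesis. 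Stating that dependence openly, and backing the geometric remainder bound with the direct envelope bound \eqref{eq:third_order_remainder_bound}, simply makes the same argument more transparent.
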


\begin{proof}
	We prove necessity ($\implies$) and sufficiency ($\impliedby$) independently.
	
	\subsection*{Part 1: Necessity ($IG_N \xrightarrow{\text{geom}} \mathcal{M}_\infty \implies IG_N \xrightarrow{\text{oper}} \mathcal{M}_\infty$)}

	Assume Axiom \ref{ax:geom_priority} ($IG_N \xrightarrow{\text{geom}} \mathcal{M}_\infty$) holds. Then, under the localized microscope map $\psi_N(h) = \theta_0 + \frac{h}{\sqrt{N}}$, the pulled-back tensor fields satisfy uniform convergence on every compact cage $\mathbb{K} \subset \mathbb{R}^d$:
	\begin{align}
		\lim_{N \to \infty} \sup_{h \in \mathbb{K}} \left\| \left[\psi_N^* G^{(N)}\right](h) - g_0 \right\|_\infty &= 0, \label{eq:proof_metric_limit} \\
		\lim_{N \to \infty} \sup_{h \in \mathbb{K}} \left\| \left[\psi_N^* \nabla^{(\alpha,N)}\right](h) - \nabla^{(0)} \right\|_\infty &= 0, \label{eq:proof_connection_limit} \\
		\lim_{N \to \infty} \sup_{h \in \mathbb{K}} \left\| \left[\psi_N^* \mathcal{R}^{(\alpha,N)}\right](h) \right\|_\infty &= 0. \label{eq:proof_curvature_limit}
	\end{align}
	
	By Lemma \ref{lem:pillar_2_taylor_geometry} (Pillar II: Taylor-Geometry Algebraic Functional Identity), the localized Radon-Nikodym log-likelihood ratio process $\Lambda_N(h) \equiv \log \frac{d P_{\theta_0 + h/\sqrt{N}}^{(N)}}{d P_{\theta_0}^{(N)}}(X^N)$ admits the exact functional decomposition:
	\begin{equation}
		\Lambda_N(h) = \sum_{a=1}^d h^a \left[ \psi_N^*(dL^{(N)}) \right]_a(0) - \frac{1}{2} \sum_{i,j=1}^d h^i h^j \left[ \psi_N^* G^{(N)} \right]_{ij}(0) + \mathcal{R}_N(h),
		\label{eq:proof_taylor_decomp}
	\end{equation}
	where the non-linear stochastic remainder $\mathcal{R}_N(h)$ satisfies the uniform geometric envelope bound across $\mathbb{K}$:
	\begin{equation}
		\sup_{h \in \mathbb{K}} |\mathcal{R}_N(h)| \le \frac{\|h\|_2^3}{6} \sup_{h \in \mathbb{K}} \left| \left[ \psi_N^* \nabla^{(\alpha,N)} \right]_{ijk}(h) \right| + \mathcal{O}_p\left( \sup_{h \in \mathbb{K}} \left\| \left[ \psi_N^* \mathcal{R}^{(\alpha,N)} \right]_{ijmn}(h) \right\|_\infty \right).
		\label{eq:proof_remainder_envelope}
	\end{equation}
	
	Substituting the geometric convergence limits from Equations \eqref{eq:proof_connection_limit} and \eqref{eq:proof_curvature_limit} into Equation \eqref{eq:proof_remainder_envelope}:
	\begin{equation}
		\sup_{h \in \mathbb{K}} |\mathcal{R}_N(h)| = \mathcal{O}\left(N^{-1/2}\right) + \mathcal{O}_p\left(N^{-1}\right) = \mathcal{O}_p\left(N^{-1/2}\right) \xrightarrow{P_{\theta_0}^{(N)}} 0.
		\label{eq:proof_remainder_vanish}
	\end{equation}
	
	Furthermore, by Theorem \ref{thm:score_transformation}, the score 1-form at $h=0$ satisfies multivariate Central Limit Theorem weak convergence:
	\begin{equation}
		\psi_N^*(dL^{(N)})(0) \xrightarrow{d} \Delta_\infty \sim \mathcal{N}(0, g_0).
		\label{eq:proof_score_normal}
	\end{equation}
	
	Substituting Equations \eqref{eq:proof_metric_limit}, \eqref{eq:proof_remainder_vanish}, and \eqref{eq:proof_score_normal} into Equation \eqref{eq:proof_taylor_decomp}, the localized log-likelihood process satisfies the uniform linear-quadratic expansion:
	\begin{equation}
		\Lambda_N(h) = h^\top \Delta_N(\theta_0) - \frac{1}{2} h^\top g_0 h + o_{P_{\theta_0}^{(N)}}(1), \quad \text{where } \Delta_N(\theta_0) \xrightarrow{d} \mathcal{N}(0, g_0).
		\label{eq:proof_lan_form_established}
	\end{equation}
	
	Applying Lemma \ref{lem:lecam_deficiency_equivalence} (Le Cam Experiment Deficiency Limit Lemma) directly to Equation \eqref{eq:proof_lan_form_established} establishes contiguity and proves that Le Cam's experiment deficiency distance between $\mathcal{E}_N(\mathbb{K})$ and $\mathcal{E}_\infty(\mathbb{K})$ vanishes asymptotically:
	\begin{equation}
		\lim_{N \to \infty} \Delta\left( \mathcal{E}_N(\mathbb{K}), \, \mathcal{E}_\infty(\mathbb{K}) \right) = 0.
	\end{equation}
	This fulfills Axiom \ref{ax:stat_oper} ($IG_N \xrightarrow{\text{oper}} \mathcal{M}_\infty$), completing the proof of Part 1 (Necessity).

	


\subsection*{Part 2: Sufficiency ($IG_N \xrightarrow{\text{oper}} \mathcal{M}_\infty \implies IG_N \xrightarrow{\text{geom}} \mathcal{M}_\infty$)}

	Assume Axiom \ref{ax:stat_oper} holds. That is, for every compact coordinate cage $\mathbb{K} \subset \mathbb{R}^d$, the Le Cam deficiency distance between the empirical localized experiment $\mathcal{E}_N(\mathbb{K})$ and the canonical Gaussian shift experiment $\mathcal{E}_\infty(\mathbb{K})$ vanishes asymptotically:
	\begin{equation}
		\lim_{N \to \infty} \Delta\left( \mathcal{E}_N(\mathbb{K}), \, \mathcal{E}_\infty(\mathbb{K}) \right) = 0.
		\label{eq:proof_part2_assumption}
	\end{equation}
	
	By Le Cam's Representation Theorem, Equation \eqref{eq:proof_part2_assumption} implies that the localized Radon-Nikodym log-likelihood ratio process $\Lambda_N(h) \equiv \sum_{n=1}^N \log \frac{p(X_n; \theta_0 + h/\sqrt{N})}{p(X_n; \theta_0)}$ converges weakly under $P_{\theta_0}^{(N)}$ to the canonical quadratic Gaussian likelihood process:
	\begin{equation}
		\Lambda_N(h) \xrightarrow{d} h^\top Z - \frac{1}{2} h^\top A h, \quad \text{where } Z \sim \mathcal{N}(0, A),
		\label{eq:proof_part2_weak_limit}
	\end{equation}
	for a symmetric, positive-definite matrix $A \in \mathbb{R}^{d \times d}$. \footnote{In Le Cam's Local Asymptotic Normality (LAN) framework, the weak convergence of the localized log-likelihood ratio process $\Lambda_N(h) \xrightarrow{d} h^\top Z - \frac{1}{2} h^\top A h$ under the background probability measure $P_{\theta_0}^{(N)}$ dictates that the limiting random vector $Z$ is Gaussian with zero mean and covariance matrix $A = g^{(1)}(\theta_0)$:
		
		\begin{enumerate}
			\item \textbf{Zero Mean ($\mathbb{E}_{\theta_0}[Z] = \mathbf{0}$):} Under High-Order Local Regularity, the single-observation score vector satisfies $\mathbb{E}_{\theta_0}\left[\nabla_\theta \log p(X; \theta_0)\right] = \int \nabla_\theta p(x; \theta_0) d\mu(x) = \nabla_\theta (1) = \mathbf{0}$. By linearity of expectation, the normalized sample score vector $V^{(N)} \equiv \frac{1}{\sqrt{N}} \sum_{n=1}^N \nabla_\theta \log p(X_n; \theta_0)$ has zero mean for every $N$. Consequently, by the Multivariate Central Limit Theorem, the weak limit $Z$ must inherit a mean of zero.
			
			\item \textbf{Covariance Matrix ($\mathrm{Var}_{\theta_0}(Z) = A$):} The covariance matrix of the single-sample score vector is defined as the Fisher Information Metric $g^{(1)}(\theta_0) \equiv \mathbb{E}_{\theta_0}\left[ \nabla_\theta \log p(X; \theta_0) \nabla_\theta \log p(X; \theta_0)^\top \right]$. Under $i.i.d.$ sampling, the variance of the normalized sum $V^{(N)}$ is identically equal to $g^{(1)}(\theta_0)$ for all $N$. Identifying $A \equiv g^{(1)}(\theta_0)$ establishes that $A$ is the precise covariance matrix of $Z$.
			
			\item \textbf{Probability Preservation under Shift:} The zero-mean structure $Z \sim \mathcal{N}(\mathbf{0}, A)$ directly guarantees that the limiting Radon-Nikodym derivative $L(h) \equiv \exp\left(h^\top Z - \frac{1}{2} h^\top A h\right)$ satisfies $\mathbb{E}_{\theta_0}[L(h)] = \exp\left(\frac{1}{2} h^\top A h - \frac{1}{2} h^\top A h\right) = 1$, preserving total probability mass under Le Cam's contiguity principles.
	\end{enumerate}}
	
	We now extract and verify the convergence of the four pulled-back geometric tensor fields ($r \in \{1, 2, 3, 4\}$) on the tangent workspace $\mathcal{M}_\infty$:
	
	\paragraph{1. Metric Stabilization ($r = 2$):}
	Differentiating the expected log-likelihood ratio process $\mathbb{E}_{\theta_0}[\Lambda_N(h)]$ twice with respect to the local displacement coordinates $h^i$ and $h^j$ at $h = 0$ yields:
	\begin{equation}
		\left. \frac{\partial^2 \mathbb{E}_{\theta_0}[\Lambda_N(h)]}{\partial h^i \partial h^j} \right|_{h=0} = -\left[ \psi_N^* G^{(N)} \right]_{ij}(0).
		\label{eq:proof_part2_metric_extract}
	\end{equation}
	From the weak limit in Equation \eqref{eq:proof_part2_weak_limit}, the expected second variation equals $-A_{ij}$. Evaluating at the background state $\theta_0$ identifies $A \equiv g_0 = g^{(1)}(\theta_0)$.
	
	By tensor pullback mechanics under the localized microscope map $\psi_N(h) = \theta_0 + \frac{h}{\sqrt{N}}$ and $N$-sample metric additivity $G^{(N)}(\theta) = N g^{(1)}(\theta)$:
	\begin{equation}
		\left[\psi_N^* G^{(N)}\right]_{ij}(h) = g_{ij}^{(1)}\left(\theta_0 + \frac{h}{\sqrt{N}}\right).
	\end{equation}
	Under High-Order Local Regularity, $g_{ij}^{(1)}(\theta)$ is $C^2$-smooth on $\mathcal{U}(\theta_0)$. Applying a multivariate Taylor expansion around $h = 0$ over the compact set $\mathbb{K}$:
	\begin{equation}
		\lim_{N \to \infty} \sup_{h \in \mathbb{K}} \left\| \left[\psi_N^* G^{(N)}\right](h) - g_0 \right\|_\infty \le \lim_{N \to \infty} \frac{C_\mathbb{K} M_g}{\sqrt{N}} = 0,
		\label{eq:proof_part2_metric_proven}
	\end{equation}
	which fulfills condition \eqref{eq:ax1_metric_stabilization} of Axiom \ref{ax:geom_priority}.
	
	\paragraph{2. Score Fluctuation ($r = 1$):}
	Taking the first variation of $\Lambda_N(h)$ at $h = 0$ recovers the score 1-form $\psi_N^*(dL^{(N)})(0)$. By Equation \eqref{eq:proof_part2_weak_limit}, its limit covariance is $A = g_0$, matching the normalized Gaussian score field $\Delta_\infty \sim \mathcal{N}(0, g_0)$.
	
	\paragraph{3. Connection Dissolution ($r = 3$):}
	By Lemma \ref{lem:connection_dissolution}, the pulled-back Amari $\alpha$-connection Christoffel symbols $\left[\psi_N^* \nabla^{(\alpha,N)}\right]_{ijk}(h)$ obey the pullback scaling relation:
	\begin{equation}
		\left[\psi_N^* \nabla^{(\alpha,N)}\right]_{ijk}(h) = \frac{1}{\sqrt{N}} \Gamma_{ijk}^{(\alpha,1)}\left(\theta_0 + \frac{h}{\sqrt{N}}\right).
	\end{equation}
	Under Assumption \ref{asm:local_regularity}, the single-sample connection symbols $\Gamma_{ijk}^{(\alpha,1)}(\theta)$ are $C^1$-continuous and bounded by $M_\Gamma < \infty$ on a compact neighborhood containing $\theta_0$. Taking the uniform supremum over $\mathbb{K}$:
	\begin{equation}
		\lim_{N \to \infty} \sup_{h \in \mathbb{K}} \left| \left[ \psi_N^* \nabla^{(\alpha,N)} \right]_{ijk}(h) \right| = \lim_{N \to \infty} \frac{1}{\sqrt{N}} \sup_{h \in \mathbb{K}} \left| \Gamma_{ijk}^{(\alpha,1)}\left(\theta_0 + \frac{h}{\sqrt{N}}\right) \right| \le \lim_{N \to \infty} \frac{M_\Gamma}{\sqrt{N}} = 0 \equiv \Gamma_{ijk}^{(0)},
		\label{eq:proof_part2_connection_proven}
	\end{equation}
	which fulfills condition \eqref{eq:ax1_connection_dissolution} of Axiom \ref{ax:geom_priority}.
	
	\paragraph{4. Accelerated Curvature Annihilation ($r = 4$):}
	By Lemma \ref{lem:accelerated_curvature_annihilation}, the pulled-back type-$(0,4)$ Riemann curvature tensor field obeys the quadratic scaling law:
	\begin{equation}
		\left[ \psi_N^* \mathcal{R}^{(\alpha,N)} \right]_{ijmn}(h) = \frac{1}{N} \mathcal{R}_{ijmn}^{(\alpha,1)}\left( \theta_0 + \frac{h}{\sqrt{N}} \right).
	\end{equation}
	Under Assumption \ref{asm:local_regularity}, single-sample curvature components are continuous and bounded by $M_{\mathcal{R}} < \infty$ on compact neighborhoods. Taking the uniform supremum over $\mathbb{K}$:
	\begin{equation}
		\lim_{N \to \infty} \sup_{h \in \mathbb{K}} \left\| \left[ \psi_N^* \mathcal{R}^{(\alpha,N)} \right](h) \right\|_\infty \le \lim_{N \to \infty} \frac{M_{\mathcal{R}}}{N} = 0 \equiv \mathcal{R}_{ijmn}^{(0)},
		\label{eq:proof_part2_curvature_proven}
	\end{equation}
	which fulfills condition \eqref{eq:ax1_curvature_annihilation} of Axiom \ref{ax:geom_priority}.
	
	\paragraph{Conclusion:}
	Combining Equations \eqref{eq:proof_part2_metric_proven}, \eqref{eq:proof_part2_connection_proven}, and \eqref{eq:proof_part2_curvature_proven} establishes that all three constitutional pillars of Axiom \ref{ax:geom_priority} ($IG_N \xrightarrow{\text{geom}} \mathcal{M}_\infty$) are satisfied, completing the proof of Sufficiency and establishing this Theorem.

\end{proof}

\section{Fisher-Compatible Ehresmann Connections and Tangent Space Splitting}
\label{sec:ehresmann_connections}

In Section \ref{sec:equivalence_proof}, we established the Geometric-Operational Equivalence Theorem under the ambient regularity assumption that the single-sample Fisher Information Metric $g^{(1)}(\theta_0)$ is strictly positive-definite ($\lambda_{\min}(g^{(1)}(\theta_0)) > 0$) across the local neighborhood $\mathcal{U}(\theta_0) \subset \Theta$. However, in modern statistical physics, over-parameterized statistical models, latent variable models, singular exponential families, and deep learning architectures, parameter spaces $\Theta \subset \mathbb{R}^D$ ($D \gg d$) exhibit non-identifiable gauge symmetries and rank-deficient metric tensors ($\det g(\theta) = 0$).

To extend the Second Edge Theorem to over-parameterized statistical universes without enforcing artificial ambient non-degeneracy, this section constructs the \textbf{Fisher-Compatible Ehresmann Connection Framework}. We formalize the statistical fiber bundle $(\Theta, \mathcal{B}, \pi)$, isolate internal structural gauge directions within the vertical sub-bundle $V_\theta \equiv \ker(d\pi_\theta)$, and construct the horizontal distribution $H_\theta \equiv V_\theta^{\perp_g}$ as the Fisher-metric orthogonal complement. We prove that the restricted metric $g_\theta\big|_{H_\theta \times H_\theta}$ remains strictly positive-definite and non-singular, establishing the geometric foundation required for horizontal unification in Section \ref{sec:module5_unification}.

\subsection{Motivation, Extension Requirements, and Ambient Breakdown}
\label{subsec:ehresmann_motivation}

The primary objective of this section is to resolve a fundamental mathematical barrier that emerges when transitioning from strictly identifiable statistical models to high-dimensional, over-parameterized architectures. In an over-parameterized system where $\dim(\Theta) = D > d = \dim(\mathcal{B})$, distinct parameter vectors $\theta_1 \neq \theta_2 \in \Theta$ can map to identical probability distributions $P_{\theta_1} = P_{\theta_2} \in \mathcal{B}$. This structural redundancy induces non-trivial \emph{gauge orbits} (fibers) in $\Theta$, causing the ambient single-sample Fisher Information Metric tensor:
\begin{equation}
	g_{ij}^{(1)}(\theta) \equiv \mathbb{E}_\theta \left[ \frac{\partial \log p(X;\theta)}{\partial \theta^i} \frac{\partial \log p(X;\theta)}{\partial \theta^j} \right]
	\label{eq:ambient_fisher_def}
\end{equation}
to become strictly rank-deficient, with $\mathrm{rank}(g^{(1)}(\theta)) = d < D$ and $\det g^{(1)}(\theta) = 0$.

\subsubsection{Why Results in Section \ref{sec:module3}  and Section \ref{sec:equivalence_proof} Require Extension}

Directly applying the geometric and operational frameworks formulated in Section \ref{sec:module3} (Manifold Convergence Axiomatization) and Section \ref{sec:equivalence_proof} (Geometric-Operational Equivalence Theorem) to an ambient space with degenerate $g^{(1)}(\theta_0)$ leads to immediate mathematical collapse:

\begin{enumerate}[label=\textnormal{(\roman*)}]
	\item \textbf{Breakdown of Section \ref{sec:module3} (Manifold Convergence):} In Section \ref{sec:module3}, the local microscope map $\psi_N(h) = \theta_0 + \frac{h}{\sqrt{N}}$ was constructed under the implicit requirement that the pulled-back metric sequence $g_N \equiv \psi_N^* g^{(N)}$ induces a well-defined, non-degenerate Riemannian distance function on $\mathbb{R}^D$. When $g^{(1)}(\theta_0)$ has a non-trivial kernel $\ker(g^{(1)}(\theta_0)) \neq \{\mathbf{0}\}$, the metric distance along kernel directions vanishes identically ($d_g(\theta, \theta + v) = 0$ for $v \in \ker(g^{(1)}(\theta_0))$). Consequently, the metric topology fails to be Hausdorff on $\Theta$, and the double-completeness and uniform convergence bounds established on compact coordinate cages $\mathbb{K} \subset \mathbb{R}^D$ collapse along unidentifiable directions.
	
	\item \textbf{Breakdown of Section \ref{sec:equivalence_proof} (Geometric-Operational Equivalence):} In Section \ref{sec:equivalence_proof}, the equivalence between geometric curvature objects and operational estimator variances explicitly relied on the metric inverse tensor $(g^{(1)})^{ij}$ and a uniform lower bound $\lambda_{\min}(g^{(1)}(\theta_0)) \ge \kappa_0 > 0$. When $\det g^{(1)}(\theta_0) = 0$:
	\begin{itemize}
		\item The matrix inverse $(g^{(1)})^{-1}$ does not exist, causing Christoffel symbols $$\Gamma_{ij}^k = \frac{1}{2}(g^{(1)})^{kl}\left(\partial_i g_{jl}^{(1)} + \partial_j g_{il}^{(1)} - \partial_l g_{ij}^{(1)}\right)$$ and Riemann curvature contractions to become mathematically undefined or singular.
		\item The operational covariance matrix $\Sigma_N$ of any parameter estimator along gauge directions diverges to infinity ($\lambda_{\max}(\Sigma_N) \to \infty$), destroying the dual isomorphism between statistical estimation risk and geometric metric length.
	\end{itemize}
\end{enumerate}

\subsubsection{Mathematical Breakdown Lemma}

To rigorously demonstrate the failure of the ambient formulation under metric degeneracy, we state and prove the following breakdown lemma.

\begin{lemma}[Ambient Degeneracy and Operational Breakdown]
	\label{lem:ambient_breakdown}
	Let $\Theta \subset \mathbb{R}^D$ be an ambient parameter space with $D > d$, and suppose $g^{(1)}(\theta_0)$ is rank-deficient with kernel $V_{\theta_0} \equiv \ker(g^{(1)}(\theta_0)) \subset T_{\theta_0}\Theta$ of dimension $D - d \ge 1$. Then:
	\begin{enumerate}[label=\textnormal{(\roman*)}]
		\item Under the isotropic microscope chart $\psi_N(h) = \theta_0 + \frac{h}{\sqrt{N}}$ for $h \in \mathbb{R}^D$, the localized metric sequence $g_N(h) \equiv \psi_N^* g^{(N)}(h)$ satisfies:
		\begin{equation}
			\inf_{h \in \mathbb{R}^D, \, \|h\|=1} h^T g_N(h) h = 0, \quad \forall N \in \mathbb{N},
			\label{eq:zero_eigenvalue_microscope}
		\end{equation}
		preventing the uniform lower bound $\lambda_{\min}(g_N) \ge \kappa_0 > 0$ required for Section \ref{sec:module3} convergence.
		
		\item For any $v \in V_{\theta_0} \setminus \{\mathbf{0}\}$ and any locally unbiased parameter estimator $\hat{\theta}_N$ on $\Theta$, the operational estimation variance along $v$ satisfies the unbounded limit:
		\begin{equation}
			\lim_{N \to \infty} N \cdot \mathrm{Var}_{\theta_0}\left( v^T \hat{\theta}_N \right) = +\infty,
			\label{eq:divergent_operational_variance}
		\end{equation}
		whereas the geometric Fisher length $\|v\|_{g^{(1)}} \equiv \sqrt{g_{\theta_0}^{(1)}(v,v)} = 0$, violating the Geometric-Operational Equivalence ratio of Section \ref{sec:equivalence_proof}.
	\end{enumerate}
\end{lemma}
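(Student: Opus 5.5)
The plan is to treat the two parts separately. Part (i) is a direct consequence of the Universal Tensor Valence Scaling Law (Theorem~\ref{thm:universal_scaling_law}) with $r=2$. Part (ii) is a degenerate Cram\'er--Rao argument built from the joint score and the additivity Lemma~\ref{lem:additivity_fisher_metric}.

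For (i), I first apply Theorem~\ref{thm:universal_scaling_law}. The computation of Theorem~\ref{thm:metric_stabilization} never used positive-definiteness and goes through verbatim on $\mathbb{R}^D$, giving $g_N(h)=\psi_N^*g^{(N)}(h)=g^{(1)}(\theta_0+h/\sqrt{N})$. In particular $g_N(\mathbf{0})=g^{(1)}(\theta_0)$ for every $N$, with no $N$-dependence. Next I choose a unit vector $u\in V_{\theta_0}$, which exists because $\dim V_{\theta_0}=D-d\ge 1$. Then $u^Tg_N(\mathbf{0})u=0$, so $\lambda_{\min}(g_N(\mathbf{0}))=0$ for all $N$, and the infimum in \eqref{eq:zero_eigenvalue_microscope} is attained and equals $0$.

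The statement of (i) conflates the base point $h$ with the test direction. I will read the infimum as being over unit test vectors at the base point. The alternative reading would use the same $h$ both as base point and as test vector, and then one would need the kernel of $g^{(1)}(\theta_0+h/\sqrt N)$ to contain $h$. If a pointwise statement at every $h$ is wanted, I would invoke the constant-rank gauge structure described in the motivation, namely $\operatorname{rank}g^{(1)}(\theta)=d<D$ on $\mathcal{U}(\theta_0)$. Under that structure each $g_N(h)$ has a nontrivial kernel, so $\lambda_{\min}(g_N(h))=0$ identically. Either way, no uniform bound $\lambda_{\min}\ge\kappa_0>0$ can hold, which is the claimed obstruction to Section~\ref{sec:module3}.

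For (ii), fix $v\in V_{\theta_0}\setminus\{\mathbf{0}\}$ and a locally unbiased $\hat\theta_N$, so that $\partial_u\,\mathbb{E}_\theta[\hat\theta_N]\big|_{\theta_0}=u$ for every direction $u$. The steps are as follows.
\begin{enumerate}
\item Differentiate $\mathbb{E}_\theta[v^T\hat\theta_N]$ along an arbitrary direction $u$ under the integral sign, using Assumption~\ref{asm:local_regularity}(i). This gives $v^Tu=\mathrm{Cov}_{\theta_0}\bigl(v^T\hat\theta_N,\;u^T\nabla\ell_N(\theta_0)\bigr)$.
\item Apply Cauchy--Schwarz, together with $\mathrm{Var}(u^T\nabla\ell_N)=N\,u^Tg^{(1)}(\theta_0)u$ from Lemma~\ref{lem:additivity_fisher_metric}. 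This yields the generalized bound $N\,\mathrm{Var}_{\theta_0}(v^T\hat\theta_N)\ge (v^Tu)^2/(u^Tg_0u)$, valid whenever $u^Tg_0u>0$.
\item Take $u_\varepsilon=v+\varepsilon w$ with $w\notin V_{\theta_0}$. Since $g_0v=0$, the denominator equals $\varepsilon^2\,w^Tg_0w$, while the numerator tends to $\|v\|^4>0$.
\item Let $\varepsilon\to0$. The lower bound diverges, so $N\,\mathrm{Var}_{\theta_0}(v^T\hat\theta_N)=+\infty$ for every $N$, hence also in the limit.
\end{enumerate}
Equivalently, taking $u=v$ directly produces the contradiction $\|v\|^2\le 0$, so no finite-variance locally unbiased estimator exists. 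Finally, $\|v\|_{g^{(1)}}=\sqrt{v^Tg_0v}=0$ by the definition of the kernel, which exhibits the mismatch between operational risk and metric length.

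The main obstacle is the well-posedness of (ii). Strictly speaking, a finite-variance locally unbiased estimator along a gauge direction does not exist, so the limit in \eqref{eq:divergent_operational_variance} must be interpreted with the convention $\mathrm{Var}=+\infty$. Alternatively, one can work with $\varepsilon$-approximately unbiased estimators and let $\varepsilon\to0$. I would make this convention explicit, and I would justify differentiation under the integral via the $L_2$-differentiability of $\sqrt{p}$ rather than pointwise log-derivatives, since the latter may be ill-behaved on the product space.
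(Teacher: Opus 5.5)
Your proposal is correct and follows essentially the paper's route. In (i) you evaluate $g_N(\mathbf{0})=g^{(1)}(\theta_0)$ on a unit vector of $V_{\theta_0}$, and the paper's proof reads the infimum the same way, at base point $h=\mathbf{0}$. In (ii) you use the vanishing of the score along $v$ to obtain the unbiasedness contradiction, and your $u_\varepsilon=v+\varepsilon w$ Cauchy--Schwarz bound plays the role of the paper's regularized bound $\lim_{\epsilon\to0^+}v^T(g^{(1)}(\theta_0)+\epsilon\mathbf{I}_D)^{-1}v$. Your formulation is in fact the sounder of the two. The paper goes on to identify that limit with $v^T(g^{(1)}(\theta_0))^+v$, which is false: the Moore--Penrose inverse annihilates $\ker g^{(1)}(\theta_0)$, so this quantity equals $0$. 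Your supremum over test directions $u$ gives the divergence directly.
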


\begin{proof}
~
\begin{itemize}
\item \textbf{Proof of (i):} Let $v \in V_{\theta_0} \setminus \{\mathbf{0}\}$ be a non-zero kernel vector normalized such that $\|v\|_{\mathbb{R}^D} = 1$. By definition of $V_{\theta_0}$, $g^{(1)}(\theta_0)(v, v) = 0$  (since  $g^{(1)}(\theta_0) v = \mathbf{0}$) . Evaluating the pulled-back metric tensor $g_N(0) = \psi_N^* g^{(N)}(0)$ on $v$:
	\begin{equation}
		g_N(0)(v, v) = \left( \frac{1}{\sqrt{N}} J(\psi_N)^T \left[ N \cdot g^{(1)}(\theta_0) \right] J(\psi_N) \right)(v, v) = g^{(1)}(\theta_0)(v, v) = 0.
		\label{eqn:171}
	\end{equation}
Now the vector $v$ is explicitly normalized to be a unit vector ($\Vert{}v\Vert{}_{\mathbb{R}^D} = 1$), which means $v$ belongs to the set of unit vectors $\{h \in \mathbb{R}^D : \Vert{}h\Vert{} = 1\}$. 

By definition of the infimum, the greatest lower bound over a set $S$ is always less than or equal to the function evaluated at any specific element $x_0 \in S$:
$$\inf_{h \in S} f(h) \le f(x_0) \quad \text{for any } x_0 \in S.
$$
Choosing $x_0 = v$ gives:$$\inf_{\Vert{}h\Vert{}=1} h^T g_N(0) h \le v^T g_N(0) v.$$
Since equation \eqref{eqn:171} established that $v^T g_N(0) v = g_N(0)(v,v) = 0$, substituting this value yields $\inf_{\Vert{}h\Vert{}=1} h^T g_N(0) h \le 0$. Since $g_N(0)$ is positive semi-definite, the infimum is exactly $0$ for all $N \in \mathbb{N}$, proving \eqref{eq:zero_eigenvalue_microscope}.
	
\item \textbf{Proof of (ii):} Because $v \in V_{\theta_0} = \ker(g^{(1)}(\theta_0))$, the Lie derivative of the log-likelihood along $v$ vanishes $P_{\theta_0}$-almost everywhere:
	\begin{equation}
		v\left( \log p(X; \theta_0) \right) = \sum_{i=1}^D v^i \frac{\partial \log p(X; \theta_0)}{\partial \theta^i} = 0 \quad P_{\theta_0}\text{-a.e.}
	\end{equation}
	Consequently, sample data $X_1, \dots, X_N$ contains zero information regarding parameter displacements along $v$. By the generalized Cramér-Rao inequality, if $v \in \ker(g^{(1)}(\theta_0))$, the lower bound for the variance of any unbiased estimator $\hat{\theta}_N$ in direction $v$ is given by the pseudo-inverse limit:
	\begin{equation}
		N \cdot \mathrm{Var}_{\theta_0}\left( v^T \hat{\theta}_N \right) \ge v^T \left( g^{(1)}(\theta_0) \right)^+ v = +\infty.
	\end{equation}
	However, the geometric length of $v$ induced by $g^{(1)}(\theta_0)$ is $\|v\|_{g^{(1)}} = \sqrt{g^{(1)}(\theta_0)(v,v)} = 0$. The operational variance diverges to $+\infty$ while the geometric length is $0$, demonstrating that the finite operational-geometric equivalence ratio $\frac{\mathrm{Var}(v^T \hat{\theta}_N)}{\|v\|_g^2} \in (0, \infty)$ constructed in Section \ref{sec:equivalence_proof} breaks down completely on $V_{\theta_0}$.
	
\item {\bf Proof of (ii):} The proof proceeds in four steps to demonstrate the complete collapse of operational variance bounds along kernel directions.

\subparagraph{Step 1: Vanishing Score Function along Kernel Vectors.}
By assumption, $v \in V_{\theta_0} \setminus \{\mathbf{0}\} = \ker(g^{(1)}(\theta_0))$. Evaluating the single-sample Fisher Information quadratic form on $v$:
\begin{equation}
	g^{(1)}(\theta_0)(v, v) = \mathbb{E}_{\theta_0} \left[ \left( \sum_{i=1}^D v^i \frac{\partial \log p(X; \theta_0)}{\partial \theta^i} \right)^2 \right] = 0.
	\label{eq:fim_quadratic_zero}
\end{equation}
Because the integrand $\left( v(\log p(X; \theta_0)) \right)^2$ is a non-negative random variable under measure $P_{\theta_0}$, an expectation of zero strictly requires that the random variable vanishes almost surely. Thus, the directional Lie derivative of the log-likelihood function vanishes $P_{\theta_0}$-almost everywhere:
\begin{equation}
	v\left( \log p(X; \theta_0) \right) \equiv \sum_{i=1}^D v^i \frac{\partial \log p(X; \theta_0)}{\partial \theta^i} = 0 \quad P_{\theta_0}\text{-a.e.}
	\label{eq:score_vanishing_step1}
\end{equation}

\subparagraph{Step 2: Vanishing Total Sample Score.}
Extending from a single observation to an $i.i.d.$ sample $X_{1:N} = (X_1, \dots, X_N) \sim P_{\theta_0}^{\otimes N}$, the total joint sample log-likelihood is $L_N(X_{1:N}; \theta) = \sum_{n=1}^N \log p(X_n; \theta)$. Applying the directional derivative operator $v$ yields:
\begin{equation}
	v\left( L_N(X_{1:N}; \theta_0) \right) = \sum_{n=1}^N v\left( \log p(X_n; \theta_0) \right) = \sum_{n=1}^N 0 = 0 \quad P_{\theta_0}^{\otimes N}\text{-a.e.}
	\label{eq:total_score_vanishing}
\end{equation}
Equation \eqref{eq:total_score_vanishing} demonstrates that the sample score vector carries zero sensitivity along direction $v$, meaning the sample data $X_{1:N}$ provides identically zero Fisher information regarding parameter displacements along $v$.

\subparagraph{Step 3: Unbiasedness Contradiction and Divergent Variance Bound.}
Let $\hat{\theta}_N(X_{1:N})$ be any locally unbiased estimator of $\theta$ in a neighborhood of $\theta_0$, satisfying $\mathbb{E}_\theta\left[ \hat{\theta}_N \right] = \theta$. Differentiating the projected expectation $v^T \mathbb{E}_\theta[\hat{\theta}_N] = v^T \theta$ along direction $v$ at $\theta = \theta_0$:
\begin{equation}
	v\left( v^T \mathbb{E}_\theta[\hat{\theta}_N] \right) \Big|_{\theta_0} = v^T v = \|v\|_2^2 > 0.
	\label{eq:unbiased_lhs}
\end{equation}
On the other hand, interchanging differentiation and integration under standard measure-theoretic regularity conditions yields:
\begin{align}
	v\left( v^T \mathbb{E}_\theta[\hat{\theta}_N] \right) \Big|_{\theta_0} &= \int_{\mathcal{X}^N} \left( v^T \hat{\theta}_N(x_{1:N}) \right) v\left( \prod_{n=1}^N p(x_n; \theta_0) \right) d\mu^{\otimes N}(x_{1:N}) \nonumber \\
	&= \mathbb{E}_{\theta_0} \left[ \left( v^T \hat{\theta}_N \right) \cdot v\left( L_N(X_{1:N}; \theta_0) \right) \right].
	\label{eq:unbiased_rhs}
\end{align}
Substituting Equation \eqref{eq:total_score_vanishing} into Equation \eqref{eq:unbiased_rhs}:
\begin{equation}
	v\left( v^T \mathbb{E}_\theta[\hat{\theta}_N] \right) \Big|_{\theta_0} = \mathbb{E}_{\theta_0} \left[ \left( v^T \hat{\theta}_N \right) \cdot 0 \right] = 0.
	\label{eq:unbiased_rhs_zero}
\end{equation}
Equating Equation \eqref{eq:unbiased_lhs} and Equation \eqref{eq:unbiased_rhs_zero} produces the contradiction $\|v\|_2^2 = 0$, establishing that no finite-variance locally unbiased estimator can exist along kernel direction $v$ \cite{rao1973, stoica2001}. 

Formalizing this via the generalized Cramér-Rao inequality for singular information matrices \cite{stoica2001}, since $v \in \ker(g^{(1)}(\theta_0))$ implies $v \notin \mathrm{range}(g^{(1)}(\theta_0))$, the variance lower bound evaluated using the Moore-Penrose pseudo-inverse limit diverges:
\begin{equation}
	N \cdot \mathrm{Var}_{\theta_0} \left( v^T \hat{\theta}_N \right) \ge \lim_{\epsilon \to 0^+} v^T \left( g^{(1)}(\theta_0) + \epsilon \mathbf{I}_D \right)^{-1} v = v^T \left( g^{(1)}(\theta_0) \right)^+ v = +\infty.
	\label{eq:divergent_variance_formal}
\end{equation}

\subparagraph{Step 4: Operational-Geometric Ratio Breakdown.}
Conversely, evaluating the geometric length of $v$ under the ambient metric $g^{(1)}(\theta_0)$ yields:
\begin{equation}
	\|v\|_{g^{(1)}} \equiv \sqrt{g^{(1)}(\theta_0)(v, v)} = \sqrt{v^T g^{(1)}(\theta_0) v} = 0.
	\label{eq:zero_geometric_length}
\end{equation}
Comparing the operational estimation risk with the geometric metric length demonstrates the complete mathematical breakdown of the operational-geometric equivalence ratio constructed in Section \ref{sec:equivalence_proof}:
\begin{equation}
	\frac{\mathrm{Var}_{\theta_0}\left( v^T \hat{\theta}_N \right)}{\|v\|_{g^{(1)}}^2} = \frac{+\infty}{0} = +\infty \notin (0, \infty).
	\label{eq:ratio_breakdown}
\end{equation}
Thus, ambient geometry without Ehresmann horizontal splitting fails to bound operational estimation errors on singular parameter spaces. $\blacksquare$

\end{itemize}
\end{proof}

\subsubsection{The Ehresmann Connection Instrument as the Geometric Solution}

To resolve this failure without abandoning over-parameterized models, we introduce the \textbf{Ehresmann Connection Instrument}. Instead of treating $\Theta$ as an unstructured ambient manifold $\mathbb{R}^D$, we equip it with a fiber bundle structure $(\Theta, \mathcal{B}, \pi)$ where:
\begin{itemize}
	\item The base manifold $\mathcal{B} \equiv \Theta / \sim$ represents the $d$-dimensional quotient space of observationally identifiable probability distributions, where $g^{\mathcal{B}}$ is strictly non-singular ($\lambda_{\min}(g^{\mathcal{B}}) \ge \kappa_0 > 0$).
	\item The fiber $\mathcal{F}_p = \pi^{-1}(p)$ represents the $(D-d)$-dimensional unidentifiable gauge orbit. \footnote{
			Let $(\Theta, \mathcal{B}, \pi)$ be a statistical fiber bundle, where $\pi: \Theta \to \mathcal{B}$ is the canonical projection map from the ambient parameter space $\Theta \subset \mathbb{R}^D$ to the quotient base manifold $\mathcal{B} \equiv \Theta / \sim$ of identifiable probability distributions. 
			
			For any point $p \in \mathcal{B}$, the fiber (or preimage) $\pi^{-1}(p) \subset \Theta$ over $p$ is defined as:
			\begin{equation}
				\pi^{-1}(p) \equiv \left\{ \theta \in \Theta : \pi(\theta) = p \right\} = \left\{ \theta \in \Theta : P_\theta = p \quad \mu\text{-a.e.} \right\}
				\label{eq:fiber_point_def}
			\end{equation}
			
			More generally, as a set-valued map $\pi^{-1}: \mathcal{P}(\mathcal{B}) \to \mathcal{P}(\Theta)$ on the power set of $\mathcal{B}$, the inverse image map for any subset $\mathcal{U} \subseteq \mathcal{B}$ is defined as:
			\begin{equation}
				\pi^{-1}(\mathcal{U}) \equiv \left\{ \theta \in \Theta : \pi(\theta) \in \mathcal{U} \right\} = \bigcup_{p \in \mathcal{U}} \pi^{-1}(p)
				\label{eq:preimage_set_def}
			\end{equation}}
\end{itemize}

The Ehresmann connection provides a smooth, canonically invariant decomposition of the ambient tangent space into vertical and horizontal sub-bundles:
\begin{equation}
	T_\theta \Theta = H_\theta \oplus V_\theta, \quad \text{where } V_\theta \equiv \ker(d\pi_\theta) \text{ and } H_\theta \equiv V_\theta^{\perp_g}.
	\label{eq:ehresmann_splitting_intro}
\end{equation}
By projecting localized coordinate charts and operators onto the horizontal distribution $H_\theta$ via the projection operator $P_\theta^H \equiv \mathrm{id}_{T_\theta\Theta} - \omega_\theta$, we isolate the non-identifiable gauge modes in $V_\theta$ while preserving a strictly positive-definite restricted metric $g_\theta\big|_{H_\theta \times H_\theta}$. This horizontal splitting allows the convergence theorems of Section \ref{sec:module3} and the equivalence bounds of Section \ref{sec:equivalence_proof} to hold strictly on $H_\theta$, enabling the formulation of the Geometric-Operational Unification Theorem in Section \ref{sec:module5_unification}.

\subsection{Statistical Fiber Bundle Architecture and Gauge Orbits}
\label{subsec:fiber_bundle_architecture}

We formalize the ambient parameter space $\Theta$ as a smooth fiber bundle over the quotient manifold of observationally identifiable probability distributions.

\begin{definition}[Statistical Fiber Bundle]
	\label{def:statistical_fiber_bundle}
	Let $\Theta \subset \mathbb{R}^D$ be a smooth, open, $D$-dimensional ambient parameter space, and let $\mathcal{P} = \{P_\theta : dP_\theta = p(x;\theta)d\mu(x), \, \theta \in \Theta\}$ be a dominated parametric family on sample space $\mathcal{X}$. Define the observational equivalence relation $\sim$ on $\Theta$ by:
	\begin{equation}
		\theta_1 \sim \theta_2 \quad \Longleftrightarrow \quad P_{\theta_1} = P_{\theta_2} \quad \mu\text{-almost everywhere}.
		\label{eq:equivalence_relation_def}
	\end{equation}
	Let $\mathcal{B} \equiv \Theta / \sim$ denote the $d$-dimensional quotient base manifold ($d < D$) of identifiable probability distributions, equipped with canonical projection map $\pi: \Theta \to \mathcal{B}$ defined by $\pi(\theta) = [P_\theta]$. The structured tuple:
	\begin{equation}
		(\Theta, \, \mathcal{B}, \, \pi, \, \mathcal{F})
		\label{eq:fiber_bundle_tuple}
	\end{equation}
	forms a smooth statistical fiber bundle, where for each $p \in \mathcal{B}$, the fiber $\mathcal{F}_p \equiv \pi^{-1}(p) \subset \Theta$ represents the $(D-d)$-dimensional gauge orbit of observationally equivalent parameters.
\end{definition}

\begin{assumption}[Submersion and Smooth Regularity of Fiber Bundle]
	\label{asm:bundle_submersion}
	The projection map $\pi: \Theta \to \mathcal{B}$ is a smooth surjective submersion of constant rank $d = \dim(\mathcal{B}) < D = \dim(\Theta)$. Consequently, for each $p \in \mathcal{B}$, the fiber $\mathcal{F}_p = \pi^{-1}(p)$ is a smooth closed sub-manifold of $\Theta$ of dimension $\dim(\mathcal{F}_p) = D - d$.
\end{assumption}

\subsubsection{Vertical Sub-Bundle and Score Function Vanishing}
\label{subsec:vertical_subbundle}

\begin{definition}[Vertical Distribution / Kernel Sub-Bundle $V_\theta$]
	\label{def:vertical_subbundle}
	For each ambient parameter state $\theta \in \Theta$, the \textbf{vertical tangent space} $V_\theta \subset T_\theta\Theta$ is defined coordinate-freely as the kernel of the differential push-forward map $d\pi_\theta: T_\theta\Theta \to T_{\pi(\theta)}\mathcal{B}$:
	\begin{equation}
		V_\theta \equiv \ker(d\pi_\theta) = \left\{ v \in T_\theta\Theta : d\pi_\theta(v) = \mathbf{0} \in T_{\pi(\theta)}\mathcal{B} \right\}.
		\label{eq:vertical_subspace_def}
	\end{equation}
	The assignment $\theta \mapsto V_\theta$ defines an integrable smooth sub-bundle $V \subset T\Theta$ of rank $D - d$, called the \textbf{vertical sub-bundle}. Tangent vectors $v \in V_\theta$ represent unidentifiable internal gauge variations along the fiber $\mathcal{F}_{\pi(\theta)}$.
\end{definition}

\begin{lemma}[Score Function Vanishing along Vertical Directions]
	\label{lem:vertical_score_vanishing}
	Let $(\Theta, \mathcal{B}, \pi)$ be the statistical fiber bundle (Definition \ref{def:statistical_fiber_bundle}). For any parameter state $\theta \in \Theta$ and any vertical tangent vector $v \in V_\theta = \ker(d\pi_\theta)$, the directional Lie derivative of the single-sample log-likelihood density function vanishes $\mu$-almost everywhere:
	\begin{equation}
		v\left( \log p(X; \theta) \right) \equiv \sum_{i=1}^D v^i \frac{\partial \log p(X; \theta)}{\partial \theta^i} = 0 \quad P_\theta\text{-a.e.}
		\label{eq:vertical_score_vanishing_statement}
	\end{equation}
	Consequently, every vertical tangent vector $v \in V_\theta$ lies in the kernel of the ambient single-sample Fisher Information Metric tensor $g_\theta^{(1)}$:
	\begin{equation}
		V_\theta \subseteq \ker\left(g_\theta^{(1)}\right) \equiv \left\{ v \in T_\theta\Theta : g_\theta^{(1)}(v, w) = 0, \; \forall w \in T_\theta\Theta \right\}.
		\label{eq:vertical_metric_kernel_containment}
	\end{equation}
\end{lemma}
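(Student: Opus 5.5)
The plan is to reduce the statement to the fact that the density is constant along each fiber. First I would fix $\theta \in \Theta$ and $v \in V_\theta = \ker(d\pi_\theta)$. By Assumption~\ref{asm:bundle_submersion}, the fiber $\mathcal{F}_{\pi(\theta)}$ is a smooth embedded submanifold whose tangent space at $\theta$ is exactly $V_\theta$. So I can choose a smooth curve $\gamma:(-\epsilon,\epsilon) \to \mathcal{F}_{\pi(\theta)}$ with $\gamma(0)=\theta$ and $\dot\gamma(0)=v$. By Definition~\ref{def:statistical_fiber_bundle}, every point of the fiber satisfies $\pi(\gamma(t)) = \pi(\theta)$, which means $P_{\gamma(t)} = P_\theta$. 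Hence $p(x;\gamma(t)) = p(x;\theta)$ for $\mu$-a.e.\ $x$, for each fixed $t$.

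The next step is to differentiate this identity at $t=0$. The null set on which the equality fails may depend on $t$, so I would not differentiate pointwise. Instead I would work in $L_2(\mu)$ using Assumption~\ref{asm:local_regularity}(i): the map $t \mapsto \sqrt{p(\cdot;\gamma(t))}$ is differentiable in $L_2(\mu)$ and constant as an element of $L_2(\mu)$, so its $L_2$-derivative vanishes. By the chain rule this derivative equals $\tfrac12 \sum_i v^i\, \partial_i \log p(\cdot;\theta)\,\sqrt{p(\cdot;\theta)}$ in $L_2(\mu)$. Vanishing in $L_2(\mu)$ gives $v(\log p(X;\theta))\sqrt{p(X;\theta)}=0$ $\mu$-a.e. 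Dividing by $\sqrt{p}$ on $\{p>0\}$ then yields \eqref{eq:vertical_score_vanishing_statement} $P_\theta$-a.e., since the set $\{p=0\}$ is $P_\theta$-null.

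For the kernel containment, I would take any $w \in T_\theta\Theta$ and write
\[
g_\theta^{(1)}(v,w)=\mathbb{E}_\theta\bigl[v(\log p)\, w(\log p)\bigr].
\]
The first factor vanishes $P_\theta$-a.e., and the expectation is finite by Cauchy--Schwarz under Assumption~\ref{asm:local_regularity}(ii). Therefore $g_\theta^{(1)}(v,w)=0$ for all $w$, which proves \eqref{eq:vertical_metric_kernel_containment}. An equivalent route is to compute $g^{(1)}_\theta(v,v)=0$ and then use the positive semidefiniteness of $g_\theta^{(1)}$, which gives $g_\theta^{(1)}v=0$.

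The main obstacle is the regularity transfer in the second step. Assumption~\ref{asm:local_regularity} is stated only on $\mathcal{U}(\theta_0)$, and its strict positivity clause conflicts with the degenerate setting of this section. I would therefore state explicitly that I use only the $L_2$-differentiability clause (i), assumed near $\theta$. If that is not available, the fallback is to assume $\theta \mapsto p(x;\theta)$ is $C^1$ for $\mu$-a.e.\ $x$. Then a single exceptional null set works for all rational $t$, and by continuity for all $t$, which allows pointwise differentiation along $\gamma$.
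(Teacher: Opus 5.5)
Your proposal is correct and follows the same route as the paper. You choose a curve $\gamma$ in the fiber $\mathcal{F}_{\pi(\theta)}$ with $\gamma(0)=\theta$ and $\dot\gamma(0)=v$, differentiate the a.e.\ identity $p(x;\gamma(t))=p(x;\theta)$ at $t=0$, and substitute the vanishing directional score into $g^{(1)}_\theta(v,w)=\mathbb{E}_\theta[v(\log p)\,w(\log p)]$. Your $L_2(\mu)$ (or rational-times) argument supplies rigor that the paper's pointwise differentiation of $\log p(x;\gamma(t))$ omits: the paper ignores both the $t$-dependent null sets and the set $\{p=0\}$ where $\log p$ is undefined, which is why your conclusion correctly reads $P_\theta$-a.e.\ rather than the paper's $\mu$-a.e.
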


\begin{proof}
	Let $v \in V_\theta = \ker(d\pi_\theta)$ be a vertical tangent vector at $\theta \in \Theta$. By definition of $V_\theta$, $v$ is tangent to the fiber $\mathcal{F}_{\pi(\theta)} = \pi^{-1}(\pi(\theta))$. Let $\gamma: (-\epsilon, \epsilon) \to \mathcal{F}_{\pi(\theta)}$ be a smooth integral curve in $\Theta$ satisfying $\gamma(0) = \theta$ and $\dot{\gamma}(0) = v$.
	
	Because $\gamma(t) \in \mathcal{F}_{\pi(\theta)}$ for all $t \in (-\epsilon, \epsilon)$, all parameter states along $\gamma(t)$ correspond to the exact same probability distribution $P_{\pi(\theta)} \in \mathcal{B}$. Thus, the density function is identical across $t$:
	\begin{equation}
		p(x; \gamma(t)) = p(x; \theta) \quad \mu\text{-a.e.}, \; \forall t \in (-\epsilon, \epsilon).
		\label{eq:density_constant_along_fiber}
	\end{equation}
	Differentiating Equation \eqref{eq:density_constant_along_fiber} with respect to $t$ at $t = 0$ using the chain rule:
	\begin{equation}
		\left. \frac{d}{dt} \log p(x; \gamma(t)) \right|_{t=0} = \sum_{i=1}^D \dot{\gamma}^i(0) \frac{\partial \log p(x; \theta)}{\partial \theta^i} = \sum_{i=1}^D v^i \frac{\partial \log p(x; \theta)}{\partial \theta^i} = 0 \quad \mu\text{-a.e.}
		\label{eq:chain_rule_score_vanishing}
	\end{equation}
	This establishes Equation \eqref{eq:vertical_score_vanishing_statement}.
	
	Now, evaluate the ambient Fisher Information Metric component $g_\theta^{(1)}(v, w)$ for an arbitrary tangent vector $w \in T_\theta\Theta$:
	\begin{equation}
		g_\theta^{(1)}(v, w) \equiv \mathbb{E}_\theta \left[ \left( v(\log p(X; \theta)) \right) \cdot \left( w(\log p(X; \theta)) \right) \right].
		\label{eq:fim_vertical_evaluation}
	\end{equation}
	Substituting Equation \eqref{eq:chain_rule_score_vanishing} into Equation \eqref{eq:fim_vertical_evaluation} gives:
	\begin{equation}
		g_\theta^{(1)}(v, w) = \mathbb{E}_\theta \left[ 0 \cdot \left( w(\log p(X; \theta)) \right) \right] = 0, \quad \forall w \in T_\theta\Theta.
		\label{eq:fim_vertical_zero}
	\end{equation}
	Hence, $v \in \ker(g_\theta^{(1)})$, proving $V_\theta \subseteq \ker(g_\theta^{(1)})$.
\end{proof}

\subsection{Fisher-Compatible Ehresmann Connection and Horizontal Splitting}
\label{subsec:ehresmann_connection_splitting}

Having isolated the vertical kernel $V_\theta$, we construct a smooth complementary horizontal distribution $H_\theta \subset T_\theta\Theta$ that encodes all statistically informative parameter variations.

\begin{definition}[Fisher-Compatible Ehresmann Connection]
	\label{def:ehresmann_connection}
	An \textbf{Ehresmann connection} on the statistical fiber bundle $(\Theta, \mathcal{B}, \pi)$ is a smooth choice of a horizontal subspace $H_\theta \subset T_\theta\Theta$ at each state $\theta \in \Theta$ such that the tangent space decomposes into a smooth direct sum:
	\begin{equation}
		T_\theta\Theta = H_\theta \oplus V_\theta, \quad \forall \theta \in \Theta.
		\label{eq:tangent_direct_sum_splitting}
	\end{equation}
	An Ehresmann connection $H \subset T\Theta$ is called \textbf{Fisher-compatible} (or \textbf{metric-orthogonal}) if $H_\theta$ is defined as the Fisher-metric orthogonal complement of the vertical sub-bundle $V_\theta$:
	\begin{equation}
		H_\theta \equiv V_\theta^{\perp_g} = \left\{ x \in T_\theta\Theta : g_\theta^{(1)}(x, v) = 0, \; \forall v \in V_\theta \right\}.
		\label{eq:horizontal_orthogonal_def}
	\end{equation}
\end{definition}

\begin{definition}[Connection 1-Form $\omega_\theta$ and Horizontal Projection $P_\theta^H$]
	\label{def:connection_1form_projection}
	Let $T_\theta\Theta = H_\theta \oplus V_\theta$ be the Fisher-compatible Ehresmann connection (Definition \ref{def:ehresmann_connection}).
	\begin{enumerate}[label=\textnormal{(\roman*)}]
		\item The \textbf{Ehresmann connection 1-form} is the smooth $V_\theta$-valued 1-form $\omega_\theta: T_\theta\Theta \to V_\theta$ defined as the canonical projection onto the vertical space $V_\theta$ along $H_\theta$:
		\begin{equation}
			\omega_\theta(x + v) = v, \quad \forall x \in H_\theta, \; v \in V_\theta.
			\label{eq:connection_1form_def}
		\end{equation}
		\item The \textbf{horizontal projection operator} $P_\theta^H: T_\theta\Theta \to H_\theta$ is the smooth idempotent linear operator defined by:
		\begin{equation}
			P_\theta^H \equiv \mathrm{id}_{T_\theta\Theta} - \omega_\theta, \quad \text{satisfying } (P_\theta^H)^2 = P_\theta^H, \; \mathrm{im}(P_\theta^H) = H_\theta, \; \ker(P_\theta^H) = V_\theta.
			\label{eq:horizontal_projection_def}
		\end{equation}
		Where $\mathrm{id}_{T_\theta\Theta}$ is the identity operator (or identity mapping) on the tangent space $T_\theta\Theta$.
	\end{enumerate}
\end{definition}

\begin{lemma}[Horizontal Tangent Isomorphism]
	\label{lem:horizontal_isomorphism}
	Let $(\Theta, \mathcal{B}, \pi)$ be the statistical fiber bundle with Fisher-compatible Ehresmann connection $T_\theta\Theta = H_\theta \oplus V_\theta$. For every $\theta \in \Theta$, the restricted differential map:
	\begin{equation}
		d\pi_\theta\big|_{H_\theta} : H_\theta \longrightarrow T_{\pi(\theta)}\mathcal{B}
		\label{eq:restricted_differential_isomorphism}
	\end{equation}
	is a linear vector space isomorphism between the $d$-dimensional horizontal subspace $H_\theta$ and the $d$-dimensional tangent space $T_{\pi(\theta)}\mathcal{B}$ of identifiable distributions.
\end{lemma}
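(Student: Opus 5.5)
The plan is a pure linear-algebra argument at a fixed $\theta \in \Theta$. It uses only two ingredients: the direct-sum splitting $T_\theta\Theta = H_\theta \oplus V_\theta$ required in Definition~\ref{def:ehresmann_connection}, and the submersion property of $\pi$ from Assumption~\ref{asm:bundle_submersion}. I would prove injectivity and surjectivity of $d\pi_\theta|_{H_\theta}$ separately, and then record the dimension count as a consistency check.

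\emph{Dimensions.} Since $\pi$ is a submersion of constant rank $d$, the map $d\pi_\theta : T_\theta\Theta \to T_{\pi(\theta)}\mathcal{B}$ is surjective onto a $d$-dimensional space. Rank--nullity then gives $\dim V_\theta = \dim\ker(d\pi_\theta) = D-d$, consistent with Definition~\ref{def:vertical_subbundle}. The direct sum then forces $\dim H_\theta = D-(D-d) = d = \dim T_{\pi(\theta)}\mathcal{B}$.

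\emph{Injectivity.} Take $x \in H_\theta$ with $d\pi_\theta(x) = \mathbf{0}$. Then $x \in \ker(d\pi_\theta) = V_\theta$, so $x \in H_\theta \cap V_\theta = \{\mathbf{0}\}$ by directness of the sum.

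\emph{Surjectivity.} Take $w \in T_{\pi(\theta)}\mathcal{B}$. Surjectivity of $d\pi_\theta$ gives $u \in T_\theta\Theta$ with $d\pi_\theta(u) = w$. Decompose $u = P_\theta^H u + \omega_\theta(u)$ using Definition~\ref{def:connection_1form_projection}. Since $\omega_\theta(u) \in V_\theta = \ker d\pi_\theta$, linearity gives $d\pi_\theta(P_\theta^H u) = w$. So $P_\theta^H u \in H_\theta$ is a preimage. Given the dimension count, either injectivity or surjectivity alone would suffice, but both are immediate.

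The main obstacle is not the linear algebra but the legitimacy of the splitting itself. By Lemma~\ref{lem:vertical_score_vanishing}, $V_\theta \subseteq \ker(g^{(1)}_\theta)$. Hence $g^{(1)}_\theta(x,v) = 0$ for every $x \in T_\theta\Theta$ and every $v \in V_\theta$. Read literally, \eqref{eq:horizontal_orthogonal_def} therefore gives $V_\theta^{\perp_g} = T_\theta\Theta$, so $H_\theta \cap V_\theta = V_\theta \neq \{\mathbf{0}\}$ and the direct sum fails.

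I would therefore state explicitly that the proof uses only the directness $H_\theta \cap V_\theta = \{\mathbf{0}\}$, which is imposed as part of Definition~\ref{def:ehresmann_connection}. I would add a remark that, for the Fisher-compatible choice to satisfy this requirement, the orthogonal complement must be taken with respect to an auxiliary non-degenerate metric, for example $g^{(1)}_\theta + \epsilon\,\mathbf{I}_D$ or the Euclidean metric on $\mathbb{R}^D$. Any such smooth complement of $V_\theta$ makes the above argument go through unchanged.

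Smoothness of the splitting is not needed for the pointwise isomorphism claim. I would only note that if $\theta \mapsto H_\theta$ is smooth, then $\theta \mapsto d\pi_\theta|_{H_\theta}$ is a smooth bundle isomorphism $H \cong \pi^*T\mathcal{B}$.
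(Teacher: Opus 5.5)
Your proof is correct and is essentially the paper's argument: the paper applies the First Isomorphism Theorem to get $T_\theta\Theta/V_\theta \cong T_{\pi(\theta)}\mathcal{B}$ and then uses the direct sum to identify $H_\theta$ with that quotient, and your separate injectivity and surjectivity checks are the same reasoning unpacked. Your caveat is also well founded: the paper's proof takes $H_\theta \cap V_\theta = \{\mathbf{0}\}$ from Definition~\ref{def:ehresmann_connection} without noticing that Lemma~\ref{lem:vertical_score_vanishing} forces $V_\theta^{\perp_g} = T_\theta\Theta$. Your repair by an auxiliary non-degenerate metric is sound, and since $g^{(1)}_\theta v = \mathbf{0}$ for $v \in V_\theta$, the complements you get from $g^{(1)}_\theta + \epsilon\,\mathbf{I}_D$ and from the Euclidean metric coincide.
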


\begin{proof}
	By the First Isomorphism Theorem for vector spaces \cite{roman2008} applied to the linear differential map $d\pi_\theta: T_\theta\Theta \to T_{\pi(\theta)}\mathcal{B}$:
	\begin{equation}
		T_\theta\Theta / \ker(d\pi_\theta) \cong \mathrm{im}(d\pi_\theta), 
		\label{eq:vector_isomorphism_step1}
	\end{equation}
	where $T_\theta\Theta / \ker(d\pi_\theta)$ is the quotient vector space of the ambient tangent space $T_\theta\Theta$ modulo the vertical kernel subspace $V_\theta \equiv \ker(d\pi_\theta)$.
	
	By Assumption \ref{asm:bundle_submersion}, $\pi$ is a submersion, so $d\pi_\theta$ is surjective: $\mathrm{im}(d\pi_\theta) = T_{\pi(\theta)}\mathcal{B}$. By Definition \ref{def:vertical_subbundle}, $\ker(d\pi_\theta) = V_\theta$. Substituting these into Equation \eqref{eq:vector_isomorphism_step1}:
	\begin{equation}
		T_\theta\Theta / V_\theta \cong T_{\pi(\theta)}\mathcal{B}.
		\label{eq:vector_isomorphism_step2}
	\end{equation}
	By the direct sum decomposition $T_\theta\Theta = H_\theta \oplus V_\theta$ (Definition \ref{def:ehresmann_connection}), every vector $w \in T_\theta\Theta / V_\theta$ has a unique representative in $H_\theta$. Thus, $H_\theta \cong T_\theta\Theta / V_\theta$. Consequently, $d\pi_\theta\big|_{H_\theta}: H_\theta \to T_{\pi(\theta)}\mathcal{B}$ is a linear bijection between vector spaces of equal finite dimension $\dim(H_\theta) = \dim(T_{\pi(\theta)}\mathcal{B}) = d$, proving that $d\pi_\theta\big|_{H_\theta}$ is an exact linear isomorphism.
\end{proof}

\subsection{Strict Uniform Positivity and Non-Singularity on $H_\theta$}
\label{subsec:horizontal_strict_positivity}

We now prove that restricting the Fisher Information Metric to the horizontal distribution $H_\theta$ completely eliminates ambient non-degeneracy pathologies.

\begin{theorem}[Strict Positivity and Non-Singularity of Restricted Metric on $H_\theta$]
	\label{thm:horizontal_strict_positivity}
	Let $(\Theta, \mathcal{B}, \pi)$ be the statistical fiber bundle equipped with the Fisher-compatible Ehresmann connection $T_\theta\Theta = H_\theta \oplus V_\theta$. Assuming local regularity on the quotient base manifold $\mathcal{B}$, the restricted single-sample Fisher Information Metric tensor field:
	\begin{equation}
		g_\theta^{(1)}\big|_{H_\theta \times H_\theta} : H_\theta \times H_\theta \longrightarrow \mathbb{R}
		\label{eq:restricted_metric_def}
	\end{equation}
	is strictly positive-definite and non-singular across the localized neighborhood $\mathcal{U}(\theta_0) \subset \Theta$. Specifically, there exists a uniform constant $\kappa_0 > 0$ such that:
	\begin{equation}
		\lambda_{\min}\left( g_\theta^{(1)}\big|_{H_\theta \times H_\theta} \right) \ge \kappa_0 > 0, \quad \forall \theta \in \mathcal{U}(\theta_0).
		\label{eq:horizontal_min_eigenvalue_bound}
	\end{equation}
\end{theorem}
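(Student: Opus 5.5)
The plan is to reduce everything to the non-degenerate metric on the quotient base $\mathcal{B}$ through the identity $g^{(1)}_\theta = \pi^* g^{\mathcal{B}}_{\pi(\theta)}$. The first step establishes this identity. By Definition~\ref{def:statistical_fiber_bundle}, the density factors as $p(x;\theta) = p_{\mathcal{B}}(x;\pi(\theta))$, so the chain rule gives $w(\log p(X;\theta)) = (d\pi_\theta w)(\log p_{\mathcal{B}}(X;\cdot))$ for every $w \in T_\theta\Theta$. Taking expectations in \eqref{eq:ambient_fisher_def} then yields
\[
g^{(1)}_\theta(w,w') = g^{\mathcal{B}}_{\pi(\theta)}(d\pi_\theta w,\, d\pi_\theta w').
\]
This identity is the same mechanism as Lemma~\ref{lem:vertical_score_vanishing}, applied now to arbitrary directions rather than only vertical ones. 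Combined with the assumed local regularity on $\mathcal{B}$, namely $\lambda_{\min}(g^{\mathcal{B}}) \ge \kappa_{\mathcal{B}} > 0$ near $\pi(\theta_0)$, it also sharpens Lemma~\ref{lem:vertical_score_vanishing} to the equality $\ker(g^{(1)}_\theta) = V_\theta$.

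The second step is pointwise positivity. Let $x \in H_\theta$ with $x \neq 0$. By Lemma~\ref{lem:horizontal_isomorphism}, $d\pi_\theta x \neq 0$, so $g^{(1)}_\theta(x,x) = g^{\mathcal{B}}(d\pi_\theta x, d\pi_\theta x) > 0$. This shows the restricted form is positive definite and hence non-singular. Quantitatively, with respect to a fixed auxiliary inner product on $H_\theta$, one gets
\[
g^{(1)}_\theta(x,x) \ge \kappa_{\mathcal{B}}\, \sigma_{\min}\!\big(d\pi_\theta|_{H_\theta}\big)^2 \|x\|^2 .
\]
The third step makes this bound uniform. I would shrink $\mathcal{U}(\theta_0)$ so that its closure is compact and lies in the domain where Assumption~\ref{asm:bundle_submersion} and the base regularity hold. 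Smoothness of $\pi$ and of the distribution $H$ makes $\theta \mapsto \sigma_{\min}(d\pi_\theta|_{H_\theta})$ continuous, and it is strictly positive by the constant-rank property. Its minimum over the compact closure is therefore some $s_0 > 0$, and we set $\kappa_0 = \kappa_{\mathcal{B}} s_0^2$. An alternative is to note directly that the smallest eigenvalue of the restricted Gram matrix in a smooth local frame of $H$ is continuous and positive, hence bounded below on the compact set.

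The main obstacle is that \eqref{eq:horizontal_orthogonal_def} is degenerate as written. Because $V_\theta \subseteq \ker g^{(1)}_\theta$, every $x \in T_\theta\Theta$ satisfies $g(x,v) = 0$ for all $v \in V_\theta$, so the literal $V_\theta^{\perp_g}$ is all of $T_\theta\Theta$ and does not give a complement. I would repair this before step two. One option is to take $H_\theta$ to be any smooth complement, for instance the orthogonal complement with respect to $g^{(1)} + \epsilon\,\mathbf{I}_D$ or the Euclidean one, which is still orthogonal to $V$ with respect to $g^{(1)}$ trivially. The argument above only uses $H_\theta \cap V_\theta = \{0\}$ together with smoothness, so it applies to any such choice. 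I would add a remark recording this, and noting that the positivity conclusion, as well as the restricted metric pushed to $T\mathcal{B}$, does not depend on which complement is chosen.
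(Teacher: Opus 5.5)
Your proposal is correct and has the same skeleton as the paper's proof: the pullback identity $g^{(1)}_\theta=\pi^*g^{\mathcal{B}}$, injectivity of $d\pi_\theta|_{H_\theta}$ from Lemma~\ref{lem:horizontal_isomorphism}, and then a compactness argument. It differs from the paper at three points, each to its advantage.

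\textbf{The pullback identity.} The paper asserts $g^{(1)}_\theta(x,x)=g^{\mathcal{B}}(d\pi_\theta x,d\pi_\theta x)$ ``by definition of the quotient metric.'' You instead derive it from $p(x;\theta)=p_{\mathcal{B}}(x;\pi(\theta))$, and you get $\ker g^{(1)}_\theta=V_\theta$ as a by-product.

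\textbf{The definition of $H_\theta$.} Your observation that $V_\theta^{\perp_g}=T_\theta\Theta$ is right, and the paper never addresses it. Its Step~1 silently needs the direct sum $T_\theta\Theta=H_\theta\oplus V_\theta$ through Lemma~\ref{lem:horizontal_isomorphism}. Its Step~3 appeal to $g$-orthogonality is vacuous. Under the literal definition, the restricted form would be the whole ambient form, whose kernel $V_\theta$ is nonzero, so the theorem would fail. Reading $H$ as any smooth complement, as you do, is what makes the statement true.

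\textbf{The uniform bound.} The paper minimizes $g^{(1)}_\theta(x,x)$ over $\overline{\mathcal{U}}(\theta_0)\times\mathbb{S}^{d-1}$, with the sphere taken in the \emph{fixed} space $H_{\theta_0}$. It justifies positivity there by Steps~1--3, which only cover $x\in H_\theta$. It then equates this minimum with $\lambda_{\min}(g^{(1)}_\theta|_{H_\theta\times H_\theta})$ by Rayleigh--Ritz, which for $\theta\neq\theta_0$ is taken over the wrong subspace. Your factorized bound $\kappa_{\mathcal{B}}\,\sigma_{\min}(d\pi_\theta|_{H_\theta})^2$ follows $H_\theta$ as $\theta$ moves, so the continuity-plus-compactness step is sound. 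It also gives the explicit constant $\kappa_0=\kappa_{\mathcal{B}}s_0^2$, which separates the base-metric contribution from that of the splitting. The paper's fixed-sphere route is shorter, but repairing it needs exactly the continuity of the splitting that you make explicit.

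Two small cautions. In your local-frame alternative, take the frame orthonormal for the auxiliary inner product; otherwise the Gram matrix's smallest eigenvalue is not the $\lambda_{\min}$ in the statement. And although positivity does not depend on the choice of complement, the value of $\kappa_0$ does.
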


\begin{proof}
	The proof proceeds in four steps.
	
	\paragraph{Step 1: Non-Zero Score Vector for Non-Zero Horizontal Vectors.}
	Let $x \in H_\theta$ be an arbitrary non-zero horizontal vector ($x \neq \mathbf{0}$). By Lemma \ref{lem:horizontal_isomorphism}, $d\pi_\theta\big|_{H_\theta}$ is an isomorphism, so:
	\begin{equation}
		u \equiv d\pi_\theta(x) \neq \mathbf{0} \in T_{\pi(\theta)}\mathcal{B}.
		\label{eq:pushforward_nonzero}
	\end{equation}
	Because $u \neq \mathbf{0}$, $u$ represents a non-trivial variation of probability distributions on the quotient manifold $\mathcal{B}$. By regularity of the identifiable base family $\mathcal{B}$, the base Fisher metric $g^{\mathcal{B}}$ is strictly positive-definite:
	\begin{equation}
		g_{\pi(\theta)}^{\mathcal{B}}(u, u) > 0, \quad \forall u \neq \mathbf{0} \in T_{\pi(\theta)}\mathcal{B}.
		\label{eq:base_metric_positive}
	\end{equation}
	
	\paragraph{Step 2: Metric Preservation under Horizontal Projection.}
	By definition of the quotient metric on $\mathcal{B}$, the Fisher metric $g_{\pi(\theta)}^{\mathcal{B}}$ on base tangent vectors $u = d\pi_\theta(x)$ is defined as the pullback of ambient density variation under horizontal lifting $x \in H_\theta$ ({\it i.e.,} $x$ is the horizontal lift of $u$.):
	\begin{equation}
		g_\theta^{(1)}(x, x) = \mathbb{E}_\theta \left[ \left( x(\log p(X; \theta)) \right)^2 \right] = g_{\pi(\theta)}^{\mathcal{B}}(d\pi_\theta(x), \, d\pi_\theta(x)).
		\label{eq:metric_pullback_identity_horizontal}
	\end{equation}
	Substituting Equation \eqref{eq:base_metric_positive} into Equation \eqref{eq:metric_pullback_identity_horizontal}:
	\begin{equation}
		g_\theta^{(1)}(x, x) = g_{\pi(\theta)}^{\mathcal{B}}(u, u) > 0, \quad \forall x \in H_\theta \setminus \{\mathbf{0}\}.
		\label{eq:horizontal_quadratic_form_positive}
	\end{equation}
	This proves that $g_\theta^{(1)}\big|_{H_\theta \times H_\theta}$ is strictly positive-definite.
	
\paragraph{Step 3: Non-Singularity and Zero Intersection with Kernel.}
Suppose $x \in H_\theta$ belongs to the kernel of the restricted metric tensor $g_\theta^{(1)}\big|_{H_\theta \times H_\theta}$, which means $g_\theta^{(1)}(x, w_H) = 0$ for all horizontal testing vectors $w_H \in H_\theta$. Setting $w_H = x$, the strict positive-definiteness established in Equation~\eqref{eq:horizontal_quadratic_form_positive} forces $g_\theta^{(1)}(x, x) = 0 \implies x = \mathbf{0}$.
	
	Furthermore, by Definition \ref{def:ehresmann_connection}, $H_\theta \equiv V_\theta^{\perp_g}$. Thus, $g_\theta^{(1)}(x, v) = 0$ for all $v \in V_\theta$ by construction. Combining horizontal and vertical testing:
	
\begin{enumerate}
	\item \textit{Vector Decomposition:} Any ambient tangent vector $w \in T_\theta\Theta$ uniquely decomposes into horizontal and vertical components: $w = w_H + w_V$, where $w_H \in H_\theta$ and $w_V \in V_\theta$.
	\item \textit{Bilinear Evaluation:} $g_\theta^{(1)}(x, w) = g_\theta^{(1)}(x, w_H) + g_\theta^{(1)}(x, w_V) = 0 + 0 = 0$, where $g_\theta^{(1)}(x, w_H) = 0$ holds by hypothesis and $g_\theta^{(1)}(x, w_V) = 0$ holds by metric orthogonality ($H_\theta \equiv V_\theta^{\perp_g}$).
	\item \textit{Kernel Intersection Membership:} Since $g_\theta^{(1)}(x, w) = 0$ holds for all $w \in T_\theta\Theta$, $x \in \ker\left(g_\theta^{(1)}\right)$. Combined with $x \in H_\theta$, this completes the proof that $x \in \ker\left(g_\theta^{(1)}\right) \cap H_\theta$.
\end{enumerate}	
	
	Equation \eqref{eq:horizontal_quadratic_form_positive} guarantees that $\ker\left(g_\theta^{(1)}\right) \cap H_\theta = \{\mathbf{0}\}$. Thus, $g_\theta^{(1)}\big|_{H_\theta \times H_\theta}$ is non-singular with matrix rank equal to $d = \dim(H_\theta) = \dim(\mathcal{B})$. Hence we  establish that the zero vector is the only element in the kernel:
	$$\ker\left(g_\theta^{(1)}\big\vert{}_{H_\theta \times H_\theta}\right) = \{\mathbf{0}\}$$
	

\paragraph{Step 4: Uniform Lower Bound via Compact Sphere Isolation.}
To transition from point-wise strict positive-definiteness to a uniform lower bound on the minimum eigenvalue across the entire localized neighborhood $\mathcal{U}(\theta_0) \subset \Theta$, we formalize the proof through four transparent logical phases.

\subparagraph{Phase 1: Compact Domain Construction.}
Let $\overline{\mathcal{U}}(\theta_0) \subset \Theta$ denote the closed, bounded neighborhood (compact closure) of $\theta_0$ in the ambient space $\mathbb{R}^D$. Let $H_{\theta_0} \cong \mathbb{R}^d$ be the $d$-dimensional horizontal subspace at state $\theta_0$. We define the unit sphere in $H_{\theta_0}$ equipped with the standard Euclidean norm as:
\begin{equation}
	\mathbb{S}^{d-1} \equiv \left\{ x \in H_{\theta_0} : \|x\|_2 = 1 \right\} \subset \mathbb{R}^d.
	\label{eq:unit_sphere_def}
\end{equation}
By the Heine-Borel Theorem \cite{rudin1976}, $\overline{\mathcal{U}}(\theta_0)$ is closed and bounded in $\mathbb{R}^D$, hence compact. Similarly, $\mathbb{S}^{d-1}$ is closed and bounded in $\mathbb{R}^d$, hence compact \cite{rudin1976}. By Tychonoff's Theorem on product topologies \cite{munkres2000}, the Cartesian product space:
\begin{equation}
	K \equiv \overline{\mathcal{U}}(\theta_0) \times \mathbb{S}^{d-1} \subset \mathbb{R}^D \times \mathbb{R}^d
	\label{eq:compact_product_space}
\end{equation}
is a non-empty, compact topological space.

\subparagraph{Phase 2: Continuity and Strict Positivity of the Rayleigh Mapping. \cite{rayleigh1877}}
Define the generalized Rayleigh quotient mapping $f: K \longrightarrow \mathbb{R}$ on the compact product domain $K$ by:
\begin{equation}
	f(\theta, x) \equiv g_\theta^{(1)}(x, x), \quad \forall (\theta, x) \in \overline{\mathcal{U}}(\theta_0) \times \mathbb{S}^{d-1}.
	\label{eq:rayleigh_map_def}
\end{equation}
The mapping $f(\theta, x)$ satisfies two crucial properties:
\begin{enumerate}[label=\textnormal{(\roman*)}]
	\item \textbf{Continuity ($C^0$):} By local regularity, the Fisher metric tensor field $\theta \mapsto g_\theta^{(1)}$ is smooth (hence $C^0$-continuous) over $\overline{\mathcal{U}}(\theta_0)$ \cite{lee2013}. Furthermore, $f(\theta, x)$ is a quadratic form in $x$. Being a composition of continuous functions, $f(\cdot, \cdot)$ is jointly $C^0$-continuous on the compact metric space $K$ \cite{rudin1976}.
	\item \textbf{Strict Positivity:} For any $(\theta, x) \in K$, $x \in \mathbb{S}^{d-1}$ implies $x \neq \mathbf{0}$. Steps 1--3 established that the restricted metric $g_\theta^{(1)}\big|_{H_\theta \times H_\theta}$ is strictly positive-definite and non-singular at every $\theta \in \overline{\mathcal{U}}(\theta_0)$. Therefore:
	\begin{equation}
		f(\theta, x) = g_\theta^{(1)}(x, x) > 0, \quad \forall (\theta, x) \in K.
		\label{eq:strict_positivity_on_K}
	\end{equation}
\end{enumerate}

\subparagraph{Phase 3: Application of the Extreme Value Theorem.}
By the Weierstrass Extreme Value Theorem \cite{rudin1976}, any continuous real-valued function defined on a non-empty compact set achieves its global minimum on that set. Thus, there exists an optimal parameter state and direction pair $(\theta^*, x^*) \in K$ such that:
\begin{equation}
	\kappa_0 \equiv \min_{(\theta, x) \in K} f(\theta, x) = f(\theta^*, x^*) = g_{\theta^*}^{(1)}(x^*, x^*).
	\label{eq:weierstrass_min_attainment}
\end{equation}
Because $f(\theta, x) > 0$ strictly for all $(\theta, x) \in K$ by Equation \eqref{eq:strict_positivity_on_K}, the global minimum value $\kappa_0$ must itself be strictly positive:
\begin{equation}
	\kappa_0 = \min_{\theta \in \overline{\mathcal{U}}(\theta_0)} \min_{x \in \mathbb{S}^{d-1}} g_\theta^{(1)}(x, x) > 0.
	\label{eq:kappa0_strictly_positive}
\end{equation}

\subparagraph{Phase 4: Variational Eigenvalue Characterization and Uniformity.}
By the Rayleigh-Ritz Variational Principle for symmetric bilinear forms \cite{horn2012}, for any fixed state $\theta \in \mathcal{U}(\theta_0)$, the minimum eigenvalue of the restricted horizontal metric tensor $g_\theta^{(1)}\big|_{H_\theta \times H_\theta}$ is characterized by:
\begin{equation}
	\lambda_{\min}\left( g_\theta^{(1)}\big|_{H_\theta \times H_\theta} \right) = \min_{x \in \mathbb{S}^{d-1}} g_\theta^{(1)}(x, x).
	\label{eq:rayleigh_ritz_eigenvalue}
\end{equation}
Taking the infimum over all $\theta \in \mathcal{U}(\theta_0) \subset \overline{\mathcal{U}}(\theta_0)$ and applying Equation \eqref{eq:kappa0_strictly_positive} yields:
\begin{equation}
	\lambda_{\min}\left( g_\theta^{(1)}\big|_{H_\theta \times H_\theta} \right) \ge \min_{\theta \in \overline{\mathcal{U}}(\theta_0)} \min_{x \in \mathbb{S}^{d-1}} g_\theta^{(1)}(x, x) = \kappa_0 > 0, \quad \forall \theta \in \mathcal{U}(\theta_0).
	\label{eq:final_uniform_lower_bound}
\end{equation}
Equation \eqref{eq:final_uniform_lower_bound} establishes a uniform, state-independent lower bound $\kappa_0 > 0$ on the spectrum of $g_\theta^{(1)}\big|_{H_\theta \times H_\theta}$ across the localized chart $\mathcal{U}(\theta_0)$, completing the proof of Theorem \ref{thm:horizontal_strict_positivity}. $\blacksquare$


\end{proof}


\section{The Geometric-Operational Unification Theorem Without Ambient Non-Degeneracy}
\label{sec:module5_unification}

In Section~\ref{sec:ehresmann_connections}, we constructed the Fisher-compatible Ehresmann connection framework over the statistical fiber bundle $(\Theta, \mathcal{B}, \pi)$, decomposing the tangent bundle into mutually metric-orthogonal sub-bundles $T_\theta\Theta = H_\theta \oplus V_\theta$. By isolating the structural internal directions $V_\theta = \ker(d\pi_\theta)$ from the statistically verifiable directions $H_\theta = V_\theta^{\perp_g}$, Theorem~\ref{thm:horizontal_strict_positivity} established that the restricted Fisher metric $g_\theta\big|_{H_\theta \times H_\theta}$ is strictly positive-definite and non-singular ($\lambda_{\min}(g_\theta|_{H_\theta}) \ge \kappa_0 > 0$), even when the ambient metric $g_\theta$ exhibits massive degeneracies due to over-parameterization.

In this section, we present the formal proof of the \textbf{Geometric-Operational Unification Theorem} restricted to the horizontal distribution $H_\theta$ (and the quotient base manifold $\mathcal{B}$). We demonstrate that the Geometric Priority Paradigm (Axiom~\ref{ax:geom_priority}) and the Statistical Operationalism Paradigm (Axiom~\ref{ax:stat_oper}) are mathematically equivalent without requiring ambient parameter space identifiability or non-degeneracy. This establishes the unified bridge between differential geometry and decision theory on the non-singular horizontal carriage.

\subsection{Generalized Horizontal Log-Likelihood Decomposition}
\label{subsec:horizontal_log_likelihood}

To analyze localized asymptotic experiments over over-parameterized statistical manifolds, we restrict parameter displacement signals $h \in \mathbb{K} \subset \mathbb{R}^d$ to the horizontal distribution $H_{\theta_0} \cong T_{\pi(\theta_0)}\mathcal{B}$.

\begin{theorem}[Generalized Horizontal Log-Likelihood Representation]
	\label{thm:generalized_horizontal_log_likelihood}
	Let $\theta_0 \in \mathrm{Int}(\Theta)$ be an ambient parameter state in a $D$-dimensional parameter manifold $\Theta \subset \mathbb{R}^D$, and let $H_{\theta_0} \subset T_{\theta_0}\Theta$ be its $d$-dimensional horizontal subspace relative to the Fisher-compatible Ehresmann connection $\omega_{\theta_0}$. For any local displacement signal $h \in \mathbb{K} \subset H_{\theta_0} \cong \mathbb{R}^d$ on a compact set $\mathbb{K}$, the localized Radon--Nikodym log-likelihood ratio process
	\begin{equation}
		\Lambda_N(h) \equiv \log \frac{d P_{\theta_0 + P_{\theta_0}^H h / \sqrt{N}}^{(N)}}{d P_{\theta_0}^{(N)}}\left(X^{(N)}\right) = \sum_{n=1}^N \left[ \log p\left(X_n; \theta_0 + \frac{P_{\theta_0}^H h}{\sqrt{N}}\right) - \log p(X_n; \theta_0) \right]
		\label{eq:horizontal_log_likelihood_def}
	\end{equation}
	admits the exact algebraic decomposition built from the horizontally filtered pulled-back geometric fields:
	\begin{equation}
		\Lambda_N(h) = h^T \Delta_{N,H}(\theta_0) - \frac{1}{2} h^T \left[\psi_{N,H}^* G^{(N)}\right](\mathbf{0}) h + \mathcal{R}_{N,H}(h)
		\label{eq:horizontal_log_likelihood_decomp}
	\end{equation}
	where:
	\begin{enumerate}
		\item $\Delta_{N,H}(\theta_0) \equiv P_{\theta_0}^H \Delta_N(\theta_0) = \frac{1}{\sqrt{N}} \sum_{n=1}^N P_{\theta_0}^H \nabla_\theta \log p(X_n; \theta_0) \xrightarrow{d} \mathcal{N}\left(\mathbf{0}, g_0\big|_{\mathcal{B}}\right)$ is the horizontally projected score vector,
		\item $\left[\psi_{N,H}^* G^{(N)}\right](\mathbf{0}) \equiv P_{\theta_0}^H g^{(1)}(\theta_0) P_{\theta_0}^H = g_0\big|_{\mathcal{B}}$ is the horizontally projected pulled-back Fisher metric, and
		\item $\mathcal{R}_{N,H}(h)$ is a non-linear stochastic remainder process vanishing uniformly in probability over $\mathbb{K}$:
		\begin{equation}
			\sup_{h \in \mathbb{K} \subset H_{\theta_0}} \left| \mathcal{R}_{N,H}(h) \right| \xrightarrow{P_{\theta_0}} 0 \quad \text{as } N \to \infty.
			\label{eq:horizontal_remainder_uniform_zero}
		\end{equation}
	\end{enumerate}
\end{theorem}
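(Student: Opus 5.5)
The plan is to reduce the statement to the non-degenerate Linear-Quadratic LAN Decomposition (Theorem~\ref{thm:lan_linear_quadratic_proof}) by working along the horizontal subspace only. The first step is a purely algebraic observation. For $h\in\mathbb{K}\subset H_{\theta_0}$ we have $P_{\theta_0}^H h=h$, because $P^H_{\theta_0}$ is idempotent with image $H_{\theta_0}$ (Definition~\ref{def:connection_1form_projection}). Hence the perturbed parameter is simply $\theta_0+h/\sqrt{N}$, moving along a horizontal line. By Lemma~\ref{lem:vertical_score_vanishing}, the score $\nabla_\theta\log p(X;\theta_0)$ annihilates $V_{\theta_0}$ $P_{\theta_0}$-a.s. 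Therefore, writing the projection as the adjoint acting on covectors, $h^T\nabla_\theta\log p(X_n;\theta_0)=h^T (P^H_{\theta_0})^{T}\nabla_\theta\log p(X_n;\theta_0)$ almost surely. This identifies the linear term exactly with $h^T\Delta_{N,H}(\theta_0)$. In the same way $h^Tg^{(1)}(\theta_0)h=h^TP^H g^{(1)}(\theta_0)P^H h$. Combined with the $r=2$ case of Theorem~\ref{thm:universal_scaling_law} applied to the restriction of $\psi_N$ to $H_{\theta_0}$ (the map $\psi_{N,H}$), this shows that the quadratic coefficient is $g_0|_{\mathcal{B}}$. The identification of the latter with the base metric is exactly the pullback identity in Step~2 of the proof of Theorem~\ref{thm:horizontal_strict_positivity}.

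Next I would carry out the third-order Taylor expansion used in the proof of Lemma~\ref{lem:pillar_2_taylor_geometry}, applied to $t\mapsto \log p(X_n;\theta_0+th/\sqrt N)$ and summed over $n$. This produces three pieces. The first is the score term above. The second is $\tfrac12 h^T\big(\tfrac1N\sum_n\nabla^2_\theta\log p(X_n;\theta_0)\big)h$. The third is an integral remainder of order $N^{-3/2}\sum_n$ (third derivatives). I define $\mathcal{R}_{N,H}(h)$ to be the sum of two parts: the centred Hessian fluctuation $\tfrac12 h^T\big(\tfrac1N\sum_n\nabla^2\log p+g^{(1)}(\theta_0)\big)h$, and the third-order part. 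The decomposition \eqref{eq:horizontal_log_likelihood_decomp} then holds as an exact identity.

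For the limits I would argue as follows. The centred Hessian part tends to $0$ uniformly over the bounded set $\mathbb{K}$ by the weak law of large numbers together with the information identity $\mathbb{E}_{\theta_0}[\nabla^2\log p]=-g^{(1)}(\theta_0)$. That identity holds in the degenerate ambient setting as well, since it only uses differentiation of $\int p\,d\mu=1$, and it is quadratic in $h$ with bounded coefficients. The third-order part is bounded by $\|h\|^3 N^{-1/2}\cdot\tfrac1N\sum_n M(X_n)=O_p(N^{-1/2})$, using the envelope condition \eqref{eq:third_order_bound}. For the score, the multivariate CLT applies to the i.i.d. mean-zero vectors $(P^H_{\theta_0})^T\nabla\log p(X_n;\theta_0)$, whose covariance is $P^Hg^{(1)}(\theta_0)P^H=g_0|_{\mathcal B}$. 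That covariance is non-singular on $H_{\theta_0}$ by Theorem~\ref{thm:horizontal_strict_positivity}, so the Gaussian limit is non-degenerate.

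The main obstacle is regularity. Assumption~\ref{asm:local_regularity} was stated with item (ii) requiring ambient positive-definiteness, which now fails. I would therefore state explicitly that only items (i) and (iii) are used in the ambient chart: $L_2$-differentiability and the third-derivative envelope on $\mathcal{U}(\theta_0)$. Positivity is needed only on $H$, where Theorem~\ref{thm:horizontal_strict_positivity} supplies it. Two further points need care. First, the a.s. vanishing of the vertical score must hold at $\theta_0$ simultaneously for all $v$; this is fine because $V_{\theta_0}$ is finite-dimensional, so a countable (in fact finite) basis suffices. Second, the uniform bound $\mathbb{E}\sup|\partial^3\log p|\le \mathbb{E}M$ must be read with a $P_{\theta_0}$-integrable envelope $M$. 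If only the $L^2$ bound at each $\theta$ is available, I would first upgrade it to an integrable envelope on a neighbourhood, shrinking $\mathcal{U}(\theta_0)$ if needed. This is the step I expect to require an added hypothesis.
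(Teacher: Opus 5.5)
Your proposal is correct and follows essentially the same route as the paper's proof. The paper also expands to third order along the horizontal direction $\theta_0 + P^H_{\theta_0}h/\sqrt{N}$ (using $P^H_{\theta_0}h = h$), identifies the linear term with the projected score and applies the multivariate CLT, absorbs the centred Hessian into $\mathcal{R}_{N,H}$ via the weak law of large numbers, and bounds the cubic integral remainder by an integrable envelope at rate $\mathcal{O}_p(N^{-1/2})$. Your two extra points refine that same argument rather than change it: you use the adjoint $(P^H_{\theta_0})^T$ on the score covector, and you note that Assumption~\ref{asm:local_regularity}(iii), as stated, does not by itself supply the $P_{\theta_0}$-integrable envelope $M_3$ that the paper's Step~4 invokes.
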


\begin{proof}
	We construct a rigorous, five-step mathematical deduction without omitting intermediate algebraic or measure-theoretic steps.
	
	\subsubsection*{Step 1: Local Trajectory and Exact Taylor Expansion of Single-Observation Log-Likelihood}
	
	Let $h \in \mathbb{K} \subset H_{\theta_0} \cong \mathbb{R}^d$ be an arbitrary horizontal vector. By definition of the Fisher-compatible Ehresmann connection decomposition $T_{\theta_0}\Theta = H_{\theta_0} \oplus V_{\theta_0}$, the horizontal projection operator $P_{\theta_0}^H = \mathrm{id}_{T_{\theta_0}\Theta} - \omega_{\theta_0}$ satisfies $P_{\theta_0}^V h = \omega_{\theta_0}(h) = \mathbf{0}$, meaning that $P_{\theta_0}^H h = h \in \mathbb{R}^D$.
	
	Define the localized parameter trajectory $\theta_N(h) \in \Theta$ by:
	\begin{equation}
		\theta_N(h) \equiv \theta_0 + \frac{1}{\sqrt{N}} P_{\theta_0}^H h.
	\end{equation}
	For each observation $X_n$ ($n = 1, \dots, N$), denote the single-observation log-likelihood function by $l_n(\theta) \equiv \log p(X_n; \theta)$. Under High-Order Local Regularity assumption \ref{asm:local_regularity}, $l_n(\theta)$ is three times continuously differentiable in an open neighborhood $\mathcal{U}(\theta_0) \subset \mathrm{Int}(\Theta)$. Performing a third-order multivariate Taylor expansion of $l_n(\theta_N(h))$ around $h = \mathbf{0}$ yields:
	\begin{align}
		l_n\left(\theta_0 + \frac{P_{\theta_0}^H h}{\sqrt{N}}\right) - l_n(\theta_0) &= \frac{1}{\sqrt{N}} \sum_{i=1}^D \left( P_{\theta_0}^H h \right)^i \frac{\partial l_n(\theta_0)}{\partial \theta^i} + \frac{1}{2N} \sum_{i,j=1}^D \left( P_{\theta_0}^H h \right)^i \left( P_{\theta_0}^H h \right)^j \frac{\partial^2 l_n(\theta_0)}{\partial \theta^i \partial \theta^j} \nonumber \\
		&\quad + R_{n,H}^{(3)}(h),
		\label{eq:taylor_expansion_single_obs}
	\end{align}
	where the third-order integral remainder $R_{n,H}^{(3)}(h)$ is given in exact integral form by:
	\begin{equation}
		R_{n,H}^{(3)}(h) = \frac{1}{6 N^{3/2}} \sum_{i,j,k=1}^D \left( P_{\theta_0}^H h \right)^i \left( P_{\theta_0}^H h \right)^j \left( P_{\theta_0}^H h \right)^k \int_0^1 3(1-t)^2 \frac{\partial^3 l_n\left(\theta_0 + t P_{\theta_0}^H h / \sqrt{N}\right)}{\partial \theta^i \partial \theta^j \partial \theta^k} dt.
		\label{eq:integral_remainder_def}
	\end{equation}
	
	\subsubsection*{Step 2: Summation over $N$ Observations and Analysis of the First-Order Linear Term}
	
	Summing Equation~\eqref{eq:taylor_expansion_single_obs} over all $n = 1, \dots, N$, the joint localized log-likelihood ratio decomposes into three sums:
	\begin{equation}
		\Lambda_N(h) = \sum_{n=1}^N \left[ l_n\left(\theta_0 + \frac{P_{\theta_0}^H h}{\sqrt{N}}\right) - l_n(\theta_0) \right] = I_{1,N}(h) + I_{2,N}(h) + \sum_{n=1}^N R_{n,H}^{(3)}(h),
		\label{eq:lambda_n_three_term_sum}
	\end{equation}
	where $I_{1,N}(h)$ and $I_{2,N}(h)$ represent the first-order linear and second-order quadratic terms, respectively.
	
	We first evaluate $I_{1,N}(h)$. Expressing the sum in vector notation:
	\begin{align}
		I_{1,N}(h) &\equiv \sum_{n=1}^N \frac{1}{\sqrt{N}} \left( P_{\theta_0}^H h \right)^T \nabla_\theta l_n(\theta_0) \nonumber \\
		&= h^T P_{\theta_0}^H \left( \frac{1}{\sqrt{N}} \sum_{n=1}^N \nabla_\theta l_n(\theta_0) \right) \nonumber \\
		&= h^T \left( \frac{1}{\sqrt{N}} \sum_{n=1}^N P_{\theta_0}^H \nabla_\theta l_n(\theta_0) \right) \equiv h^T \Delta_{N,H}(\theta_0),
		\label{eq:linear_term_derivation}
	\end{align}
	where $\Delta_{N,H}(\theta_0) \equiv P_{\theta_0}^H \Delta_N(\theta_0) = \frac{1}{\sqrt{N}} \sum_{n=1}^N P_{\theta_0}^H \nabla_\theta l_n(\theta_0)$ is the horizontally projected sample score vector.
	
	Under $P_{\theta_0}$, the single-observation score vectors $S_n \equiv \nabla_\theta l_n(\theta_0)$ are independent and identically distributed ($i.i.d.$) with:
	\begin{enumerate}
		\item \textbf{Unbiasedness:} $\mathbb{E}_{\theta_0}[S_n] = \mathbf{0}$,
		\item \textbf{Covariance Structure:} $\mathrm{Var}_{\theta_0}(S_n) = \mathbb{E}_{\theta_0}[S_n S_n^T] = g^{(1)}(\theta_0)$.
	\end{enumerate}
	Consequently, the horizontally projected score vector $P_{\theta_0}^H S_n$ has mean $\mathbb{E}_{\theta_0}[P_{\theta_0}^H S_n] = \mathbf{0}$ and covariance matrix:
	\begin{equation}
		\mathrm{Var}_{\theta_0}\left( P_{\theta_0}^H S_n \right) = \mathbb{E}_{\theta_0}\left[ \left( P_{\theta_0}^H S_n \right) \left( P_{\theta_0}^H S_n \right)^T \right] = P_{\theta_0}^H g^{(1)}(\theta_0) P_{\theta_0}^H = g_0\big|_{H_{\theta_0} \times H_{\theta_0}} \equiv g_0\big|_{\mathcal{B}}.
	\end{equation}
	By the Multivariate Central Limit Theorem (CLT) applied to the $i.i.d.$ sum $\Delta_{N,H}(\theta_0) = \frac{1}{\sqrt{N}} \sum_{n=1}^N P_{\theta_0}^H S_n$:
	\begin{equation}
		\Delta_{N,H}(\theta_0) \xrightarrow{d} \mathcal{N}\left(\mathbf{0}, \, g_0\big|_{\mathcal{B}}\right).
	\end{equation}
	By the Fisher-compatible Ehresmann connection construction, $g_0\big|_{\mathcal{B}}$ is strictly positive-definite and non-singular on $H_{\theta_0}$ with $\lambda_{\min}\left(g_0\big|_{\mathcal{B}}\right) \ge \kappa_0 > 0$.
	
	\subsubsection*{Step 3: Analysis of the Second-Order Term and Quadratic Metric Stabilization}
	
	Next, we evaluate the second-order term $I_{2,N}(h)$:
	\begin{equation}
		I_{2,N}(h) \equiv \frac{1}{2N} \sum_{n=1}^N h^T P_{\theta_0}^H \left( \nabla_\theta^2 l_n(\theta_0) \right) P_{\theta_0}^H h.
	\end{equation}
	Under standard $L_2$-differentiability regularity, the expected Hessian of the single-observation log-likelihood equals the negative Fisher Information Metric:
	\begin{equation}
		\mathbb{E}_{\theta_0}\left[ \nabla_\theta^2 l_n(\theta_0) \right] = -g^{(1)}(\theta_0).
	\end{equation}
	Adding and subtracting $-g^{(1)}(\theta_0)$ inside the sum:
	\begin{align}
		I_{2,N}(h) &= \frac{1}{2} h^T P_{\theta_0}^H \left( \frac{1}{N} \sum_{n=1}^N \nabla_\theta^2 l_n(\theta_0) \right) P_{\theta_0}^H h \nonumber \\
		&= -\frac{1}{2} h^T \left( P_{\theta_0}^H g^{(1)}(\theta_0) P_{\theta_0}^H \right) h + \frac{1}{2} h^T P_{\theta_0}^H \left( \frac{1}{N} \sum_{n=1}^N \left[ \nabla_\theta^2 l_n(\theta_0) + g^{(1)}(\theta_0) \right] \right) P_{\theta_0}^H h.
		\label{eq:second_order_split}
	\end{align}
	Notice that the lead matrix in the first term matches the horizontally projected pulled-back Fisher metric:
	\begin{equation}
		P_{\theta_0}^H g^{(1)}(\theta_0) P_{\theta_0}^H = \left[ \psi_{N,H}^* G^{(N)} \right](\mathbf{0}) = g_0\big|_{\mathcal{B}}.
	\end{equation}
	
	Define the random matrix $Q_N \equiv \frac{1}{N} \sum_{n=1}^N \left[ \nabla_\theta^2 l_n(\theta_0) + g^{(1)}(\theta_0) \right] \in \mathbb{R}^{D \times D}$. Since $X_n$ are $i.i.d.$ and $\mathbb{E}_{\theta_0}[\nabla_\theta^2 l_n(\theta_0)] = -g^{(1)}(\theta_0)$, the Khintchine--Kolmogorov Weak Law of Large Numbers (WLLN) implies:
	\begin{equation}
		Q_N \xrightarrow{P_{\theta_0}} \mathbf{0}_{D \times D} \quad \text{as } N \to \infty.
	\end{equation}
	For any $h \in \mathbb{K}$, using the Cauchy--Schwarz inequality and operator norm bounds:
	\begin{equation}
		\left| \frac{1}{2} h^T P_{\theta_0}^H Q_N P_{\theta_0}^H h \right| \le \frac{1}{2} \|h\|_2^2 \left\| P_{\theta_0}^H \right\|_{\mathrm{op}}^2 \|Q_N\|_{\mathrm{op}} \le \frac{1}{2} R_{\mathbb{K}}^2 C_P^2 \|Q_N\|_{\mathrm{op}} = o_{P_{\theta_0}}(1),
	\end{equation}
	where $R_{\mathbb{K}} \equiv \sup_{h \in \mathbb{K}} \|h\|_2 < \infty$ and $C_P \equiv \|P_{\theta_0}^H\|_{\mathrm{op}} < \infty$. Therefore, the second-order term simplifies to:
	\begin{equation}
		I_{2,N}(h) = -\frac{1}{2} h^T \left[ \psi_{N,H}^* G^{(N)} \right](\mathbf{0}) h + o_{P_{\theta_0}}(1).
		\label{eq:i2_simplified}
	\end{equation}
	
	\subsubsection*{Step 4: Analysis and Uniform Boundedness of the Third-Order Remainder Process}
	
	We define the total non-linear stochastic remainder process $\mathcal{R}_{N,H}(h)$ by collecting all residual terms:
	\begin{equation}
		\mathcal{R}_{N,H}(h) \equiv \frac{1}{2} h^T P_{\theta_0}^H Q_N P_{\theta_0}^H h + \sum_{n=1}^N R_{n,H}^{(3)}(h).
		\label{eq:total_remainder_def}
	\end{equation}
	
	To bound $\sum_{n=1}^N R_{n,H}^{(3)}(h)$, recall from High-Order Local Regularity (Assumption \ref{asm:local_regularity}) that third-order log-density derivatives are uniformly dominated by a square-integrable envelope function $M_3(X)$ across $\mathcal{U}(\theta_0)$:
	\begin{equation}
		\sup_{\theta \in \mathcal{U}(\theta_0)} \left| \frac{\partial^3 \log p(X; \theta)}{\partial \theta^i \partial \theta^j \partial \theta^k} \right| \le M_3(X), \quad \text{with } \mathbb{E}_{\theta_0}[M_3(X)] < \infty.
	\end{equation}
	Applying this envelope bound to the integral remainder in Equation~\eqref{eq:integral_remainder_def}:
	\begin{align}
		\left| R_{n,H}^{(3)}(h) \right| &\le \frac{1}{6 N^{3/2}} \sum_{i,j,k=1}^D \left| P_{\theta_0}^H h \right|^i \left| P_{\theta_0}^H h \right|^j \left| P_{\theta_0}^H h \right|^k \int_0^1 3(1-t)^2 M_3(X_n) \, dt \nonumber \\
		&= \frac{1}{6 N^{3/2}} \left( \sum_{i=1}^D \left| (P_{\theta_0}^H h)^i \right| \right)^3 M_3(X_n) \nonumber \\
		&\le \frac{1}{6 N^{3/2}} \left( \sqrt{D} \left\| P_{\theta_0}^H h \right\|_2 \right)^3 M_3(X_n) \le \frac{D^{3/2} C_P^3 \|h\|_2^3}{6 N^{3/2}} M_3(X_n).
	\end{align}
	
	Summing over $n = 1, \dots, N$ and taking the uniform supremum over the compact cage $h \in \mathbb{K}$:
	\begin{align}
		\sup_{h \in \mathbb{K}} \left| \sum_{n=1}^N R_{n,H}^{(3)}(h) \right| &\le \sum_{n=1}^N \sup_{h \in \mathbb{K}} \left| R_{n,H}^{(3)}(h) \right| \nonumber \\
		&\le \frac{D^{3/2} C_P^3 R_{\mathbb{K}}^3}{6 \sqrt{N}} \left( \frac{1}{N} \sum_{n=1}^N M_3(X_n) \right).
		\label{eq:third_order_sum_bound}
	\end{align}
	By the Weak Law of Large Numbers, $\frac{1}{N} \sum_{n=1}^N M_3(X_n) \xrightarrow{P_{\theta_0}} \mathbb{E}_{\theta_0}[M_3(X)] < \infty$, which implies $\frac{1}{N} \sum_{n=1}^N M_3(X_n) = \mathcal{O}_p(1)$. Consequently:
	\begin{equation}
		\sup_{h \in \mathbb{K}} \left| \sum_{n=1}^N R_{n,H}^{(3)}(h) \right| \le \frac{D^{3/2} C_P^3 R_{\mathbb{K}}^3}{6 \sqrt{N}} \mathcal{O}_p(1) = \mathcal{O}_p\left(\frac{1}{\sqrt{N}}\right) \xrightarrow{P_{\theta_0}} 0 \quad \text{as } N \to \infty.
		\label{eq:remainder_rate_proven}
	\end{equation}
	
	Combining the bounds for both components of $\mathcal{R}_{N,H}(h)$ in Equation~\eqref{eq:total_remainder_def}:
	\begin{equation}
		\sup_{h \in \mathbb{K}} \left| \mathcal{R}_{N,H}(h) \right| \le \frac{1}{2} R_{\mathbb{K}}^2 C_P^2 \|Q_N\|_{\mathrm{op}} + \sup_{h \in \mathbb{K}} \left| \sum_{n=1}^N R_{n,H}^{(3)}(h) \right| = o_{P_{\theta_0}}(1) + \mathcal{O}_p\left(N^{-1/2}\right) \xrightarrow{P_{\theta_0}} 0,
	\end{equation}
	which establishes Equation~\eqref{eq:horizontal_remainder_uniform_zero}.
	
	\subsubsection*{Step 5: Synthesis and Final Assembly}
	
	Substituting the linear term from Equation~\eqref{eq:linear_term_derivation}, the quadratic metric term from Equation~\eqref{eq:i2_simplified}, and the remainder process from Equation~\eqref{eq:total_remainder_def} back into Equation~\eqref{eq:lambda_n_three_term_sum}:
	\begin{equation}
		\Lambda_N(h) = h^T \Delta_{N,H}(\theta_0) - \frac{1}{2} h^T \left[\psi_{N,H}^* G^{(N)}\right](\mathbf{0}) h + \mathcal{R}_{N,H}(h),
	\end{equation}
	where:
	\begin{itemize}
		\item $\Delta_{N,H}(\theta_0) \xrightarrow{d} \mathcal{N}\left(\mathbf{0}, g_0\big|_{\mathcal{B}}\right)$,
		\item $\left[\psi_{N,H}^* G^{(N)}\right](\mathbf{0}) = g_0\big|_{\mathcal{B}}$,
		\item $\sup_{h \in \mathbb{K}} |\mathcal{R}_{N,H}(h)| = \mathcal{O}_p\left(N^{-1/2}\right) \xrightarrow{P_{\theta_0}} 0$.
	\end{itemize}
	This completes the rigorous, step-by-step mathematical proof of Theorem 10.
\end{proof}

\subsection{Uniform Horizontal Metric Stabilization}
\label{subsec:horizontal_metric_stabilization}

\begin{lemma}[Algebraic Representation of Horizontally Projected Pulled-Back Fisher Metric Tensor]
	\label{lem:horizontal_pulled_back_metric_algebraic}
	Let $\theta_0 \in \mathrm{Int}(\Theta)$ be an ambient parameter state in a $D$-dimensional parameter manifold $\Theta \subset \mathbb{R}^D$, and let $H_{\theta_0} \subset T_{\theta_0}\Theta$ be its $d$-dimensional horizontal subspace relative to the Fisher-compatible Ehresmann connection $\omega_{\theta_0}$. Let $\psi_{N,H} : \mathbb{K} \to \Theta$ be the localized horizontal rescaling map defined on a compact coordinate cage $\mathbb{K} \subset H_{\theta_0} \cong \mathbb{R}^d$ by
	\begin{equation}
		\psi_{N,H}(h) \equiv \theta_0 + \frac{1}{\sqrt{N}} P_{\theta_0}^H h, \quad \text{for } h \in \mathbb{K},
	\end{equation}
	where $P_{\theta_0}^H$ denotes the horizontal projection operator at $\theta_0$.
	
	Let $\widetilde{G}_H^{(N)}(h)$ be the horizontally projected, pulled-back $N$-sample Fisher Information Metric tensor field on $\mathbb{K}$, then we have:
	\begin{align}
		\widetilde{G}_H^{(N)}(h) &\equiv \left[ \psi_{N,H}^* G^{(N)} \right](h) \nonumber \\
		&\equiv \left[ \psi_{N,H}^* \left( P_\theta^H G^{(N)}(\theta) P_\theta^H \right) \right](h) \nonumber \\
		&= P_{\psi_{N,H}(h)}^H \left( \frac{1}{N} G^{(N)}\left(\theta_0 + \frac{P_{\theta_0}^H h}{\sqrt{N}}\right) \right) P_{\psi_{N,H}(h)}^H \nonumber \\
		&= P_{\psi_{N,H}(h)}^H g^{(1)}\left(\theta_0 + \frac{P_{\theta_0}^H h}{\sqrt{N}}\right) P_{\psi_{N,H}(h)}^H,
	\end{align}
	where $G^{(N)}(\theta)$ is the $N$-sample Fisher Information Metric tensor and $g^{(1)}(\theta)$ is the single-observation Fisher Information Metric tensor.
\end{lemma}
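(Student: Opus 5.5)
The plan is to read the second line of the display as a definition, namely that the horizontally projected pulled-back metric means the pullback of the projected tensor field $\theta \mapsto P_\theta^H G^{(N)}(\theta) P_\theta^H$. After that, the statement follows from two ingredients already in hand. The first is the Jacobian computation for the rescaling map, in the style of Lemma~\ref{lem:jacobian_scaffold}. The second is the additivity $G^{(N)} = N g^{(1)}$ of Lemma~\ref{lem:additivity_fisher_metric}. This is the horizontal analogue of the valence-$2$ case of Theorem~\ref{thm:universal_scaling_law}.

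\textbf{Jacobian.} Differentiate $\psi_{N,H}(h) = \theta_0 + N^{-1/2} P_{\theta_0}^H h$ in $h$. Because $P_{\theta_0}^H$ is a fixed linear operator, the differential is spatially constant:
\[
d\psi_{N,H} = N^{-1/2} P_{\theta_0}^H .
\]
For $h \in H_{\theta_0}$ we have $P_{\theta_0}^H h = h$ (this was noted in Step~1 of the proof of Theorem~\ref{thm:generalized_horizontal_log_likelihood}). So on tangent vectors of $\mathbb{K} \subset H_{\theta_0}$ the Jacobian acts as $N^{-1/2}$ times the inclusion. Applying the covariant pullback formula of Definition~\ref{def:cotangent_pullback} to the projected field, evaluated at $\psi_{N,H}(h)$, gives
\[
\bigl[\psi_{N,H}^*(P^H G^{(N)} P^H)\bigr](h) = N^{-1} (P_{\theta_0}^H)^T P^H_{\psi_{N,H}(h)} G^{(N)}\bigl(\psi_{N,H}(h)\bigr) P^H_{\psi_{N,H}(h)} P_{\theta_0}^H .
\]

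\textbf{Simplifying the outer factors.} I would then absorb the outer $P_{\theta_0}^H$ factors. Since the arguments are restricted to $H_{\theta_0}$, where $P_{\theta_0}^H$ acts as the identity, they can be dropped when the tensor is read as a bilinear form on $\mathbb{K}$. What remains is $P^H_{\psi_{N,H}(h)} \bigl(N^{-1} G^{(N)}\bigr) P^H_{\psi_{N,H}(h)}$, which is the third line of the display. Substituting $N^{-1} G^{(N)}(\theta) = g^{(1)}(\theta)$ pointwise, by Lemma~\ref{lem:additivity_fisher_metric}, then gives the final line. That the $N$ cancels exactly, with exponent $1 - 2/2 = 0$, is just Theorem~\ref{thm:metric_stabilization} transplanted to the horizontal chart.

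\textbf{Main obstacle.} The delicate step is justifying that the outer $P_{\theta_0}^H$ from the Jacobian can be merged with, or dropped next to, the inner $P^H_{\psi_{N,H}(h)}$, given that the horizontal spaces vary with $\theta$. I would avoid claiming $P_{\theta_0}^H = P_\theta^H$. Instead I would argue at the level of bilinear forms evaluated on vectors $u, w \in H_{\theta_0}$: there $P_{\theta_0}^H u = u$, so the expression is exactly $g^{(1)}\bigl(P^H_{\psi_{N,H}(h)} u,\, P^H_{\psi_{N,H}(h)} w\bigr)$ at the point $\psi_{N,H}(h)$.

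I would also record that, by the smoothness of the Ehresmann splitting (Assumption~\ref{asm:bundle_submersion}), the map $\theta \mapsto P_\theta^H$ is smooth. This makes the right-hand side well defined and continuous in $h$, which the later uniform stabilization argument will need.
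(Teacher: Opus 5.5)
Your proposal is correct and follows the same route as the paper. Both compute the spatially constant Jacobian $N^{-1/2}P^H_{\theta_0}$, pull back the filtered field $P^H_\theta G^{(N)}(\theta) P^H_\theta$ at $\psi_{N,H}(h)$, and cancel the factor $N$ via $G^{(N)} = N g^{(1)}$. Your step of evaluating the pullback on arguments $u,w \in H_{\theta_0}$, where $P^H_{\theta_0}u = u$, is a sounder justification for discarding the outer $P^H_{\theta_0}$ factors than the paper's Step~3, which simply drops them. The paper's next proof, of uniform horizontal metric stabilization, in fact silently restores them as $P^H_{\theta_0}P^H_{\psi_{N,H}(h)}\cdots P^H_{\psi_{N,H}(h)}P^H_{\theta_0}$.
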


\begin{proof}
	We execute the strict mathematical deduction in four sequential measure-theoretic and differential-geometric steps.
	
	\paragraph{Step 1: Horizontal Projection of the Ambient $N$-Sample Metric Field.}
	On the statistical fiber bundle $(\Theta, \mathcal{B}, \pi)$, the ambient $N$-sample Fisher Information Metric tensor field $G^{(N)}(\theta)$ at any point $\theta \in \Theta$ is restricted to the statistically identifiable horizontal distribution $H_\theta = \mathrm{im}(P_\theta^H)$ via the idempotent horizontal projection operator $P_\theta^H \equiv \mathrm{id}_{T_\theta\Theta} - \omega_\theta$. The horizontally filtered $2$-covariant tensor field $G_H^{(N)}(\theta)$ is defined pointwise by:
	\begin{equation}
		G_H^{(N)}(\theta) \equiv P_\theta^H G^{(N)}(\theta) P_\theta^H.
		\label{eq:filtered_metric_def}
	\end{equation}
	This filtration projects out all unidentifiable gauge directions lying in the vertical sub-bundle $V_\theta \equiv \ker(d\pi_\theta)$.
	
	\paragraph{Step 2: Differential Push-Forward and Multivariable Jacobian Scaffold.}
	Under the horizontal microscope mapping $\psi_{N,H}(h) = \theta_0 + \frac{1}{\sqrt{N}} P_{\theta_0}^H h$, the linear displacement coordinate $h \in \mathbb{K} \subset H_{\theta_0}$ is mapped into the localized parameter neighborhood $\mathcal{U}_N(\theta_0) \subset \Theta$. The differential push-forward operator $d\psi_{N,H}: T_h \mathbb{K} \to T_{\psi_{N,H}(h)} \Theta$ acts as an isotropic linear scaling transformation. 
	
	In local Cartesian derivation coordinates, its spatial Jacobian matrix scaffold $J(\psi_{N,H}) \in \mathbb{R}^{D \times d}$ is given by:
	\begin{equation}
		J_i^a(h) \equiv \frac{\partial \psi_{N,H}^a(h)}{\partial h^i} = \frac{1}{\sqrt{N}} (P_{\theta_0}^H)_i^a, \quad \forall a \in \{1, \dots, D\}, \, i \in \{1, \dots, d\}.
		\label{eq:horizontal_jacobian_scaffold}
	\end{equation}
	
	\paragraph{Step 3: Cotangent Pullback Mechanics and Exact Scale Factor Cancellation.}
	Applying the dual cotangent pullback operator $\psi_{N,H}^*$ on $2$-covariant tensor fields (Definition \ref{def:cotangent_pullback} / Theorem \ref{thm:universal_scaling_law}) to the horizontally filtered metric tensor $G_H^{(N)}(\theta) = P_\theta^H G^{(N)}(\theta) P_\theta^H$ requires contracting the tensor evaluated at the image point $\theta = \psi_{N,H}(h) = \theta_0 + \frac{P_{\theta_0}^H h}{\sqrt{N}}$ against two copies of the Jacobian scaffold matrix $J(\psi_{N,H})$:
	\begin{align}
		\widetilde{G}_H^{(N)}(h) &\equiv \left[ \psi_{N,H}^* \left( P_\theta^H G^{(N)}(\theta) P_\theta^H \right) \right](h) \nonumber \\
		&= J(\psi_{N,H})^T \left[ P_{\psi_{N,H}(h)}^H G^{(N)}\left(\psi_{N,H}(h)\right) P_{\psi_{N,H}(h)}^H \right] J(\psi_{N,H}) \nonumber \\
		&= \left( \frac{1}{\sqrt{N}} P_{\theta_0}^H \right)^T \left[ P_{\psi_{N,H}(h)}^H G^{(N)}\left(\theta_0 + \frac{P_{\theta_0}^H h}{\sqrt{N}}\right) P_{\psi_{N,H}(h)}^H \right] \left( \frac{1}{\sqrt{N}} P_{\theta_0}^H \right).
	\end{align}
	Factoring out the isotropic scalar scaling factors $\frac{1}{\sqrt{N}} \times \frac{1}{\sqrt{N}} = \frac{1}{N}$, and noting that $P_{\psi_{N,H}(h)}^H$ acts as the canonical horizontal projection operator evaluated at the target manifold state $\psi_{N,H}(h)$, the pulled-back tensor field reduces strictly to:
	\begin{equation}
		\widetilde{G}_H^{(N)}(h) = P_{\psi_{N,H}(h)}^H \left( \frac{1}{N} G^{(N)}\left(\theta_0 + \frac{P_{\theta_0}^H h}{\sqrt{N}}\right) \right) P_{\psi_{N,H}(h)}^H,
		\label{eq:third_equality_proven}
	\end{equation}
	establishing the third equality.
	
	\paragraph{Step 4: Substitution of $i.i.d.$ Metric Additivity.}
	By Lemma \ref{lem:additivity_fisher_metric}  (Additivity of the Fisher Information Metric under $i.i.d.$ Product Measures), the joint $N$-sample Fisher Information Metric tensor $G^{(N)}(\theta)$ scales linearly with sample size $N$ relative to the single-observation Fisher Information Metric tensor $g^{(1)}(\theta)$:
	\begin{equation}
		G^{(N)}(\theta) = N \cdot g^{(1)}(\theta), \quad \forall \theta \in \Theta.
		\label{eq:fim_additivity_subst}
	\end{equation}
	Substituting Equation~\eqref{eq:fim_additivity_subst} evaluated at parameter state $\theta = \theta_0 + \frac{P_{\theta_0}^H h}{\sqrt{N}}$ directly into Equation~\eqref{eq:third_equality_proven}:
	\begin{align}
		\widetilde{G}_H^{(N)}(h) &= P_{\psi_{N,H}(h)}^H \left( \frac{1}{N} \left[ N \cdot g^{(1)}\left(\theta_0 + \frac{P_{\theta_0}^H h}{\sqrt{N}}\right) \right] \right) P_{\psi_{N,H}(h)}^H \nonumber \\
		&= P_{\psi_{N,H}(h)}^H \left( \left[ \frac{N}{N} \right] \cdot g^{(1)}\left(\theta_0 + \frac{P_{\theta_0}^H h}{\sqrt{N}}\right) \right) P_{\psi_{N,H}(h)}^H \nonumber \\
		&= P_{\psi_{N,H}(h)}^H g^{(1)}\left(\theta_0 + \frac{P_{\theta_0}^H h}{\sqrt{N}}\right) P_{\psi_{N,H}(h)}^H.
	\end{align}
	The sample-size capacity expansion factor $N$ from $i.i.d.$ sampling cancels identically against the spatial cotangent contraction factor $\frac{1}{N}$ from the Jacobian pullback scaffold, establishing the fourth equality and completing the proof.
\end{proof}

\begin{lemma}[Uniform Horizontal Metric Stabilization]
	\label{lem:uniform_horizontal_metric_stabilization}
	Let $\theta_0 \in \mathrm{Int}(\Theta)$ be an ambient parameter state in a $D$-dimensional parameter manifold $\Theta \subset \mathbb{R}^D$, and let $H_{\theta_0} \subset T_{\theta_0}\Theta$ be its $d$-dimensional horizontal subspace relative to the Fisher-compatible Ehresmann connection $\omega_{\theta_0}$.
	
	Define the local horizontal rescaling map $\psi_{N,H}: \mathbb{K} \to \Theta$ on a compact coordinate cage $\mathbb{K} \subset H_{\theta_0} \cong \mathbb{R}^d$ by:
	\begin{equation}
		\psi_{N,H}(h) \equiv \theta_0 + \frac{1}{\sqrt{N}} P_{\theta_0}^H h, \quad \text{for } h \in \mathbb{K},
	\end{equation}
	where $P_{\theta_0}^H$ is the horizontal projection operator at $\theta_0$.
	
	Let $\widetilde{G}_H^{(N)}(h)$ be the horizontally projected, pulled-back $N$-sample Fisher Information Metric tensor field on $\mathbb{K}$, and $G^{(N)}(\theta) = N \cdot g^{(1)}(\theta)$ is the $N$-sample Fisher Information Metric tensor and $g^{(1)}(\theta)$ be the single-observation Fisher Information Metric.
	
	Under High-Order Local Regularity (Assumption~\ref{asm:local_regularity}), the tensor field $\widetilde{G}_H^{(N)}(h)$ converges uniformly over the compact set $\mathbb{K} \subset H_{\theta_0}$ in matrix operator norm $\|\cdot\|_{\mathrm{op}}$ to the constant, non-singular base Fisher metric tensor $g_0\big|_{\mathcal{B}} \equiv P_{\theta_0}^H g^{(1)}(\theta_0) P_{\theta_0}^H = g^{(1)}(\theta_0)\big|_{H_{\theta_0}}$:
	\begin{equation}
		\lim_{N \to \infty} \sup_{h \in \mathbb{K} \subset H_{\theta_0}} \left\| \widetilde{G}_H^{(N)}(h) - g_0\big|_{\mathcal{B}} \right\|_{\mathrm{op}} = 0.
		\label{eq:uniform_horizontal_metric_stabilization}
	\end{equation}
\end{lemma}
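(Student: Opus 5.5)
The plan is to start from the algebraic representation in Lemma~\ref{lem:horizontal_pulled_back_metric_algebraic}:
\[
\widetilde{G}_H^{(N)}(h) = P_{\theta_N(h)}^H\, g^{(1)}(\theta_N(h))\, P_{\theta_N(h)}^H, \qquad \theta_N(h)\equiv \theta_0 + \tfrac{1}{\sqrt{N}}P_{\theta_0}^H h .
\]
Here the $N$-dependence has already been absorbed into the base point $\theta_N(h)$. The claim then becomes a pure continuity statement. The map $F(\theta)\equiv P_\theta^H g^{(1)}(\theta) P_\theta^H$ should be continuous at $\theta_0$ in operator norm, and $\theta_N(h)$ should converge to $\theta_0$ uniformly in $h\in\mathbb{K}$. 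Since $F(\theta_0)=g_0|_{\mathcal{B}}$ by definition, the difference $\widetilde{G}_H^{(N)}(h)-g_0|_{\mathcal{B}}$ equals $F(\theta_N(h))-F(\theta_0)$.

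First, we have the uniform bound $\sup_{h\in\mathbb{K}}\|\theta_N(h)-\theta_0\|_2\le C_P R_{\mathbb{K}}/\sqrt{N}$, with $C_P=\|P_{\theta_0}^H\|_{\mathrm{op}}$ and $R_{\mathbb{K}}=\sup_{\mathbb{K}}\|h\|_2$. In particular, $\theta_N(\mathbb{K})\subset\overline{\mathcal{U}}(\theta_0)$ for all $N\ge N_0$.

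Second, we telescope:
\[
F(\theta)-F(\theta_0) = (P_\theta^H-P_{\theta_0}^H)\,g^{(1)}(\theta)P_\theta^H + P_{\theta_0}^H\bigl(g^{(1)}(\theta)-g^{(1)}(\theta_0)\bigr)P_\theta^H + P_{\theta_0}^H g^{(1)}(\theta_0)\bigl(P_\theta^H-P_{\theta_0}^H\bigr).
\]
This gives
\[
\|F(\theta)-F(\theta_0)\|_{\mathrm{op}}\le \bigl(2M\,C'\bigr)\|P_\theta^H-P_{\theta_0}^H\|_{\mathrm{op}} + C'^2\|g^{(1)}(\theta)-g^{(1)}(\theta_0)\|_{\mathrm{op}},
\]
where $M$ bounds $\|g^{(1)}\|_{\mathrm{op}}$ on $\overline{\mathcal{U}}(\theta_0)$ (Assumption~\ref{asm:local_regularity}(ii)) and $C'$ bounds $\|P_\theta^H\|_{\mathrm{op}}$ there. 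The metric term is handled exactly as in Lemma~\ref{lem:metric_freezing_proof}. The mean-value bound gives $\|g^{(1)}(\theta_N(h))-g^{(1)}(\theta_0)\|\le M_g\sqrt{D}\,C_PR_{\mathbb{K}}/\sqrt{N}$ uniformly over $\mathbb{K}$.

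The main obstacle is the projection term: we need continuity, and a local bound, of $\theta\mapsto P_\theta^H$. The operator $P_\theta^H$ is the projection onto $V_\theta^{\perp_g}$ along $V_\theta$. I would argue this from the smoothness of the Ehresmann splitting in Definition~\ref{def:ehresmann_connection}, together with Assumption~\ref{asm:bundle_submersion}. Because $\pi$ is a constant-rank submersion, $V_\theta=\ker d\pi_\theta$ is a smooth rank-$(D-d)$ distribution and admits a smooth local frame. If one worries that the $g$-orthogonal complement is ill-defined when $g$ is degenerate, I would instead express $P_\theta^H$ through the horizontal lift $(d\pi_\theta|_{H_\theta})^{-1}\circ d\pi_\theta$ from Lemma~\ref{lem:horizontal_isomorphism}. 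Since $d\pi_\theta$ is smooth and of constant rank, this lift is smooth in $\theta$. Continuity on the compact set $\overline{\mathcal{U}}(\theta_0)$ then yields $C'<\infty$. Smoothness yields a Lipschitz constant $L_P$ with $\|P_\theta^H-P_{\theta_0}^H\|_{\mathrm{op}}\le L_P\|\theta-\theta_0\|$.

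Combining the three pieces gives
\[
\sup_{h\in\mathbb{K}}\|\widetilde{G}_H^{(N)}(h)-g_0|_{\mathcal{B}}\|_{\mathrm{op}}\le C/\sqrt{N}\to0,
\]
which is \eqref{eq:uniform_horizontal_metric_stabilization}, and in fact gives it at the rate $\mathcal{O}(N^{-1/2})$. Finally, Theorem~\ref{thm:horizontal_strict_positivity} supplies $\lambda_{\min}(g_0|_{\mathcal{B}})\ge\kappa_0>0$, which confirms that the limit is non-singular on $H_{\theta_0}$.
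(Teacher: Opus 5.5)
Your proposal is correct and follows essentially the same route as the paper. Both start from $\widetilde{G}_H^{(N)}(h)=P^H_{\theta_N(h)}\,g^{(1)}(\theta_N(h))\,P^H_{\theta_N(h)}$, split the difference from $g_0|_{\mathcal{B}}$ into three telescoping terms, and bound them using Lipschitz continuity of $g^{(1)}$ and of $\theta\mapsto P^H_\theta$ together with uniform operator-norm bounds, obtaining the rate $\mathcal{O}(N^{-1/2})$; your remark that $\theta_N(\mathbb{K})\subset\overline{\mathcal{U}}(\theta_0)$ for $N\ge N_0$ is a detail the paper leaves implicit.

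Like the paper, your projection bound ultimately rests on the smoothness of the splitting posited in Definition~\ref{def:ehresmann_connection}. Your horizontal-lift fallback $(d\pi_\theta|_{H_\theta})^{-1}\circ d\pi_\theta$ presupposes a smooth complement $H_\theta$, so it adds no independent justification unless you fix one concretely. For example, take the Euclidean orthogonal complement of $\ker d\pi_\theta$; for that choice $P^H_\theta$ is manifestly smooth because $d\pi_\theta$ has constant rank.
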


\begin{proof}
	Based on Lemma \ref{lem:horizontal_pulled_back_metric_algebraic}, we present a step-by-step mathematical deduction of the uniform convergence result.
	
	\paragraph{Step 1: Metric Additivity and Pullback Cancellation Mechanics.}
	By the additivity of the Fisher Information Metric under $i.i.d.$ product measures, the $N$-sample joint Fisher metric field satisfies $G^{(N)}(\theta) = N \cdot g^{(1)}(\theta)$ (Lemma~\ref{lem:additivity_fisher_metric}). Under the horizontal local microscope map $\psi_{N,H}(h) = \theta_0 + \frac{1}{\sqrt{N}} P_{\theta_0}^H h$, the differential Jacobian scaffold is given by:
	\begin{equation}
		J(\psi_{N,H}) = \frac{1}{\sqrt{N}} P_{\theta_0}^H.
	\end{equation}
	Applying the 2-covariant pullback operator $\psi_{N,H}^*$ to the horizontally filtered metric $P_\theta^H G^{(N)}(\theta) P_\theta^H$ yields:
	\begin{align}
		\widetilde{G}_H^{(N)}(h) &= J(\psi_{N,H})^T \left[ P_{\psi_{N,H}(h)}^H G^{(N)}\left(\psi_{N,H}(h)\right) P_{\psi_{N,H}(h)}^H \right] J(\psi_{N,H}) \nonumber \\
		&= \left( \frac{1}{\sqrt{N}} P_{\theta_0}^H \right)^T \left[ P_{\psi_{N,H}(h)}^H \left( N \cdot g^{(1)}\left(\theta_0 + \frac{P_{\theta_0}^H h}{\sqrt{N}}\right) \right) P_{\psi_{N,H}(h)}^H \right] \left( \frac{1}{\sqrt{N}} P_{\theta_0}^H \right) \nonumber \\
		&= P_{\theta_0}^H P_{\psi_{N,H}(h)}^H g^{(1)}\left(\theta_0 + \frac{P_{\theta_0}^H h}{\sqrt{N}}\right) P_{\psi_{N,H}(h)}^H P_{\theta_0}^H.
	\end{align}
	Notice that the sample capacity expansion factor $N$ cancels identically against the spatial scaling factor $\left(\frac{1}{\sqrt{N}}\right)^2 = \frac{1}{N}$, demonstrating exact metric capacity-cotangent scaling cancellation ($N^{1 - 2/2} = N^0 = 1$) (Theorem~\ref{thm:universal_scaling_law}).
	
	\paragraph{Step 2: Algebraic Three-Term Expansion.}
	For notation conciseness, define the localized parameter state $\theta_N(h) \equiv \psi_{N,H}(h) = \theta_0 + \frac{P_{\theta_0}^H h}{\sqrt{N}}$. We decompose the difference between $\widetilde{G}_H^{(N)}(h)$ and the base metric tensor $g_0\big|_{\mathcal{B}} \equiv P_{\theta_0}^H g^{(1)}(\theta_0) P_{\theta_0}^H$ into three operator terms:
	\begin{equation}
		\widetilde{G}_H^{(N)}(h) - g_0\big|_{\mathcal{B}} = \mathbf{T}_1(N, h) + \mathbf{T}_2(N, h) + \mathbf{T}_3(N, h),
	\end{equation}
	where:
	\begin{align}
		\mathbf{T}_1(N, h) &\equiv P_{\theta_N(h)}^H \left[ g^{(1)}(\theta_N(h)) - g^{(1)}(\theta_0) \right] P_{\theta_N(h)}^H, \nonumber \\
		\mathbf{T}_2(N, h) &\equiv \left( P_{\theta_N(h)}^H - P_{\theta_0}^H \right) g^{(1)}(\theta_0) P_{\theta_N(h)}^H, \nonumber \\
		\mathbf{T}_3(N, h) &\equiv P_{\theta_0}^H g^{(1)}(\theta_0) \left( P_{\theta_N(h)}^H - P_{\theta_0}^H \right).
	\end{align}
	Applying the triangle inequality for the matrix operator norm $\|\cdot\|_{\mathrm{op}}$:
	\begin{equation}
		\left\| \widetilde{G}_H^{(N)}(h) - g_0\big|_{\mathcal{B}} \right\|_{\mathrm{op}} \le \left\| \mathbf{T}_1(N, h) \right\|_{\mathrm{op}} + \left\| \mathbf{T}_2(N, h) \right\|_{\mathrm{op}} + \left\| \mathbf{T}_3(N, h) \right\|_{\mathrm{op}}.
		\label{eq:triangle_ineq_decomp}
	\end{equation}
	
	\paragraph{Step 3: Lipschitz Bounds on Component Tensors.}
	Under High-Order Local Regularity (Assumption~\ref{asm:local_regularity}), the single-sample Fisher metric field $g^{(1)}(\theta)$ is $C^2$-smooth on the closed neighborhood $\overline{\mathcal{U}}(\theta_0)$, and the Ehresmann connection projection $P_\theta^H = \mathrm{id} - \omega_\theta$ depends smoothly on $\theta$ (Definition~\ref{def:connection_1form_projection}).
\begin{enumerate}
\item \textbf{Metric Field Lipschitz Bound:} Since $g^{(1)}(\theta)$ is continuously differentiable on the compact set $\overline{\mathcal{U}}(\theta_0)$, it is Lipschitz continuous with constant $L_g \equiv \max_{i,j,k} \sup_{\theta \in \mathcal{U}(\theta_0)} \left| \frac{\partial g_{ij}^{(1)}(\theta)}{\partial \theta_k} \right| < \infty$:
	\begin{equation}
		\left\| g^{(1)}(\theta_N(h)) - g^{(1)}(\theta_0) \right\|_{\mathrm{op}} \le L_g \left\| \theta_N(h) - \theta_0 \right\|_2 = \frac{L_g}{\sqrt{N}} \left\| P_{\theta_0}^H h \right\|_2 \le \frac{L_g C_P R_{\mathbb{K}}}{\sqrt{N}},
	\end{equation}
	where $C_P \equiv \|P_{\theta_0}^H\|_{\mathrm{op}} < \infty$ and $R_{\mathbb{K}} \equiv \sup_{h \in \mathbb{K}} \|h\|_2 < \infty$ due to the compactness of $\mathbb{K}$ (Definition~\ref{def:microscope_map}).
	
\item  \textbf{Projection Operator Lipschitz Bound:} Smoothness of the connection 1-form implies $P_\theta^H$ is locally Lipschitz continuous with constant $L_P < \infty$:
	\begin{equation}
		\left\| P_{\theta_N(h)}^H - P_{\theta_0}^H \right\|_{\mathrm{op}} \le L_P \left\| \theta_N(h) - \theta_0 \right\|_2 \le \frac{L_P C_P R_{\mathbb{K}}}{\sqrt{N}}.
	\end{equation}
	
\item \textbf{Uniform Operator Boundedness:} On the compact neighborhood $\overline{\mathcal{U}}(\theta_0)$, the projection operator and base metric are uniformly bounded in operator norm:
	\begin{equation}
		M_P \equiv \sup_{\theta \in \mathcal{U}(\theta_0)} \left\| P_\theta^H \right\|_{\mathrm{op}} < \infty, \quad M_g \equiv \left\| g^{(1)}(\theta_0) \right\|_{\mathrm{op}} < \infty.
	\end{equation}
\end{enumerate}	
	\paragraph{Step 4: Operator Norm Bounding of Individual Terms.}
	Using sub-multiplicativity of the operator norm:
	\begin{align}
		\left\| \mathbf{T}_1(N, h) \right\|_{\mathrm{op}} &\le \left\| P_{\theta_N(h)}^H \right\|_{\mathrm{op}}^2 \cdot \left\| g^{(1)}(\theta_N(h)) - g^{(1)}(\theta_0) \right\|_{\mathrm{op}} \le M_P^2 \frac{L_g C_P R_{\mathbb{K}}}{\sqrt{N}}, \\
		\left\| \mathbf{T}_2(N, h) \right\|_{\mathrm{op}} &\le \left\| P_{\theta_N(h)}^H - P_{\theta_0}^H \right\|_{\mathrm{op}} \cdot \left\| g^{(1)}(\theta_0) \right\|_{\mathrm{op}} \cdot \left\| P_{\theta_N(h)}^H \right\|_{\mathrm{op}} \le \left(\frac{L_P C_P R_{\mathbb{K}}}{\sqrt{N}}\right) M_g M_P, \\
		\left\| \mathbf{T}_3(N, h) \right\|_{\mathrm{op}} &\le \left\| P_{\theta_0}^H \right\|_{\mathrm{op}} \cdot \left\| g^{(1)}(\theta_0) \right\|_{\mathrm{op}} \cdot \left\| P_{\theta_N(h)}^H - P_{\theta_0}^H \right\|_{\mathrm{op}} \le C_P M_g \left(\frac{L_P C_P R_{\mathbb{K}}}{\sqrt{N}}\right).
	\end{align}
	
	\paragraph{Step 5: Uniform Supremum Convergence and Non-Singularity.}
	Summing the three bounds into Equation~\eqref{eq:triangle_ineq_decomp}:
	\begin{equation}
		\left\| \widetilde{G}_H^{(N)}(h) - g_0\big|_{\mathcal{B}} \right\|_{\mathrm{op}} \le \frac{K_{\mathbb{K}}}{\sqrt{N}},
	\end{equation}
	where $K_{\mathbb{K}} \equiv C_P R_{\mathbb{K}} \left( M_P^2 L_g + L_P M_P M_g + C_P L_P M_g \right) < \infty$ is a constant independent of $N$ and $h$.
	
	Taking the uniform supremum over $h \in \mathbb{K} \subset H_{\theta_0}$:
	\begin{equation}
		\sup_{h \in \mathbb{K} \subset H_{\theta_0}} \left\| \widetilde{G}_H^{(N)}(h) - g_0\big|_{\mathcal{B}} \right\|_{\mathrm{op}} \le \frac{K_{\mathbb{K}}}{\sqrt{N}} = \mathcal{O}\left(\frac{1}{\sqrt{N}}\right).
	\end{equation}
	Taking $N \to \infty$ yields:
	\begin{equation}
		\lim_{N \to \infty} \sup_{h \in \mathbb{K} \subset H_{\theta_0}} \left\| \widetilde{G}_H^{(N)}(h) - g_0\big|_{\mathcal{B}} \right\|_{\mathrm{op}} = 0.
	\end{equation}
	Finally, by the Fisher-compatible Ehresmann connection framework, the restricted metric $g_0\big|_{\mathcal{B}} \equiv g^{(1)}(\theta_0)\big|_{H_{\theta_0}}$ is strictly positive-definite on $H_{\theta_0}$ ($\lambda_{\min}(g_0\big|_{\mathcal{B}}) \ge \kappa_0 > 0$), establishing uniform metric stabilization to a non-singular base Fisher metric tensor (Theorem~\ref{thm:horizontal_strict_positivity}).
\end{proof}

\subsection{Proof of the Unification Theorem on $H_\theta$}
\label{subsec:unification_theorem_horizontal}

We now establish the main result of this section: {\it the complete equivalence between the Geometric Priority Paradigm and the Statistical Operationalism Paradigm on the non-singular horizontal carriage $H_\theta$ (and the quotient base manifold $\mathcal{B}$).}

\begin{theorem}[Geometric-Operational Unification Theorem under Ehresmann Framework]
	\label{thm:geometric_operational_unification_horizontal}
	Under High-Order Local Regularity (Assumption~\ref{asm:local_regularity}), without assuming ambient parameter space non-degeneracy, Axiom~\ref{ax:geom_priority} (Geometric Priority) and Axiom~\ref{ax:stat_oper} (Statistical Operationalism) are logically and mathematically equivalent when restricted to the horizontal distribution $H_\theta$ (and quotient base manifold $\mathcal{B}$):
	\begin{equation}
		IG_N \Big|_{H_\theta} \xrightarrow{\text{geom}} \mathcal{M}_\infty \quad \Longleftrightarrow \quad IG_N \Big|_{H_\theta} \xrightarrow{\text{oper}} \mathcal{M}_\infty \equiv \mathrm{CS}.
		\label{eq:unification_horizontal_equivalence}
	\end{equation}
\end{theorem}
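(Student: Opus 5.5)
The plan is to repeat the two-part architecture of Theorem~\ref{thm:geom_oper_equivalence} verbatim, with every ambient object replaced by its horizontally filtered counterpart. The first step is to fix a $d$-dimensional coordinate cage $\mathbb{K} \subset H_{\theta_0} \cong T_{\pi(\theta_0)}\mathcal{B}$ through Lemma~\ref{lem:horizontal_isomorphism}. Throughout, the horizontal microscope map $\psi_{N,H}(h) = \theta_0 + P_{\theta_0}^H h/\sqrt{N}$ replaces $\psi_N$, and $g_0|_{\mathcal{B}} = P_{\theta_0}^H g^{(1)}(\theta_0) P_{\theta_0}^H$ replaces $g_0$. I would reinterpret both axioms on this cage:
\begin{itemize}
\item the Geometric Priority Axiom~\ref{ax:geom_priority} as uniform convergence of $\psi_{N,H}^*$ applied to the projected metric, connection and curvature;
\item the Statistical Operationalism Axiom~\ref{ax:stat_oper} as $\Delta(\mathcal{E}_{N,H}(\mathbb{K}), \mathcal{E}_\infty(\mathbb{K})) \to 0$, where $\mathcal{E}_{N,H}(\mathbb{K})$ is indexed by $P_{\theta_0+P^H h/\sqrt{N}}^{(N)}$ and $\mathcal{E}_\infty(\mathbb{K})$ by $\mathcal{N}(h, (g_0|_{\mathcal{B}})^{-1})$.
\end{itemize}
The inverse $(g_0|_{\mathcal{B}})^{-1}$ exists by Theorem~\ref{thm:horizontal_strict_positivity}.

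\textbf{Necessity.} Assume horizontal geometric convergence. Theorem~\ref{thm:generalized_horizontal_log_likelihood} already gives the decomposition $\Lambda_N(h) = h^T\Delta_{N,H} - \tfrac12 h^T[\psi_{N,H}^*G^{(N)}](\mathbf{0})h + \mathcal{R}_{N,H}(h)$. It also gives $\Delta_{N,H} \xrightarrow{d} \mathcal{N}(\mathbf{0}, g_0|_{\mathcal{B}})$ and $\sup_{\mathbb{K}}|\mathcal{R}_{N,H}| \to 0$ in probability. Lemma~\ref{lem:uniform_horizontal_metric_stabilization} identifies the quadratic coefficient with $g_0|_{\mathcal{B}}$ uniformly on $\mathbb{K}$. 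This is exactly the hypothesis of Lemma~\ref{lem:lecam_deficiency_equivalence}, with $g_0$ replaced by the non-singular $g_0|_{\mathcal{B}}$. Its three steps (Le Cam's first lemma, finite-dimensional convergence, and compact grid extension) therefore go through unchanged, and they yield the vanishing deficiency.

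\textbf{Sufficiency.} Assume the deficiency vanishes. By Le Cam's representation theorem, $\Lambda_N$ converges to $h^TZ - \tfrac12 h^T A h$ with $Z \sim \mathcal{N}(0, A)$. I would identify $A = g_0|_{\mathcal{B}}$ through the covariance of the projected score $P^H_{\theta_0}S_n$, which was computed in Step 2 of the proof of Theorem~\ref{thm:generalized_horizontal_log_likelihood}. Metric stabilization then follows directly from Lemma~\ref{lem:uniform_horizontal_metric_stabilization}.

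For the connection and the curvature I would redo the valence computation of Theorem~\ref{thm:universal_scaling_law} with the rectangular Jacobian $J = P_{\theta_0}^H/\sqrt{N}$. Contracting $r$ slots of an $N$-additive tensor gives $N^{1-r/2}$ times a projected single-sample tensor, whose operator norm is bounded by $\|P^H\|_{\mathrm{op}}^r$ times its ambient sup bound. The connection therefore decays like $N^{-1/2}$ and the curvature like $N^{-1}$, exactly as in Lemmas~\ref{lem:connection_dissolution} and~\ref{lem:accelerated_curvature_annihilation}.

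\textbf{Main obstacle.} The hard step is making the horizontal connection and curvature well defined when $g^{(1)}$ is singular. The ambient formulas involve $(g^{(1)})^{-1}$, so they cannot be used directly. I would first define these objects on the base $(\mathcal{B}, g^{\mathcal{B}})$, where the metric is invertible, and transport them to $H_\theta$ through the isomorphism $d\pi|_{H}$. Because vertical scores vanish (Lemma~\ref{lem:vertical_score_vanishing}), the Amari expectations in \eqref{eq:amari_christoffel_def} evaluated on horizontal lifts coincide with the base symbols. This makes the additivity $\Gamma^{(\alpha,N)} = N\Gamma^{(\alpha,1)}$ valid on $H$. The remaining difficulty is the O'Neill-type correction from non-integrability of $H$, namely the curvature $\Omega = d\omega$ of the Ehresmann connection. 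My plan is to show that $\Omega$ is itself an $N$-independent tensor with two slots. Under the horizontal pullback it therefore picks up at least a factor $N^{-1}$, because the pullback of a 2-form under a dilation by $1/\sqrt{N}$ scales as $1/N$. Hence the correction is dominated by the base curvature rates and does not change the limits.
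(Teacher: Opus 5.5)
Your proposal is correct at the paper's level of rigor and follows its proof essentially step for step. Necessity feeds the horizontal LAN decomposition of Theorem~\ref{thm:generalized_horizontal_log_likelihood} into Lemma~\ref{lem:lecam_deficiency_equivalence}, and sufficiency uses Le Cam's representation theorem, Lemma~\ref{lem:uniform_horizontal_metric_stabilization} and the $N^{1-r/2}$ valence scaling for connection and curvature. Your departures are refinements rather than a new route: you take the remainder bound from the third-order envelope in Theorem~\ref{thm:generalized_horizontal_log_likelihood} instead of the geometric envelope of Lemma~\ref{lem:pillar_2_taylor_geometry}, and you define the horizontal connection and curvature through $(\mathcal{B}, g^{\mathcal{B}})$, a point the paper skips entirely (with that definition the curvature is just the tensorial pullback of the base curvature under $\pi\circ\psi_{N,H}$, so the O'Neill-type correction you anticipate never enters). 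As in the paper, each direction in effect proves its conclusion outright from Assumption~\ref{asm:local_regularity} rather than from its hypothesis, so the equivalence holds because both sides are true.
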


\begin{proof}
	We execute a rigorous, bi-directional mathematical proof covering necessity ($\Rightarrow$) and sufficiency ($\Leftarrow$) sequentially.
	
	\subsubsection*{Part 1: Forward Implication (Necessity: $IG_N \big|_{H_\theta} \xrightarrow{\text{geom}} \mathcal{M}_\infty \implies IG_N \big|_{H_\theta} \xrightarrow{\text{oper}} \mathcal{M}_\infty$)}
	
	Assume Axiom~\ref{ax:geom_priority} (Geometric Priority) holds when restricted to the horizontal distribution $H_{\theta_0} \cong \mathbb{R}^d$. Under the localized horizontal microscope map $\psi_{N,H}(h) = \theta_0 + \frac{1}{\sqrt{N}} P_{\theta_0}^H h$ for $h \in \mathbb{K} \subset H_{\theta_0} \cong \mathbb{R}^d$, the pulled-back horizontally filtered tensor fields satisfy uniform convergence over every compact coordinate cage $\mathbb{K} \subset H_{\theta_0}$:
	\begin{align}
		\lim_{N \to \infty} \sup_{h \in \mathbb{K}} \left\| \left[\psi_{N,H}^* G^{(N)}\right](h) - g_0\big|_{\mathcal{B}} \right\|_{op} &= 0 \quad \text{(Horizontal Metric Stabilization)}, \label{eq:proof1_metric_stabilization} \\
		\lim_{N \to \infty} \sup_{h \in \mathbb{K}} \left| \left[\psi_{N,H}^* \nabla^{(\alpha,N)}\right]_{ijk}(h) \right| &= 0 \quad \text{(Horizontal Connection Dissolution)}, \label{eq:proof1_connection_dissolution} \\
		\lim_{N \to \infty} \sup_{h \in \mathbb{K}} \left\| \left[\psi_{N,H}^* \mathcal{R}^{(\alpha,N)}\right]_{ijmn}(h) \right\|_{op} &= 0 \quad \text{(Horizontal Curvature Annihilation)}. \label{eq:proof1_curvature_annihilation}
	\end{align}
	
	\paragraph{Step 1.1: Localized Radon-Nikodym Likelihood Ratio Decomposition.}
	By Theorem~\ref{thm:generalized_horizontal_log_likelihood} (Generalized Horizontal Log-Likelihood Representation) and Lemma~\ref{lem:pillar_2_taylor_geometry} (Taylor-Geometry Identity), the localized Radon-Nikodym log-likelihood ratio process 
	$$\Lambda_N(h) \equiv \log \frac{d P_{\theta_0 + P_{\theta_0}^H h / \sqrt{N}}^{(N)}}{d P_{\theta_0}^{(N)}}(X^N)$$ 
	admits the exact algebraic decomposition built from the horizontally filtered pulled-back geometric fields:
	\begin{equation}
		\Lambda_N(h) = h^T \Delta_{N,H}(\theta_0) - \frac{1}{2} h^T \left[\psi_{N,H}^* G^{(N)}\right](0) h + \mathcal{R}_{N,H}(h),
		\label{eq:proof1_lambda_decomp}
	\end{equation}
	where $\Delta_{N,H}(\theta_0) \equiv \frac{1}{\sqrt{N}} \sum_{n=1}^N P_{\theta_0}^H \nabla_\theta \log p(X_n; \theta_0)$ is the horizontally projected sample score vector, and $\mathcal{R}_{N,H}(h)$ is the non-linear stochastic remainder process.
	
	\paragraph{Step 1.2: Uniform Bounding of the Stochastic Remainder.}
	By Lemma~\ref{lem:pillar_2_taylor_geometry}, the remainder process $\mathcal{R}_{N,H}(h)$ is governed by the uniform geometric envelope bound across $\mathbb{K}$:
	\begin{equation}
		\sup_{h \in \mathbb{K}} |\mathcal{R}_{N,H}(h)| \le \frac{\|h\|_2^3}{6} \sup_{h \in \mathbb{K}} \left| \left[\psi_{N,H}^* \nabla^{(\alpha,N)}\right]_{ijk}(h) \right| + \mathcal{O}_p\left( \sup_{h \in \mathbb{K}} \left\| \left[\psi_{N,H}^* \mathcal{R}^{(\alpha,N)}\right]_{ijmn}(h) \right\|_{op} \right).
		\label{eq:proof1_remainder_bound}
	\end{equation}
	Substituting the connection dissolution limit \eqref{eq:proof1_connection_dissolution} and accelerated curvature decay \eqref{eq:proof1_curvature_annihilation} into \eqref{eq:proof1_remainder_bound} yields:
	\begin{equation}
		\sup_{h \in \mathbb{K}} |\mathcal{R}_{N,H}(h)| = \mathcal{O}\left(N^{-1/2}\right) + \mathcal{O}_p\left(N^{-1}\right) = \mathcal{O}_p\left(N^{-1/2}\right) \xrightarrow{P_{\theta_0}} 0 \quad \text{as } N \to \infty.
		\label{eq:proof1_remainder_vanish}
	\end{equation}
	
	\paragraph{Step 1.3: Asymptotic Score Normality and LAN Limit Formulation.}
	By Theorem~\ref{thm:generalized_horizontal_log_likelihood}, under High-Order Local Regularity (Assumption~\ref{asm:local_regularity}), the horizontally projected score vector satisfies the Multivariate Central Limit Theorem:
	\begin{equation}
		\Delta_{N,H}(\theta_0) \xrightarrow{d} \Delta_{\infty, H} \sim \mathcal{N}\left(0, g_0\big|_{\mathcal{B}}\right).
		\label{eq:proof1_score_normality}
	\end{equation}
	Substituting the metric stabilization limit $\left[\psi_{N,H}^* G^{(N)}\right](0) = g_0\big|_{\mathcal{B}}$ from \eqref{eq:proof1_metric_stabilization}, the score convergence \eqref{eq:proof1_score_normality}, and the vanishing remainder \eqref{eq:proof1_remainder_vanish} into \eqref{eq:proof1_lambda_decomp} establishes the uniform linear-quadratic Local Asymptotic Normality (LAN) expansion on $H_\theta$:
	\begin{equation}
		\Lambda_N(h) = h^T \Delta_{N,H}(\theta_0) - \frac{1}{2} h^T \left(g_0\big|_{\mathcal{B}}\right) h + o_{P_{\theta_0}}(1).
		\label{eq:proof1_lan_expansion}
	\end{equation}
	
	\paragraph{Step 1.4: Application of Le Cam Deficiency Limit Lemma.}
	By Theorem~\ref{thm:horizontal_strict_positivity} (Strict Positivity and Non-Singularity of Restricted Metric), the base Fisher metric tensor $g_0\big|_{\mathcal{B}} \equiv g^{(1)}(\theta_0)\big|_{H_{\theta_0}}$ is strictly positive-definite and non-singular on $H_{\theta_0}$, with $\lambda_{\min}(g_0\big|_{\mathcal{B}}) \ge \kappa_0 > 0$. Applying Lemma~\ref{lem:lecam_deficiency_equivalence} (Le Cam Experiment Deficiency Limit Lemma) directly to the LAN representation \eqref{eq:proof1_lan_expansion} confirms contiguity and proves that Le Cam's experiment deficiency distance between the localized empirical experiment $\mathcal{E}_N(\mathbb{K})$ and the canonical Gaussian shift experiment $\mathcal{E}_\infty(\mathbb{K})$ vanishes asymptotically:
	\begin{equation}
		\lim_{N \to \infty} \Delta\left( \mathcal{E}_N(\mathbb{K}), \, \mathcal{E}_\infty(\mathbb{K}) \right) = 0.
		\label{eq:proof1_lecam_deficiency_zero}
	\end{equation}
	This fulfills Axiom~\ref{ax:stat_oper} (Statistical Operationalism) on $H_\theta$, completing the proof of Necessity:
	\begin{equation}
		IG_N \Big|_{H_\theta} \xrightarrow{\text{geom}} \mathcal{M}_\infty \implies IG_N \Big|_{H_\theta} \xrightarrow{\text{oper}} \mathcal{M}_\infty.
	\end{equation}
	
	\subsubsection*{Part 2: Converse Implication (Sufficiency: $IG_N \big|_{H_\theta} \xrightarrow{\text{oper}} \mathcal{M}_\infty \implies IG_N \big|_{H_\theta} \xrightarrow{\text{geom}} \mathcal{M}_\infty$)}
	
	Assume Axiom~\ref{ax:stat_oper} (Statistical Operationalism) holds on $H_\theta$. That is, for every compact coordinate cage $\mathbb{K} \subset H_{\theta_0} \cong \mathbb{R}^d$, the Le Cam experiment deficiency distance vanishes asymptotically:
	\begin{equation}
		\lim_{N \to \infty} \Delta\left( \mathcal{E}_N(\mathbb{K}), \, \mathcal{E}_\infty(\mathbb{K}) \right) = 0.
		\label{eq:proof2_assumption}
	\end{equation}
	
	\paragraph{Step 2.1: Convergence of Log-Likelihood Process via Le Cam Representation Theorem.}
	By Le Cam's Representation Theorem \cite{lecam1986, lecam2000}, deficiency distance convergence \eqref{eq:proof2_assumption} implies that under $P_{\theta_0}^{(N)}$, the finite-dimensional distributions of the localized log-likelihood ratio process $\Lambda_N(h)$ converge weakly to the canonical quadratic Gaussian limit:
	\begin{equation}
		\Lambda_N(h) \xrightarrow{d} h^T Z - \frac{1}{2} h^T A h \quad \text{under } P_{\theta_0}^{(N)}, \quad \text{where } Z \sim \mathcal{N}(0, A),
		\label{eq:proof2_weak_limit}
	\end{equation}
	for a symmetric, positive-definite precision matrix $A \in \mathbb{R}^{d \times d}$.
	
	\paragraph{Step 2.2: Extraction of Horizontally Projected Metric Tensor ($r = 2$).}
	Differentiating the expected log-likelihood ratio process $\mathbb{E}_{\theta_0}[\Lambda_N(h)]$ twice with respect to the horizontal displacement coordinates $h^i$ and $h^j$ at $h = 0$ recovers the negative pulled-back metric tensor:
	\begin{equation}
		\left. \frac{\partial^2 \mathbb{E}_{\theta_0}[\Lambda_N(h)]}{\partial h^i \partial h^j} \right|_{h=0} = -\left[ \psi_{N,H}^* G^{(N)} \right]_{ij}(0).
		\label{eq:proof2_metric_extract}
	\end{equation}
	From the weak Gaussian limit in \eqref{eq:proof2_weak_limit}, the expected second variation equals $-A_{ij}$. Evaluating at the background state $\theta_0$ uniquely identifies $A \equiv g_0\big|_{\mathcal{B}} = g^{(1)}(\theta_0)\big|_{H_{\theta_0}}$. Applying Lemma~\ref{lem:uniform_horizontal_metric_stabilization} (Uniform Horizontal Metric Stabilization) across the compact cage $\mathbb{K} \subset H_{\theta_0}$:
	\begin{equation}
		\lim_{N \to \infty} \sup_{h \in \mathbb{K}} \left\| \left[ \psi_{N,H}^* G^{(N)} \right](h) - g_0\big|_{\mathcal{B}} \right\|_{op} = 0,
		\label{eq:proof2_metric_proven}
	\end{equation}
	which fulfills condition \eqref{eq:proof1_metric_stabilization} of Axiom~\ref{ax:geom_priority}.
	
	\paragraph{Step 2.3: Extraction of Horizontally Projected Score 1-Form ($r = 1$).}
	Taking the first variation of $\Lambda_N(h)$ at $h = 0$ yields the horizontally projected score 1-form $\psi_{N,H}^*(dL^{(N)})(0)$. By \eqref{eq:proof2_weak_limit}, its limit covariance matrix equals $A = g_0\big|_{\mathcal{B}}$, matching the normalized Gaussian score fluctuation field $\Delta_{\infty, H} \sim \mathcal{N}(0, g_0\big|_{\mathcal{B}})$.
	
	\paragraph{Step 2.4: Extraction of Horizontally Projected Connection Symbols ($r = 3$).}
	By Lemma~\ref{lem:connection_dissolution}, differentiating $\mathbb{E}_{\theta_0}[\Lambda_N(h)]$ three times with respect to horizontal coordinates $h^i, h^j, h^k$ at $h = 0$ relates the third log-density cumulant to the pulled-back connection symbols:
	\begin{equation}
		\left. \frac{\partial^3 \mathbb{E}_{\theta_0}[\Lambda_N(h)]}{\partial h^i \partial h^j \partial h^k} \right|_{h=0} = -\sqrt{N} \left[ \psi_{N,H}^* \nabla^{(\alpha,N)} \right]_{ijk}(0) = -\Gamma_{ijk}^{(\alpha,1)}(\theta_0).
		\label{eq:proof2_third_cumulant_identity}
	\end{equation}
	Because the limiting Gaussian log-likelihood ratio $\Lambda_\infty(h) = h^T Z - \frac{1}{2} h^T A h$ is purely quadratic in $h$, its third derivative vanishes identically:
	\begin{equation}
		\frac{\partial^3 \Lambda_\infty(h)}{\partial h^i \partial h^j \partial h^k} \equiv 0.
		\label{eq:proof2_limit_third_deriv_zero}
	\end{equation}
	Matching higher-order variations under High-Order Local Regularity (Assumption~\ref{asm:local_regularity}) and Theorem~\ref{thm:connection_dissolution}:
	\begin{equation}
		\lim_{N \to \infty} \sup_{h \in \mathbb{K}} \left| \left[ \psi_{N,H}^* \nabla^{(\alpha,N)} \right]_{ijk}(h) \right| = \lim_{N \to \infty} \mathcal{O}\left(\frac{1}{\sqrt{N}}\right) = 0 \equiv \Gamma_{ijk}^{(0)},
		\label{eq:proof2_connection_proven}
	\end{equation}
	which fulfills condition \eqref{eq:proof1_connection_dissolution} of Axiom~\ref{ax:geom_priority}.
	
	\paragraph{Step 2.5: Extraction of Horizontally Projected Riemann Curvature ($r = 4$).}
	Evaluating spatial derivatives of the connection symbols under the localized horizontal microscope map $\psi_{N,H}(h) = \theta_0 + \frac{1}{\sqrt{N}} P_{\theta_0}^H h$ introduces a second chain-rule factor of $1/\sqrt{N}$. By Lemma~\ref{lem:accelerated_curvature_annihilation} and Theorem~\ref{thm:curvature_annihilation}:
	\begin{equation}
		\lim_{N \to \infty} \sup_{h \in \mathbb{K}} \left\| \left[ \psi_{N,H}^* \mathcal{R}^{(\alpha,N)} \right]_{ijmn}(h) \right\|_{op} = \lim_{N \to \infty} \mathcal{O}_p\left(\frac{1}{N}\right) = 0 \equiv \mathcal{R}_{ijmn}^{(0)},
		\label{eq:proof2_curvature_proven}
	\end{equation}
	which fulfills condition \eqref{eq:proof1_curvature_annihilation} of Axiom~\ref{ax:geom_priority}.
	
	\paragraph{Conclusion.}
	Combining Equations \eqref{eq:proof2_metric_proven}, \eqref{eq:proof2_connection_proven}, and \eqref{eq:proof2_curvature_proven} establishes that all three constitutional pillars of Axiom~\ref{ax:geom_priority} ($IG_N \big|_{H_\theta} \xrightarrow{\text{geom}} \mathcal{M}_\infty$) are satisfied on $H_\theta$, completing the proof of Sufficiency:
	\begin{equation}
		IG_N \Big|_{H_\theta} \xrightarrow{\text{oper}} \mathcal{M}_\infty \implies IG_N \Big|_{H_\theta} \xrightarrow{\text{geom}} \mathcal{M}_\infty.
	\end{equation}
	
	Combining Part 1 and Part 2 completes the bi-directional, step-by-step mathematical proof of Theorem~\ref{thm:geometric_operational_unification_horizontal}.
\end{proof}


\subsection{Epistemological Synthesis and Elimination of Non-Identifiability Pathologies}
\label{subsec:epistemological_synthesis_horizontal}

The unification established in Section~6 demonstrates that ambient parameter space non-identifiability ($\det g^{(1)}(\theta) = 0$) does not present an insurmountable barrier to Local Asymptotic Normality (LAN) or information-geometric stability. In high-dimensional, over-parameterized statistical systems—such as deep neural architectures, singular exponential families, and singular latent variable models—the parameter space $\Theta \subseteq \mathbb{R}^D$ ($D \gg d$) possesses continuous gauge symmetries. Under these symmetries, infinitely many parameter vectors $\theta \in \Theta$ yield identical observational probability distributions $P_\theta \in \mathcal{B}$.

By introducing the \emph{Fisher-Compatible Ehresmann Connection} ($\omega_\theta$), the ambient tangent bundle decomposes into two mutually orthogonal sub-bundles:
\begin{equation}
	T_\theta \Theta = H_\theta \oplus V_\theta, \quad \text{where } V_\theta \equiv \ker(d\pi_\theta) \text{ and } H_\theta \equiv V_\theta^{\perp_g}.
\end{equation}
This horizontal-vertical splitting acts as a geometric filter:
\begin{enumerate}[label=(\roman*)]
	\item \textbf{Vertical Sub-bundle ($V_\theta$):} Absorbs all unidentifiable gauge directions, internal parameter redundancies, and zero-eigenvalue modes of the Fisher information matrix.
	\item \textbf{Horizontal Sub-bundle ($H_\theta$):} Isolates the statistically verifiable, information-carrying directions that map bijectively onto the $d$-dimensional quotient base manifold $\mathcal{B} \equiv \Theta / \sim$ of identifiable probability distributions.
\end{enumerate}

By restricting localized experiments and tensor pullbacks strictly to the horizontal distribution $H_{\theta_0} \cong T_{\pi(\theta_0)}\mathcal{B}$, we eliminate all ambient metric singularities and establish a well-posed, gauge-invariant asymptotic decision framework.

\subsubsection{Formulation of Gauge-Invariant LAN}
\label{subsubsec:corrected_corollary}

\begin{corollary}[Gauge-Invariant Local Asymptotic Normality]
	\label{cor:gauge_invariant_lan}
Let $\Theta \subseteq \mathbb{R}^D$ ($D \gg d$) be an over-parameterized ambient parameter space with non-identifiable gauge fibers $\mathcal{F}_p = \pi^{-1}(p)$ and vertical sub-bundle $V_\theta = \ker(d\pi_\theta)$. Let $H_{\theta_0} \subset T_{\theta_0}\Theta$ denote the $d$-dimensional horizontal subspace relative to the Fisher-compatible Ehresmann connection $\omega_{\theta_0}$, and let $\mathcal{L}_{\theta_0} \subset \Theta$ denote the local horizontal leaf manifold passing through $\theta_0$, which is locally leaf-isomorphic to the quotient base space $\mathcal{B} \equiv \Theta / \sim$.

For any compact coordinate cage $\mathbb{K} \subset H_{\theta_0} \cong T_{\pi(\theta_0)}\mathcal{B} \cong \mathbb{R}^d$, let:
\begin{equation}
	\psi_{N,H}^* \mathcal{E}_N(\mathbb{K}) \equiv \left( \mathcal{X}^N, \, \mathcal{A}^{\otimes N}, \, \left\{ P_{\theta_0 + \frac{1}{\sqrt{N}}P_{\theta_0}^H h}^{(N)} : h \in \mathbb{K} \right\} \right)
\end{equation}
denote the localized empirical statistical experiment pulled back along the horizontal projection $P_{\theta_0}^H = \mathrm{id} - \omega_{\theta_0}$, and let:
\begin{equation}
	\mathcal{E}_\infty\left(\mathcal{B}; \mathbb{K}\right) \equiv \left( \mathbb{R}^d, \, \mathcal{B}^d, \, \left\{ \mathcal{N}\left(h, \, \left(g_0\big|_{\mathcal{B}}\right)^{-1}\right) : h \in \mathbb{K} \right\} \right)
\end{equation}
denote the canonical Gaussian shift experiment defined on the identifiable base manifold $\mathcal{B}$, equipped with the non-singular restricted precision matrix $g_0\big|_{\mathcal{B}} \equiv g^{(1)}(\theta_0)\big|_{H_{\theta_0} \times H_{\theta_0}}$.

Then, Local Asymptotic Normality holds in a fully gauge-invariant manner on the horizontal leaf space $\mathcal{L}_{\theta_0} \cong \mathcal{B}$, as quantified by the vanishing of Le Cam's experiment deficiency distance $\Delta$:
\begin{equation}
	\lim_{N \to \infty} \Delta\left( \psi_{N,H}^* \mathcal{E}_N(\mathbb{K}), \; \mathcal{E}_\infty\left(\mathcal{B}; \mathbb{K}\right) \right) = 0,
	\tag{284}
	\label{eq:corrected_equation_284}
\end{equation}
where the vertical sub-bundle $V_\theta$ absorbs all non-identifiable directions, preventing ambient metric singularity ($\det g^{(1)}(\theta) = 0$) and guaranteeing strictly positive-definite, non-singular asymptotic covariance bounds $\left(g_0\big|_{\mathcal{B}}\right)^{-1}$ for all gauge-invariant statistical estimators.
\end{corollary}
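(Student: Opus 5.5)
The proof reduces the corollary to the Le Cam Experiment Deficiency Limit Lemma (Lemma~\ref{lem:lecam_deficiency_equivalence}), applied with $\mathbb{R}^d$ replaced by $H_{\theta_0}\cong\mathbb{R}^d$ and $g_0$ replaced by $g_0\big|_{\mathcal{B}}$. The required uniform LAN input comes from Theorem~\ref{thm:generalized_horizontal_log_likelihood}, and non-singularity comes from Theorem~\ref{thm:horizontal_strict_positivity}.

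\textbf{Step 1 (coordinates on $H_{\theta_0}$).} Fix a basis of $H_{\theta_0}$ so that $h\in\mathbb{K}$ has $d$ coordinates. In this basis, $g_0\big|_{\mathcal{B}}$ is a $d\times d$ matrix, and by Theorem~\ref{thm:horizontal_strict_positivity} it satisfies $\lambda_{\min}\ge\kappa_0>0$. Hence $(g_0|_{\mathcal{B}})^{-1}$ exists and the limit experiment $\mathcal{E}_\infty(\mathcal{B};\mathbb{K})$ is well defined. Since $P^H_{\theta_0}h=h$ for $h\in H_{\theta_0}$, the pulled-back experiment $\psi_{N,H}^*\mathcal{E}_N(\mathbb{K})$ is simply the family $P^{(N)}_{\theta_0+h/\sqrt N}$ restricted to horizontal directions.

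\textbf{Step 2 (uniform LAN).} Theorem~\ref{thm:generalized_horizontal_log_likelihood} gives
\[
\Lambda_N(h)=h^T\Delta_{N,H}-\tfrac12 h^T g_0|_{\mathcal{B}}h+\mathcal{R}_{N,H}(h),
\]
with $\sup_{\mathbb{K}}|\mathcal{R}_{N,H}|\to0$ in probability and $\Delta_{N,H}\xrightarrow{d}\mathcal{N}(0,g_0|_{\mathcal{B}})$. One point needs care: $\Delta_{N,H}$ is a $D$-vector lying in $H_{\theta_0}$. Its covariance $P^H g P^H$ must be identified with the $d\times d$ restricted matrix in the chosen basis. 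I will check that $P^H$ is the $g$-orthogonal projection, which holds because $H=V^{\perp_g}$ and $V\subseteq\ker g$ by Lemma~\ref{lem:vertical_score_vanishing}, so that $P^Hg P^H$ agrees with $g$ on $H\times H$.

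\textbf{Step 3 (apply the deficiency lemma).} With uniform LAN in hand, I run the three steps of Lemma~\ref{lem:lecam_deficiency_equivalence} in the horizontal coordinates:
\begin{itemize}
\item \emph{Contiguity.} Le Cam's first lemma applies, since the limit satisfies $\mathbb{E}e^{\Lambda_\infty}=1$.
\item \emph{Finite-dimensional convergence.} On finite grids this follows from the representation theorem.
\item \emph{Compact extension.} Extend to all of $\mathbb{K}$ using a $\delta$-net.
\end{itemize}
This yields \eqref{eq:corrected_equation_284}.

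\textbf{Step 4 (gauge invariance).} Any vertical perturbation $v\in V_{\theta_0}$ leaves $P_\theta$ unchanged to first order, by Lemma~\ref{lem:vertical_score_vanishing}. So the experiment depends only on $d\pi_{\theta_0}h$, and Lemma~\ref{lem:horizontal_isomorphism} identifies $\mathbb{K}$ with a compact set in $T_{\pi(\theta_0)}\mathcal{B}$. The limit is therefore intrinsic to $\mathcal{B}$: a different choice of basis changes $\mathcal{E}_\infty$ only by a linear reparametrization, which preserves $\Delta$.

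\textbf{Main obstacle.} The hard step is the compact extension in Step 3. Pointwise LAN gives only finite-dimensional convergence. Passing to the supremum over $\mathbb{K}$ needs equicontinuity of $h\mapsto P^{(N)}_{\theta_0+h/\sqrt N}$ in total variation, uniformly in $N$. I would try to get this from the Hellinger bound $H^2\lesssim \|h-h'\|^2$, which follows from $L_2$-differentiability of $\sqrt p$ (Assumption~\ref{asm:local_regularity}(i)) restricted to horizontal directions.
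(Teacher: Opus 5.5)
Your proposal is correct and follows essentially the paper's route: uniform horizontal LAN from Theorem~\ref{thm:generalized_horizontal_log_likelihood}, non-singularity of $g_0|_{\mathcal{B}}$ from Theorem~\ref{thm:horizontal_strict_positivity}, then Lemma~\ref{lem:lecam_deficiency_equivalence} run on $H_{\theta_0}$. Your tensorized Hellinger bound adds something the paper lacks: the squared Hellinger distance between the $N$-fold products at $h,h'$ is at most $N$ times the one-observation value, hence $\lesssim\|h-h'\|^2$ uniformly in $N$, and this supplies the equicontinuity that the paper's compact-extension step only asserts.

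One small fix: since $V_{\theta_0}\subseteq\ker g^{(1)}(\theta_0)$, the set $V_{\theta_0}^{\perp_g}$ is all of $T_{\theta_0}\Theta$, so ``$g$-orthogonal projection'' is not a valid justification for Step~2; instead note that in a basis $\{e_a\}$ of $H_{\theta_0}$ the coordinates $e_a^\top\Delta_N$ have covariance $g^{(1)}(\theta_0)(e_a,e_b)$, which is exactly the matrix of $g_0|_{\mathcal{B}}$.
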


\begin{proof}
	The proof proceeds in four sequential measure-theoretic and differential-geometric stages, applying the Fisher-compatible Ehresmann connection framework \cite{lee2013} to restrict localized statistical operations to the non-singular horizontal distribution $H_{\theta_0}$ (Definition~\ref{def:ehresmann_connection}).
	
	\subsubsection*{Stage 1: Local Horizontal Trajectory and Projection}
	By definition of the Fisher-compatible Ehresmann connection decomposition $T_{\theta_0}\Theta = H_{\theta_0} \oplus V_{\theta_0}$, the horizontal projection operator $P_{\theta_0}^H = \mathrm{id}_{T_{\theta_0}\Theta} - \omega_{\theta_0}$ satisfies $P_{\theta_0}^H h = h$ for any horizontal vector $h \in \mathbb{K} \subset H_{\theta_0} \cong \mathbb{R}^d$ (Definition~\ref{def:connection_1form_projection}). 
	For each sample size $N \ge 1$, define the localized horizontal parameter trajectory $\theta_N(h) \in \Theta$ by:
	\begin{equation}
		\theta_N(h) \equiv \theta_0 + \frac{1}{\sqrt{N}} P_{\theta_0}^H h, \quad h \in \mathbb{K} \subset H_{\theta_0}.
	\end{equation}
	This parametrization restricts local parameter perturbations strictly to the horizontal leaf $\mathcal{L}_{\theta_0} \cong \mathcal{B}$, isolating and suppressing all vertical gauge variations in $V_{\theta_0} = \ker(d\pi_{\theta_0})$ (Definition~\ref{def:vertical_subbundle}).
	
	\subsubsection*{Stage 2: Algebraic Likelihood Ratio Expansion on $H_{\theta_0}$}
	Consider the localized Radon-Nikodym log-likelihood ratio process $$\Lambda_N(h) \equiv \log \frac{d P_{\theta_N(h)}^{(N)}}{d P_{\theta_0}^{(N)}}(X^N) = \sum_{n=1}^N \left[ \log p\left(X_n; \theta_0 + \frac{P_{\theta_0}^H h}{\sqrt{N}}\right) - \log p(X_n; \theta_0) \right]$$ \cite{lecam1986}.
	By Theorem~\ref{thm:generalized_horizontal_log_likelihood} (Generalized Horizontal Log-Likelihood Representation) and Lemma~\ref{lem:pillar_2_taylor_geometry} (Taylor-Geometry Identity), $\Lambda_N(h)$ admits the exact algebraic decomposition:
	\begin{equation}
		\Lambda_N(h) = h^T \Delta_{N,H}(\theta_0) - \frac{1}{2} h^T \left[ \psi_{N,H}^* G^{(N)} \right](0) h + \mathcal{R}_{N,H}(h),
	\end{equation}
	where:
	\begin{enumerate}
		\item $\Delta_{N,H}(\theta_0) \equiv P_{\theta_0}^H \Delta_N(\theta_0) = \frac{1}{\sqrt{N}} \sum_{n=1}^N P_{\theta_0}^H \nabla_\theta \log p(X_n; \theta_0)$ is the horizontally projected sample score vector (Theorem~\ref{thm:generalized_horizontal_log_likelihood}).
		\item $\left[ \psi_{N,H}^* G^{(N)} \right](0) \equiv P_{\theta_0}^H g^{(1)}(\theta_0) P_{\theta_0}^H = g_0|_{\mathcal{B}}$ is the horizontally projected pulled-back Fisher metric (Lemma~\ref{lem:horizontal_pulled_back_metric_algebraic}).
		\item $\mathcal{R}_{N,H}(h)$ is the non-linear stochastic remainder process (Theorem~\ref{thm:generalized_horizontal_log_likelihood}).
	\end{enumerate}
	
	\subsubsection*{Stage 3: Asymptotic Score Normality and Remainder Uniform Control}
	We evaluate the limiting behavior of the components in the expansion:
	\begin{enumerate}
		\item \textbf{Horizontally Projected Score Normality:} The single-observation horizontally projected score vectors $P_{\theta_0}^H S_n$ are independent and identically distributed with zero mean $\mathbb{E}_{\theta_0}[P_{\theta_0}^H S_n] = \mathbf{0}$ and covariance matrix $\mathrm{Var}_{\theta_0}(P_{\theta_0}^H S_n) = P_{\theta_0}^H g^{(1)}(\theta_0) P_{\theta_0}^H = g_0|_{\mathcal{B}}$ (Theorem~\ref{thm:score_transformation}). By the Multivariate Central Limit Theorem, the normalized sum converges weakly \cite{vandervaart1998}:
		\begin{equation}
			\Delta_{N,H}(\theta_0) \xrightarrow{d} \Delta_{\infty,H} \sim \mathcal{N}\left(\mathbf{0}, g_0|_{\mathcal{B}}\right).
		\end{equation}
		
		\item \textbf{Uniform Metric Stabilization:} By Lemma~\ref{lem:uniform_horizontal_metric_stabilization}, the horizontally projected pulled-back metric tensor stabilizes uniformly in operator norm over the compact cage $\mathbb{K}$:
		\begin{equation}
			\lim_{N \to \infty} \sup_{h \in \mathbb{K}} \left\| \left[ \psi_{N,H}^* G^{(N)} \right](h) - g_0|_{\mathcal{B}} \right\|_{\mathrm{op}} = 0.
		\end{equation}
		
		\item \textbf{Uniform Vanishing of Stochastic Remainder:} By Lemma~\ref{lem:pillar_2_taylor_geometry}, the remainder process is bounded by the connection dissolution and curvature annihilation limits on $H_{\theta_0}$:
		\begin{equation}
			\sup_{h \in \mathbb{K}} |\mathcal{R}_{N,H}(h)| \le \frac{\|h\|_2^3}{6} \sup_{h \in \mathbb{K}} \left| \left[ \psi_{N,H}^* \nabla^{(\alpha,N)} \right]_{ijk}(h) \right| + \mathcal{O}_p\left( \sup_{h \in \mathbb{K}} \left\| \left[ \psi_{N,H}^* \mathcal{R}^{(\alpha,N)} \right]_{ijmn}(h) \right\|_{\mathrm{op}} \right).
		\end{equation}
		By Lemma~\ref{lem:connection_dissolution} and Lemma~\ref{lem:accelerated_curvature_annihilation}, the pulled-back connection symbols dissolve at rate $\mathcal{O}(N^{-1/2})$ and the pulled-back Riemann curvature undergoes accelerated quadratic decay at rate $\mathcal{O}_p(N^{-1})$. Thus:
		\begin{equation}
			\sup_{h \in \mathbb{K}} |\mathcal{R}_{N,H}(h)| = \mathcal{O}_p\left(N^{-1/2}\right) \xrightarrow{P_{\theta_0}^{(N)}} 0 \quad \text{as } N \to \infty.
		\end{equation}
	\end{enumerate}
	
	Combining these limits yields the uniform linear-quadratic Local Asymptotic Normality (LAN) expansion over $\mathbb{K} \subset H_{\theta_0}$ \cite{lecam1986}:
	\begin{equation}
		\Lambda_N(h) = h^T \Delta_{N,H}(\theta_0) - \frac{1}{2} h^T \left( g_0|_{\mathcal{B}} \right) h + o_{P_{\theta_0}^{(N)}}(1), \quad \Delta_{N,H}(\theta_0) \xrightarrow{d} \mathcal{N}\left(\mathbf{0}, g_0|_{\mathcal{B}}\right).
	\end{equation}
	
	\subsubsection*{Stage 4: Strict Horizontal Positivity and Le Cam Deficiency Convergence}
	By Theorem~\ref{thm:horizontal_strict_positivity} (Strict Positivity and Non-Singularity of Restricted Metric), the restricted base metric tensor $g_0|_{\mathcal{B}} \equiv g^{(1)}(\theta_0)|_{H_{\theta_0} \times H_{\theta_0}}$ is strictly positive-definite and non-singular on $H_{\theta_0}$, satisfying $\lambda_{\min}(g_0|_{\mathcal{B}}) \ge \kappa_0 > 0$. This eliminates the ambient metric singularity ($\det g^{(1)}(\theta_0) = 0$).
	
	Applying Lemma~\ref{lem:lecam_deficiency_equivalence} (Le Cam Experiment Deficiency Limit Lemma) to the uniform LAN expansion on $H_{\theta_0}$ confirms mutual contiguity $P_{\theta_N(h)}^{(N)} \triangleleft \triangleright P_{\theta_0}^{(N)}$ and establishes that Le Cam's experiment deficiency distance between the horizontally pulled-back empirical experiment $\psi_{N,H}^* \mathcal{E}_N(\mathbb{K})$ and the canonical Gaussian experiment $\mathcal{E}_\infty(\mathcal{B}; \mathbb{K})$ vanishes asymptotically \cite{lecam1986, lecam2000}:
	\begin{equation}
		\lim_{N \to \infty} \Delta\left( \psi_{N,H}^* \mathcal{E}_N(\mathbb{K}), \, \mathcal{E}_\infty(\mathcal{B}; \mathbb{K}) \right) = 0.
	\end{equation}
	This completes the proof of Corollary~\ref{cor:gauge_invariant_lan}.
\end{proof}

\subsubsection{Resolution of Non-Identifiability Pathologies}
\label{subsubsec:pathology_elimination_table}

 Table~\ref{tab:pathology_resolution} details how the Fisher-compatible Ehresmann connection framework systematically resolves the three classic pathologies of over-parameterized statistics.

\begin{table}[h!]
\centering
\small
\begin{tabular}{|p{3.2cm}|p{5.8cm}|p{5.8cm}|}
\hline
\textbf{Pathology Category} & \textbf{Ambient Parameter Space Pathology ($\Theta \subset \mathbb{R}^D$)} & \textbf{Horizontal Ehresmann Resolution ($H_\theta \cong T\mathcal{B}$)} \\ \hline
\textbf{1. Metric Singularity \& Inverse Breakdown} & $\det g^{(1)}(\theta) = 0$ with $\mathrm{rank}(g^{(1)}) = d < D$. Matrix inverse $(g^{(1)})^{-1}$ fails to exist, blowing up Christoffel symbols and Riemann curvature tensor contractions. & $g_0\big|_{\mathcal{B}}$ is strictly positive-definite ($\lambda_{\min}\left(g_0\big|_{\mathcal{B}}\right) \ge \kappa_0 > 0$). Metric matrix inverse is well-defined and non-singular on $H_\theta$. \\ \hline
\textbf{2. Variance Divergence \& CR-Bound Collapse} & Cramér-Rao lower bound along vertical gauge directions diverges to infinity: $N \cdot \mathrm{Var}_{\theta_0}(v^T \hat{\theta}_N) \ge v^T (g^{(1)})^+ v = +\infty$ for $v \in V_{\theta_0}$. & For any physical/identifiable decision rule $\hat{g}(\cdot)$, operational estimation variance is bounded strictly by the horizontal inverse: $\mathrm{Var}(P^H \hat{\theta}_N) \le \frac{1}{N} \left(g_0\big|_{\mathcal{B}}\right)^{-1} + o(N^{-1})$. \\ \hline
\textbf{3. Non-Hausdorff Distance \& Topological Collapse} & Geodesic distance between distinct parameters $\theta_1 \neq \theta_2$ on the same gauge fiber vanishes ($d_g(\theta_1, \theta_2) = 0$), destroying Hausdorff topology. & The horizontal leaf $\mathcal{L}_{\theta_0} \cong \mathcal{B}$ inherits a non-degenerate, Hausdorff Riemannian metric $g\big|_{\mathcal{B}}$, restoring well-posed geometric topologies. \\ \hline
\end{tabular}
\caption{Systematic Elimination of Non-Identifiability Pathologies under Horizontal Splitting.}
\label{tab:pathology_resolution}
\end{table}
\newpage
\subsubsection{Logical Bridge to the Global Second Edge Phase Transition}
\label{subsubsec:logical_bridge_section6}

\begin{remark}[Logical Bridge to Global Geometric Collapse]
\label{rem:logical_bridge_global_collapse}
Theorem~11 and Corollary~\ref{cor:gauge_invariant_lan} provide the indispensable analytical foundation required to execute the complete global proof of the \textbf{Second Edge Theorem} ($IG_N \to \mathcal{M}_\infty \equiv \mathrm{CS}$). 

By filtering out the non-identifiable gauge modes along $V_\theta$ and establishing metric non-degeneracy strictly on $H_\theta$, the Ehresmann connection framework guarantees that the five global stages of geometric phase transition remain mathematically well-posed:
\begin{enumerate}
\item \textbf{Gromov-Hausdorff Leaf Collapse:} The sequence of horizontal leaves $\mathcal{L}_{N, \theta_0}$ converges in the Gromov-Hausdorff metric to the flat Euclidean space $\mathbb{R}^d$.
\item \textbf{Gauge Fibre Contraction:} The vertical gauge orbits $\mathcal{F}_p$ contract smoothly into equivalence classes, eliminating internal parameter drift.
\item \textbf{Accelerated Curvature Annihilation:} The pulled-back horizontal Riemann curvature tensor vanishes at rate $\mathcal{O}_p(N^{-1})$:
\begin{equation}
\sup_{h \in \mathbb{K}} \left\| \left[ \psi_{N,H}^* \mathcal{R}^{(\alpha,N)} \right](h) \right\|_{\mathrm{op}} = \mathcal{O}_p\left(\frac{1}{N}\right) \longrightarrow 0.
\end{equation}
\item \textbf{LAN Uniformization:} Localized log-likelihood ratio processes converge uniformly over compact displacement cages $\mathbb{K}$ to the linear-quadratic Gaussian experiment $\mathcal{E}_\infty(\mathcal{B}; \mathbb{K})$.
\item \textbf{Global Edge Collapse:} The curved finite-sample joint information manifold $IG_N$ collapses globally onto the flat canonical tangent canvas $\mathcal{M}_\infty \equiv \mathrm{CS}$ as $N \to \infty$.
\end{enumerate}
\end{remark}

\section{The Second Edge Theorem ($IG_N \to \mathcal{M}_\infty \equiv \text{CS}$): {\small Global Asymptotic Phase Transition, Topological Collapse, and Operational Unification}}
\label{sec:module7_second_edge_theorem}

\subsection{Architectural Challenges and the Necessity of Advanced Geometric Machinery}
\label{subsec:mod7_challenges_machinery}

To establish the Second Edge Theorem—the asymptotic phase transition and global geometric collapse of the sequence of $N$-sample joint information-geometric manifolds $IG_N = (\Theta, G^{(N)}, \nabla^{(\alpha,N)})$ onto the canonical flat tangent canvas of Conventional Statistics $\mathcal{M}_\infty \equiv (T_{\theta_0}IG_1, g_0, \nabla^{(0)}) \equiv \text{CS}$ as $N \to \infty$—a formidable array of mathematical instruments is strictly required. Why do we need such an aggressive differential-topological arsenal, encompassing Ehresmann connections, Gromov-Hausdorff limits, and holonomy group dissolution? Because parametric statistics in finite-sample regimes ($N < \infty$) is not a static Euclidean canvas. Instead, as established in Section~\ref{sec:module1} and Section~\ref{sec:module2}, finite-sample statistical parametric families are intrinsically curved, dynamically shifting, and often structurally ambiguous Riemannian-affine manifolds \cite{amari1985, rao1945}.

In finite samples, the information manifold sequence $IG_N = (\Theta, G^{(N)}, \nabla^{(\alpha,N)})$, defined in Definition~\ref{def:ambient_spaces}, exhibits non-zero Amari-Riemann curvature $\mathcal{R}^{(\alpha,N)}$ (Theorem~\ref{thm:curvature_annihilation}), non-Euclidean affine transport friction $\Gamma^{(\alpha,N)}$ (Theorem~\ref{thm:connection_dissolution}), and severe metric degeneracies $\det G^{(N)} = 0$ whenever over-parameterization induces non-identifiable gauge symmetries (Section~\ref{sec:ehresmann_connections}, Lemma~\ref{lem:ambient_breakdown}). Proving the Second Edge Theorem requires showing how these non-Euclidean pathologies systematically collapse as $N \to \infty$. Below is a rigorous mathematical breakdown of why each advanced geometric instrument is indispensable.

\subsubsection*{1. Resolving the Moving Target Crisis: The Localized Microscope Map}
\label{subsubsec:mod7_challenge_moving_target}

Under $i.i.d.$ sampling from a dominated family $\mathcal{P}$, direct asymptotic evaluation of $IG_N = (\Theta, G^{(N)}, \nabla^{(\alpha,N)})$ encounters a severe dual-faceted analytical barrier known as the \textbf{Moving Target Crisis} (Section~\ref{subsec:mod2_moving_target}):
\begin{enumerate}[label=(\roman*)]
	\item \textbf{Domain Collapse (Microscopic Signal Shrinkage):} By the Law of Large Numbers and the Central Limit Theorem, active sample probability mass concentrates around the background parameter $\theta_0 \in \text{Int}(\Theta)$ into a rapidly contracting coordinate neighborhood $\mathcal{U}_N(\theta_0) \subset \Theta$ whose geometric diameter shrinks at rate $\mathcal{O}_p(N^{-1/2})$. As $N \to \infty$, the active parameter domain contracts topologically to a single singular point $\{\theta_0\}$.
	\item \textbf{Metric Divergence (Macro-Capacity Explosion):} By Lemma~\ref{lem:additivity_fisher_metric}, the $N$-sample joint Fisher Information Metric tensor field satisfies $G^{(N)}(\theta) = N \cdot g^{(1)}(\theta)$. Consequently, every component of the Fisher metric tensor diverges to infinity across the domain at linear rate $\mathcal{O}(N)$:
	\begin{equation}
		\lim_{N \to \infty} G_{ab}^{(N)}(\theta) = \infty, \quad \forall \theta \in \mathcal{U}_N(\theta_0).
		\label{eq:sec7_metric_divergence}
	\end{equation}
\end{enumerate}

Attempting to take a standard pointwise limit $\lim_{N \to \infty} G^{(N)}(\theta)$ directly on the fixed parameter domain $\Theta$ fails catastrophically because the subject blurs out and contracts to a point while the metric capacity explodes.

To decouple the stationary observer tracking canvas $\mathcal{M}_\infty \equiv (\mathbb{R}^d, g_0)$ from the dynamic space $IG_N$, we construct the localized dilation microscope map $\psi_N: \mathbb{K} \to \Theta$ (Definition~\ref{def:microscope_map}):
\begin{equation}
	\psi_N(h) \equiv \theta_0 + \frac{h}{\sqrt{N}} \in \Theta, \quad \forall h \in \mathbb{K} \subset \mathbb{R}^d.
	\label{eq:sec7_microscope_map_def}
\end{equation}
The multivariable spatial Jacobian matrix scaffold $J(\psi_N) \in \mathbb{R}^{d \times d}$ acts as an isotropic linear scaling transformation (Lemma~\ref{lem:jacobian_scaffold}):
\begin{equation}
	J_i^a(h) \equiv \frac{\partial \psi_N^a(h)}{\partial h^i} = \frac{1}{\sqrt{N}} \delta_i^a.
	\label{eq:sec7_jacobian_scaffold}
\end{equation}

By Theorem~\ref{thm:universal_scaling_law}, any $r$-covariant $i.i.d.$ additive tensor field obeys the Universal Tensor Valence Scaling Law $N^{1 - r/2}$. Setting $r=2$ for the Fisher metric tensor $G^{(N)}$, the capacity expansion factor $N^1$ cancels identically against the spatial cotangent contraction factor $(\frac{1}{\sqrt{N}})^2 = N^{-1}$, yielding exact scale factor stabilization ($N^{1 - 2/2} = N^0 = 1$):
\begin{equation}
	\widetilde{G}^{(N)}(h) \equiv \left[ \psi_N^* G^{(N)} \right](h) = J(\psi_N)^T G^{(N)}(\psi_N(h)) J(\psi_N) = g^{(1)}\left(\theta_0 + \frac{h}{\sqrt{N}}\right).
	\label{eq:sec7_metric_stabilization_cancellation}
\end{equation}
By Lemma~\ref{lem:metric_freezing_proof} and Theorem~\ref{thm:metric_stabilization}, $\widetilde{G}^{(N)}(h)$ converges uniformly over compact coordinate cages $\mathbb{K}$ to the constant frozen metric $g_0 \equiv g^{(1)}(\theta_0)$ at rate $\mathcal{O}(N^{-1/2})$. The intrinsic Riemannian geodesic distance under $\psi_N^* G^{(N)}$ satisfies:
\begin{equation}
	d_{\psi_N^* G^{(N)}}(\mathbf{0}, h) \equiv d_{G^{(N)}}\left(\theta_0, \theta_0 + \frac{h}{\sqrt{N}}\right) = \sqrt{h^T g_0 h} + o(1) = \mathcal{O}(1),
	\label{eq:sec7_snr_constancy}
\end{equation}
maintaining a constant, non-degenerate Signal-to-Noise Ratio ($\text{SNR}_N(h) = \mathcal{O}(1)$) across all $N$ (Proposition~\ref{prop:resolution_mechanics}) \cite{lecam1986, vandervaart1998}.

\subsubsection*{2. Taming Non-Identifiability: The Fisher-Compatible Ehresmann Connection}
\label{subsubsec:mod7_challenge_ehresmann}

In modern over-parameterized statistical models, deep architectures, and latent variable families, the ambient parameter space $\Theta \subset \mathbb{R}^D$ has dimension $D \gg d = \dim(\mathcal{B})$, where $\mathcal{B} \equiv \Theta/\sim$ denotes the quotient manifold of observationally identifiable probability distributions (Definition \ref{def:statistical_fiber_bundle} in Section 5). Distinct parameter vectors $\theta_1 \neq \theta_2$ can map to the exact same probability distribution $P_{\theta_1} = P_{\theta_2} \in \mathcal{B}$.

Consider an intuitive analogy: the authentication of antique imperial porcelain. Two distinct vases may exhibit identical macroscopic surface features (the observable probability distribution $P_\theta \in \mathcal{B}$). However, the underlying physical kiln temperatures, cooling durations, and microscopic atmospheric fluctuations represent an unobservable, high-dimensional firing history (the vertical gauge fiber $\mathcal{F}_p = \pi^{-1}(p)$). If we attempt to measure statistical distance directly in the ambient space $\mathbb{R}^D$ without filtering out these internal firing variations, the score function along fiber directions vanishes identically (Lemma  \ref{lem:vertical_score_vanishing}):
\begin{equation}
	v\left(\log p(X; \theta_0)\right) \equiv \sum_{i=1}^D v^i \frac{\partial \log p(X; \theta_0)}{\partial \theta^i} = 0 \quad P_{\theta_0}\text{-a.e.}, \quad \forall v \in V_{\theta_0} \equiv \ker(d\pi_{\theta_0}).
	\label{eq:sec7_score_vanishing}
\end{equation}
As proved in Lemma~\ref{lem:ambient_breakdown}, this causes the ambient Fisher Information Metric $g^{(1)}(\theta_0)$ to become rank-deficient ($\det g^{(1)}(\theta_0) = 0$). Consequently, the operational estimation variance along unidentifiable gauge directions diverges to infinity:
\begin{equation}
	\lim_{N \to \infty} N \cdot \mathrm{Var}_{\theta_0}\left( v^T \hat{\theta}_N \right) = +\infty,
	\label{eq:sec7_variance_explosion}
\end{equation}
while the ambient geometric Fisher length $\|v\|_{g^{(1)}} = 0$, destroying the operational-geometric equivalence ratio of Section~\ref{sec:module1}.

The \textbf{Fisher-Compatible Ehresmann Connection} $(\Theta, \mathcal{B}, \pi, \mathcal{F})$ provides the exact differential-geometric solution (Definition \ref{def:ehresmann_connection} in Section 5) \cite{frankel2011, lee2013}. It constructs a smooth, canonically invariant splitting of the ambient tangent bundle:
\begin{equation}
	T_\theta \Theta = H_\theta \oplus V_\theta, \quad \text{where } V_\theta \equiv \ker(d\pi_\theta) \text{ and } H_\theta \equiv V_\theta^{\perp_g} = \{x \in T_\theta \Theta : g_\theta^{(1)}(x, v) = 0, \, \forall v \in V_\theta\}.
	\label{eq:sec7_tangent_splitting}
\end{equation}
By applying the horizontal projection operator $P_\theta^H \equiv \mathrm{id}_{T_\theta \Theta} - \omega_\theta$ (Definition  \ref{def:connection_1form_projection} in Section 5), the restricted differential map $d\pi_\theta\big|_{H_\theta}: H_\theta \to T_{\pi(\theta)} \mathcal{B}$ becomes a linear vector space isomorphism (Lemma \ref{lem:horizontal_isomorphism} in Section 5). Under 
 	\ref{thm:horizontal_strict_positivity}  (Section 5), the restricted Fisher metric tensor $g_\theta^{(1)}\big|_{H_\theta \times H_\theta}$ is strictly positive-definite and non-singular across the horizontal sub-bundle, satisfying:
\begin{equation}
	\lambda_{\min}\left( g_\theta^{(1)}\big|_{H_\theta \times H_\theta} \right) \ge \kappa_0 > 0, \quad \forall \theta \in \mathcal{U}(\theta_0).
	\label{eq:sec7_strict_horizontal_positivity}
\end{equation}
Filtering out the unidentifiable vertical gauge fluctuations restores metric non-degeneracy, allowing estimation risk bounds and geometric lengths to remain well-posed \cite{stoica2001}.

\subsubsection*{3. Eradicating Geometric Friction: Connection Dissolution and Accelerated Curvature Annihilation}
\label{subsubsec:mod7_challenge_friction}

Consider another physical intuition: maintaining a rigorous long-distance cycling routine. If you cycle a closed loop over a highly curved, mountainous terrain, the tilt and spatial orientation of your bicycle will shift by the time you return to the starting point. In differential geometry, this path-dependent orientation shift along a closed loop is quantified by the \textbf{parallel transport holonomy group} $\mathrm{Hol}(\nabla)$ \cite{frankel2011, lee2013}.

Finite-sample statistical parametric families suffer from this exact same "terrain friction" via non-zero Amari Christoffel connection symbols $\Gamma_{ijk}^{(\alpha,N)}$ and non-zero Amari-Riemann curvature $\mathcal{R}_{ijmn}^{(\alpha,N)}$ \cite{amari1985, efron1975}. As established in Section~\ref{subsubsec:cs_stat_definition} (Equations~\eqref{eq:test_diff_1}--\eqref{eq:test_diff_2}), this geometric friction causes the three classical hypothesis test statistics—Rao's Score Test ($W_S$), Wilks' Likelihood Ratio Test ($W_{LR}$), and Wald's Test ($W_W$)—to diverge in finite samples:
\begin{align}
	W_{LR}(h) - W_S(h) &= \frac{1}{3\sqrt{N}} \sum_{i,j,k=1}^d \Gamma_{ijk}^{(1)}(\theta_0) h^i h^j h^k + o_p\left(N^{-1/2}\right), \label{eq:sec7_test_diff_1} \\
	W_W(h) - W_{LR}(h) &= \frac{1}{3\sqrt{N}} \sum_{i,j,k=1}^d \Gamma_{ijk}^{(-1)}(\theta_0) h^i h^j h^k + o_p\left(N^{-1/2}\right). \label{eq:sec7_test_diff_2}
\end{align}

Proving the Second Edge Theorem requires demonstrating that as $N \to \infty$, this non-Euclidean terrain friction is completely eradicated:
\begin{enumerate}[label=(\roman*)]
	\item \textbf{Amari Connection Dissolution ($r=3$, Scaling $N^{-1/2}$):} By Theorem~\ref{thm:connection_dissolution} and Lemma \ref{lem:connection_dissolution} (Section \ref{sec:equivalence_proof}), the pulled-back connection symbols scale as $N^{1 - 3/2} = N^{-1/2}$:
	\begin{equation}
		\sup_{h \in \mathbb{K}} \left| \widetilde{\Gamma}_{ijk}^{(\alpha, N)}(h) \right| = \frac{1}{\sqrt{N}} \sup_{\theta \in \mathcal{U}(\theta_0)} \left| \Gamma_{ijk}^{(\alpha, 1)}(\theta) \right| = \mathcal{O}\left(N^{-1/2}\right) \longrightarrow 0 \equiv \Gamma_{ijk}^{(0)}.
		\label{eq:sec7_connection_dissolution_proof}
	\end{equation}
	
	\item \textbf{Accelerated Quadratic Curvature Annihilation ($r=4$, Scaling $N^{-1}$):} By Theorem~\ref{thm:curvature_annihilation} and Lemma \ref{lem:accelerated_curvature_annihilation} (Section \ref{sec:equivalence_proof}), the pulled-back Riemann curvature tensor scales as $N^{1 - 4/2} = N^{-1}$:
	\begin{equation}
		\sup_{h \in \mathbb{K}} \left\| \widetilde{\mathcal{R}}_{ijmn}^{(\alpha, N)}(h) \right\|_\infty = \frac{1}{N} \sup_{\theta \in \mathcal{U}(\theta_0)} \left\| \mathcal{R}_{ijmn}^{(\alpha, 1)}(\theta) \right\|_\infty = \mathcal{O}_p\left(N^{-1}\right) \longrightarrow 0 \equiv \mathcal{R}_{ijmn}^{(0)}.
		\label{eq:sec7_curvature_annihilation_proof}
	\end{equation}
\end{enumerate}

Because Riemann curvature undergoes accelerated quadratic decay ($\mathcal{O}_p(N^{-1})$)—decaying twice as fast as connection dissolution ($\mathcal{O}(N^{-1/2})$)—the holonomy group collapses to the trivial identity group ($\mathrm{Hol}(\nabla^{(\alpha,N)}) \to \{\mathrm{Id}\}$). Parallel transport becomes path-independent, flattening the curved statistical manifold into a perfectly smooth, zero-friction Euclidean canvas and guaranteeing the unconditional asymptotic invariance of the test trinity ($W_S = W_{LR} = W_W + o_p(1)$) \cite{kass1997}.

\subsubsection*{4. Bridging Two Worlds: Geometric-Operational Unification}
\label{subsubsec:mod7_challenge_unification}

Finally, there exist two historically divergent paradigms for evaluating the convergence $IG_N \to \mathcal{M}_\infty$:
\begin{itemize}
	\item \textbf{The Differential Geometer's View (Way 1, Geometric Priority):} Focuses on intrinsic spatial flattening, defining convergence via the Cheeger-Gromov $C^\infty$-manifold topology ($\psi_N^* IG_N \xrightarrow{C^\infty} \mathcal{M}_\infty$, Axiom~\ref{ax:geom_priority}, Proposition~\ref{prop:pillar_1_completeness}) \cite{cheeger1970, gromov2007}.
	\item \textbf{The Operational Statistician's View (Way 2, Statistical Operationalism):} Focuses strictly on operational decision performance, defining convergence via the vanishing of Le Cam's experiment deficiency distance ($\lim_{N \to \infty} \Delta(\mathcal{E}_N(\mathbb{K}), \mathcal{E}_\infty(\mathbb{K})) = 0$, Axiom~\ref{ax:stat_oper}, Definition~\ref{def:localized_experiments}) \cite{lecam1960, lecam1986}.
\end{itemize}

Complex differential topology and measure theory are required to construct the \textbf{Double Completeness Architecture} (Section~\ref{subsec:mod3_double_completeness}, Remark \ref{rem:double_completeness_architecture}), proving that measuring structural manifold geometry and evaluating operational decision risk are mathematically identical actions.

By Lemma~\ref{lem:pillar_2_taylor_geometry} and Theorem \ref{thm:generalized_horizontal_log_likelihood} (Section \ref{sec:module5_unification}), the localized Radon-Nikodym log-likelihood ratio process $\Lambda_N(h)$ admits an exact algebraic functional representation built exclusively from the four pulled-back geometric tensor fields ($r \in \{1, 2, 3, 4\}$):
\begin{equation}
	\Lambda_N(h) = \sum_{a=1}^d h^a \left[ \psi_N^*(dL^{(N)}) \right]_a(\mathbf{0}) - \frac{1}{2} \sum_{i,j=1}^d h^i h^j \left[ \psi_N^* G^{(N)} \right]_{ij}(\mathbf{0}) + \mathcal{R}_N(h),
	\label{eq:sec7_taylor_geometry_identity}
\end{equation}
where the non-linear stochastic remainder process $\mathcal{R}_N(h)$ is strictly bounded by connection dissolution $\widetilde{\Gamma}^{(\alpha,N)}$ and curvature annihilation $\widetilde{\mathcal{R}}^{(\alpha,N)}$:
\begin{equation}
	\sup_{h \in \mathbb{K}} |\mathcal{R}_N(h)| \le \frac{\|h\|_2^3}{6} \sup_{h \in \mathbb{K}} \left| \widetilde{\Gamma}_{ijk}^{(\alpha,N)}(h) \right| + \mathcal{O}_p\left( \sup_{h \in \mathbb{K}} \left\| \widetilde{\mathcal{R}}_{ijmn}^{(\alpha,N)}(h) \right\|_\infty \right) = \mathcal{O}_p\left(N^{-1/2}\right) \xrightarrow{P_{\theta_0}} 0.
	\label{eq:sec7_remainder_bound}
\end{equation}

As proved in Theorem~\ref{thm:geom_oper_equivalence} (Section \ref{sec:equivalence_proof}) and Theorem \ref{thm:geometric_operational_unification_horizontal} (Section \ref{sec:module5_unification}), this algebraic identity locks differential geometry and decision theory together, establishing the full Geometric-Operational Equivalence Theorem:
\begin{equation}
	IG_N \xrightarrow{\text{geom}} \mathcal{M}_\infty \quad \Longleftrightarrow \quad IG_N \xrightarrow{\text{oper}} \mathcal{M}_\infty \equiv \text{CS}.
	\label{eq:sec7_unification_equivalence}
\end{equation}
Thus, structural manifold flattening and decision risk condensation are non-separable dual manifestations of the exact same asymptotic phase transition.

\subsection{Architectural Blueprint and Strategic Roadmap of the Global Second Edge Phase Transition}
\label{subsec:roadmap_second_edge}

In Section~\ref{sec:module1}, we established the Epistemological Equivalence Theorem ($\text{CS}_{\text{stat}} \iff \text{CS}_{\text{geom}} \iff \mathcal{M}_\infty$, Theorem~\ref{thm:theorem2_full}), proving that Conventional Statistics ($\text{CS}$) is mathematically identical to the canonical flat tangent canvas $\mathcal{M}_\infty \equiv (T_{\theta_0} IG_1, g_0, \nabla^{(0)})$. In Section~\ref{sec:module2}, we constructed the localized microscope map $\psi_N(h) = \theta_0 + \frac{h}{\sqrt{N}}$ (Definition~\ref{def:microscope_map}), derived the multivariable Jacobian scaffold $J(\psi_N)$ (Lemma~\ref{lem:jacobian_scaffold}), and proved the Universal Tensor Valence Scaling Law $N^{1-r/2}$ (Theorem~\ref{thm:universal_scaling_law}). In Section~\ref{sec:module3} and Section~\ref{sec:equivalence_proof}, we axiomatized the dual phase transition paradigms (Axiom~\ref{ax:geom_priority} and Axiom~\ref{ax:stat_oper}) and proved the Double Completeness Architecture alongside the Geometric-Operational Equivalence Theorem (Theorem~\ref{thm:geom_oper_equivalence}). Finally, in Section~\ref{sec:ehresmann_connections} and Section~\ref{sec:module5_unification}, we constructed the Fisher-Compatible Ehresmann Connection framework ($T_\theta \Theta = H_\theta \oplus V_\theta$, Definition~\ref{def:ehresmann_connection}) to filter out non-identifiable gauge modes ($V_\theta = \ker(d\pi_\theta)$) and proved the Geometric-Operational Unification Theorem on the non-singular horizontal distribution $H_\theta$ (Theorem~\ref{thm:geometric_operational_unification_horizontal}, Corollary~\ref{cor:gauge_invariant_lan}).

Building upon these foundational results, this section executes the global proof of the Second Edge Theorem: {\it  the asymptotic phase transition, topological collapse, and operational unification of the sequence of $N$-sample joint information-geometric manifolds $IG_N = (\Theta, G^{(N)}, \nabla^{(\alpha,N)})$ onto the flat canonical tangent canvas of Conventional Statistics $\mathcal{M}_\infty \equiv \text{CS}$ as $N \to \infty$.}

To maintain mathematical rigor, the global transition $IG_N \to \mathcal{M}_\infty$ is established through a five-stage strategic roadmap:
\begin{description}[leftmargin=*]
	\item[\textbf{Stage I: Gromov-Hausdorff Leaf Collapse (Section~\ref{subsec:gromov_hausdorff_collapse}):}] We prove that the sequence of Riemannian horizontal leaf manifolds $(\mathcal{L}_{N,\theta_0}, \widetilde{G}_H^{(N)})$ converges in the Gromov-Hausdorff metric topology to the flat Euclidean space $(\mathcal{M}_\infty, d_{g_0})$ at rate $\mathcal{O}(N^{-1/2})$.
	\item[\textbf{Stage II: Accelerated Holonomy Annihilation (Section~\ref{subsec:holonomy_annihilation}):}] We prove that the restricted holonomy group $\mathrm{Hol}(\widetilde{\nabla}^{(\alpha,N)})$ collapses to the trivial identity group $\{\mathbf{I}_{d \times d}\}$ at the accelerated quadratic rate $\mathcal{O}_p(N^{-1})$, eradicating all non-Euclidean parallel transport friction.
	\item[\textbf{Stage III: Global LAN Uniformization (Section~\ref{subsec:global_lan_uniformization}):}] We prove that the localized log-likelihood ratio process $\Lambda_N(h)$ converges uniformly in probability over compact coordinate cages to the linear-quadratic Gaussian experiment under Le Cam deficiency distance $\Delta(\mathcal{E}_N, \mathcal{E}_\infty) \to 0$.
	\item[\textbf{Stage IV: The Master Second Edge Theorem (Section~\ref{subsec:master_second_edge_theorem}):}] We synthesize Stages I--III to state and prove the primary Master Second Edge Theorem, formalizing the complete asymptotic collapse $IG_N \to \mathcal{M}_\infty \equiv \text{CS}$.
	\item[\textbf{Stage V: Epistemological Synthesis and Gauge Contraction (Section~\ref{subsec:epistemological_synthesis_master}):}] We demonstrate how vertical gauge fibers $\mathcal{F}_p = \pi^{-1}(p)$ contract topologically into singular points, eliminating over-parameterization pathologies and unifying differential geometry with asymptotic decision theory.
\end{description}

\subsection{Gromov-Hausdorff Leaf Collapse and Metric-Topological Shrinkage}
\label{subsec:gromov_hausdorff_collapse}

We begin by formalizing the metric-topological convergence of the horizontal leaf manifolds $(\mathcal{L}_{N,\theta_0}, \widetilde{G}_H^{(N)})$ to the canonical flat tangent space $\mathcal{M}_\infty \equiv (T_{\theta_0} IG_1, g_0|_{\mathcal{B}}, \nabla^{(0)})$.

\begin{definition}[Gromov-Hausdorff Distance for Information Leaf Spaces]
	\label{def:gromov_hausdorff_distance}
	Let $(\mathcal{X}, d_{\mathcal{X}})$ and $(\mathcal{Y}, d_{\mathcal{Y}})$ be two compact metric spaces. The Gromov-Hausdorff distance $d_{GH}\left((\mathcal{X}, d_{\mathcal{X}}), (\mathcal{Y}, d_{\mathcal{Y}})\right)$ is defined as the infimum of the Hausdorff distance $d_H^{\mathcal{Z}}(i(\mathcal{X}), j(\mathcal{Y}))$ over all isometric embeddings $i: \mathcal{X} \to \mathcal{Z}$ and $j: \mathcal{Y} \to \mathcal{Z}$ into a common metric space $(\mathcal{Z}, d_{\mathcal{Z}})$:
	\begin{equation}
		d_{GH}\left((\mathcal{X}, d_{\mathcal{X}}), (\mathcal{Y}, d_{\mathcal{Y}})\right) \equiv \inf_{\mathcal{Z}, i, j} d_H^{\mathcal{Z}}\left(i(\mathcal{X}), j(\mathcal{Y})\right),
	\end{equation}
	where $d_H^{\mathcal{Z}}(A, B) \equiv \max\left\{ \sup_{a \in A} \inf_{b \in B} d_{\mathcal{Z}}(a,b), \, \sup_{b \in B} \inf_{a \in A} d_{\mathcal{Z}}(a,b) \right\}$.
\end{definition}

\begin{lemma}[Flat Euclidean-Fisher Distance Characterization]
	\label{lem:horizontal_pulled_back_metric_algebraic}
	Let $\mathbb{K} \subset H_{\theta_0} \cong \mathbb{R}^d$ be a compact coordinate domain, and let $g_0|_{\mathcal{B}}$ denote the restriction of the Fisher information metric $g_0$ to the horizontal subspace $\mathcal{B} \subset T_{\theta_0}\mathcal{M}$ \cite{amari2000, kullback1951}, evaluated as a constant positive-definite matrix field across $\mathbb{K}$. Then, the intrinsic Riemannian geodesic distance $d_{g_0}(h_1, h_2)$ between any points $h_1, h_2 \in \mathbb{K}$ on $\mathcal{M}_\infty$ is given by the flat Euclidean-Fisher distance:
	\begin{equation}
		\label{eq:flat_fisher_dist}
		d_{g_0}(h_1, h_2) \equiv \sqrt{(h_1 - h_2)^T (g_0|_{\mathcal{B}}) (h_1 - h_2)}.
	\end{equation}
\end{lemma}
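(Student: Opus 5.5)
The plan is to identify the geodesic distance of a constant-coefficient Riemannian metric on a linear space, taking care over what "intrinsic distance on $\mathbb{K}$" means when $\mathbb{K}$ need not be convex. I interpret $d_{g_0}$ as the Riemannian distance on the ambient flat canvas $\mathcal{M}_\infty \cong H_{\theta_0} \cong \mathbb{R}^d$, equipped with the constant tensor $A \equiv g_0|_{\mathcal{B}}$, and then restricted to pairs of points of $\mathbb{K}$. Strict positive-definiteness of $A$ comes from Theorem~\ref{thm:horizontal_strict_positivity}: $\lambda_{\min}(A) \ge \kappa_0 > 0$, so $A$ is a genuine inner product. By definition, $d_{g_0}(h_1,h_2) = \inf_\gamma L(\gamma)$, where
\[
L(\gamma) = \int_0^1 \sqrt{\dot\gamma(t)^T A\, \dot\gamma(t)}\,dt ,
\]
and the infimum runs over piecewise $C^1$ curves $\gamma:[0,1]\to\mathbb{R}^d$ with $\gamma(0)=h_1$ and $\gamma(1)=h_2$.

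\textbf{Step 1 (reduction to the Euclidean case).} Take the symmetric square root $A^{1/2}$, which is invertible because $A \succ 0$. The linear map $\Phi(h) = A^{1/2} h$ satisfies $\dot{(\Phi\circ\gamma)}^T\dot{(\Phi\circ\gamma)} = \dot\gamma^T A\dot\gamma$. Hence $\Phi$ is an isometry from $(\mathbb{R}^d, A)$ onto $(\mathbb{R}^d, \mathbf{I})$ that preserves curve lengths, and it is a bijection on curves with the corresponding endpoints. It follows that $d_{g_0}(h_1,h_2) = d_{\mathrm{Euc}}(A^{1/2}h_1, A^{1/2}h_2)$.

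\textbf{Step 2 (upper bound).} The straight segment $\gamma(t) = h_1 + t(h_2-h_1)$ has constant speed, so
\[
L(\gamma) = \sqrt{(h_1-h_2)^T A (h_1-h_2)} .
\]

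\textbf{Step 3 (lower bound).} In Euclidean coordinates, for any admissible $\gamma$ the fundamental theorem of calculus gives $\Phi(h_2) - \Phi(h_1) = \int_0^1 \dot{(\Phi\circ\gamma)}\,dt$. Minkowski's integral inequality (the triangle inequality for integrals) then yields
\[
\|\Phi(h_2)-\Phi(h_1)\|_2 \le \int_0^1 \|\dot{(\Phi\circ\gamma)}\|_2\,dt = L(\gamma).
\]
Taking the infimum over $\gamma$ and combining with Step 2 gives Equation~\eqref{eq:flat_fisher_dist}. Alternatively, Pillar III makes the geodesic equation $\ddot\gamma = 0$, since all Christoffel symbols vanish; geodesics are therefore straight lines, which recovers the same formula.

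The main obstacle is the domain issue. If distances were measured by curves confined to a non-convex $\mathbb{K}$, the length-space distance could strictly exceed the formula. I will resolve this by stating explicitly that $d_{g_0}$ is the metric of the complete flat canvas $\mathcal{M}_\infty$ restricted to $\mathbb{K}$, which is the structure needed for the later Gromov–Hausdorff comparison. If intrinsic distance on $\mathbb{K}$ itself is intended, I will instead assume $\mathbb{K}$ is convex (for example a closed $g_0$-ball), so that the segment of Step 2 stays inside $\mathbb{K}$ and the argument goes through unchanged.
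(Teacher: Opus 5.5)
Your proof is correct, but it follows a different route from the paper. The paper argues differential-geometrically. Constancy of $g_0|_{\mathcal{B}}$ kills all metric derivatives, so the Christoffel symbols and curvature vanish. The geodesic equation then reduces to $\ddot\gamma = 0$, whose solution with $\gamma(0)=h_1$, $\gamma(1)=h_2$ is the segment $h_1 + t(h_2-h_1)$. The paper declares this segment ``the unique length-minimizing geodesic'' and integrates its constant-speed length.

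You bypass the geodesic equation entirely. The linear isometry $h \mapsto A^{1/2}h$ reduces everything to the Euclidean case. The segment gives the upper bound, and the triangle inequality for vector-valued integrals gives the matching lower bound for every admissible curve.

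What your route buys is an actual proof of global minimality. Solving $\ddot\gamma = 0$ only shows that the segment is a critical point of length. Upgrading that to ``length-minimizing'' needs an extra ingredient the paper leaves implicit: existence of a minimizer (e.g.\ completeness of $\mathbb{R}^d$ via Hopf--Rinow) together with uniqueness for the two-point boundary problem. That ingredient fails when curves are confined to a non-convex $\mathbb{K}$, where minimizers can run along the boundary and need not satisfy the geodesic equation.

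Your reading of $d_{g_0}$ as the distance of the full canvas $\mathcal{M}_\infty$ restricted to $\mathbb{K}$ matches the statement's wording (``on $\mathcal{M}_\infty$''), so your convexity caveat is not needed for this lemma. It does matter in the subsequent distortion lemma. There the paper uses the straight segment as a trial path for a distance defined as an infimum over curves inside $\mathbb{K}$, which silently requires $\mathbb{K}$ to be convex.

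The paper's route, in exchange, ties the formula directly to Pillar III (vanishing $\Gamma$ and $\mathcal{R}$), which is the structural point it wants to make. Your closing remark about the geodesic equation recovers exactly that.
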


\begin{proof}
	The proof proceeds by establishing zero Riemannian curvature \cite{docarmo1992}, deriving the constant-velocity geodesic curves, and explicitly calculating the path length integral.
	
	\paragraph{1. Metric Constancy and Flat Geometry.}
	By definition of the limit manifold metric field, {\it $g_0|_{\mathcal{B}}$ is a constant positive-definite matrix independent of the coordinate position $h \in \mathbb{K}$.} Consequently, all partial derivatives of the metric tensor components vanish identically:
	\begin{equation}
		\label{eq:metric_deriv_zero}
		\frac{\partial (g_0|_{\mathcal{B}})_{ij}}{\partial h^k} = 0, \quad \forall i, j, k \in \{1, \dots, d\}.
	\end{equation}
	It follows directly from equation  \eqref{eq:metric_deriv_zero} that the Christoffel symbols of the second kind \cite{lee2018} vanish everywhere on $\mathbb{K}$:
	\begin{equation}
		\label{eq:christoffel_zero}
		\Gamma^i_{jk} = \frac{1}{2} (g_0|_{\mathcal{B}})^{il} \left( \frac{\partial (g_0|_{\mathcal{B}})_{lj}}{\partial h^k} + \frac{\partial (g_0|_{\mathcal{B}})_{lk}}{\partial h^j} - \frac{\partial (g_0|_{\mathcal{B}})_{jk}}{\partial h^l} \right) = 0.
	\end{equation}
	Because $\Gamma^i_{jk} = 0$, the Riemann curvature tensor vanishes identically ($R^i{}_{\mu\alpha\beta} = 0$), {\it defining a flat Riemannian geometry on $(\mathbb{K}, g_0|_{\mathcal{B}})$.}
	
	\paragraph{2. Straight-Line Geodesics.}
	The geodesic equation for a curve $\gamma(t): [0, 1] \to \mathbb{K}$ parameterised by $t \in [0, 1]$ is given by:
	\begin{equation}
		\label{eq:geodesic_eq}
		\ddot{\gamma}^i(t) + \Gamma^i_{jk}\dot{\gamma}^j(t)\dot{\gamma}^k(t) = 0.
	\end{equation}
	Substituting \eqref{eq:christoffel_zero} into \eqref{eq:geodesic_eq} yields $\ddot{\gamma}(t) = 0$. Integrating twice with boundary conditions $\gamma(0) = h_1$ and $\gamma(1) = h_2$ yields the unique length-minimizing geodesic:
	\begin{equation}
		\label{eq:geodesic_path}
		\gamma(t) = h_1 + t(h_2 - h_1), \quad t \in [0, 1],
	\end{equation}
	{\it with constant tangent velocity vector $\dot{\gamma}(t) = h_2 - h_1$.}
	
	\paragraph{3. Geodesic Distance Integration.}
	The intrinsic Riemannian distance $d_{g_0}(h_1, h_2)$ in Lemma~\ref{lem:horizontal_pulled_back_metric_algebraic} is defined as the arc length of the minimizing geodesic $\gamma(t)$ given in \eqref{eq:geodesic_path} under the inner product induced by $g_0|_{\mathcal{B}}$ \cite{amari2000}:
	\begin{align}
		d_{g_0}(h_1, h_2) &= \int_0^1 \sqrt{\langle \dot{\gamma}(t), \dot{\gamma}(t) \rangle_{g_0|_{\mathcal{B}}}} \, dt \nonumber \\
		&= \int_0^1 \sqrt{\dot{\gamma}(t)^T (g_0|_{\mathcal{B}}) \dot{\gamma}(t)} \, dt \nonumber \\
		&= \int_0^1 \sqrt{(h_2 - h_1)^T (g_0|_{\mathcal{B}}) (h_2 - h_1)} \, dt. \label{eq:distance_integral}
	\end{align}
	Since the integrand in \eqref{eq:distance_integral} is independent of $t$, integration over $t \in [0, 1]$ simplifies directly to:
	\begin{equation}
		d_{g_0}(h_1, h_2) = \sqrt{(h_1 - h_2)^T (g_0|_{\mathcal{B}}) (h_1 - h_2)},
	\end{equation}
	which completes the proof for expression \eqref{eq:flat_fisher_dist}.
\end{proof}

\begin{lemma}[Uniform Geodesic Metric Distortion Bound]
	\label{lem:geodesic_distortion_bound}
	Let $\mathbb{K} \subset H_{\theta_0} \cong \mathbb{R}^d$ be a compact coordinate cage. For each $N \ge 1$, let $d_{\widetilde{G}_H^{(N)}}(h_1, h_2)$ denote the intrinsic Riemannian geodesic distance between $h_1, h_2 \in \mathbb{K}$ under the pulled-back horizontal Fisher metric field $\widetilde{G}_H^{(N)}(h) \equiv [\psi_{N,H}^* G^{(N)}](h)$ (Lemma~\ref{lem:horizontal_pulled_back_metric_algebraic}). Under High-Order Local Regularity (Assumption~\ref{asm:local_regularity}), the metric distortion satisfies:
	\begin{equation}
		\sup_{h_1, h_2 \in \mathbb{K}} \left| d_{\widetilde{G}_H^{(N)}}(h_1, h_2) - d_{g_0}(h_1, h_2) \right| \le \frac{K_{\mathbb{K}}}{\sqrt{N}} = \mathcal{O}\left(N^{-1/2}\right),
		\label{eq:geodesic_distortion_formula}
	\end{equation}
	where $K_{\mathbb{K}} < \infty$ is a constant independent of $N, h_1,$ and $h_2$.
\end{lemma}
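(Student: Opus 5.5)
The plan is to convert the uniform operator-norm metric stabilization of Lemma~\ref{lem:uniform_horizontal_metric_stabilization} into a bi-Lipschitz comparison of length functionals, and then into a uniform additive bound on distances. Write $g_* \equiv g_0|_{\mathcal{B}}$ and $\varepsilon_N \equiv \sup_{h \in \mathbb{K}} \|\widetilde{G}_H^{(N)}(h) - g_*\|_{\mathrm{op}}$. The proof of that lemma already gives $\varepsilon_N \le K'/\sqrt{N}$.

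\textbf{Step 1: uniform multiplicative comparison of the metrics.} By Theorem~\ref{thm:horizontal_strict_positivity}, $\lambda_{\min}(g_*) \ge \kappa_0 > 0$. Hence for every $h \in \mathbb{K}$ and every vector $v$,
\[
\bigl| v^T \widetilde{G}_H^{(N)}(h) v - v^T g_* v \bigr| \le \varepsilon_N \|v\|_2^2 \le \frac{\varepsilon_N}{\kappa_0}\, v^T g_* v .
\]
For $N$ large enough that $\varepsilon_N/\kappa_0 < 1/2$, this gives
\[
(1-\eta_N)\, |v|_{g_*} \le |v|_{\widetilde{G}_H^{(N)}(h)} \le (1+\eta_N)\, |v|_{g_*}, \qquad \eta_N \equiv \frac{\varepsilon_N}{\kappa_0} = \mathcal{O}(N^{-1/2}),
\]
using $|\sqrt{1+x}-1| \le |x|$. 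The finitely many small $N$ are absorbed into the constant $K_{\mathbb{K}}$, because both distances are bounded on the compact set $\mathbb{K}$.

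\textbf{Step 2: from lengths to distances.} Integrate along curves. Every piecewise $C^1$ curve $\gamma$ in $\mathbb{K}$ satisfies
\[
(1-\eta_N)\, L_{g_*}(\gamma) \le L_{\widetilde{G}_H^{(N)}}(\gamma) \le (1+\eta_N)\, L_{g_*}(\gamma).
\]
Take infima over curves joining $h_1$ to $h_2$ and invoke Lemma~\ref{lem:horizontal_pulled_back_metric_algebraic} (flat Euclidean-Fisher distance) for $d_{g_0}$. This yields $|d_{\widetilde{G}_H^{(N)}} - d_{g_0}| \le \eta_N\, d_{g_0}(h_1,h_2) \le \eta_N\, \sqrt{\lambda_{\max}(g_*)}\, \mathrm{diam}(\mathbb{K})$, so $K_{\mathbb{K}} = K' \sqrt{\lambda_{\max}(g_*)}\, \mathrm{diam}(\mathbb{K}) / \kappa_0$ up to the small-$N$ adjustment.

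\textbf{Main obstacle: the domain of the curves.} Intrinsic distances require a convention for which curves are admissible. If $\mathbb{K}$ is not convex, the flat geodesic (a straight segment) may leave $\mathbb{K}$. Likewise, the $\widetilde{G}_H^{(N)}$-minimizers could exit $\mathbb{K}$ if curves were allowed to range over all of $H_{\theta_0}$. I would first handle this by replacing $\mathbb{K}$ with a compact convex enlargement $\widehat{\mathbb{K}}$, for instance its closed convex hull or a closed ball containing it, and proving the estimate of Lemma~\ref{lem:uniform_horizontal_metric_stabilization} on $\widehat{\mathbb{K}}$; that lemma holds on any compact cage. Distances are then taken with curves in $\widehat{\mathbb{K}}$. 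There the flat distance is the straight-line formula, and the upper bound for $d_{\widetilde{G}_H^{(N)}}$ follows by testing on the straight segment.

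For the lower bound, any competing curve that left a slightly larger ball would have $g_*$-length exceeding $d_{g_0}(h_1,h_2)$ by a fixed margin, so after multiplication by $(1-\eta_N)$ it cannot beat the segment once $N$ is large. This localizes the infimum, and the constant stays independent of $N$, $h_1$, and $h_2$.
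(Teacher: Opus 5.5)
Your argument is correct. It differs from the paper's mainly in how the pointwise metric comparison is turned into a distance comparison.

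\textbf{The paper's route.} It works additively. Rationalizing $\sqrt{x}-\sqrt{y}=(x-y)/(\sqrt{x}+\sqrt{y})$ gives $\bigl|\,|v|_{\widetilde{G}_H^{(N)}(h)}-|v|_{g_*}\bigr|\le C\|v\|_2/\sqrt{N}$. This is integrated along two specific curves: the straight segment for the upper bound, and a $\widetilde{G}_H^{(N)}$-length-minimizing curve $\gamma_N^*$ for the lower bound. The lower bound therefore needs a minimizer to exist in $\mathbb{K}$ and a bound on its Euclidean length, which the paper only asserts. The crude bound $L_{\mathrm{Euc}}(\gamma_N^*)\le\sqrt{\lambda_{\max}/\lambda_{\min}}\,\|h_2-h_1\|_2$, with eigenvalues of $\widetilde{G}_H^{(N)}$, would suffice at the cost of a condition-number factor.

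\textbf{Your route.} Your multiplicative bound $(1-\eta_N)|v|_{g_*}\le|v|_{\widetilde{G}_H^{(N)}(h)}\le(1+\eta_N)|v|_{g_*}$ holds along every curve at once. Taking infima over a common class of curves then gives both inequalities, with no minimizer and no control of Euclidean length. The price is a constant scaling like $\sqrt{\lambda_{\max}(g_*)}/\kappa_0$ instead of the $1/\sqrt{\kappa_0}$ the paper claims. This does not affect the $\mathcal{O}(N^{-1/2})$ rate.

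\textbf{Convexity.} You also make explicit an assumption the paper uses silently. Its trial segment lies in $\mathbb{K}$, and $d_{g_0}$ equals the straight-line formula, only when $\mathbb{K}$ is convex. With curves confined to a non-convex $\mathbb{K}$, as in the paper's definition, the comparison with the straight-line $d_{g_0}$ fails. Passing to a convex $\widehat{\mathbb{K}}$ is therefore a genuine repair.

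\textbf{A correction to your optional localization remark.} The factor $(1-\eta_N)$ is only available on the compact region where you proved stabilization. So only the portions of an escaping curve inside that region can be used, and their $g_*$-length is only about twice the gap between the two balls. The enlarged ball must therefore exceed the one containing $\mathbb{K}$ in radius by more than roughly $\tfrac12\sqrt{\lambda_{\max}(g_*)/\kappa_0}\,\mathrm{diam}(\mathbb{K})$, not just slightly. Your main convention, with curves in $\widehat{\mathbb{K}}$, does not need this step.
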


\begin{proof}
	We execute the proof in four sequential measure-theoretic and differential-geometric steps:
	
	\paragraph{Step 1: Path Integral Representation of Geodesic Distance.}
	Let $h_1, h_2 \in \mathbb{K} \subset \mathbb{R}^d$ be arbitrary points. The intrinsic Riemannian geodesic distance $d_{\widetilde{G}_H^{(N)}}(h_1, h_2)$ is defined as the infimum of the length functional $L_{\widetilde{G}_H^{(N)}}(\gamma)$ over all piecewise smooth curves $\gamma: [0, 1] \to \mathbb{K}$ connecting $\gamma(0) = h_1$ to $\gamma(1) = h_2$:
	\begin{equation}
		d_{\widetilde{G}_H^{(N)}}(h_1, h_2) \equiv \inf_{\gamma \in C^1([0,1], \mathbb{K})} \int_0^1 \sqrt{\dot{\gamma}(t)^T \widetilde{G}_H^{(N)}(\gamma(t)) \dot{\gamma}(t)} \, dt.
	\end{equation}
	Using Lemma \ref{lem:horizontal_pulled_back_metric_algebraic}, the flat distance $d_{g_0}(h_1, h_2)$ is achieved along the straight line segment $\gamma_0(t) = h_1 + t(h_2 - h_1)$:
	\begin{equation}
		d_{g_0}(h_1, h_2) = \int_0^1 \sqrt{(h_2 - h_1)^T (g_0|_{\mathcal{B}}) (h_2 - h_1)} \, dt = \sqrt{(h_2 - h_1)^T (g_0|_{\mathcal{B}}) (h_2 - h_1)}.
	\end{equation}
	
	\paragraph{Step 2: Operator Norm Metric Difference Bound.}
	By Lemma~\ref{lem:uniform_horizontal_metric_stabilization} (Uniform Horizontal Metric Stabilization), the pulled-back metric tensor field satisfies uniform operator norm stabilization over $\mathbb{K}$:
	\begin{equation}
		\sup_{h \in \mathbb{K}} \left\| \widetilde{G}_H^{(N)}(h) - g_0|_{\mathcal{B}} \right\|_{op} \le \frac{K_{\mathbb{K}}^{(1)}}{\sqrt{N}},
	\end{equation}
	for a finite constant $K_{\mathbb{K}}^{(1)} < \infty$. For any tangent vector $v \in T_h \mathbb{K} \cong \mathbb{R}^d$, by Rayleigh-Ritz bounds:
	\begin{equation}
		\left| v^T \widetilde{G}_H^{(N)}(h) v - v^T (g_0|_{\mathcal{B}}) v \right| \le \left\| \widetilde{G}_H^{(N)}(h) - g_0|_{\mathcal{B}} \right\|_{op} \|v\|_2^2 \le \frac{K_{\mathbb{K}}^{(1)}}{\sqrt{N}} \|v\|_2^2.
	\end{equation}

\paragraph{Step 3: Linear Matrix Inequality and Integrand Bounds.}
Fix $h \in \mathbb{K}$ and a non-zero vector $v \in T_h \mathbb{K} \cong \mathbb{R}^d$. To rigorously bound the point-wise integrand difference between the perturbed metric length $\sqrt{v^T \widetilde{G}_H^{(N)}(h) v}$ and the unperturbed base metric length $\sqrt{v^T (g_0|_{\mathcal{B}}) v}$, we define the positive quadratic forms:
\begin{equation}
	x \coloneqq v^T \widetilde{G}_H^{(N)}(h) v, \quad y \coloneqq v^T (g_0|_{\mathcal{B}}) v.
\end{equation}
We execute step-by-step mathematical deduction through four sequential sub-steps:

\begin{enumerate}
	\item \textbf{Lower Bound on the Unperturbed Quadratic Form $y$:} \\
	By Theorem~\ref{thm:horizontal_strict_positivity}, $g_0|_{\mathcal{B}}$ is strictly positive-definite on $H_{\theta_0}$, with minimal eigenvalue bounded below by $\lambda_{\min}(g_0|_{\mathcal{B}}) \ge \kappa_0 > 0$. By the Rayleigh-Ritz variational principle:
	\begin{equation}
		y = v^T (g_0|_{\mathcal{B}}) v \ge \lambda_{\min}(g_0|_{\mathcal{B}}) \|v\|_2^2 \ge \kappa_0 \|v\|_2^2 > 0 \implies \sqrt{y} = \sqrt{v^T (g_0|_{\mathcal{B}}) v} \ge \sqrt{\kappa_0} \|v\|_2.
	\end{equation}
	
	\item \textbf{Operator Norm Perturbation Bound on $|x - y|$:} \\
	Applying the uniform matrix operator norm bound established in Step~2, $\|\widetilde{G}_H^{(N)}(h) - g_0|_{\mathcal{B}}\|_{\mathrm{op}} \le \frac{K_{\mathbb{K}}^{(1)}}{\sqrt{N}}$, the absolute difference between the quadratic forms satisfies:
	\begin{equation}
		|x - y| = \left| v^T \left( \widetilde{G}_H^{(N)}(h) - g_0|_{\mathcal{B}} \right) v \right| \le \left\| \widetilde{G}_H^{(N)}(h) - g_0|_{\mathcal{B}} \right\|_{\mathrm{op}} \|v\|_2^2 \le \frac{K_{\mathbb{K}}^{(1)}}{\sqrt{N}} \|v\|_2^2.
	\end{equation}
	
	\item \textbf{Uniform Denominator Lower Bound for $\sqrt{x} + \sqrt{y}$:} \\
	Under uniform horizontal metric stabilization, the perturbed metric satisfies $\lambda_{\min}(\widetilde{G}_H^{(N)}(h)) \ge \kappa_0 - \mathcal{O}(N^{-1}{}^/{}^2)$. For sufficiently large $N$, $\sqrt{x} \ge \sqrt{\kappa_0} \|v\|_2 + o(1)$. Summing the square roots of the quadratic forms yields the lower bound:
	\begin{equation}
		\sqrt{x} + \sqrt{y} = \sqrt{v^T \widetilde{G}_H^{(N)}(h) v} + \sqrt{v^T (g_0|_{\mathcal{B}}) v} \ge 2 \sqrt{\kappa_0} \|v\|_2.
	\end{equation}
	
	\item \textbf{Algebraic Rationalization and Final Integrand Evaluation:} \\
	Applying the algebraic difference-of-squares identity $|\sqrt{x} - \sqrt{y}| = \frac{|x - y|}{\sqrt{x} + \sqrt{y}}$ for positive $x, y$:
	\begin{align}
		\left| \sqrt{v^T \widetilde{G}_H^{(N)}(h) v} - \sqrt{v^T (g_0|_{\mathcal{B}}) v} \right| 
		&= \frac{\left| v^T \widetilde{G}_H^{(N)}(h) v - v^T (g_0|_{\mathcal{B}}) v \right|}{\sqrt{v^T \widetilde{G}_H^{(N)}(h) v} + \sqrt{v^T (g_0|_{\mathcal{B}}) v}} \nonumber \\
		&\le \frac{\frac{K_{\mathbb{K}}^{(1)}}{\sqrt{N}} \|v\|_2^2}{2 \sqrt{\kappa_0} \|v\|_2} \nonumber \\
		&= \left( \frac{K_{\mathbb{K}}^{(1)}}{2\sqrt{\kappa_0}} \right) \frac{\|v\|_2}{\sqrt{N}}.
	\end{align}
\end{enumerate}
	
\paragraph{Step 4: Integration along Straight and Optimal Geodesics.}
To derive a rigorous two-sided bound on the geodesic distortion $|d_{\widetilde{G}_H^{(N)}}(h_1, h_2) - d_{g_0}(h_1, h_2)|$, we analyze the length functional along two distinct curves connecting $h_1, h_2 \in \mathbb{K}$: the Euclidean straight line segment $\gamma_0$ (for the upper bound) and the metric-minimizing geodesic $\gamma_N^*$ under $\widetilde{G}_H^{(N)}$ (for the lower bound).

\noindent
\subparagraph{(1) Upper Bound via Trial Path $\gamma_0$:}
Consider the standard linear path $\gamma_0(t) = h_1 + t(h_2 - h_1)$ for $t \in [0, 1]$, whose tangent velocity is constant: $\dot{\gamma}_0(t) = h_2 - h_1$. By definition of the intrinsic Riemannian distance $d_{\widetilde{G}_H^{(N)}}(h_1, h_2)$ as the infimum of arc lengths over all piecewise $C^1$ curves connecting $h_1$ and $h_2$, evaluating the length along $\gamma_0$ provides an immediate upper bound:
\begin{equation}
	d_{\widetilde{G}_H^{(N)}}(h_1, h_2) \le L_{\widetilde{G}_H^{(N)}}(\gamma_0) = \int_0^1 \sqrt{\dot{\gamma}_0(t)^T \widetilde{G}_H^{(N)}(\gamma_0(t)) \dot{\gamma}_0(t)} \, dt.
\end{equation}
Substituting the point-wise integrand upper bound derived in Step 3, namely
\begin{equation*}
	\sqrt{v^T \widetilde{G}_H^{(N)}(h) v} \le \sqrt{v^T (g_0|_{\mathcal{B}}) v} + \frac{K_{\mathbb{K}}^{(1)}}{2\sqrt{\kappa_0}\sqrt{N}} \|v\|_2
\end{equation*}
evaluated at $v = \dot{\gamma}_0(t) = h_2 - h_1$ and $h = \gamma_0(t)$, we obtain:
\begin{align}
	d_{\widetilde{G}_H^{(N)}}(h_1, h_2) &\le \int_0^1 \left( \sqrt{\dot{\gamma}_0(t)^T (g_0|_{\mathcal{B}}) \dot{\gamma}_0(t)} + \frac{K_{\mathbb{K}}^{(1)}}{2\sqrt{\kappa_0}\sqrt{N}} \|\dot{\gamma}_0(t)\|_2 \right) dt \nonumber \\
	&= \int_0^1 \sqrt{(h_2 - h_1)^T (g_0|_{\mathcal{B}}) (h_2 - h_1)} \, dt + \frac{K_{\mathbb{K}}^{(1)} \|h_2 - h_1\|_2}{2\sqrt{\kappa_0}\sqrt{N}} \int_0^1 dt \nonumber \\
	&= d_{g_0}(h_1, h_2) + \frac{K_{\mathbb{K}}^{(1)} \|h_2 - h_1\|_2}{2\sqrt{\kappa_0}\sqrt{N}}. \label{eq:geodesic_upper_bound}
\end{align}
Subtracting $d_{g_0}(h_1, h_2)$ from both sides yields the one-sided upper distortion bound:
\begin{equation}
	d_{\widetilde{G}_H^{(N)}}(h_1, h_2) - d_{g_0}(h_1, h_2) \le \frac{K_{\mathbb{K}}^{(1)} \|h_2 - h_1\|_2}{2\sqrt{\kappa_0}\sqrt{N}}.
\end{equation}

\noindent
\subparagraph{(2) Lower Bound via Optimal Geodesic $\gamma_N^*$:}
Conversely, let $\gamma_N^*: [0, 1] \to \mathbb{K}$ be the length-minimizing geodesic curve under metric $\widetilde{G}_H^{(N)}$ satisfying $\gamma_N^*(0) = h_1$ and $\gamma_N^*(1) = h_2$, such that $L_{\widetilde{G}_H^{(N)}}(\gamma_N^*) = d_{\widetilde{G}_H^{(N)}}(h_1, h_2)$. Since $d_{g_0}(h_1, h_2)$ represents the absolute minimum length between $h_1$ and $h_2$ under the flat metric $g_0|_{\mathcal{B}}$, evaluating the flat length of $\gamma_N^*$ yields:
\begin{equation}
	d_{g_0}(h_1, h_2) \le L_{g_0}(\gamma_N^*) = \int_0^1 \sqrt{\dot{\gamma}_N^*(t)^T (g_0|_{\mathcal{B}}) \dot{\gamma}_N^*(t)} \, dt.
\end{equation}
Applying the reverse point-wise integrand bound from Step 3 along $\gamma_N^*(t)$:
\begin{equation*}
	\sqrt{v^T (g_0|_{\mathcal{B}}) v} \le \sqrt{v^T \widetilde{G}_H^{(N)}(h) v} + \frac{K_{\mathbb{K}}^{(1)}}{2\sqrt{\kappa_0}\sqrt{N}} \|v\|_2,
\end{equation*}
we integrate along $\gamma_N^*$:
\begin{align}
	d_{g_0}(h_1, h_2) &\le \int_0^1 \left( \sqrt{\dot{\gamma}_N^*(t)^T \widetilde{G}_H^{(N)}(\gamma_N^*(t)) \dot{\gamma}_N^*(t)} + \frac{K_{\mathbb{K}}^{(1)}}{2\sqrt{\kappa_0}\sqrt{N}} \|\dot{\gamma}_N^*(t)\|_2 \right) dt \nonumber \\
	&= L_{\widetilde{G}_H^{(N)}}(\gamma_N^*) + \frac{K_{\mathbb{K}}^{(1)}}{2\sqrt{\kappa_0}\sqrt{N}} \int_0^1 \|\dot{\gamma}_N^*(t)\|_2 \, dt \nonumber \\
	&= d_{\widetilde{G}_H^{(N)}}(h_1, h_2) + \frac{K_{\mathbb{K}}^{(1)}}{2\sqrt{\kappa_0}\sqrt{N}} L_{\text{Euc}}(\gamma_N^*).
\end{align}
Because $\lambda_{\min}(\widetilde{G}_H^{(N)}) \ge \kappa_0 / 2 > 0$ uniformly for large $N$, the Euclidean length $L_{\text{Euc}}(\gamma_N^*)$ of the minimizing curve is bounded by the straight-line distance $\|h_2 - h_1\|_2$ up to higher-order $\mathcal{O}(N^{-1/2})$ corrections, giving:
\begin{equation}
	d_{g_0}(h_1, h_2) \le d_{\widetilde{G}_H^{(N)}}(h_1, h_2) + \frac{K_{\mathbb{K}}^{(1)} \|h_2 - h_1\|_2}{2\sqrt{\kappa_0}\sqrt{N}}. \label{eq:geodesic_lower_bound}
\end{equation}
Rearranging \eqref{eq:geodesic_lower_bound} establishes the complementary lower distortion bound:
\begin{equation}
	d_{g_0}(h_1, h_2) - d_{\widetilde{G}_H^{(N)}}(h_1, h_2) \le \frac{K_{\mathbb{K}}^{(1)} \|h_2 - h_1\|_2}{2\sqrt{\kappa_0}\sqrt{N}}.
\end{equation}

\subparagraph{3. Uniform Supremum over Compact Cage $\mathbb{K}$:}
Combining the upper and lower bounds from Equations \eqref{eq:geodesic_upper_bound} and \eqref{eq:geodesic_lower_bound}, we obtain the two-sided point-wise distortion inequality for any $h_1, h_2 \in \mathbb{K}$:
\begin{equation}
	\left| d_{\widetilde{G}_H^{(N)}}(h_1, h_2) - d_{g_0}(h_1, h_2) \right| \le \frac{K_{\mathbb{K}}^{(1)} \|h_2 - h_1\|_2}{2\sqrt{\kappa_0}\sqrt{N}}.
\end{equation}
Since $\mathbb{K}$ is compact, its Euclidean diameter is strictly finite: $R_{\mathbb{K}} \equiv \sup_{h \in \mathbb{K}} \|h\|_2 < \infty$, which implies $\sup_{h_1, h_2 \in \mathbb{K}} \|h_2 - h_1\|_2 \le 2 R_{\mathbb{K}}$. Taking the uniform supremum over all parameter pairs $h_1, h_2 \in \mathbb{K}$ yields:
\begin{equation}
	\sup_{h_1, h_2 \in \mathbb{K}} \left| d_{\widetilde{G}_H^{(N)}}(h_1, h_2) - d_{g_0}(h_1, h_2) \right| \le \frac{K_{\mathbb{K}}^{(1)} (2 R_{\mathbb{K}})}{2\sqrt{\kappa_0}\sqrt{N}} = \frac{K_{\mathbb{K}}^{(1)} R_{\mathbb{K}}}{\sqrt{\kappa_0}\sqrt{N}} \equiv \frac{K_{\mathbb{K}}}{\sqrt{N}},
\end{equation}
where $K_{\mathbb{K}} \equiv \frac{K_{\mathbb{K}}^{(1)} R_{\mathbb{K}}}{\sqrt{\kappa_0}} < \infty$ is a finite constant independent of $N, h_1, h_2$. This completes the proof of Lemma~\ref{lem:geodesic_distortion_bound}.
\end{proof}

\begin{theorem}[Gromov-Hausdorff Leaf Collapse Theorem]
	\label{thm:gh_leaf_collapse}
	Under High-Order Local Regularity (Assumption \ref{asm:local_regularity}), the sequence of horizontal Riemannian leaf manifolds $(\mathcal{L}_{N,\theta_0}, \tilde{G}_H^{(N)})$ restricted to compact coordinate cages $\mathbb{K} \subset \mathbb{R}^d$ converges in the Gromov-Hausdorff metric topology to the canonical flat tangent canvas $\mathcal{M}_\infty \equiv (T_{\theta_0}IG_1, g_0|_{\mathcal{B}}, \nabla^{(0)})$:
	\begin{equation}
		\label{eq:gh_leaf_collapse_limit}
		\lim_{N \to \infty} d_{GH}\left( (\mathbb{K}, d_{\tilde{G}_H^{(N)}}), \, (\mathbb{K}, d_{g_0}) \right) = 0.
	\end{equation}
	More specifically, the rate of topological collapse satisfies $d_{GH}\left( (\mathbb{K}, d_{\tilde{G}_H^{(N)}}), (\mathbb{K}, d_{g_0}) \right) = \mathcal{O}\left(N^{-1/2}\right)$.
\end{theorem}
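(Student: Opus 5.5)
The plan is to exploit the fact that both metric spaces in the statement live on the same underlying set $\mathbb{K}$. That lets us bound the Gromov-Hausdorff distance by the distortion of the identity map. I will use the standard correspondence characterization of $d_{GH}$. For compact metric spaces $(\mathcal{X},d_{\mathcal{X}})$ and $(\mathcal{Y},d_{\mathcal{Y}})$, and any correspondence $\mathfrak{R}\subset\mathcal{X}\times\mathcal{Y}$,
\[
d_{GH}\big((\mathcal{X},d_{\mathcal{X}}),(\mathcal{Y},d_{\mathcal{Y}})\big)\le \tfrac12\,\mathrm{dis}(\mathfrak{R}),\qquad \mathrm{dis}(\mathfrak{R})\equiv\sup_{(x,y),(x',y')\in\mathfrak{R}}\big|d_{\mathcal{X}}(x,x')-d_{\mathcal{Y}}(y,y')\big|.
\]
To keep the argument self-contained relative to Definition~\ref{def:gromov_hausdorff_distance}, I will not cite this. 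Instead I will build the common space $\mathcal{Z}=\mathbb{K}\sqcup\mathbb{K}$ explicitly. Within each copy it carries the respective metric. Across copies it is $d_{\mathcal{Z}}(x,y')\equiv\inf_{z\in\mathbb{K}}\big(d_{\widetilde G_H^{(N)}}(x,z)+d_{g_0}(z,y)\big)+\varepsilon_N/2$, where $\varepsilon_N$ is the distortion bound.

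The first step is to take the diagonal correspondence $\mathfrak{R}=\{(h,h):h\in\mathbb{K}\}$ and note that its distortion is exactly
\[
\sup_{h_1,h_2\in\mathbb{K}}\big|d_{\widetilde G_H^{(N)}}(h_1,h_2)-d_{g_0}(h_1,h_2)\big|.
\]
The second step is to invoke Lemma~\ref{lem:geodesic_distortion_bound}, which bounds this quantity by $K_{\mathbb{K}}/\sqrt{N}$. The third step is to verify that the glued function $d_{\mathcal{Z}}$ is a genuine metric. Only the mixed triangle inequalities need checking, and these reduce to the distortion bound plus the triangle inequalities within each copy. In $\mathcal{Z}$, every point of either copy lies within $\varepsilon_N/2$ of its twin, since $d_{\mathcal{Z}}(h,h')\le 0+0+\varepsilon_N/2$. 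The Hausdorff distance between the two copies is therefore at most $\varepsilon_N/2$, which gives $d_{GH}\le K_{\mathbb{K}}/(2\sqrt{N})=\mathcal{O}(N^{-1/2})$. Letting $N\to\infty$ yields \eqref{eq:gh_leaf_collapse_limit}.

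Preliminary checks are also needed so that Definition~\ref{def:gromov_hausdorff_distance} applies. Both $(\mathbb{K},d_{g_0})$ and $(\mathbb{K},d_{\widetilde G_H^{(N)}})$ must be compact metric spaces. For $d_{g_0}$ this follows from Lemma~\ref{lem:horizontal_pulled_back_metric_algebraic}, which makes it a norm distance equivalent to Euclidean, so $\mathbb{K}$ stays compact. For $d_{\widetilde G_H^{(N)}}$, uniform stabilization (Lemma~\ref{lem:uniform_horizontal_metric_stabilization}) together with $\lambda_{\min}(g_0|_{\mathcal{B}})\ge\kappa_0$ (Theorem~\ref{thm:horizontal_strict_positivity}) gives, for $N$ large, $\tfrac{\kappa_0}{2}\|v\|^2\le v^T\widetilde G_H^{(N)}v\le C\|v\|^2$. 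That makes it bi-Lipschitz to Euclidean and hence a compatible metric.

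I expect the main obstacle to be a subtlety inherited from Lemma~\ref{lem:geodesic_distortion_bound}. The intrinsic distance is an infimum over curves confined to $\mathbb{K}$. If $\mathbb{K}$ is not convex, $d_{g_0}$ restricted to $\mathbb{K}$ is not given by the straight-line formula, and the straight trial path used there may leave $\mathbb{K}$. I will first try to reduce to convex $\mathbb{K}$, for example a closed $g_0$-ball containing the original cage, since any compact cage sits inside one. Alternatively I can interpret both distances as restrictions of the ambient metrics on $H_{\theta_0}$, with the curves allowed to range over a slightly larger convex set on which the stabilization bound still holds. Either route leaves the argument above unchanged.
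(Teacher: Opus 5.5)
Your proposal is correct and is essentially the paper's proof. Both use the identity (diagonal) correspondence on $\mathbb{K}$, the uniform distortion bound $K_{\mathbb{K}}/\sqrt{N}$ from Lemma~\ref{lem:geodesic_distortion_bound}, and $d_{GH}\le\tfrac12\,\mathrm{dis}$; the paper only cites this last inequality from Burago--Burago--Ivanov, since its Stage~3 identity map of $(\mathbb{K},d_{\widetilde G_H^{(N)}})$ into $(\mathbb{K},d_{g_0})$ is not an isometric embedding, whereas your glued metric on $\mathbb{K}\sqcup\mathbb{K}$ actually proves it.

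On your convexity caveat, no reduction is needed, and enlarging $\mathbb{K}$ to a ball changes the metric spaces unless you adopt the restricted-metric reading. The pointwise integrand bound already gives $L_{\widetilde G_H^{(N)}}(\gamma)\le(1+c/\sqrt{N})\,L_{g_0}(\gamma)$ and the reverse inequality for every curve $\gamma\subset\mathbb{K}$, so the intrinsic distances on $\mathbb{K}$ differ by at most $\mathcal{O}(N^{-1/2})$ times the intrinsic diameter of $\mathbb{K}$, with no convexity assumption.
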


\begin{proof}
	We construct a detailed, step-by-step mathematical proof structured into four sequential stages: metric space formulation, mapping distortion estimation, explicit isometric reference space embedding, and asymptotic convergence analysis.
	
	\subsubsection*{Stage 1: Formulation of Compact Metric Spaces and Canonical Mapping}
	Let $\mathbb{K} \subset H_{\theta_0} \cong \mathbb{R}^d$ be a fixed, $N$-invariant compact coordinate cage enclosing the origin $h = \mathbf{0}$. We define two compact metric spaces on the common domain $\mathbb{K}$:
	\begin{enumerate}
		\item $X_N \equiv \left(\mathbb{K}, \, d_{\tilde{G}_H^{(N)}}\right)$, representing the localized horizontal leaf manifold equipped with the pulled-back Riemannian geodesic distance $d_{\tilde{G}_H^{(N)}}$;
		\item $Y \equiv \left(\mathbb{K}, \, d_{g_0}\right)$, representing the canonical asymptotic target space equipped with the flat Euclidean-Fisher distance $d_{g_0}$ established in Lemma~\ref{lem:horizontal_pulled_back_metric_algebraic} (equation (\ref{eq:flat_fisher_dist})).
	\end{enumerate}
	Consider the canonical identity map $f = \mathrm{id}_{\mathbb{K}}: X_N \to Y$ defined pointwise by $f(h) = h$ for all $h \in \mathbb{K}$. The map $f$ is a global bijection between $X_N$ and $Y$.
	
	\subsubsection*{Stage 2: Distortion Bound of the Identity Correspondence}
	By classical metric geometry \cite{burago2001, gromov2007}, for any surjective mapping $f: X_N \to Y$, the \textbf{metric distortion} $\mathrm{dis}(f)$ of $f$ is defined as:
	\begin{equation}
		\label{eq:distortion_def_proof}
		\mathrm{dis}(f) \equiv \sup_{h_1, h_2 \in X_N} \left| d_Y\left(f(h_1), f(h_2)\right) - d_{X_N}(h_1, h_2) \right|.
	\end{equation}
	Substituting $f = \mathrm{id}_{\mathbb{K}}$, $d_{X_N} = d_{\tilde{G}_H^{(N)}}$, and $d_Y = d_{g_0}$ into \eqref{eq:distortion_def_proof}:
	\begin{equation}
		\label{eq:identity_distortion_eval}
		\mathrm{dis}(\mathrm{id}_{\mathbb{K}}) = \sup_{h_1, h_2 \in \mathbb{K}} \left| d_{g_0}(h_1, h_2) - d_{\tilde{G}_H^{(N)}}(h_1, h_2) \right|.
	\end{equation}
	Applying Lemma~\ref{lem:geodesic_distortion_bound} equation (equation \eqref{eq:geodesic_distortion_formula}) directly to \eqref{eq:identity_distortion_eval}, the uniform geodesic metric distortion is strictly bounded under Assumption~\ref{asm:local_regularity}:
	\begin{equation}
		\label{eq:distortion_bound_applied}
		\mathrm{dis}(\mathrm{id}_{\mathbb{K}}) \le \frac{K_{\mathbb{K}}}{\sqrt{N}} = \mathcal{O}\left(N^{-1/2}\right),
	\end{equation}
	where $K_{\mathbb{K}} \equiv \frac{K_{\mathbb{K}}^{(1)} R_{\mathbb{K}}}{\sqrt{\kappa_0}} < \infty$ is an $N$-independent finite constant.
	
	\subsubsection*{Stage 3: Explicit Embedding into Reference Metric Space}
	To compute the Gromov-Hausdorff distance according to Definition~\ref{def:gromov_hausdorff_distance} {\it (Gromov-Hausdorff Distance for Information Leaf Spaces)}, we construct an explicit joint metric space $(\mathcal{Z}, d_{\mathcal{Z}})$ and isometric embeddings $i: X_N \hookrightarrow \mathcal{Z}$ and $j: Y \hookrightarrow \mathcal{Z}$.
	
	Let $\mathcal{Z} \equiv \mathbb{K}$, equipped with the target flat metric $d_{\mathcal{Z}} \equiv d_{g_0}$. We define the candidate embeddings:
	\begin{equation}
		\label{eq:embeddings_def}
		j = \mathrm{id}_{\mathbb{K}}: Y \hookrightarrow \mathcal{Z}, \quad \text{and} \quad i = \mathrm{id}_{\mathbb{K}}: X_N \hookrightarrow \mathcal{Z}.
	\end{equation}
	The embedding $j: (Y, d_{g_0}) \hookrightarrow (\mathcal{Z}, d_{g_0})$ is an exact isometry ($d_{\mathcal{Z}}(j(h_1), j(h_2)) = d_{g_0}(h_1, h_2)$).
	
	Now we evaluate the Hausdorff distance $d_H^{\mathcal{Z}}(i(X_N), j(Y))$ between the embedded images $i(\mathbb{K})$ and $j(\mathbb{K})$ in $\mathcal{Z}$. By Definition~\ref{def:gromov_hausdorff_distance}, and leveraging the distortion inequality for surjective mappings \cite[Theorem~7.3.25]{burago2001}:
	\begin{equation}
		\label{eq:gh_distortion_theorem}
		d_{GH}\left( X_N, \, Y \right) \le \frac{1}{2} \mathrm{dis}(\mathrm{id}_{\mathbb{K}}),
	\end{equation}
where $\mathrm{dis}(\mathrm{id}_{\mathbb{K}})$ is the metric distortion of the canonical identity correspondence $\mathrm{id}_{\mathbb{K}}$ mapping between the metric spaces $(\mathbb{K}, d_{\tilde{G}_H^{(N)}})$ and $(\mathbb{K}, d_{g_0})$, defined as:  
\begin{equation}
\mathrm{dis}(\mathrm{id}_{\mathbb{K}}) = \sup_{h_1, h_2 \in \mathbb{K}} \left\vert{} d_{\tilde{G}_H^{(N)}}(h_1, h_2) - d_{g_0}(h_1, h_2) \right\vert{}
\end{equation}

	For completeness, we verify \eqref{eq:gh_distortion_theorem} directly: for any point $x = i(h_1) \in i(X_N)$, choosing $y = j(h_1) \in j(Y)$ yields $d_{\mathcal{Z}}(i(h_1), j(h_1)) = d_{g_0}(h_1, h_1) = 0$. However, the metric distance between pairs in $X_N$ differs from that in $\mathcal{Z}$ by at most $\mathrm{dis}(\mathrm{id}_{\mathbb{K}})$. Thus, the minimal Hausdorff distance attainable over all isometric embeddings into arbitrary metric spaces is upper-bounded by half the maximum pairwise distortion:
	\begin{equation}
		\label{eq:gh_half_distortion_bound}
		d_{GH}\left( (\mathbb{K}, d_{\tilde{G}_H^{(N)}}), \, (\mathbb{K}, d_{g_0}) \right) \le \frac{1}{2} \sup_{h_1, h_2 \in \mathbb{K}} \left| d_{\tilde{G}_H^{(N)}}(h_1, h_2) - d_{g_0}(h_1, h_2) \right|.
	\end{equation}
	
	\subsubsection*{Stage 4: Asymptotic Limit and Rate Evaluation}
	Substituting the uniform bound \eqref{eq:distortion_bound_applied} from Lemma~\ref{lem:geodesic_distortion_bound} into \eqref{eq:gh_half_distortion_bound}:
	\begin{equation}
		\label{eq:gh_explicit_rate_bound}
		d_{GH}\left( (\mathbb{K}, d_{\tilde{G}_H^{(N)}}), \, (\mathbb{K}, d_{g_0}) \right) \le \frac{1}{2} \left( \frac{K_{\mathbb{K}}}{\sqrt{N}} \right) = \frac{K_{\mathbb{K}}}{2\sqrt{N}} = \mathcal{O}\left(N^{-1/2}\right).
	\end{equation}
	Taking the thermodynamic / large-sample limit as $N \to \infty$ on both sides of \eqref{eq:gh_explicit_rate_bound}:
	\begin{equation}
		\label{eq:final_limit_eval}
		\lim_{N \to \infty} d_{GH}\left( (\mathbb{K}, d_{\tilde{G}_H^{(N)}}), \, (\mathbb{K}, d_{g_0}) \right) \le \lim_{N \to \infty} \frac{K_{\mathbb{K}}}{2\sqrt{N}} = 0.
	\end{equation}
	Since $d_{GH}(\cdot, \cdot) \ge 0$ by definition of a metric on compact metric space isomorphism classes, we have:
	\begin{equation}
		\lim_{N \to \infty} d_{GH}\left( (\mathbb{K}, d_{\tilde{G}_H^{(N)}}), \, (\mathbb{K}, d_{g_0}) \right) = 0.
	\end{equation}
	This completes the rigorous, step-by-step mathematical proof of Theorem~\ref{thm:gh_leaf_collapse}.
\end{proof}

\begin{remark}[Topological and Statistical Consequences of Theorem~\ref{thm:gh_leaf_collapse}]
	The Gromov-Hausdorff leaf collapse proved in Theorem~\ref{thm:gh_leaf_collapse} carries profound structural implications across differential geometry and large-sample decision theory:
	\begin{enumerate}
		\item \textbf{Dimension Preservation (Absence of Lower-Dimensional Collapse):} Unlike general collapse phenomena in Riemannian geometry where manifolds can collapse to lower-dimensional spaces \cite{cheeger1986}, the horizontal leaf $\mathcal{L}_{N,\theta_0}$ collapses onto a flat canvas $(\mathbb{K}, d_{g_0})$ of \textbf{identical dimension} $d = \mathrm{dim}(\mathcal{B})$. This is guaranteed because $g_0|_{\mathcal{B}}$ is strictly positive-definite ($\lambda_{\min} \ge \kappa_0 > 0$) under Assumption~\ref{asm:local_regularity}.
		
		\item \textbf{Exact CLT Matching Rate:} The convergence rate $\mathcal{O}(N^{-1/2})$ in Gromov-Hausdorff metric distance matches the micro-scale fluctuation rate $\beta_N = 1/\sqrt{N}$ governed by the Central Limit Theorem.
		
		\item \textbf{Geometric Foundation for Le Cam Deficiency Convergence:} Combining Theorem~\ref{thm:gh_leaf_collapse} with Cheeger-Gromov $C^\infty$-convergence \cite{cheeger1986, gromov2007} provides the exact differential-topological substrate for Le Cam's experiment deficiency convergence $\Delta(\mathcal{E}_N(\mathbb{K}), \mathcal{E}_\infty(\mathbb{K})) \to 0$ \cite{lecam1986}.
	\end{enumerate}
\end{remark}

\subsection{Accelerated Holonomy Group Annihilation and Global Parallel Transport Flattening}
\label{subsec:holonomy_annihilation}

Having established Gromov-Hausdorff metric leaf collapse, we now analyze the affine connection structure and parallel transport holonomy governing the phase transition $IG_N \to \mathcal{M}_\infty$. 

To study phrase transition, the Gromov-Hausdorff metric leaf collapse only tracks the metric degeneration of distance scales, whereas the physical and geometric phase transition is defined by the evolution of differential and topological structures:
\begin{itemize}
\item {\bf Affine Connection Structure (Local Differential Dynamics):} While metric collapse compresses the metric space, the affine connection controls how directional derivatives, covariant differentiation, and Christoffel symbols transform under the limit. It governs whether the differential operator field remains stable or degenerates, dictating how the differential geometry on $IG_N$ transitions into the smooth manifold structure of $\mathcal{M}_\infty$.

\item {\bf Parallel Transport Holonomy (Global Topological \& Gauge Dynamics):} Holonomy measures the non-trivial path-dependent phase shift and curvature accumulation acquired when transporting vectors around closed loops, especially along the collapsing leaves. As leaves collapse, holonomy tracks how global topological invariants and internal gauge degrees of freedom concentrate, decouple, or reduce into boundary/gauge conditions on $\mathcal{M}_\infty$.	
	
\end{itemize}
Together, the affine connection determines {\it the local differential continuum}, and parallel transport holonomy determines {\it the global gauge/topological structure}, completely defining the dynamic mechanism behind the phase transition.

\begin{definition}[Restricted Holonomy Group $\mathrm{Hol}_{\theta_0}(\nabla)$] \footnote{The $GL(d, \mathbb{R})$ and $\mathfrak{gl}(d, \mathbb{R})$ structures in this subsection represent the holonomy group of linear endomorphisms acting on vector tangent spaces via the Ambrose--Singer Theorem; they do not assume $\Theta$ itself is a Lie group, thus preserving the full validity of the Fisher-compatible Ehresmann connection framework.}
	\label{def:restricted_holonomy_group}
	Let $(\mathcal{M}, \nabla)$ be an affine manifold. For a base point $p \in \mathcal{M}$, the restricted holonomy group $\mathrm{Hol}_p^0(\nabla) \subseteq \mathrm{GL}(T_p \mathcal{M})$ is the Lie subgroup of invertible linear endomorphisms generated by parallel transport operators $\mathcal{P}_\gamma: T_p \mathcal{M} \to T_p \mathcal{M}$ along smooth piecewise closed loops $\gamma: [0, 1] \to \mathcal{M}$ ($\gamma(0) = \gamma(1) = p$) that are piecewise contractible to $p$.
\end{definition}

\begin{remark}
	The appearance of the Lie group structure $GL(T_{\theta_0}\mathcal{M}) \cong GL(d, \mathbb{R})$ stems strictly from the linear endomorphisms acting on the vector tangent space $T_{\theta_0}\mathcal{M}$ at base point $\theta_0$, rather than requiring the underlying parameter manifold $\Theta$ to be a Lie group or principal bundle. Thus, this formulation maintains complete compatibility with the Fisher-compatible Ehresmann connection framework.
\end{remark}

\begin{lemma}[Ambrose-Singer Holonomy Lie Algebra Bound]
	\label{lem:ambrose_singer_bound}
	Let $\widetilde{\nabla}^{(\alpha,N)}$ denote the pulled-back Amari $\alpha$-connection on $\mathbb{K} \subset \mathcal{M}_\infty$ (Theorem~\ref{thm:connection_dissolution}), and let $\mathfrak{hol}_0(\widetilde{\nabla}^{(\alpha,N)}) \subset \mathfrak{gl}(d, \mathbb{R})$ denote its Lie algebra. Under High-Order Local Regularity (Assumption~\ref{asm:local_regularity}), every generator $A \in \mathfrak{hol}_0(\widetilde{\nabla}^{(\alpha,N)})$ satisfies the accelerated quadratic decay bound in matrix operator norm:
	\begin{equation}
		\sup_{A \in \mathfrak{hol}_0(\widetilde{\nabla}^{(\alpha,N)}), \|A\|_{op}=1} \|A\|_{op} \le C_{\mathbb{K}}^{(\mathcal{R})} \cdot \sup_{h \in \mathbb{K}} \left\| [\psi_{N,H}^* \mathcal{R}^{(\alpha,N)}](h) \right\|_{op} = \mathcal{O}_p\left(N^{-1}\right),
		\label{eq:ambrose_singer_decay_bound}
	\end{equation}
	where $C_{\mathbb{K}}^{(\mathcal{R})} < \infty$ is a constant independent of $N$.
\end{lemma}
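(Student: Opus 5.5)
The plan is to read the left-hand side of \eqref{eq:ambrose_singer_decay_bound} through the Ambrose--Singer theorem. Taken literally, the supremum of $\|A\|_{op}$ over unit-norm $A$ is identically $1$, so the bound cannot hold as written. I would therefore interpret the supremum as ranging over the \emph{canonical generating set}
\[
\mathcal{G}_N \equiv \left\{ \mathcal{P}_\gamma^{-1} \circ \widetilde{\mathcal{R}}^{(\alpha,N)}_{\gamma(1)}(X,Y) \circ \mathcal{P}_\gamma \; : \; \gamma \subset \mathbb{K},\ \gamma(0)=\mathbf{0},\ \|X\|_2=\|Y\|_2=1 \right\}.
\]
By Ambrose--Singer, $\mathcal{G}_N$ spans $\mathfrak{hol}_0(\widetilde{\nabla}^{(\alpha,N)})$, and its elements carry the unit-argument normalization. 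The quantity to bound is then $\sup_{A \in \mathcal{G}_N} \|A\|_{op}$, and I would state this normalization explicitly at the start of the proof. Each generator then satisfies
\[
\|A\|_{op} \le \|\mathcal{P}_\gamma^{-1}\|_{op}\, \|\mathcal{P}_\gamma\|_{op}\, \sup_{h\in\mathbb{K}} \big\|\widetilde{\mathcal{R}}^{(\alpha,N)}(h)\big\|_{op},
\]
which reduces the lemma to two estimates: uniform control of parallel transport, and the curvature decay.

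\textbf{Step 1 (parallel transport is uniformly bounded).} Let $\gamma$ be a path in $\mathbb{K}$ of Euclidean length at most $L_{\mathbb{K}}$; piecewise-linear loops suffice. Parallel transport along $\gamma$ solves the linear ODE $\dot V^k = -\widetilde{\Gamma}^{k}_{ij}(\gamma)\dot\gamma^i V^j$. By Theorem~\ref{thm:connection_dissolution} and Lemma~\ref{lem:connection_dissolution}, the first-kind symbols are $O(N^{-1/2})$ uniformly on $\mathbb{K}$. Raising an index uses $(\widetilde{G}^{(N)}_H)^{-1}$, which is uniformly bounded by $2/\kappa_0$ for large $N$ by Lemma~\ref{lem:uniform_horizontal_metric_stabilization} and Theorem~\ref{thm:horizontal_strict_positivity}. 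Hence the second-kind symbols are also $O(N^{-1/2})$. Gr\"onwall's inequality then gives
\[
\max\big(\|\mathcal{P}_\gamma\|_{op},\ \|\mathcal{P}_\gamma^{-1}\|_{op}\big) \le \exp\!\big(C L_{\mathbb{K}} N^{-1/2}\big) \le 2
\]
for $N$ large, uniformly over such loops. The inverse is parallel transport along the reversed path, so the same bound applies to it.

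\textbf{Step 2 (curvature endomorphism decay).} The operator $\widetilde{\mathcal{R}}(X,Y)$ acting on $T_h$ is the $(1,3)$-tensor. It is obtained from the $(0,4)$-tensor of Lemma~\ref{lem:accelerated_curvature_annihilation} and Theorem~\ref{thm:curvature_annihilation} by one contraction with the uniformly bounded inverse metric. Its operator norm with unit $X,Y$ is therefore at most $C d^2 \sup_{h}\|\widetilde{\mathcal{R}}^{(\alpha,N)}(h)\|_\infty = O(N^{-1})$. Combining Steps 1 and 2 gives $\|A\|_{op}\le 4C\sup_h\|\widetilde{\mathcal R}\|\le C^{(\mathcal R)}_{\mathbb K} M_{\mathcal R}/N$ for every $A \in \mathcal{G}_N$, with $C^{(\mathcal{R})}_{\mathbb{K}}$ independent of $N$. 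This is the claimed $O(N^{-1})$ rate; the stochastic symbol $\mathcal{O}_p$ is inherited only notationally, since the bound is deterministic.

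\textbf{Main obstacle.} The delicate point is the normalization. The bound only makes sense for the Ambrose--Singer generating set with unit-norm arguments $X,Y$, and a general element of the span cannot be controlled by a norm condition alone, so the proof must work on $\mathcal{G}_N$ rather than on all of $\mathfrak{hol}_0$. A secondary technical point is that the horizontal pullback $\psi_{N,H}$ carries $h$-dependent projections $P^H_{\psi_{N,H}(h)}$. Their derivatives contribute additional $O(N^{-1/2})$ terms to the pulled-back connection, by the Lipschitz bound on $P^H$ in Lemma~\ref{lem:uniform_horizontal_metric_stabilization}. I would check that these terms enter Step 1 only through the Gr\"onwall exponent, so that they do not affect the $N^{-1}$ rate in Step 2.
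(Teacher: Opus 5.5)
Your proposal follows essentially the same route as the paper's proof: Ambrose--Singer generators $\mathcal{P}_\tau^{-1}\widetilde{\mathcal{R}}^{(\alpha,N)}(\cdot,\cdot)\,\mathcal{P}_\tau$, a Gr\"onwall bound $\|\mathcal{P}_\tau^{\pm 1}\|_{op}\le \exp(M_\Gamma L_\tau/\sqrt{N})$ from the $O(N^{-1/2})$ connection symbols (the inverse via the reversed path), then sub-multiplicativity against the $O(N^{-1})$ curvature; your unit-argument normalization is exactly what the paper's Step~3 bounds, since it carries the factor $\|x\|_2\|y\|_2$ (normalizing at $\tau(0)$ rather than $\tau(1)$, hence $\|\mathcal{P}_\tau\|_{op}^3$ instead of your single $\|\mathcal{P}_\gamma\|_{op}$), and your index-raising with the bounded inverse metric fills in a step the paper skips. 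The one point both arguments leave implicit is that the Gr\"onwall factor is $1+O(N^{-1/2})$ only for paths of bounded length (for $\alpha\neq 0$ the holonomy group need not be compact, so $\|\mathcal{P}_\gamma\|_{op}$ is not uniformly bounded over arbitrary loops in $\mathbb{K}$), so you should justify ``piecewise-linear loops suffice'', e.g.\ by taking $\mathbb{K}$ star-shaped about $\mathbf{0}$ and using only straight segments from $\mathbf{0}$, whose conjugated curvatures still generate $\mathfrak{hol}_0$ as a Lie algebra by the standard radial-gauge argument.
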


\begin{remark}
	By virtue of the Ambrose--Singer Holonomy Theorem \cite{ambrose1953}, the operator norm bound on the Lie algebra $Hol_{\theta_0}(\widetilde{\nabla}^{(\alpha,N)}) \subseteq GL(d, \mathbb{R})$ directly measures curvature decay. Explicitly indexing by base point $\theta_0$, we have $Hol_{\theta_0}^0(\widetilde{\nabla}^{(\alpha,N)}) \subseteq GL(d, \mathbb{R})$ (where subscript $\theta_0$ designates the base point and superscript $0$ denotes restricted holonomy via contractible loops). As $N \to \infty$, the $\mathcal{O}_p(N^{-1})$ bound forces all Lie algebra generators in $Hol_{\theta_0}(\widetilde{\nabla}^{(\alpha,N)})$ to vanish, collapsing $Hol_{\theta_0}^0(\widetilde{\nabla}^{(\alpha,N)})$ to the trivial identity subgroup $\{I_{d \times d}\}$ and establishing global path-independence for parallel transport on the horizontal leaf.
\end{remark}

\begin{proof}
	We execute a step-by-step mathematical deduction using the Ambrose-Singer Holonomy Theorem.
	
	\paragraph{Step 1: Application of the Ambrose-Singer Holonomy Theorem.}
	By the Ambrose-Singer Theorem \cite{ambrose1953, frankel2011}, the Lie algebra $\mathfrak{hol}_0(\widetilde{\nabla}^{(\alpha,N)})$ of the holonomy group at $h=\mathbf{0}$ is linearly spanned by transformation operators of the form:
	\begin{equation}
		A(x, y; \tau) \equiv \mathcal{P}_\tau^{-1} \circ \widetilde{\mathcal{R}}^{(\alpha,N)}(\tau(1))(\mathcal{P}_\tau x, \mathcal{P}_\tau y) \circ \mathcal{P}_\tau,
		\label{eq:ambrose_singer_generator}
	\end{equation}
	where $\tau: [0, 1] \to \mathbb{K}$ is a smooth path starting at $\tau(0) = \mathbf{0}$, $\mathcal{P}_\tau: T_{\mathbf{0}} \mathbb{K} \to T_{\tau(1)} \mathbb{K}$ is the parallel transport operator along $\tau$ under connection $\widetilde{\nabla}^{(\alpha,N)}$, $x, y \in T_{\mathbf{0}} \mathbb{K}$ are tangent vectors,\footnote{Since $\tau(0) = \mathbf{0}$, $T_{\mathbf{0}}\mathbb{K}$ and $T_{\tau(0)}\mathbb{K}$ denote the exact same tangent space; using $T_{\mathbf{0}}\mathbb{K}$ explicitly emphasizes that parallel transport originates at the base point $h = \mathbf{0}$.} and $\widetilde{\mathcal{R}}^{(\alpha,N)} \equiv \psi_{N,H}^* \mathcal{R}^{(\alpha,N)}$ is the pulled-back Riemann curvature tensor.

\paragraph{Step 2: Operator Norm Bounding of Parallel Transport Operators.}
\label{step:parallel_transport_norm_bounding}
By Theorem~\ref{thm:connection_dissolution} (Amari Connection Dissolution), the Christoffel symbols of the pulled-back connection $\widetilde{\nabla}^{(\alpha,N)}$ satisfy the uniform coordinate bound:
\begin{equation}
	\sup_{h \in \mathbb{K}} \left| \widetilde{\Gamma}_{ijk}^{(\alpha,N)}(h) \right| = \frac{1}{\sqrt{N}} \sup_{\theta \in \mathcal{U}(\theta_0)} \left| \Gamma_{ijk}^{(\alpha,1)}(\theta) \right| \le \frac{M_\Gamma}{\sqrt{N}} = \mathcal{O}\left(N^{-1/2}\right) \longrightarrow 0 \quad \text{as } N \to \infty,
	\label{eq:christoffel_uniform_bound}
\end{equation}
where $M_\Gamma \equiv \sup_{\theta \in \overline{\mathcal{U}}(\theta_0)} \max_{i,j,k} \left| \Gamma_{ijk}^{(\alpha,1)}(\theta) \right| < \infty$ due to $C^0$-continuity over the compact neighborhood closure $\overline{\mathcal{U}}(\theta_0)$.

Let $\tau: [0, 1] \to \mathbb{K} \subset \mathbb{R}^d$ be a piecewise smooth curve with velocity vector field $\dot{\tau}(t) \in T_{\tau(t)}\mathbb{K}$ and total arc length:
\begin{equation}
	L_\tau \equiv \int_0^1 \|\dot{\tau}(t)\|_2 \, dt < \infty.
\end{equation}
The linear system of differential equations governing the parallel transport of a tangent vector field $v(t) \in T_{\tau(t)}\mathbb{K}$ along $\tau(t)$ with initial state $v(0) \in T_{\tau(0)}\mathbb{K}$ under $\widetilde{\nabla}^{(\alpha,N)}$ is given by:
\begin{equation}
	\frac{d v^k(t)}{dt} = -\sum_{i,j=1}^d \widetilde{\Gamma}_{ij}^{(\alpha,N)k}(\tau(t)) \dot{\tau}^i(t) v^j(t), \quad k = 1, \dots, d.
	\label{eq:parallel_transport_ode_enhanced}
\end{equation}

To rigorously bound the growth of the Euclidean norm $\|v(t)\|_2 = \sqrt{\sum_{k=1}^d (v^k(t))^2}$, we differentiate $\|v(t)\|_2^2$ with respect to $t$:
\begin{align}
	\frac{d}{dt} \|v(t)\|_2^2 &= 2 \sum_{k=1}^d v^k(t) \frac{d v^k(t)}{dt} \nonumber \\
	&= -2 \sum_{i,j,k=1}^d \widetilde{\Gamma}_{ij}^{(\alpha,N)k}(\tau(t)) \dot{\tau}^i(t) v^j(t) v^k(t).
	\label{eq:norm_sq_derivative}
\end{align}
Applying the Cauchy--Schwarz inequality together with the uniform Christoffel symbol bound from \eqref{eq:christoffel_uniform_bound}:
\begin{align}
	\left| \frac{d}{dt} \|v(t)\|_2^2 \right| &\le 2 \sum_{k=1}^d |v^k(t)| \sum_{j=1}^d \left( \sum_{i=1}^d \left| \widetilde{\Gamma}_{ij}^{(\alpha,N)k}(\tau(t)) \right| |\dot{\tau}^i(t)| \right) |v^j(t)| \nonumber \\
	&\le 2 \left( \frac{M_\Gamma}{\sqrt{N}} \right) \|\dot{\tau}(t)\|_2 \|v(t)\|_2^2.
	\label{eq:norm_sq_inequality}
\end{align}
Since $\frac{d}{dt} \|v(t)\|_2^2 = 2 \|v(t)\|_2 \frac{d}{dt} \|v(t)\|_2$, dividing both sides by $2 \|v(t)\|_2$ for $\|v(t)\|_2 > 0$ yields the differential inequality:
\begin{equation}
	\left| \frac{d}{dt} \|v(t)\|_2 \right| \le \frac{M_\Gamma}{\sqrt{N}} \|\dot{\tau}(t)\|_2 \|v(t)\|_2.
	\label{eq:differential_inequality_v}
\end{equation}

Integrating equation \eqref{eq:differential_inequality_v} via Grönwall's Lemma \cite{gronwall1919} over $t \in [0, 1]$ gives:
\begin{equation}
	\|v(1)\|_2 \le \|v(0)\|_2 \exp\left( \int_0^1 \frac{M_\Gamma}{\sqrt{N}} \|\dot{\tau}(t)\|_2 \, dt \right) = \|v(0)\|_2 \exp\left( \frac{M_\Gamma L_\tau}{\sqrt{N}} \right).
	\label{eq:gronwall_forward_bound}
\end{equation}

Let $\mathcal{P}_\tau: T_{\tau(0)}\mathbb{K} \to T_{\tau(1)}\mathbb{K}$ denote the parallel transport operator along $\tau$, defined by $\mathcal{P}_\tau v(0) \equiv v(1)$. Its operator norm induced by the inner product (metric) norms on the respective tangent spaces is defined as:
\begin{equation}
	\|\mathcal{P}_\tau\|_{op} \equiv \sup_{\substack{v \in T_{\tau(0)}\mathbb{K} \\ \|v\|_{T_{\tau(0)}\mathbb{K}} = 1}} \|\mathcal{P}_\tau v\|_{T_{\tau(1)}\mathbb{K}}.\footnote{For a general non-metric-compatible affine connection such as the Amari $\alpha$-connection $\widetilde{\nabla}^{(\alpha,N)}$, parallel transport is a general linear transformation rather than an isometry, so $\|\mathcal{P}_\tau\|_{op}$ quantifies the maximum norm-stretching factor applied to tangent vectors along $\tau$.}
\end{equation}
Taking the supremum over all $\|v(0)\|_2 = 1$ in \eqref{eq:gronwall_forward_bound} and using the Taylor expansion $\exp(x) = 1 + x + \mathcal{O}(x^2)$ as $x \to 0$:
\begin{equation}
	\|\mathcal{P}_\tau\|_{op} \le \exp\left( \frac{M_\Gamma L_\tau}{\sqrt{N}} \right) = 1 + \frac{M_\Gamma L_\tau}{\sqrt{N}} + \mathcal{O}\left(N^{-1}\right) = 1 + \mathcal{O}\left(N^{-1/2}\right).
	\label{eq:pt_op_norm_upper}
\end{equation}

To derive the corresponding bound for the inverse parallel transport operator $\mathcal{P}_\tau^{-1}$, observe that $\mathcal{P}_\tau^{-1} = \mathcal{P}_{\overline{\tau}}$, where $\overline{\tau}(t) \equiv \tau(1-t)$ ($t \in [0, 1]$) is the time-reversed curve. The total arc length satisfies $L_{\overline{\tau}} = L_\tau$. Applying Grönwall's Lemma to parallel transport along $\overline{\tau}$:
\begin{equation}
	\|\mathcal{P}_\tau^{-1}\|_{op} = \|\mathcal{P}_{\overline{\tau}}\|_{op} \le \exp\left( \frac{M_\Gamma L_\tau}{\sqrt{N}} \right) = 1 + \mathcal{O}\left(N^{-1/2}\right).
	\label{eq:pt_inv_op_norm_upper}
\end{equation}

Furthermore, sub-multiplicativity of the operator norm enforces $1 = \|\mathrm{Id}\|_{op} = \|\mathcal{P}_\tau \mathcal{P}_\tau^{-1}\|_{op} \le \|\mathcal{P}_\tau\|_{op} \|\mathcal{P}_\tau^{-1}\|_{op}$, providing matching lower bounds:
\begin{equation}
	\|\mathcal{P}_\tau\|_{op} \ge \frac{1}{\|\mathcal{P}_\tau^{-1}\|_{op}} \ge \exp\left( -\frac{M_\Gamma L_\tau}{\sqrt{N}} \right) = 1 - \mathcal{O}\left(N^{-1/2}\right).
\end{equation}

Thus, both the forward and inverse parallel transport operator norms satisfy exact two-sided asymptotic bounds:
\begin{equation}
	\|\mathcal{P}_\tau\|_{op} = 1 + \mathcal{O}\left(N^{-1/2}\right) \quad \text{and} \quad \|\mathcal{P}_\tau^{-1}\|_{op} = 1 + \mathcal{O}\left(N^{-1/2}\right).
\end{equation}

	\paragraph{Step 3: Contraction against Accelerated Curvature Annihilation.}
	Taking the operator norm of generator $A(x,y;\tau)$ in Equation \eqref{eq:ambrose_singer_generator}:
	\begin{align}
		\|A(x,y;\tau)\|_{op} &\le \|\mathcal{P}_\tau^{-1}\|_{op} \cdot \|\widetilde{\mathcal{R}}^{(\alpha,N)}(\tau(1))\|_{op} \cdot \|\mathcal{P}_\tau\|_{op}^3 \|x\|_2 \|y\|_2 \nonumber \\
		&\le \left(1 + \mathcal{O}\left(N^{-1/2}\right)\right)^4 \|\widetilde{\mathcal{R}}^{(\alpha,N)}(\tau(1))\|_{op} \|x\|_2 \|y\|_2.
	\end{align}
	By Theorem~\ref{thm:curvature_annihilation} (Object 4: Accelerated Quadratic Curvature Annihilation),
	 Lemma \ref{lem:accelerated_curvature_annihilation}  (Accelerated Curvature Annihilation), and Theorem \ref{thm:universal_scaling_law}  (Universal Tensor Valence Scaling Law $N^{1-r/2}$), the pulled-back Riemann curvature scales as $N^{1-4/2} = N^{-1}$:
	\begin{equation}
		\sup_{h \in \mathbb{K}} \|\widetilde{\mathcal{R}}^{(\alpha,N)}(h)\|_{op} = \frac{1}{N} \sup_{\theta \in \mathcal{U}(\theta_0)} \|\mathcal{R}^{(\alpha,1)}(\theta)\|_{op} = \mathcal{O}_p\left(N^{-1}\right).
		\label{eq:curvature_annihilation_step_ref}
	\end{equation}
	Substituting Equation \eqref{eq:curvature_annihilation_step_ref} into Equation \eqref{eq:ambrose_singer_generator} proves Equation \eqref{eq:ambrose_singer_decay_bound}, completing the proof of Lemma~\ref{lem:ambrose_singer_bound}.
\end{proof}


\begin{theorem}[Accelerated Holonomy Annihilation and Global Parallel Transport Trivialization]
	\label{thm:holonomy_annihilation}
	Under High-Order Local Regularity (Assumption \ref{asm:local_regularity}), as $N\rightarrow\infty$, the restricted holonomy group $Hol_{0}^{0}(\tilde{\nabla}^{(\alpha,N)})$ of the pulled-back information manifold sequence collapses to the trivial identity group $\{I_{d\times d}\}$ at rate $\mathcal{O}_{p}(N^{-1})$:
	\begin{equation}
		\lim_{N\rightarrow\infty}\text{diam}_{op}\left(Hol_{0}^{0}(\tilde{\nabla}^{(\alpha,N)})\right)=0 \iff Hol_{0}^{0}(\tilde{\nabla}^{(\alpha,N)})\xrightarrow{N\rightarrow\infty}\{I_{d\times d}\}.
		\label{eq:holonomy_collapse}
	\end{equation}
	Consequently, parallel transport on the limiting canvas $\mathcal{M}_{\infty}$ is path-independent, establishing global affine flat geometry ($\tilde{\Gamma}^{(0)}\equiv0,\mathcal{R}^{(0)}\equiv0$).
\end{theorem}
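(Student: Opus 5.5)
I will prove the holonomy collapse by converting the Lie-algebra bound of Lemma~\ref{lem:ambrose_singer_bound} into a group-level diameter bound. The diameter is measured by $\mathrm{diam}_{op}(G) \equiv \sup_{g \in G} \|g - \mathbf{I}_{d\times d}\|_{op}$, and I work on a fixed compact convex cage $\mathbb{K}$. Because $\mathbb{K}$ is convex and hence simply connected, every loop at $\mathbf{0}$ is contractible, so the restricted and full holonomy groups coincide. I will restrict to loops of Euclidean length at most some $L$; for the bound to be uniform, the diameter statement has to be read with $L$ fixed.

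The core estimate is a direct bound on $\|\mathcal{P}_\gamma - \mathbf{I}\|_{op}$ for a contractible loop $\gamma$. I will not pass through the exponential map of the group. Instead I use the standard curvature-holonomy comparison: contract $\gamma$ by the straight-line homotopy $\gamma_s(t) = s\,\gamma(t)$, which stays in $\mathbb{K}$ because $\mathbb{K}$ is star-shaped about $\mathbf{0}$. Differentiating $s \mapsto \mathcal{P}_{\gamma_s}$ gives the variation formula
\begin{equation*}
\frac{d}{ds}\mathcal{P}_{\gamma_s} = \mathcal{P}_{\gamma_s}\int_0^1 \mathcal{P}_{\gamma_s|[0,t]}^{-1}\,\widetilde{\mathcal{R}}^{(\alpha,N)}\big(\partial_s\gamma_s,\partial_t\gamma_s\big)\,\mathcal{P}_{\gamma_s|[0,t]}\,dt .
\end{equation*}
The integrand is exactly the Ambrose--Singer generator $A(x,y;\tau)$ of Lemma~\ref{lem:ambrose_singer_bound}. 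Step 2 of that lemma's proof gives the Gr\"onwall bounds $\|\mathcal{P}\|_{op}, \|\mathcal{P}^{-1}\|_{op} \le e^{M_\Gamma L/\sqrt N}$. Integrating over $s \in [0,1]$ with $\mathcal{P}_{\gamma_0} = \mathbf{I}$ then yields
\begin{equation*}
\|\mathcal{P}_\gamma - \mathbf{I}\|_{op} \le C\, L\, R_{\mathbb{K}}\, e^{3M_\Gamma L/\sqrt N}\sup_{h\in\mathbb{K}}\|\widetilde{\mathcal{R}}^{(\alpha,N)}(h)\|_{op}.
\end{equation*}
Lemma~\ref{lem:accelerated_curvature_annihilation} bounds the curvature factor by $M_{\mathcal{R}}/N$. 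So the diameter of the holonomy group over loops of length at most $L$ is $\mathcal{O}(N^{-1})$, and taking $N \to \infty$ gives the limit in \eqref{eq:holonomy_collapse}.

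The equivalence in \eqref{eq:holonomy_collapse} is definitional once convergence of subgroups is taken in the Hausdorff sense inside $\mathrm{GL}(d,\mathbb{R})$ under the operator norm. A set's Hausdorff distance to the singleton $\{\mathbf{I}\}$ equals its $\mathrm{diam}_{op}$ as defined above.

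For the consequence, the vanishing holonomy bound says $\mathcal{P}_{\gamma_1} - \mathcal{P}_{\gamma_2} \to 0$ for any two paths with the same endpoints, since $\mathcal{P}_{\gamma_2}^{-1}\mathcal{P}_{\gamma_1}$ is loop holonomy. Together with $\widetilde{\Gamma}^{(\alpha,N)} \to 0$ from Theorem~\ref{thm:connection_dissolution}, transport converges to the identity map of $\mathbb{R}^d$. That identity map is the transport of $\nabla^{(0)}$, which is path-independent with $\widetilde{\Gamma}^{(0)} \equiv 0$ and $\mathcal{R}^{(0)} \equiv 0$.

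The main obstacle is the variation formula together with the uniformity. Holonomy over arbitrarily long loops is not uniformly small, so the length cutoff $L$ (or the convex-cage restriction) must be made explicit. I also need to check that $\mathcal{O}_p$ here is really deterministic $\mathcal{O}$: the Amari curvature is an expectation and therefore nonrandom.
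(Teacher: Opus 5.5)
Your argument is sound for the statement you actually prove, and it takes a different route from the paper.

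\textbf{The two routes.} The paper uses the non-Abelian Stokes theorem. It writes $\mathcal{P}_\gamma$ as a surface-ordered exponential of the conjugated curvature $A(\xi)=\mathcal{P}_{\tau_\xi}^{-1}\widetilde{\mathcal{R}}^{(\alpha,N)}(\xi)\mathcal{P}_{\tau_\xi}$ over a spanning surface $\Sigma$. It bounds $\sup_\xi\|A(\xi)\|_{op}$ by $\mathcal{O}(N^{-1})$ using the transport bounds of Lemma~\ref{lem:ambrose_singer_bound}, and majorizes the Dyson series via $e^x-1\le xe^x$. It then asserts $\mathrm{Area}(\Sigma)\le\mathrm{Area}(\mathbb{K})$ to obtain a bound uniform over \emph{all} contractible loops. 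You instead integrate the first-variation identity along the explicit radial contraction $\gamma_s=s\gamma$. This needs only an ODE variation formula, with no surface ordering. It also makes the swept area ($\le\tfrac12 R_{\mathbb{K}}L$) and the lengths of all auxiliary transport paths ($\le L$) explicit, so every constant is controlled.

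\textbf{Why your length cutoff is forced.} The paper's uniformity step is exactly where its argument fails. Take a small circle through $\mathbf{0}$ traversed $k$ times: its holonomy is $\mathcal{P}^k$, and any spanning surface has area at least $k$ times that of the disk. For $d\ge 3$ the comparison with $\mathrm{Area}(\mathbb{K})$ is not even meaningful. Likewise, the factors $1+\mathcal{O}(N^{-1/2})$ of Lemma~\ref{lem:ambrose_singer_bound} are really $e^{M_\Gamma L_\tau/\sqrt N}$, which are uniform only over paths of bounded length.

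In fact the literal group statement is false in general. $\mathrm{Hol}^0_0(\widetilde{\nabla}^{(\alpha,N)})$ is a connected Lie subgroup of $\mathrm{GL}(d,\mathbb{R})$. For any nonzero $A$ in its Lie algebra, $t\mapsto\exp(tA)$ is either unbounded or reaches an eigenvalue $-1$, so any nontrivial such group has $\mathrm{diam}_{op}\ge 2$. Since $\widetilde{\mathcal{R}}^{(\alpha,N)}(\mathbf{0})=N^{-1}\mathcal{R}^{(\alpha,1)}(\theta_0)$, Ambrose--Singer makes the group nontrivial for every $N$ as soon as the curvature of $\nabla^{(\alpha,1)}$ at $\theta_0$ is nonzero. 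For example, for $\alpha=0$ on the normal location--scale family, the group is all of $SO(g_0)\cong SO(2)$ for every $N$. So the correct content of the theorem is what you prove: for each fixed $L$, $\sup_{\mathrm{len}(\gamma)\le L}\|\mathcal{P}_\gamma-\mathbf{I}\|_{op}\le C_L/N$.

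\textbf{Adjustments.}
\begin{itemize}
\item Drop ``(or the convex-cage restriction)''. Convexity bounds neither loop length nor swept area, as the winding example shows.
\item Rewrite the ``definitional'' Hausdorff paragraph. The set $\{\mathcal{P}_\gamma:\mathrm{len}(\gamma)\le L\}$ is not a subgroup, and for genuine subgroups Hausdorff convergence to $\{\mathbf{I}\}$ just means eventual triviality.
\item Limiting path-independence over bounded-length paths already follows from Gr\"onwall and $\widetilde{\Gamma}^{(\alpha,N)}=\mathcal{O}(N^{-1/2})$ alone, since $\|\mathcal{P}_\tau-\mathbf{I}\|_{op}\le e^{M_\Gamma L/\sqrt N}-1$. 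What the curvature estimate adds is the sharper $N^{-1}$ rate on loops.
\end{itemize}
Your remark that $\mathcal{O}_p$ should be the deterministic $\mathcal{O}$ is correct.
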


\begin{proof}
	We execute the proof through a rigorous four-step mathematical deduction, translating local curvature decay into global parallel transport trivialization via the non-Abelian Stokes' Theorem.
\begin{itemize}
\item \textbf{Step 1: Surface-Ordered Exponential Representation.}
	Let $\gamma:[0,1]\rightarrow\mathbb{K}$ be an arbitrary piecewise smooth contractible closed loop based at the origin $0$. Because the compact coordinate cage $\mathbb{K}\subset\mathbb{R}^d$ is simply connected, $\gamma$ continuously bounds a smooth 2-dimensional oriented surface $\Sigma\subset\mathbb{K}$ such that its boundary is exactly $\gamma$ ($\partial\Sigma=\gamma$).
	
	By the non-Abelian Stokes' Theorem for affine connections \cite{frankel2011}, the holonomy transformation $\mathcal{P}_{\gamma}\in Hol_{0}^{0}(\tilde{\nabla}^{(\alpha,N)})$ along $\gamma$ is given exactly by the surface-ordered exponential integral over $\Sigma$:
	\begin{equation}
		\mathcal{P}_{\gamma}=\mathcal{P}\exp\left(\iint_{\Sigma}\mathcal{P}_{\tau_{\xi}}^{-1}\tilde{\mathcal{R}}^{(\alpha,N)}(\xi)\mathcal{P}_{\tau_{\xi}}d\Sigma(\xi)\right),
		\label{eq:stokes_non_abelian}
	\end{equation}
	where
\begin{itemize}
	\item \textbf{Surface and Boundary ($\Sigma$ and $\gamma$):}
	$\Sigma \subset \mathbb{K}$ is a smooth 2-dimensional oriented surface continuously bounded by the contractible loop $\gamma$ ($\partial\Sigma = \gamma$), based at a reference origin point $0$.
	
	\item \textbf{Reference Radial Paths ($\tau_{\xi}$):}
	For every point $\xi \in \Sigma$, $\tau_{\xi}$ represents a smooth family of reference paths connecting the base point $0$ to $\xi$.
	
	\item \textbf{Local Curvature Conjugation ($\mathcal{P}_{\tau_{\xi}}^{-1}\tilde{\mathcal{R}}^{(\alpha,N)}(\xi)\mathcal{P}_{\tau_{\xi}}$):}
	The term $\tilde{\mathcal{R}}^{(\alpha,N)}(\xi)$ is the local Riemann curvature tensor (or Lie-algebra-valued curvature 2-form) evaluated at $\xi \in \Sigma$. Because curvature tensors at different points $\xi$ act on different tangent spaces/fibers, the operators $\mathcal{P}_{\tau_{\xi}}$ (parallel transport from $0$ to $\xi$) and $\mathcal{P}_{\tau_{\xi}}^{-1}$ (parallel transport back from $\xi$ to $0$) conjugate the local curvature to translate its action back to the single, common tangent Lie algebra space at the base point $0$.
	
	\item \textbf{Surface-Ordering Operator ($\mathcal{P}\exp$):}
	In non-Abelian geometry, curvature operators at different points do not commute ($\left[A(\xi_1), A(\xi_2)\right] \neq 0$). The operator $\mathcal{P}\exp$ generalizes the 1D path-ordered exponential (Dyson series) to 2D surfaces. It dictates that infinitesimal surface elements $d\Sigma(\xi)$ are continuously ordered along a parameterized family of expanding loops/curves sweeping across $\Sigma$. Formally, it is defined via the series expansion:
	\begin{equation*}
		\mathcal{P}_{\gamma} = I + \iint_{\Sigma} A(\xi) d\Sigma(\xi) + \frac{1}{2!} \mathcal{P} \left( \iint_{\Sigma} A(\xi) d\Sigma(\xi) \right)^2 + \dots
	\end{equation*}
	where $A(\xi) = \mathcal{P}_{\tau_{\xi}}^{-1}\tilde{\mathcal{R}}^{(\alpha,N)}(\xi)\mathcal{P}_{\tau_{\xi}}$.
\end{itemize}
	
\item 	\textbf{Step 2: Operator Norm Bound of the Lie Algebra Generator.}
	Let $A(\xi)\equiv\mathcal{P}_{\tau_{\xi}}^{-1}\tilde{\mathcal{R}}^{(\alpha,N)}(\xi)\mathcal{P}_{\tau_{\xi}}$ denote the integrand in Equation \eqref{eq:stokes_non_abelian}, which acts as a Lie algebra generator. Applying the sub-multiplicative property of the matrix operator norm $||\cdot||_{op}$:
	\begin{equation}
		||A(\xi)||_{op} \le ||\mathcal{P}_{\tau_{\xi}}^{-1}||_{op} \cdot ||\tilde{\mathcal{R}}^{(\alpha,N)}(\xi)||_{op} \cdot ||\mathcal{P}_{\tau_{\xi}}||_{op}.
		\label{eq:generator_norm_bound}
	\end{equation}
	By the parallel transport operator bounds established in Lemma \ref{lem:ambrose_singer_bound}, $||\mathcal{P}_{\tau_{\xi}}||_{op} = 1 + \mathcal{O}(N^{-1/2})$ and $||\mathcal{P}_{\tau_{\xi}}^{-1}||_{op} = 1 + \mathcal{O}(N^{-1/2})$. Concurrently, Theorem \ref{thm:curvature_annihilation} (Accelerated Quadratic Curvature Annihilation) guarantees that the pulled-back Riemann curvature is uniformly bounded across $\mathbb{K}$ by:
	\begin{equation}
		\sup_{\xi\in\mathbb{K}}||\tilde{\mathcal{R}}^{(\alpha,N)}(\xi)||_{op} = \mathcal{O}_{p}(N^{-1}).
	\end{equation}
	Substituting these rates into Equation \eqref{eq:generator_norm_bound}, we bound the generator uniformly over $\Sigma\subset\mathbb{K}$:
	\begin{equation}
		\sup_{\xi\in\Sigma}||A(\xi)||_{op} \le \left(1 + \mathcal{O}(N^{-1/2})\right)^2 \mathcal{O}_{p}(N^{-1}) = \mathcal{O}_{p}(N^{-1}).
		\label{eq:uniform_integrand_bound}
	\end{equation}
	
\item 	\textbf{Step 3: Dyson Series Expansion and Identity Deviation.}
	The ordered exponential operator $\mathcal{P}\exp$ in Equation \eqref{eq:stokes_non_abelian} is formally defined by its Dyson series expansion:
	\begin{equation}
		\mathcal{P}_{\gamma} = I_{d\times d} + \iint_{\Sigma}A(\xi)d\Sigma(\xi) + \frac{1}{2!}\mathcal{P}\left(\iint_{\Sigma}A(\xi)d\Sigma(\xi)\right)^2 + \dots
	\end{equation}
	Subtracting the identity matrix $I_{d\times d}$ and applying the triangle inequality to the infinite sum yields the standard exponential majorization bound for the operator norm:
	\begin{equation}
		||\mathcal{P}_{\gamma} - I_{d\times d}||_{op} \le \exp\left(\iint_{\Sigma}||A(\xi)||_{op}d\Sigma(\xi)\right) - 1.
	\end{equation}
	Using the analytic scalar inequality $e^x - 1 \le x e^x$ for $x \ge 0$, we extract the explicit linear-exponential bound:
	\begin{equation}
		||\mathcal{P}_{\gamma} - I_{d\times d}||_{op} \le \left(\iint_{\Sigma}||A(\xi)||_{op}d\Sigma(\xi)\right) \exp\left(\iint_{\Sigma}||A(\xi)||_{op}d\Sigma(\xi)\right).
		\label{eq:exponential_majorization}
	\end{equation}
	
\item \textbf{Step 4: Asymptotic Annihilation of the Holonomy Group.}
	By integrating the uniform bound from Equation \eqref{eq:uniform_integrand_bound} over the surface $\Sigma$, and recognizing that the area of $\Sigma$ is strictly upper-bounded by the maximal area of the compact cage $\mathbb{K}$:
	\begin{equation}
		\iint_{\Sigma}||A(\xi)||_{op}d\Sigma(\xi) \le \text{Area}(\Sigma) \cdot \mathcal{O}_{p}(N^{-1}) \le \text{Area}(\mathbb{K}) \cdot \mathcal{O}_{p}(N^{-1}) = \mathcal{O}_{p}(N^{-1}).
	\end{equation}
	Substituting this integral bound into Equation \eqref{eq:exponential_majorization}:
	\begin{equation}
		||\mathcal{P}_{\gamma} - I_{d\times d}||_{op} \le \mathcal{O}_{p}(N^{-1}) \cdot \exp\left(\mathcal{O}_{p}(N^{-1})\right).
	\end{equation}
	Since $\exp(\mathcal{O}_{p}(N^{-1})) \xrightarrow{N\rightarrow\infty} 1$, the right-hand side simplifies asymptotically to exactly $\mathcal{O}_{p}(N^{-1})$.
	Because this bound depends solely on the geometric area of the domain $\mathbb{K}$ and is completely independent of the specific choice of the contractible loop $\gamma$, we can take the supremum over all possible loops $\gamma \subset \mathbb{K}$:
	\begin{equation}
		\sup_{\gamma\subset\mathbb{K}}||\mathcal{P}_{\gamma} - I_{d\times d}||_{op} = \mathcal{O}_{p}(N^{-1}).
		\label{eq:supremum_bound_holonomy}
	\end{equation}
	The operator norm diameter of the restricted holonomy group is precisely defined by this supremum. Taking the thermodynamic limit $N\rightarrow\infty$ on both sides of Equation \eqref{eq:supremum_bound_holonomy} yields:
	\begin{equation}
		\lim_{N\rightarrow\infty}\text{diam}_{op}\left(Hol_{0}^{0}(\tilde{\nabla}^{(\alpha,N)})\right) = \lim_{N\rightarrow\infty}\sup_{\gamma\subset\mathbb{K}}||\mathcal{P}_{\gamma} - I_{d\times d}||_{op} = 0.
	\end{equation}
	This proves that $Hol_{0}^{0}(\tilde{\nabla}^{(\alpha,N)}) \rightarrow \{I_{d\times d}\}$ identically. 
	Consequently, parallel transport along any path within the limiting canvas $\mathcal{M}_{\infty}$ induces exclusively the identity transformation, ensuring absolute path-independence. This globally trivializes the affine geometry, locking the limits strictly to $\tilde{\Gamma}^{(0)}\equiv0$ and $\mathcal{R}^{(0)}\equiv0$, completing the proof of Theorem \ref{thm:holonomy_annihilation}.
\end{itemize}	
\end{proof}


\subsection{Global LAN Uniformization Across Compact Coordinate Cages}
\label{subsec:global_lan_uniformization}

In Section~\ref{sec:module3} and Section~\ref{sec:equivalence_proof}, the equivalence between Geometric Priority (\ref{ax:geom_priority}) and Statistical Operationalism (\ref{ax:stat_oper}) was proven for localized point neighborhoods. Furthermore, in Section~\ref{sec:module2}, Section~\ref{subsec:mod2_degeneration}, Section~\ref{sec:module1}, and Section~\ref{sec:equivalence_proof}, we established that the horizontal Riemannian leaf sequence $(\mathcal{L}_{N,\theta_0}, \widetilde{G}_H^{(N)})$ undergoes Gromov--Hausdorff metric collapse onto the flat Euclidean canvas $\mathcal{M}_\infty \equiv (T_{\theta_0}IG_1, g_0\big|_{\mathcal{B}}, \nabla^{(0)})$ at rate $\mathcal{O}(N^{-1/2})$ (Theorem \ref{thm:gh_leaf_collapse}), while the restricted holonomy group $\mathrm{Hol}_{\theta_0}^0(\widetilde{\nabla}^{(\alpha,N)})$ collapses to the trivial identity subgroup $\{\mathrm{Id}_{d \times d}\}$ at the accelerated rate $\mathcal{O}_p(N^{-1})$ (Theorem \ref{thm:holonomy_annihilation}).

In this subsection (Stage III of the global blueprint outlined in Section~\ref{sec:module1}), we translate these differential-topological leaf collapse results into global decision-theoretic uniformization across arbitrary compact coordinate cages $\mathbb{K} \subset H_{\theta_0} \cong \mathbb{R}^d$. We demonstrate that the localized Radon--Nikodym log-likelihood ratio process $\Lambda_{N,H}(h)$ converges uniformly in probability over $\mathbb{K}$ to a linear-quadratic Gaussian experiment, forcing Le Cam's experiment deficiency distance $\Delta\left(\mathcal{E}_{N,H}(\mathbb{K}), \mathcal{E}_\infty(\mathcal{B}; \mathbb{K})\right)$ to vanish at rate $\mathcal{O}(N^{-1/2})$.

\begin{definition}[Horizontal Localized Experiment Sequence and Canonical Limit Experiment]
	\label{def:horizontal_localized_experiment}
	Let $(\Theta, \mathcal{B}, \pi, \mathcal{F})$ be the statistical fiber bundle equipped with the Fisher-compatible Ehresmann connection $T_\theta \Theta = H_\theta \oplus V_\theta$ (Definition \ref{def:ehresmann_connection}). For a fixed background parameter state $\theta_0 \in \mathrm{Int}(\Theta)$ and an $N$-invariant compact coordinate cage $\mathbb{K} \subset H_{\theta_0} \cong \mathbb{R}^d$ enclosing the origin $h = \mathbf{0}$:
	\begin{enumerate}[label=\textnormal{(\roman*)}]
		\item The \textbf{horizontally restricted $N$-sample empirical experiment sequence} $\mathcal{E}_{N,H}(\mathbb{K})$ is defined by:
		\begin{equation}
			\mathcal{E}_{N,H}(\mathbb{K}) \equiv \left( \mathcal{X}^N, \, \mathcal{A}^{\otimes N}, \, \left\{ P_{\theta_0 + \frac{1}{\sqrt{N}} P_{\theta_0}^H h}^{(N)} : h \in \mathbb{K} \right\} \right),
			\label{eq:horizontal_empirical_exp_def}
		\end{equation}
		where $P_{\theta_0}^H \equiv \mathrm{id}_{T_{\theta_0}\Theta} - \omega_{\theta_0}$ is the horizontal projection operator (Definition  \ref{def:connection_1form_projection}).
		
		\item The \textbf{canonical limiting Gaussian shift experiment} $\mathcal{E}_\infty(\mathcal{B}; \mathbb{K})$ defined on the quotient base manifold $\mathcal{B} \equiv \Theta / \sim$ is defined by:
		\begin{equation}
			\mathcal{E}_\infty(\mathcal{B}; \mathbb{K}) \equiv \left( \mathbb{R}^d, \, \mathcal{B}^d, \, \left\{ \mathcal{N}\left( h, \, \left(g_0\big|_{\mathcal{B}}\right)^{-1} \right) : h \in \mathbb{K} \right\} \right),
			\label{eq:canonical_gaussian_limit_exp_def}
		\end{equation}
		where $g_0\big|_{\mathcal{B}} \equiv P_{\theta_0}^H g^{(1)}(\theta_0) P_{\theta_0}^H$ is the strictly positive-definite, non-singular restricted Fisher metric tensor on $H_{\theta_0}$ (Theorem \ref{thm:horizontal_strict_positivity}).
	\end{enumerate}
\end{definition}

\begin{lemma}[Uniform Stochastic Expansion of the Horizontal Log-Likelihood Process]
	\label{lem:uniform_horizontal_log_likelihood_expansion}
	Under High-Order Local Regularity (Assumption \ref{asm:local_regularity}), the localized horizontal log-likelihood ratio process
	\begin{equation}
		\Lambda_{N,H}(h) \equiv \log \frac{d P_{\theta_0 + \frac{1}{\sqrt{N}} P_{\theta_0}^H h}^{(N)}}{d P_{\theta_0}^{(N)}}(X^N) = \sum_{n=1}^N \left[ \log p\left(X_n; \theta_0 + \frac{P_{\theta_0}^H h}{\sqrt{N}}\right) - \log p(X_n; \theta_0) \right]
		\label{eq:lambda_nh_definition_sec7}
	\end{equation}
	admits the uniform stochastic linear-quadratic expansion over the compact coordinate cage $\mathbb{K} \subset H_{\theta_0}$:
	\begin{equation}
		\Lambda_{N,H}(h) = h^\top \Delta_{N,H}(\theta_0) - \frac{1}{2} h^\top \left( g_0\big|_{\mathcal{B}} \right) h + R_{N,H}(h),
		\label{eq:uniform_lan_expansion_formula_sec7}
	\end{equation}
	where:
	\begin{enumerate}[label=\textnormal{(\roman*)}]
		\item The horizontally projected sample score vector $\Delta_{N,H}(\theta_0) \equiv \frac{1}{\sqrt{N}} \sum_{n=1}^N P_{\theta_0}^H \nabla_\theta \log p(X_n; \theta_0)$ converges weakly under $P_{\theta_0}^{(N)}$ to a $d$-dimensional multivariate Gaussian vector:
		\begin{equation}
			\Delta_{N,H}(\theta_0) \xrightarrow{d} \Delta_{\infty,H} \sim \mathcal{N}\left( \mathbf{0}, \, g_0\big|_{\mathcal{B}} \right);
			\label{eq:score_weak_limit_h_sec7}
		\end{equation}
		\item The non-linear stochastic remainder process $R_{N,H}(h)$ vanishes uniformly in probability over $\mathbb{K}$:
		\begin{equation}
			\sup_{h \in \mathbb{K}} \left| R_{N,H}(h) \right| = \mathcal{O}_p\left( N^{-1/2} \right) \xrightarrow{P_{\theta_0}^{(N)}} 0 \quad \text{as } N \to \infty.
			\label{eq:remainder_uniform_p_convergence_sec7}
		\end{equation}
	\end{enumerate}
\end{lemma}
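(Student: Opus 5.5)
This lemma is essentially Theorem~\ref{thm:generalized_horizontal_log_likelihood} restated on $\mathbb{K}\subset H_{\theta_0}$, but with the sharper rate $\mathcal{O}_p(N^{-1/2})$ demanded for the \emph{whole} remainder. The plan is therefore to rerun that five-step argument and upgrade the control of the Hessian fluctuation term.

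\textbf{Expansion and score term.} Fix $h\in\mathbb{K}$ and use $P_{\theta_0}^H h=h$. I would perform the third-order integral-form Taylor expansion of each $l_n(\theta_0+h/\sqrt N)$, exactly as in \eqref{eq:taylor_expansion_single_obs}--\eqref{eq:integral_remainder_def}, and sum over $n$. The linear term is identically $h^\top\Delta_{N,H}(\theta_0)$. The projected scores $P_{\theta_0}^H S_n$ are i.i.d. with mean zero and covariance $P_{\theta_0}^H g^{(1)}(\theta_0)P_{\theta_0}^H=g_0|_{\mathcal{B}}$. The multivariate CLT then gives (i), with the limit non-degenerate on $H_{\theta_0}$ by Theorem~\ref{thm:horizontal_strict_positivity}.

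\textbf{Quadratic term and remainder.} I split the quadratic term as in \eqref{eq:second_order_split} into $-\tfrac12 h^\top g_0|_{\mathcal{B}}h$ plus $\tfrac12 h^\top P^H Q_N P^H h$, where $Q_N=N^{-1}\sum_n[\nabla^2 l_n(\theta_0)+g^{(1)}(\theta_0)]$. I then define $R_{N,H}(h)$ as this fluctuation term plus the cubic sum $\sum_n R^{(3)}_{n,H}(h)$.

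\textbf{Cubic term.} The envelope argument of \eqref{eq:third_order_sum_bound} applies directly: by Assumption~\ref{asm:local_regularity}(iii),
\[
\sup_{h\in\mathbb{K}}\Bigl|\sum_n R^{(3)}_{n,H}(h)\Bigr|\le \frac{D^{3/2}C_P^3R_{\mathbb{K}}^3}{6\sqrt N}\cdot\frac1N\sum_n M_3(X_n)=\mathcal{O}_p(N^{-1/2}),
\]
where the last step uses the WLLN. This holds for all sufficiently large $N$ such that $\theta_0+\mathbb{K}/\sqrt N\subset\mathcal{U}(\theta_0)$.

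\textbf{Main obstacle.} The hard step is getting $Q_N=\mathcal{O}_p(N^{-1/2})$ rather than merely $o_p(1)$. The WLLN used in Theorem~\ref{thm:generalized_horizontal_log_likelihood} only yields the latter. To get the rate I would apply Chebyshev entrywise, which requires $\mathbb{E}_{\theta_0}|\partial_i\partial_j l(X;\theta_0)|^2<\infty$. This second moment is not literally stated in Assumption~\ref{asm:local_regularity}. I would first try to derive it from the $L_2$-differentiability of $\sqrt{p}$ in (i) together with the square-integrable third-derivative envelope in (iii). If that fails, I would state it explicitly as part of High-Order Local Regularity.

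\textbf{Uniformity and conclusion.} With the second moment in hand, $\mathbb{E}\|Q_N\|_F^2=\mathcal{O}(1/N)$, so $\|Q_N\|_{\mathrm{op}}=\mathcal{O}_p(N^{-1/2})$. Since $h$ enters this term only through a deterministic quadratic form, uniformity over $\mathbb{K}$ is free:
\[
\sup_{h\in\mathbb{K}}\Bigl|\tfrac12 h^\top P^H Q_N P^H h\Bigr|\le \tfrac12R_{\mathbb{K}}^2C_P^2\|Q_N\|_{\mathrm{op}}.
\]
Adding this to the cubic bound gives (ii) with rate $\mathcal{O}_p(N^{-1/2})$.
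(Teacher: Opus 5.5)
Your argument is sound, granting the extra moment you flag, and it matches the paper on the score term and on the splitting of the quadratic term. It departs from the paper's proof of this lemma in how the two remainder pieces are controlled.

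\textbf{Cubic remainder.} For the cubic sum, the paper does not reuse the $M_3$-envelope bound of Theorem~\ref{thm:generalized_horizontal_log_likelihood}. Instead it invokes the geometric envelope of Lemma~\ref{lem:pillar_2_taylor_geometry}. That bounds $\sum_n r_n^{(3)}(h)$ by $\tfrac{\|h\|_2^3}{6}\sup_{h\in\mathbb{K}}\bigl|[\psi_{N,H}^*\nabla^{(\alpha,N)}]_{ijk}(h)\bigr|$ plus $\mathcal{O}_p$ of the pulled-back curvature. The paper then inserts Theorems~\ref{thm:connection_dissolution} and~\ref{thm:curvature_annihilation}. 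That route ties the likelihood remainder to the geometric fields, which the paper's narrative needs, but the envelope inequality is asserted rather than derived. The remainder is $N^{-1/2}$ times an empirical average of third derivatives at intermediate points, not an expectation. Moreover, no curvature term arises in a third-order Taylor remainder. Your envelope-plus-WLLN bound is the step that actually controls this stochastic term.

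\textbf{Hessian fluctuation.} Here the paper writes $\|Q_N\|_{\mathrm{op}}=\mathcal{O}_p(N^{-1/2})$ right after citing Khintchine's WLLN, which only gives $o_p(1)$. Your Chebyshev step supplies the missing rate. As for deriving $\mathbb{E}_{\theta_0}|\partial_i\partial_j l|^2<\infty$ from Assumption~\ref{asm:local_regularity}: $L_2$-differentiability of $\sqrt{p}$ by itself controls only $\mathbb{E}_{\theta_0}\bigl(\partial_i\partial_j l+\tfrac12\partial_i l\,\partial_j l\bigr)^2$. Isolating the Hessian that way would need fourth moments of the score. Stating the second Hessian moment explicitly is the clean fix; without it, neither proof gives more than $o_p(1)$ in (ii).

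\textbf{The envelope $M_3$.} Your $M_3$ is an envelope with $\mathbb{E}_{\theta_0}\sup_{\theta\in\mathcal{U}(\theta_0)}|\partial^3 l|<\infty$. This is how the paper reads Assumption~\ref{asm:local_regularity}(iii) in Theorems~\ref{thm:lan_linear_quadratic_proof} and~\ref{thm:generalized_horizontal_log_likelihood}. It is, however, stronger than the displayed condition $\sup_\theta\mathbb{E}_\theta|\partial^3\log p|^2<\infty$, so list it among your explicit hypotheses as well.
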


\begin{proof}
	We execute the deduction in four sequential steps.
	
	\paragraph{\textbf{Step 1: Multivariable Taylor Expansion of Single-Observation Log-Likelihoods.}}
	Fix $h \in \mathbb{K} \subset H_{\theta_0}$. Under High-Order Local Regularity (Assumption \ref{asm:local_regularity}), the single-observation log-likelihood $l_n(\theta) \equiv \log p(X_n; \theta)$ is three times continuously differentiable on an open neighborhood $\mathcal{U}(\theta_0) \subset \mathrm{Int}(\Theta)$. Performing a third-order multivariate Taylor series expansion of $l_n\left(\theta_0 + \frac{P_{\theta_0}^H h}{\sqrt{N}}\right)$ around $h = \mathbf{0}$:
	\begin{align}
		\small 
		l_n\left(\theta_0 + \frac{P_{\theta_0}^H h}{\sqrt{N}}\right) - l_n(\theta_0) &= \frac{1}{\sqrt{N}} \left(P_{\theta_0}^H h\right)^\top \nabla_\theta l_n(\theta_0) + \frac{1}{2N} \left(P_{\theta_0}^H h\right)^\top \nabla_\theta^2 l_n(\theta_0) \left(P_{\theta_0}^H h\right) \nonumber \\
		&\quad + \frac{1}{6 N^{3/2}} \sum_{i,j,k=1}^D \left(P_{\theta_0}^H h\right)^i \left(P_{\theta_0}^H h\right)^j \left(P_{\theta_0}^H h\right)^k \int_0^1 3(1-t)^2 \frac{\partial^3 l_n\left(\theta_0 + t \frac{P_{\theta_0}^H h}{\sqrt{N}}\right)}{\partial \theta^i \partial \theta^j \partial \theta^k} dt.
		\label{eq:taylor_expansion_single_obs_sec7}
	\end{align}
	
	\paragraph{\textbf{Step 2: Linear Term and Multivariate Central Limit Theorem.}}
	Summing the first-order linear term in Equation~\eqref{eq:taylor_expansion_single_obs_sec7} over $n = 1, \dots, N$:
	\begin{equation}
		\sum_{n=1}^N \frac{1}{\sqrt{N}} \left(P_{\theta_0}^H h\right)^\top \nabla_\theta l_n(\theta_0) = h^\top P_{\theta_0}^H \left( \frac{1}{\sqrt{N}} \sum_{n=1}^N \nabla_\theta l_n(\theta_0) \right) = h^\top \Delta_{N,H}(\theta_0).
		\label{eq:linear_term_sum_sec7}
	\end{equation}
	The horizontally projected score vectors $S_{n,H} \equiv P_{\theta_0}^H \nabla_\theta l_n(\theta_0)$ are independent and identically distributed ($i.i.d.$) under $P_{\theta_0}^{(N)}$, satisfying:
	\begin{align}
		\mathbb{E}_{\theta_0}\left[ S_{n,H} \right] &= P_{\theta_0}^H \mathbb{E}_{\theta_0}\left[ \nabla_\theta l_n(\theta_0) \right] = \mathbf{0}, \label{eq:score_mean_zero_sec7} \\
		\mathrm{Var}_{\theta_0}\left( S_{n,H} \right) &= \mathbb{E}_{\theta_0}\left[ S_{n,H} S_{n,H}^\top \right] = P_{\theta_0}^H g^{(1)}(\theta_0) P_{\theta_0}^H \equiv g_0\big|_{\mathcal{B}}. \label{eq:score_cov_g0_sec7}
	\end{align}
	By the Multivariate Central Limit Theorem applied to the normalized sum $$\Delta_{N,H}(\theta_0) = \frac{1}{\sqrt{N}} \sum_{n=1}^N S_{n,H},$$ we obtain weak Gaussian convergence:
	\begin{equation}
		\Delta_{N,H}(\theta_0) \xrightarrow{d} \Delta_{\infty,H} \sim \mathcal{N}\left( \mathbf{0}, \, g_0\big|_{\mathcal{B}} \right).
		\label{eq:clt_score_proven_sec7}
	\end{equation}
	
	\paragraph{\textbf{Step 3: Second-Order Term and Metric Stabilization.}}
	Summing the second-order term in Equation~\eqref{eq:taylor_expansion_single_obs_sec7} over $n = 1, \dots, N$ and introducing the expected single-observation Hessian $\mathbb{E}_{\theta_0}\left[ \nabla_\theta^2 l_n(\theta_0) \right] = -g^{(1)}(\theta_0)$:
	\begin{align}
		\frac{1}{2N} \sum_{n=1}^N \left(P_{\theta_0}^H h\right)^\top \nabla_\theta^2 l_n(\theta_0) \left(P_{\theta_0}^H h\right) &= -\frac{1}{2} h^\top \left( P_{\theta_0}^H g^{(1)}(\theta_0) P_{\theta_0}^H \right) h \nonumber \\
		&\quad + \frac{1}{2} h^\top P_{\theta_0}^H \left( \frac{1}{N} \sum_{n=1}^N \nabla_\theta^2 l_n(\theta_0) + g^{(1)}(\theta_0) \right) P_{\theta_0}^H h.
		\label{eq:second_order_sum_split_sec7}
	\end{align}
	The lead term matches $-\frac{1}{2} h^\top \left( g_0\big|_{\mathcal{B}} \right) h$. For the matrix deviation term $$Q_N \equiv \frac{1}{N} \sum_{n=1}^N \nabla_\theta^2 l_n(\theta_0) + g^{(1)}(\theta_0),$$ Khintchine's Weak Law of Large Numbers implies $Q_N \xrightarrow{P_{\theta_0}^{(N)}} \mathbf{0}_{D \times D}$. Because $h \in \mathbb{K}$ with $\|h\|_2 \le R_\mathbb{K} < \infty$ and $\|P_{\theta_0}^H\|_{\mathrm{op}} < \infty$:
	\begin{equation}
		\sup_{h \in \mathbb{K}} \left| \frac{1}{2} h^\top P_{\theta_0}^H Q_N P_{\theta_0}^H h \right| \le \frac{1}{2} R_\mathbb{K}^2 \left\| P_{\theta_0}^H \right\|_{\mathrm{op}}^2 \left\| Q_N \right\|_{\mathrm{op}} = \mathcal{O}_p\left( N^{-1/2} \right).
		\label{eq:second_order_dev_bound_sec7}
	\end{equation}
	
	\paragraph{\textbf{Step 4: Geometric Control of the Third-Order Remainder.}}
	By Lemma~\ref{lem:pillar_2_taylor_geometry} and Theorem~\ref{thm:generalized_horizontal_log_likelihood}, the third-order log-density sum is governed by the pulled-back connection symbols $\left[\psi_{N,H}^* \nabla^{(\alpha,N)}\right]_{ijk}(h)$ and pulled-back Riemann curvature $\left[\psi_{N,H}^* \mathcal{R}^{(\alpha,N)}\right]_{ijmn}(h)$:
	\begin{equation}
		\sup_{h \in \mathbb{K}} \left| \sum_{n=1}^N r_n^{(3)}(h) \right| \le \frac{\|h\|_2^3}{6} \sup_{h \in \mathbb{K}} \left| \left[ \psi_{N,H}^* \nabla^{(\alpha,N)} \right]_{ijk}(h) \right| + \mathcal{O}_p\left( \sup_{h \in \mathbb{K}} \left\| \left[ \psi_{N,H}^* \mathcal{R}^{(\alpha,N)} \right]_{ijmn}(h) \right\|_\infty \right).
		\label{eq:third_order_geometric_envelope_sec7}
	\end{equation}
	Applying Theorem~\ref{thm:connection_dissolution} (connection dissolution at rate $\mathcal{O}(N^{-1/2})$) and Theorem~\ref{thm:curvature_annihilation} (accelerated curvature annihilation at rate $\mathcal{O}_p(N^{-1})$) directly to Equation~\eqref{eq:third_order_geometric_envelope_sec7}:
	\begin{equation}
		\sup_{h \in \mathbb{K}} \left| \sum_{n=1}^N r_n^{(3)}(h) \right| \le \frac{R_\mathbb{K}^3}{6} \mathcal{O}\left( N^{-1/2} \right) + \mathcal{O}_p\left( N^{-1} \right) = \mathcal{O}_p\left( N^{-1/2} \right).
		\label{eq:third_order_rate_proven_sec7}
	\end{equation}
	Combining Equations~\eqref{eq:second_order_dev_bound_sec7} and \eqref{eq:third_order_rate_proven_sec7} completes the proof of Equation~\eqref{eq:remainder_uniform_p_convergence_sec7} and establishes Lemma~\ref{lem:uniform_horizontal_log_likelihood_expansion}.
\end{proof}

\begin{lemma}[Mutual Contiguity on Horizontal Leaves]
	\label{lem:horizontal_leaf_contiguity}
	For any sequence of local displacement vectors $h_N \in \mathbb{K} \subset H_{\theta_0}$, the sequence of $N$-sample probability measures $P_{\theta_N(h_N)}^{(N)}$ along the horizontal trajectory $\theta_N(h_N) \equiv \theta_0 + \frac{1}{\sqrt{N}} P_{\theta_0}^H h_N$ and the background measure $P_{\theta_0}^{(N)}$ are mutually contiguous:
	\begin{equation}
		P_{\theta_N(h_N)}^{(N)} \triangleleft \triangleright P_{\theta_0}^{(N)}.
		\label{eq:mutual_contiguity_leaf_formula_sec7}
	\end{equation}
\end{lemma}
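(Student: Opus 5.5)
The plan is to derive mutual contiguity from Le Cam's First Lemma, applied to the limiting law of the log-likelihood ratio along the moving sequence $h_N$. Lemma~\ref{lem:uniform_horizontal_log_likelihood_expansion} supplies a linear-quadratic expansion that holds \emph{uniformly} over $\mathbb{K}$. This uniformity is exactly what lets us replace a fixed $h$ by an arbitrary sequence $h_N\in\mathbb{K}$. Because $\mathbb{K}$ is compact, it also suffices to argue along subsequences on which $h_N\to h^\ast\in\mathbb{K}$. Contiguity holds for the full sequence if every subsequence has a further subsequence along which it holds, since contiguity is characterized by tightness and uniform-integrability properties of the likelihood ratios.

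First, evaluate the expansion \eqref{eq:uniform_lan_expansion_formula_sec7} at $h=h_N$:
\[
\Lambda_{N,H}(h_N)=h_N^\top\Delta_{N,H}(\theta_0)-\tfrac12 h_N^\top (g_0|_{\mathcal{B}})h_N+R_{N,H}(h_N).
\]
Since $|R_{N,H}(h_N)|\le\sup_{h\in\mathbb{K}}|R_{N,H}(h)|=\mathcal{O}_p(N^{-1/2})$, the remainder is $o_p(1)$ under $P^{(N)}_{\theta_0}$. Along a subsequence with $h_N\to h^\ast$, we also have $(h_N-h^\ast)^\top\Delta_{N,H}=o_p(1)$, because $\Delta_{N,H}$ is tight by \eqref{eq:score_weak_limit_h_sec7}. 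The quadratic term converges deterministically. By Slutsky's lemma,
\[
\Lambda_{N,H}(h_N)\xrightarrow{d}\mathcal{N}\!\left(-\tfrac12\sigma^2,\sigma^2\right)
\quad\text{under }P^{(N)}_{\theta_0},\qquad \sigma^2=h^{\ast\top}(g_0|_{\mathcal{B}})h^\ast .
\]

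Second, apply Le Cam's First Lemma exactly as in Step~1 of Lemma~\ref{lem:lecam_deficiency_equivalence}. Write $L=e^{\Lambda_\infty}$ for the limit of the likelihood ratios $dP^{(N)}_{\theta_N(h_N)}/dP^{(N)}_{\theta_0}$. The direction $P^{(N)}_{\theta_N(h_N)}\triangleleft P^{(N)}_{\theta_0}$ follows from $\mathbb{E}[L]=\exp(-\tfrac12\sigma^2+\tfrac12\sigma^2)=1$. The reverse direction $P^{(N)}_{\theta_0}\triangleleft P^{(N)}_{\theta_N(h_N)}$ follows because the limit law of the likelihood ratio puts no mass at zero: $P(L=0)=0$, since $\Lambda_\infty$ is finite Gaussian. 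The degenerate case $h^\ast=0$ is harmless, since then $L\equiv1$. Theorem~\ref{thm:horizontal_strict_positivity} guarantees that $\sigma^2<\infty$ and that $g_0|_{\mathcal{B}}$ is well defined on $H_{\theta_0}$, so no ambient degeneracy enters.

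The main obstacle I expect is the passage from a fixed $h$ to the moving sequence $h_N$. The remainder bound borrowed from Step~4 of Lemma~\ref{lem:uniform_horizontal_log_likelihood_expansion} must genuinely be uniform in $h$. If the geometric envelope is felt to be insufficiently rigorous, I would instead rely directly on the explicit third-order bound of Theorem~\ref{thm:generalized_horizontal_log_likelihood}, Step~4, namely \eqref{eq:remainder_rate_proven_sec7}, together with the uniform $Q_N$ bound. Both estimates are plainly uniform over $\mathbb{K}$, which closes this gap. The subsequence argument then handles non-convergent $h_N$.
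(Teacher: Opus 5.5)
Your proposal follows the same route as the paper. The uniform horizontal expansion of Lemma~\ref{lem:uniform_horizontal_log_likelihood_expansion} gives $\Lambda_{N,H}(h_N)\to\mathcal{N}(-\tfrac12\sigma^2,\sigma^2)$ under $P^{(N)}_{\theta_0}$, and Le Cam's First Lemma then yields mutual contiguity. Your version is in fact tighter than the paper's: the paper writes $\sigma_H^2=h^\top(g_0|_{\mathcal{B}})h$ without saying how the moving sequence $h_N$ is handled, and it treats $\mathbb{E}[e^{\Lambda_{\infty,H}}]=1$ as sufficient for both directions of contiguity. Your compactness/subsequence step and your separate check that the limiting likelihood ratio has no atom at zero supply exactly those two missing details.
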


\begin{proof}
	We apply Le Cam's First Lemma \cite{lecam1986, vandervaart1998}. Under $P_{\theta_0}^{(N)}$, the horizontal log-likelihood ratio process $\Lambda_{N,H}(h_N)$ defined in Lemma~\ref{lem:uniform_horizontal_log_likelihood_expansion} satisfies the weak limit:
	\begin{equation}
		\Lambda_{N,H}(h_N) \xrightarrow{d} \Lambda_{\infty,H} \sim \mathcal{N}\left( -\frac{1}{2} \sigma_H^2, \, \sigma_H^2 \right), \quad \text{where } \sigma_H^2 \equiv h^\top \left( g_0\big|_{\mathcal{B}} \right) h.
		\label{eq:weak_limit_likelihood_ratio_contiguity}
	\end{equation}
	Evaluating the expectation of the limiting Radon--Nikodym derivative $L_\infty \equiv \exp(\Lambda_{\infty,H})$:
	\begin{equation}
		\mathbb{E}\left[ e^{\Lambda_{\infty,H}} \right] = \exp\left( -\frac{1}{2} \sigma_H^2 + \frac{1}{2} \sigma_H^2 \right) = e^0 = 1.
		\label{eq:expectation_radon_nikodym_one}
	\end{equation}
	By Le Cam's First Lemma (Condition 2), an expectation of 1 for the limiting likelihood ratio is necessary and sufficient for mutual contiguity $P_{\theta_N(h_N)}^{(N)} \triangleleft \triangleright P_{\theta_0}^{(N)}$, completing the proof.
\end{proof}

\begin{theorem}[Global Experiment Deficiency Convergence under Le Cam Distance]
	\label{thm:global_experiment_deficiency_convergence}
	Under High-Order Local Regularity (Assumption \ref{asm:local_regularity}) and the Fisher-compatible Ehresmann Connection framework (Definition \ref{def:ehresmann_connection}), for every compact coordinate cage $\mathbb{K} \subset H_{\theta_0} \cong \mathbb{R}^d$, the Le Cam experiment deficiency distance between the horizontally restricted empirical experiment $\mathcal{E}_{N,H}(\mathbb{K})$ and the canonical Gaussian shift limit experiment $\mathcal{E}_\infty(\mathcal{B}; \mathbb{K})$ vanishes asymptotically:
	\begin{equation}
		\lim_{N \to \infty} \Delta\left( \mathcal{E}_{N,H}(\mathbb{K}), \, \mathcal{E}_\infty(\mathcal{B}; \mathbb{K}) \right) = 0.
		\label{eq:global_lecam_deficiency_limit_formula_sec7}
	\end{equation}
	Furthermore, the rate of operational experiment convergence satisfies:
	\begin{equation}
		\Delta\left( \mathcal{E}_{N,H}(\mathbb{K}), \, \mathcal{E}_\infty(\mathcal{B}; \mathbb{K}) \right) = \mathcal{O}\left( N^{-1/2} \right).
		\label{eq:lecam_deficiency_rate_formula_sec7}
	\end{equation}
\end{theorem}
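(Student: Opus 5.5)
The qualitative statement \eqref{eq:global_lecam_deficiency_limit_formula_sec7} follows from results already available, and I would prove it first. By Lemma~\ref{lem:uniform_horizontal_log_likelihood_expansion}, the horizontal process $\Lambda_{N,H}(h)$ has the uniform linear-quadratic form
\[
\Lambda_{N,H}(h)=h^\top\Delta_{N,H}(\theta_0)-\tfrac12 h^\top\left(g_0\big|_{\mathcal{B}}\right)h+R_{N,H}(h),
\]
with $\Delta_{N,H}(\theta_0)\xrightarrow{d}\mathcal{N}(\mathbf{0},g_0|_{\mathcal{B}})$ and $\sup_{h\in\mathbb{K}}|R_{N,H}(h)|=\mathcal{O}_p(N^{-1/2})$. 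By Lemma~\ref{lem:horizontal_leaf_contiguity}, contiguity holds along every horizontal sequence $h_N\in\mathbb{K}$. These are exactly the hypotheses of Lemma~\ref{lem:lecam_deficiency_equivalence}. The only change is that $\mathbb{R}^d$ is read as $H_{\theta_0}$ and $g_0$ as $g_0|_{\mathcal{B}}$, which is legitimate because Theorem~\ref{thm:horizontal_strict_positivity} makes $g_0|_{\mathcal{B}}$ non-singular with $\lambda_{\min}\ge\kappa_0$. The finite-grid argument in Step~3 of that lemma then gives the limit $0$.

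For the rate \eqref{eq:lecam_deficiency_rate_formula_sec7} I would bound the two one-sided deficiencies separately, each by an explicit randomization.

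For $\delta(\mathcal{E}_{N,H},\mathcal{E}_\infty)$, I would use the central-sequence map
\[
X^N\mapsto \hat h_N \equiv \left(g_0|_{\mathcal{B}}\right)^{-1}\Delta_{N,H}(\theta_0),
\]
convolved with a small independent Gaussian of variance $\epsilon_N\,(g_0|_{\mathcal{B}})^{-1}$, with $\epsilon_N\sim N^{-1/2}$. The smoothing is needed because total variation does not see CLT convergence of a lattice-type or otherwise rough law. I would then bound $\sup_{h\in\mathbb{K}}\|K P^{(N)}_{h}-\mathcal{N}(h,(g_0|_{\mathcal{B}})^{-1})\|_{TV}$ in three pieces:
\begin{itemize}
\item a change of measure from $P^{(N)}_{\theta_0}$ to $P^{(N)}_{\theta_N(h)}$, using the exponential of the expansion; the error from $R_{N,H}$ is $\mathcal{O}(N^{-1/2})$ uniformly by Lemma~\ref{lem:uniform_horizontal_log_likelihood_expansion};
\item a smoothed multivariate Berry--Esseen bound (Bentkus-type) for $\Delta_{N,H}$, which costs $\mathcal{O}(N^{-1/2})$ provided the horizontal score has a finite third moment;
\item the TV cost of the added noise, which is $\mathcal{O}(\epsilon_N)$.
\end{itemize}

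For the reverse deficiency $\delta(\mathcal{E}_\infty,\mathcal{E}_{N,H})$ there is no natural forward kernel. Here I would use Le Cam's characterization of deficiency through the laws of likelihood-ratio processes. On an $N^{-1/2}$-grid of $\mathbb{K}$, the distributions under $P_{\theta_0}^{(N)}$ of $(\Lambda_{N,H}(h_j))_j$ and of the Gaussian log-likelihoods $(h_j^\top Z-\tfrac12 h_j^\top g_0|_{\mathcal{B}}h_j)_j$ differ by $\mathcal{O}(N^{-1/2})$. Lipschitz continuity in $h$ on both sides, which comes from the uniform third-derivative envelope in Assumption~\ref{asm:local_regularity}, then extends the bound from the grid to all of $\mathbb{K}$, as in Step~3 of Lemma~\ref{lem:lecam_deficiency_equivalence}.

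I expect the rate, not the limit, to be the main obstacle, in two places. First, the quantitative TV approximation requires a third-moment bound on $P^H_{\theta_0}\nabla_\theta\log p$. Assumption~\ref{asm:local_regularity} does not state this directly, so I would first try to extract it from the $L_2$-differentiability of $\sqrt{p}$ together with the envelope bound; failing that, I would add it as an explicit moment hypothesis. Second, the $\mathcal{O}_p(N^{-1/2})$ remainder controls $\Lambda_{N,H}$ only in probability, not in $L^1$, so converting it into a TV bound requires uniform integrability of $e^{\Lambda_{N,H}}$ on $\mathbb{K}$. I would try to obtain this from the Hellinger-differentiability expansion, which gives $\mathcal{O}(N^{-1/2})$ Hellinger control between $P_{\theta_N(h)}^{(N)}$ and the LAN-tilted measure.
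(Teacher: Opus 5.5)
Your argument for the limit itself is sound. Lemma~\ref{lem:uniform_horizontal_log_likelihood_expansion}, Lemma~\ref{lem:horizontal_leaf_contiguity} and Theorem~\ref{thm:horizontal_strict_positivity} put you exactly under Lemma~\ref{lem:lecam_deficiency_equivalence}. For the rate you depart from the paper, and you have no sound template to follow there.

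The paper bounds $\delta(\mathcal{E}_{N,H},\mathcal{E}_\infty)$ with the unsmoothed map $X^N\mapsto (g_0|_{\mathcal{B}})^{-1}\Delta_{N,H}(\theta_0)$ and Pinsker's inequality. For the reverse deficiency it draws $X^N$ from $P^{(N)}_{\theta_0+P^H_{\theta_0}Y/\sqrt N}$. Your objections to the forward step are correct:
\begin{itemize}
\item for discrete data the law of that statistic is at total variation distance $1$ from every Gaussian;
\item the paper's step needs $\sup_h\mathbb{E}|R_{N,H}(h)|=\mathcal{O}(N^{-1})$, which an $\mathcal{O}_p(N^{-1/2})$ remainder does not give.
\end{itemize}
The paper's reverse kernel fails as well. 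It outputs the mixture $\int P^{(N)}_{\theta_0+y/\sqrt N}\,\mathcal{N}(h,(g_0|_{\mathcal{B}})^{-1})(dy)$. Under this mixture the central sequence is approximately $\mathcal{N}(h,2(g_0|_{\mathcal{B}})^{-1})$, so its distance to $P^{(N)}_{\theta_N(h)}$ stays bounded away from $0$.

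\textbf{The gap is in your reverse step.} Le Cam's characterization of deficiency through likelihood-ratio laws is qualitative. Saying that the laws of $(\Lambda_{N,H}(h_j))_j$ and of the Gaussian log-likelihoods ``differ by $\mathcal{O}(N^{-1/2})$'' does not bound $\delta(\mathcal{E}_\infty,\mathcal{E}_{N,H})$, for two reasons. First, a Berry--Esseen argument only gives closeness in a weak metric, and deficiency is not controlled at a rate in such a metric, because the relevant functionals involve the unbounded $e^{\Lambda}$. Second, your grid has about $N^{d/2}$ points, so any finite-set bound whose constant grows with the number of points is useless.

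The quantitative tool you need is the Bayes form of the randomization criterion. For a prior $\pi$ and a loss with values in $[0,1]$, the Bayes risk is $\mathbb{E}_{P^{(N)}_{\theta_0}}\psi(\pi\odot e^{\Lambda})$, with singular parts added if absolute continuity fails. Here $\psi$ is concave and $1$-Lipschitz in $\ell^1$. Hence both one-sided deficiencies are bounded at once, with no grid, by
\[
\sup_{h\in\mathbb{K}}\mathbb{E}\bigl|e^{\Lambda_{N,H}(h)}-e^{h^\top Z-\frac12 h^\top (g_0|_{\mathcal{B}})h}\bigr|
\]
for any coupling of $X^N\sim P^{(N)}_{\theta_0}$ with $Z\sim\mathcal{N}(0,g_0|_{\mathcal{B}})$.

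Making this quantity $\mathcal{O}(N^{-1/2})$ is the real crux. It is the $L^1$ control you flagged, and it must be proved rather than deferred. Because $e^{h^\top\Delta_{N,H}}$ need not be integrable, you must first replace $\Delta_{N,H}$ by a per-observation truncated central sequence. You then need two estimates:
\begin{enumerate}
\item an $\mathcal{O}(N^{-1/2})$ Hellinger bound between $P^{(N)}_{\theta_N(h)}$ and the resulting product-form exponential tilt of $P^{(N)}_{\theta_0}$, where your Hellinger-differentiability idea and the third-order $L_2$ smoothness of $\sqrt p$ do the work;
\item a coupling CLT at rate $N^{-1/2}$ with exponential weights.
\end{enumerate}
Both require moment hypotheses beyond Assumption~\ref{asm:local_regularity}, so the stated rate is not secured by your sketch or by the paper.

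Once you have this bound, your smoothed forward kernel is superfluous. That is fortunate, because the estimate it needs is a total-variation local limit bound at smoothing scale $N^{-1/4}$, not a Bentkus convex-set bound.
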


\begin{proof}
	We execute a step-by-step measure-theoretic and decision-theoretic proof in four stages.
	
	\paragraph{\textbf{Step 1: Construction of Randomization Markov Kernels.}}
	To bound the Le Cam deficiency distance $\Delta(\mathcal{E}_{N,H}, \mathcal{E}_\infty) \equiv \max\left\{ \delta(\mathcal{E}_{N,H}, \mathcal{E}_\infty), \, \delta(\mathcal{E}_\infty, \mathcal{E}_{N,H}) \right\}$, we construct explicit Markov transition kernels (randomization operators) between the sample observation space $\mathcal{X}^N$ and the Gaussian space $\mathbb{R}^d$:
	\begin{enumerate}
		\item \textbf{Forward Kernel $K_N : \mathcal{X}^N \to \mathcal{P}(\mathbb{R}^d)$:} Maps sample data $X^N \in \mathcal{X}^N$ to a randomized location statistic $Y \in \mathbb{R}^d$ via $Y \equiv \left( g_0\big|_{\mathcal{B}} \right)^{-1} \Delta_{N,H}(\theta_0)$, where $$\Delta_{N,H}(\theta_0) = \frac{1}{\sqrt{N}} \sum_{n=1}^N P_{\theta_0}^H \nabla_\theta \log p(X_n; \theta_0).$$
		\item \textbf{Reverse Kernel $K_\infty : \mathbb{R}^d \to \mathcal{P}(\mathcal{X}^N)$:} Draws synthetic observation vectors $X^N \in \mathcal{X}^N$ from the localized product probability measure $P_{\theta_0 + \frac{1}{\sqrt{N}} P_{\theta_0}^H Y}^{(N)}$ given a Gaussian observation $Y \sim \mathcal{N}\left( h, \left(g_0\big|_{\mathcal{B}}\right)^{-1} \right)$.
	\end{enumerate}
	
	\paragraph{\textbf{Step 2: Bounding Forward Deficiency $\delta\left( \mathcal{E}_{N,H}(\mathbb{K}), \mathcal{E}_\infty(\mathcal{B}; \mathbb{K}) \right)$.}}
	By definition of experiment deficiency \cite{lecam1986, torgersen1991}:
	\begin{equation}
		\delta\left( \mathcal{E}_{N,H}(\mathbb{K}), \mathcal{E}_\infty(\mathcal{B}; \mathbb{K}) \right) \le \sup_{h \in \mathbb{K}} \left\| K_N P_{\theta_0 + \frac{P_{\theta_0}^H h}{\sqrt{N}}}^{(N)} - \mathcal{N}\left( h, \, \left(g_0\big|_{\mathcal{B}}\right)^{-1} \right) \right\|_{\mathrm{TV}}.
		\label{eq:forward_deficiency_tv_bound_sec7}
	\end{equation}
	Applying Scheff\'e's Lemma and Pinsker's Inequality \cite{vandervaart1998}, the total variation distance is bounded by the Kullback--Leibler (KL) divergence between probability measures:
	\begin{equation}
		\left\| P - Q \right\|_{\mathrm{TV}} \le \sqrt{\frac{1}{2} D_{\mathrm{KL}}(P \parallel Q)}.
		\label{eq:pinskers_inequality_sec7}
	\end{equation}
	Evaluating the KL divergence between the pushed-forward score statistic distribution $K_N P_{\theta_N(h)}^{(N)}$ and the target Gaussian distribution $\mathcal{N}\left( h, \left(g_0\big|_{\mathcal{B}}\right)^{-1} \right)$:
	\begin{align}
		D_{\mathrm{KL}}\left( K_N P_{\theta_N(h)}^{(N)} \parallel \mathcal{N}\left( h, \, \left(g_0\big|_{\mathcal{B}}\right)^{-1} \right) \right) &= \mathbb{E}_{P_{\theta_N(h)}^{(N)}} \left[ \Lambda_{N,H}(h) - \left( h^\top \Delta_{\infty,H} - \frac{1}{2} h^\top \left(g_0\big|_{\mathcal{B}}\right) h \right) \right].
		\label{eq:kl_divergence_expansion_sec7}
	\end{align}
	Substituting the uniform stochastic expansion from Lemma~\ref{lem:uniform_horizontal_log_likelihood_expansion}:
	\begin{equation}
		\sup_{h \in \mathbb{K}} D_{\mathrm{KL}}\left( K_N P_{\theta_N(h)}^{(N)} \parallel \mathcal{N}\left( h, \, \left(g_0\big|_{\mathcal{B}}\right)^{-1} \right) \right) = \sup_{h \in \mathbb{K}} \mathbb{E}_{P_{\theta_N(h)}^{(N)}} \left[ \left| R_{N,H}(h) \right| \right] = \mathcal{O}\left( N^{-1} \right).
		\label{eq:kl_divergence_rate_proven_sec7}
	\end{equation}
	Substituting Equation~\eqref{eq:kl_divergence_rate_proven_sec7} into Pinsker's Inequality Equation~\eqref{eq:pinskers_inequality_sec7} yields:
	\begin{equation}
		\delta\left( \mathcal{E}_{N,H}(\mathbb{K}), \mathcal{E}_\infty(\mathcal{B}; \mathbb{K}) \right) \le \sqrt{\frac{1}{2} \mathcal{O}\left( N^{-1} \right)} = \mathcal{O}\left( N^{-1/2} \right).
		\label{eq:forward_deficiency_rate_proven_sec7}
	\end{equation}
	
	\paragraph{\textbf{Step 3: Bounding Reverse Deficiency $\delta\left( \mathcal{E}_\infty(\mathcal{B}; \mathbb{K}), \mathcal{E}_{N,H}(\mathbb{K}) \right)$.}}
	Symmetrically, applying the reverse kernel $K_\infty$ under mutual contiguity $P_{\theta_N(h)}^{(N)} \triangleleft \triangleright P_{\theta_0}^{(N)}$ (Lemma~\ref{lem:horizontal_leaf_contiguity}):
	\begin{equation}
		\delta\left( \mathcal{E}_\infty(\mathcal{B}; \mathbb{K}), \mathcal{E}_{N,H}(\mathbb{K}) \right) \le \sup_{h \in \mathbb{K}} \left\| K_\infty \mathcal{N}\left( h, \, \left(g_0\big|_{\mathcal{B}}\right)^{-1} \right) - P_{\theta_0 + \frac{P_{\theta_0}^H h}{\sqrt{N}}}^{(N)} \right\|_{\mathrm{TV}} = \mathcal{O}\left( N^{-1/2} \right).
		\label{eq:reverse_deficiency_rate_proven_sec7}
	\end{equation}
	
	\paragraph{\textbf{Step 4: Assembly of Symmetric Le Cam Distance.}}
	Combining Equations~\eqref{eq:forward_deficiency_rate_proven_sec7} and \eqref{eq:reverse_deficiency_rate_proven_sec7}:
	\begin{align}
		\Delta\left( \mathcal{E}_{N,H}(\mathbb{K}), \, \mathcal{E}_\infty(\mathcal{B}; \mathbb{K}) \right) &= \max\left\{ \delta\left(\mathcal{E}_{N,H}, \mathcal{E}_\infty\right), \, \delta\left(\mathcal{E}_\infty, \mathcal{E}_{N,H}\right) \right\} = \mathcal{O}\left( N^{-1/2} \right).
		\label{eq:lecam_distance_max_assembly_sec7}
	\end{align}
	Taking $N \to \infty$ proves Equation~\eqref{eq:global_lecam_deficiency_limit_formula_sec7} and completes the proof of Theorem~\ref{thm:global_experiment_deficiency_convergence}.
\end{proof}


\subsection{The Master Second Edge Theorem}
\label{subsec:master_second_edge_theorem}

Building upon the metric-topological leaf shrinkage established in Section~\ref{subsec:mod2_degeneration} (Theorem~\ref{thm:metric_stabilization}), the Gromov-Hausdorff convergence in Section 7.3 (Theorem~\ref{thm:gh_leaf_collapse}), the accelerated holonomy group annihilation in Section 7.4 (Theorem~\ref{thm:holonomy_annihilation}), and the global Le Cam experiment deficiency uniformization in Section 7.5 (Theorem~\ref{thm:global_experiment_deficiency_convergence}), we now state and prove the central theoretical result of this paper: the \textbf{Master Second Edge Theorem}.

This theorem synthesizes differential geometry, geometric analysis, and asymptotic decision theory into a single, unified phase transition law, proving that the sequence of curved $N$-sample joint information manifolds $IG_N|_{H_\theta}$ collapses globally onto the canonical flat tangent canvas of Conventional Statistics $\mathcal{M}_\infty \equiv (T_{\theta_0}IG_1, g_0|_\mathcal{B}, \nabla^{(0)}) \equiv \mathrm{CS}$ as $N \to \infty$.

\begin{theorem}[The Master Second Edge Theorem: Global Phase Transition and Geometric Collapse]
	\label{thm:master_second_edge}
	Let $\mathcal{M}_{\mathrm{univ}} \equiv \left( \mathcal{X}, \mu, \Theta, \mathcal{P}, IG_1, IG_N, (\mathcal{X}^N, \mathcal{A}^{\otimes N}, P_{\theta_0}^{\otimes N}) \right)$ be the ambient statistical universe (Definition~\ref{def:ambient_spaces}) satisfying High-Order Local Regularity (Assumption~\ref{asm:local_regularity}). Let $(\Theta, \mathcal{B}, \pi, \mathcal{F})$ be the statistical fiber bundle (Definition~\ref{def:statistical_fiber_bundle}) equipped with the Fisher-compatible Ehresmann connection $T_\theta \Theta = H_\theta \oplus V_\theta$ (Definition~\ref{def:ehresmann_connection}).
	
	As the sample size $N \to \infty$, the sequence of $N$-sample joint information-geometric manifolds restricted to the non-singular horizontal distribution $IG_N|_{H_\theta} \equiv \left(\Theta|_{H_\theta}, G^{(N)}|_{H_\theta}, \nabla^{(\alpha,N)}|_{H_\theta}\right)$ undergoes a global asymptotic phase transition and metric-topological-operational collapse onto the canonical flat tangent canvas of Conventional Statistics $\mathcal{M}_\infty \equiv \left( T_{\theta_0} IG_1, g_0|_\mathcal{B}, \nabla^{(0)} \right) \equiv \mathrm{CS}$.
	
	Specifically, the global phase transition $IG_N|_{H_\theta} \longrightarrow \mathcal{M}_\infty \equiv \mathrm{CS}$ is governed by four synchronized limit theorems across every compact coordinate cage $\mathbb{K} \subset H_{\theta_0} \cong \mathbb{R}^d$:
	\begin{enumerate}[label=\textnormal{(\roman*)}]
		\item \textbf{Gromov-Hausdorff Metric Leaf Collapse:}
		The horizontal Riemannian leaf space sequence $(\mathcal{L}_{N, \theta_0}, \widetilde{G}_H^{(N)})$ converges in the Gromov-Hausdorff metric topology to the flat Euclidean-Fisher space $(\mathbb{K}, d_{g_0|_\mathcal{B}})$:
		\begin{equation}
			\lim_{N \to \infty} d_{\mathrm{GH}}\left( \left(\mathbb{K}, d_{\widetilde{G}_H^{(N)}}\right), \, \left(\mathbb{K}, d_{g_0|_\mathcal{B}}\right) \right) = 0,
			\label{eq:master_gh_collapse}
		\end{equation}
		with an exact metric shrinkage rate of $\mathcal{O}\left(N^{-1/2}\right)$.
		
		\item \textbf{Accelerated Holonomy Annihilation and Parallel Transport Trivialization:}
		The restricted holonomy Lie group $\mathrm{Hol}_{\theta_0}^0\left(\widetilde{\nabla}^{(\alpha,N)}\right) \subseteq GL(d, \mathbb{R})$ collapses to the trivial identity subgroup $\{I_{d \times d}\}$:
		\begin{equation}
			\lim_{N \to \infty} \mathrm{diam}_{\mathrm{op}}\left( \mathrm{Hol}_{\theta_0}^0\left(\widetilde{\nabla}^{(\alpha,N)}\right) \right) = 0 \iff \mathrm{Hol}_{\theta_0}^0\left(\widetilde{\nabla}^{(\alpha,N)}\right) \xrightarrow{N \to \infty} \{I_{d \times d}\},
			\label{eq:master_holonomy_collapse}
		\end{equation}
		at the accelerated quadratic rate $\mathcal{O}_p\left(N^{-1}\right)$, forcing pulled-back Amari connection symbols to dissolve ($\widetilde{\Gamma}^{(\alpha,N)} \to \Gamma^{(0)} \equiv 0$) and Riemann curvature to vanish ($\widetilde{\mathcal{R}}^{(\alpha,N)} \to \mathcal{R}^{(0)} \equiv 0$).
		
		\item \textbf{Le Cam Operational Experiment Deficiency Convergence:}
		The sequence of localized empirical statistical experiments $\mathcal{E}_{N,H}(\mathbb{K})$ converges locally in the sense of Le Cam to the canonical Gaussian shift limit experiment $\mathcal{E}_\infty(\mathcal{B}; \mathbb{K})$:
		\begin{equation}
			\lim_{N \to \infty} \Delta\left( \mathcal{E}_{N,H}(\mathbb{K}), \, \mathcal{E}_\infty(\mathcal{B}; \mathbb{K}) \right) = 0,
			\label{eq:master_lecam_collapse}
		\end{equation}
		under Le Cam deficiency distance at rate $\mathcal{O}\left(N^{-1/2}\right)$.
		
		\item \textbf{Cheeger-Gromov $C^\infty$-Manifold Collapse and Epistemological Identity:}
		The pulled-back manifold sequence $\psi_{N,H}^*(IG_N|_{H_\theta})$ converges smoothly in the Cheeger-Gromov $C^\infty$-manifold topology to $\mathcal{M}_\infty$:
		\begin{equation}
			\psi_{N,H}^*(IG_N|_{H_\theta}) \xrightarrow[\mathrm{Cheeger-Gromov}]{C^\infty} \mathcal{M}_\infty \equiv \mathrm{CS}.
			\label{eq:master_cheeger_gromov}
		\end{equation}
	\end{enumerate}
\end{theorem}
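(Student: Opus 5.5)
The Master Second Edge Theorem is a synthesis, so the plan is to prove items (i)--(iv) in order by invoking the stage theorems already established. The only new work is checking that their hypotheses agree: they must use the same horizontal microscope map $\psi_{N,H}$, the same compact cage $\mathbb{K}\subset H_{\theta_0}$, and the same limit metric $g_0|_{\mathcal{B}}$.

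First, item (i) follows directly from Theorem~\ref{thm:gh_leaf_collapse}. Lemma~\ref{lem:uniform_horizontal_metric_stabilization} gives $\sup_{\mathbb{K}}\|\widetilde G^{(N)}_H-g_0|_{\mathcal{B}}\|_{\mathrm{op}}=\mathcal{O}(N^{-1/2})$. Lemma~\ref{lem:geodesic_distortion_bound} turns this into a uniform geodesic distortion bound. The distortion inequality $d_{GH}\le\frac12\mathrm{dis}(\mathrm{id}_{\mathbb{K}})$ then yields the stated $\mathcal{O}(N^{-1/2})$ rate.

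Second, item (ii) is Theorem~\ref{thm:holonomy_annihilation}. Its inputs are two facts: the transport bounds $\|\mathcal{P}_\tau^{\pm1}\|_{\mathrm{op}}=1+\mathcal{O}(N^{-1/2})$ from Lemma~\ref{lem:ambrose_singer_bound}, obtained via Gr\"onwall and connection dissolution (Theorem~\ref{thm:connection_dissolution}), and curvature annihilation at rate $N^{-1}$ (Theorem~\ref{thm:curvature_annihilation}). The accompanying statements $\widetilde\Gamma^{(\alpha,N)}\to 0$ and $\widetilde{\mathcal{R}}^{(\alpha,N)}\to 0$ are exactly those two theorems restated on the horizontal cage. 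This restatement is legitimate because $J(\psi_{N,H})=N^{-1/2}P^H_{\theta_0}$ still carries the factor $N^{-1/2}$ per covariant slot, so the valence law of Theorem~\ref{thm:universal_scaling_law} applies verbatim with $\|P^H_{\theta_0}\|_{\mathrm{op}}$ absorbed into constants.

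Third, item (iii) is Theorem~\ref{thm:global_experiment_deficiency_convergence}, which rests on three results: the uniform horizontal LAN expansion (Lemma~\ref{lem:uniform_horizontal_log_likelihood_expansion}), mutual contiguity (Lemma~\ref{lem:horizontal_leaf_contiguity}), and the kernel construction with Pinsker's inequality. To guard against the weakness of the rate argument, I would record that the qualitative limit $\Delta\to0$ also follows from Lemma~\ref{lem:lecam_deficiency_equivalence} applied to the LAN form. The corresponding statement with the horizontal experiments is Corollary~\ref{cor:gauge_invariant_lan}.

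Finally, item (iv) is the main obstacle, because Cheeger--Gromov $C^\infty$ convergence requires uniform control of \emph{all} covariant derivatives of the pulled-back metric, not just the $C^0$ and $C^1$ quantities above. I would argue directly rather than through compactness. The identity map is a fixed diffeomorphism of $\mathbb{K}$, so it suffices to show $\widetilde G^{(N)}_H\to g_0|_{\mathcal{B}}$ in $C^k(\mathbb{K})$ for every $k$. Differentiating $\widetilde G^{(N)}_H(h)=P^H g^{(1)}(\theta_0+P^H_{\theta_0}h/\sqrt N)P^H$ $k$ times in $h$ produces a factor $N^{-k/2}$ times $k$-th $\theta$-derivatives of $g^{(1)}$ and $P^H_\theta$. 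These are bounded on $\overline{\mathcal U}(\theta_0)$ provided $g^{(1)}$ and $\omega_\theta$ are smooth; Assumption~\ref{asm:local_regularity}(ii) gives this for $g^{(1)}$, and Assumption~\ref{asm:bundle_submersion} gives it for $\omega_\theta$. Combined with the $C^0$ convergence of item (i), this gives convergence in every $C^k$, and the same chain-rule scaling gives $C^k$ convergence of the connection and curvature to $0$. Proposition~\ref{prop:pillar_1_completeness} then identifies the limit as $\mathcal{M}_\infty$. Theorem~\ref{thm:theorem2_full}, together with Theorem~\ref{thm:geometric_operational_unification_horizontal} to tie (iii) and (iv) together, identifies $\mathcal{M}_\infty$ with $\mathrm{CS}$, which is the epistemological identity claimed.
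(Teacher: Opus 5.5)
Your items (i)--(iii) reproduce the paper's Stages 1--3 almost verbatim. You use the chain Lemma~\ref{lem:uniform_horizontal_metric_stabilization} $\to$ Lemma~\ref{lem:geodesic_distortion_bound} $\to$ Theorem~\ref{thm:gh_leaf_collapse}, then the Gr\"onwall and Ambrose--Singer inputs of Lemma~\ref{lem:ambrose_singer_bound} feeding Theorem~\ref{thm:holonomy_annihilation}, then Theorem~\ref{thm:global_experiment_deficiency_convergence} for the deficiency. Your check that $J(\psi_{N,H})=N^{-1/2}P^H_{\theta_0}$ preserves the valence law on the horizontal cage is one the paper omits.

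The real departure is (iv). The paper asserts $\widetilde G^{(N)}_H\to g_0|_{\mathcal B}$ ``in $C^\infty$'' while citing Theorems~\ref{thm:metric_stabilization}, \ref{thm:connection_dissolution} and \ref{thm:curvature_annihilation}, which give only sup-norm control, and then appeals to Cheeger--Gromov compactness. You instead take the identity of $\mathbb K$ as the comparison diffeomorphism and check $C^k$ convergence for every $k$ from the chain-rule factor $N^{-k/2}$. Yours is the sounder argument: compactness would give only subsequential limits modulo diffeomorphisms, under derivative bounds the paper never proves, whereas you obtain convergence and the limit at once.

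Its price is explicit regularity, and here one input is missing and one is misattributed. The connection and curvature parts need bounded $\theta$-derivatives of every order of $\Gamma^{(\alpha,1)}$ (for $\alpha\neq0$, of the skewness tensor $\mathbb E_\theta[\partial_i\ell\,\partial_j\ell\,\partial_k\ell]$). Assumption~\ref{asm:local_regularity} does not supply this; it makes only $g^{(1)}$ smooth. Smoothness of $\omega_\theta$ does not follow from Assumption~\ref{asm:bundle_submersion}, which controls only $V$. The paper simply posits it in Definition~\ref{def:connection_1form_projection}. It also cannot come from Fisher-orthogonality: Lemma~\ref{lem:vertical_score_vanishing} gives $V_\theta\subseteq\ker g^{(1)}_\theta$, hence $V_\theta^{\perp_g}=T_\theta\Theta$.

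The genuine gap is one you inherit by using the stage theorems as black boxes, and the paper's Stage~2 has it too: (ii) cannot hold as stated once $\mathcal R^{(\alpha)}(\theta_0)\neq0$. Take the normal family $\mathcal N(\mu,\sigma^2)$ at $\alpha=0$ (add a coordinate on which $p$ does not depend if you insist on $d<D$; nothing changes on $H_{\theta_0}$). Then $\widetilde G^{(N)}_H$ is the pullback of $Ng^{(1)}$ under a dilation, a hyperbolic metric of curvature $-1/(2N)$. A small loop at $0$ enclosing $\widetilde G^{(N)}_H$-area $A$ rotates vectors through the angle $A/(2N)$, so suitable loops traversed often enough realize every rotation. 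In a $g_0$-orthonormal frame, $\mathrm{Hol}^0_0(\widetilde\nabla^{(0,N)})=SO(2)\ni-I$, and $\mathrm{diam}_{\mathrm{op}}\ge2$ for every $N$. More generally, Ambrose--Singer itself puts $\widetilde{\mathcal R}_0(x,y)$ into the holonomy algebra at $0$. This is $N^{-1}$ times a fixed nonzero endomorphism, and the algebra, being a vector space, then contains that fixed endomorphism, so the group does not shrink at all.

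Your two inputs control only loops of bounded length, whereas the group contains transports along arbitrarily long, multiply traversed loops. This is exactly where the paper's step $\mathrm{Area}(\Sigma)\le\mathrm{Area}(\mathbb K)$ breaks (and for $d\ge3$ its right side is not even finite). What your inputs do give is $\sup_{\ell(\gamma)\le L}\|\mathcal P_\gamma-I\|_{\mathrm{op}}=\mathcal O(L^2/N)$ over loops of Euclidean length at most $L$, and that is the correct form of (ii).

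Similarly, your hedge in (iii) secures only $\Delta\to0$; the $\mathcal O(N^{-1/2})$ rate still rests on Theorem~\ref{thm:global_experiment_deficiency_convergence}. That theorem's reverse kernel outputs the mixture $\int P^{(N)}_{\theta_0+P^H_{\theta_0}y/\sqrt N}\,\mathcal N(h,(g_0|_{\mathcal B})^{-1})(dy)$ rather than $P^{(N)}_{\theta_0+P^H_{\theta_0}h/\sqrt N}$. Already for $X_n\sim\mathcal N(\theta,1)$ this turns the $\mathcal N(h,1)$ law of the sufficient statistic $\sqrt N(\bar X-\theta_0)$ into $\mathcal N(h,2)$, a total-variation gap independent of $N$.
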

\subsubsection{Motivation and Explanation of Theorem \ref{thm:master_second_edge} (The Master Second Edge Theorem)}
\label{subsubsec:theorem15_motivation_explanation}

\paragraph{1. Architectural Motivation}
Classical asymptotic statistical theory historically treats the transition from curved, finite-sample statistical models to flat limit experiments as an ad-hoc analytical convenience or a passive collection of scalar point limits. However, finite-sample statistical parametric families are intrinsically curved, dynamically expanding Riemannian-affine manifolds $IG_N = (\Theta, G^{(N)}, \nabla^{(\alpha,N)})$ possessing non-zero Amari-Riemann curvature $\mathcal{R}^{(\alpha,N)}$, non-Euclidean parallel transport friction $\Gamma^{(\alpha,N)}$, and potential rank-degeneracies ($\det G^{(N)} = 0$) in over-parameterized regimes.

The primary objective of Theorem \ref{thm:master_second_edge} (\emph{The Master Second Edge Theorem: Global Phase Transition and Geometric Collapse}) is to construct a rigorous, unified, and coordinate-free foundation for the global asymptotic metamorphosis $IG_N|_{H_\theta} \longrightarrow \mathcal{M}_\infty \equiv \mathrm{CS}$ as $N \to \infty$. It establishes that Conventional Statistics ($\mathrm{CS}$) is not merely an intuitive linear approximation, but rather the unique, mathematically invariant, zero-curvature attractor state $\mathcal{M}_\infty \equiv \left(T_{\theta_0}IG_1, g_0|_{\mathcal{B}}, \nabla^{(0)}\right)$ toward which all regular information manifolds dynamically collapse in the thermodynamic limit.

\paragraph{2. Explanation of the Four Synchronized Limit Conditions}
The core assertion of the Second Edge Theorem is that the global phase transition 
\[
IG_N|_{H_\theta} \longrightarrow \mathcal{M}_\infty \equiv \mathrm{CS}
\]
is governed by four synchronized limit theorems across every compact coordinate cage $\mathbb{K} \subset H_{\theta_0} \cong \mathbb{R}^d$. Crucially, these four conditions are \textbf{necessary and sufficient conditions} for the complete statement and realization of the Second Edge Theorem. They operate in tight structural locks across the differential, topological, metric, and decision-theoretic dimensions of the manifold:

\begin{enumerate}[label=\textnormal{(\roman*)}]
	\item \textbf{Gromov-Hausdorff Metric Leaf Collapse (Metric-Topological Necessity \& Sufficiency):}
	\begin{equation}
		\lim_{N\to\infty} d_{\mathrm{GH}}\left(\left(\mathbb{K}, d_{\tilde{G}_H^{(N)}}\right), \left(\mathbb{K}, d_{g_0|_{\mathcal{B}}}\right)\right) = 0 \quad \text{at rate } \mathcal{O}\left(N^{-1/2}\right).
	\end{equation}
	\emph{Explanation:} This condition resolves the Moving Target Crisis by proving that the sequence of horizontal Riemannian leaf spaces $(\mathcal{L}_{N,\theta_0}, \tilde{G}_H^{(N)})$ collapses in the Gromov-Hausdorff metric topology onto the flat Euclidean-Fisher space $(\mathbb{K}, d_{g_0|_{\mathcal{B}}})$. It guarantees exact metric capacity-cotangent scale cancellation ($N^{1 - 2/2} = N^0 = 1$), fixing the Signal-to-Noise Ratio ($\mathrm{SNR}_N(h) = \mathcal{O}(1)$) without loss of topological dimension.
	
	\item \textbf{Accelerated Holonomy Annihilation and Parallel Transport Trivialization (Differential-Affine Necessity \& Sufficiency):}
	\begin{equation}
		\lim_{N\to\infty} \mathrm{diam}_{\mathrm{op}}\left(\mathrm{Hol}_{\theta_0}^0\left(\tilde{\nabla}^{(\alpha,N)}\right)\right) = 0 \iff \mathrm{Hol}_{\theta_0}^0\left(\tilde{\nabla}^{(\alpha,N)}\right) \xrightarrow{N\to\infty} \{I_{d \times d}\} \quad \text{at rate } \mathcal{O}_p\left(N^{-1}\right).
	\end{equation}
	\emph{Explanation:} By the Ambrose-Singer Holonomy Theorem, the restricted holonomy Lie group collapses to the trivial identity subgroup $\{I_{d \times d}\}$ at the accelerated quadratic decay rate $\mathcal{O}_p(N^{-1})$. This condition is necessary and sufficient to force connection friction to dissolve ($\tilde{\Gamma}^{(\alpha,N)} \to \Gamma^{(0)} \equiv 0$) and Riemann curvature to vanish ($\tilde{\mathcal{R}}^{(\alpha,N)} \to \mathcal{R}^{(0)} \equiv 0$). It eradicates non-Euclidean path-dependent terrain friction, ensuring path-independent parallel transport and enforcing the unconditional asymptotic identity of the classical test trinity ($W_S = W_{LR} = W_W + o_p(1)$).
	
	\item \textbf{Le Cam Operational Experiment Deficiency Convergence (Decision-Theoretic Necessity \& Sufficiency):}
	\begin{equation}
		\lim_{N\to\infty} \Delta\left(\mathcal{E}_{N,H}(\mathbb{K}), \mathcal{E}_\infty(\mathcal{B};\mathbb{K})\right) = 0 \quad \text{at rate } \mathcal{O}\left(N^{-1/2}\right).
	\end{equation}
	\emph{Explanation:} This condition ensures that the sequence of localized empirical statistical experiments $\mathcal{E}_{N,H}(\mathbb{K})$ converges uniformly in operational decision risk to the canonical Gaussian shift experiment $\mathcal{E}_\infty(\mathcal{B};\mathbb{K})$ under Le Cam deficiency distance $\Delta$. It proves that decision rules on $IG_N|_{H_\theta}$ achieve zero second-order information loss ($\Delta I_N = 0$).
	
	\item \textbf{Cheeger-Gromov $C^\infty$-Manifold Collapse and Epistemological Identity (Global Structural Necessity \& Sufficiency):}
	\begin{equation}
		\psi_{N,H}^*\left(IG_N|_{H_\theta}\right) \xrightarrow[\text{Cheeger-Gromov}]{C^\infty} \mathcal{M}_\infty \equiv \mathrm{CS}.
	\end{equation}
	\emph{Explanation:} Grounded in Élie Cartan's Structural Determinacy Theorem, uniform $C^\infty$-bounds on metric components, connection symbols, and curvature tensors guarantee smooth manifold convergence in the Cheeger-Gromov topology, uniquely determining $\mathcal{M}_\infty$ without higher-order topological anomalies.
\end{enumerate}

\paragraph{3. Necessity and Sufficiency of the Fourfold Unification}
These four conditions are strictly \textbf{necessary and sufficient} because, by the Double Completeness Architecture and the Geometric-Operational Equivalence Theorem:
\begin{itemize}
	\item \textbf{Necessity:} If any single condition fails (e.g., if holonomy does not collapse or if Le Cam deficiency remains non-zero), residual curvature or decision risk discrepancies persist, preventing global collapse onto the flat, zero-curvature canvas $\mathcal{M}_\infty \equiv \mathrm{CS}$.
	\item \textbf{Sufficiency:} Fulfilling conditions (i)--(iv) completely specifies the limiting differential-geometric and decision-theoretic state, leaving no remaining degrees of freedom or unobserved topological distortions outside these four geometric fields ($r \in \{1, 2, 3, 4\}$).
\end{itemize}

Thus, structural manifold flattening ($\mathrm{Way\ 1}$) and operational decision risk condensation ($\mathrm{Way\ 2}$) are demonstrated to be non-separable dual projections of a single, unified asymptotic phase transition governed by these four synchronized limit theorems.

\begin{proof} {\bf Proof of Theorem \ref{thm:master_second_edge}:}
	
	The proof proceeds in four unified mathematical stages, synthesizing the structural, topological, differential, and decision-theoretic results established throughout the manuscript.
	
	\paragraph{Stage 1: Proof of Metric-Topological Leaf Collapse (Statement i).}
	Let $\mathbb{K} \subset H_{\theta_0} \cong \mathbb{R}^d$ be a compact coordinate cage. Under the horizontal localized microscope map $\psi_{N,H}(h) \equiv \theta_0 + \frac{1}{\sqrt{N}} P_{\theta_0}^H h$ (Definition~\ref{def:microscope_map}), the multivariable spatial Jacobian scaffold $J(\psi_{N,H}) = \frac{1}{\sqrt{N}} P_{\theta_0}^H$ (Lemma~\ref{lem:jacobian_scaffold}) contracts $r=2$ covariant tensor slots against $N$-sample metric additivity $G^{(N)}(\theta) = N g^{(1)}(\theta)$ (Lemma~\ref{lem:additivity_fisher_metric}). 
	
	By Lemma~\ref{lem:horizontal_pulled_back_metric_algebraic}, the pulled-back horizontal metric field obeys exact scale factor cancellation:
	\begin{equation}
		\widetilde{G}_H^{(N)}(h) \equiv \left[\psi_{N,H}^* G^{(N)}\right](h) = P_{\psi_{N,H}(h)}^H g^{(1)}\left(\theta_0 + \frac{P_{\theta_0}^H h}{\sqrt{N}}\right) P_{\psi_{N,H}(h)}^H.
		\label{eq:proof_stage1_metric_cancellation}
	\end{equation}
	By Lemma~
	\ref{lem:uniform_horizontal_metric_stabilization}, 
	 $\widetilde{G}_H^{(N)}(h)$ converges uniformly in matrix operator norm over $\mathbb{K}$ to the constant, non-singular base metric $g_0|_\mathcal{B} \equiv P_{\theta_0}^H g^{(1)}(\theta_0) P_{\theta_0}^H$:
	\begin{equation}
		\sup_{h \in \mathbb{K}} \left\| \widetilde{G}_H^{(N)}(h) - g_0|_\mathcal{B} \right\|_{\mathrm{op}} \le \frac{K_\mathbb{K}^{(1)}}{\sqrt{N}} = \mathcal{O}\left(N^{-1/2}\right).
		\label{eq:proof_stage1_operator_bound}
	\end{equation}
	By Lemma~\ref{lem:geodesic_distortion_bound}, the intrinsic Riemannian geodesic distance $d_{\widetilde{G}_H^{(N)}}(h_1, h_2)$ between any $h_1, h_2 \in \mathbb{K}$ satisfies the uniform point-wise distortion bound relative to the flat Euclidean-Fisher distance $d_{g_0|_\mathcal{B}}(h_1, h_2) = \sqrt{(h_1 - h_2)^T (g_0|_\mathcal{B})(h_1 - h_2)}$ (Lemma~\ref{lem:uniform_horizontal_metric_stabilization}):
	\begin{equation}
		\sup_{h_1, h_2 \in \mathbb{K}} \left| d_{\widetilde{G}_H^{(N)}}(h_1, h_2) - d_{g_0|_\mathcal{B}}(h_1, h_2) \right| \le \frac{K_\mathbb{K}}{\sqrt{N}} = \mathcal{O}\left(N^{-1/2}\right).
		\label{eq:proof_stage1_geodesic_bound}
	\end{equation}
	Applying Theorem~\ref{thm:gh_leaf_collapse}, embedding $(\mathbb{K}, d_{\widetilde{G}_H^{(N)}})$ and $(\mathbb{K}, d_{g_0|_\mathcal{B}})$ into a common metric space yields the Gromov-Hausdorff distance bound:
	\begin{equation}
		d_{\mathrm{GH}}\left( \left(\mathbb{K}, d_{\widetilde{G}_H^{(N)}}\right), \, \left(\mathbb{K}, d_{g_0|_\mathcal{B}}\right) \right) \le \frac{1}{2} \sup_{h_1, h_2 \in \mathbb{K}} \left| d_{\widetilde{G}_H^{(N)}}(h_1, h_2) - d_{g_0|_\mathcal{B}}(h_1, h_2) \right| \le \frac{K_\mathbb{K}}{2\sqrt{N}} = \mathcal{O}\left(N^{-1/2}\right).
		\label{eq:proof_stage1_gh_final}
	\end{equation}
	Taking $N \to \infty$ establishes Statement (i).
	
	\paragraph{Stage 2: Proof of Holonomy Annihilation and Parallel Transport Trivialization (Statement ii).}
	By Theorem~\ref{thm:universal_scaling_law}, setting valence $r=3$ for Amari connection symbols and $r=4$ for the Riemann curvature tensor yields:
	\begin{align}
		\sup_{h \in \mathbb{K}} \left| \widetilde{\Gamma}_{ijk}^{(\alpha,N)}(h) \right| &= \frac{1}{\sqrt{N}} \sup_{\theta \in \mathcal{U}(\theta_0)} \left| \Gamma_{ijk}^{(\alpha,1)}(\theta) \right| = \mathcal{O}\left(N^{-1/2}\right) \xrightarrow{N \to \infty} 0, \label{eq:proof_stage2_connection} \\
		\sup_{h \in \mathbb{K}} \left\| \widetilde{\mathcal{R}}_{ijmn}^{(\alpha,N)}(h) \right\|_{\mathrm{op}} &= \frac{1}{N} \sup_{\theta \in \mathcal{U}(\theta_0)} \left\| \mathcal{R}_{ijmn}^{(\alpha,1)}(\theta) \right\|_{\mathrm{op}} = \mathcal{O}_p\left(N^{-1}\right) \xrightarrow{N \to \infty} 0. \label{eq:proof_stage2_curvature}
	\end{align}
	By Lemma~\ref{lem:ambrose_singer_bound}, applying the Ambrose-Singer Holonomy Theorem \cite{ambrose1953} bounds every generator $A \in \mathfrak{hol}_0(\widetilde{\nabla}^{(\alpha,N)})$ of the holonomy Lie algebra by $\|A\|_{\mathrm{op}} = \mathcal{O}_p(N^{-1})$. 
	
	For any contractible loop $\gamma = \partial \Sigma \subset \mathbb{K}$, applying the non-Abelian Stokes surface-ordered exponential representation \cite{frankel2011}:
	\begin{equation}
		\mathcal{P}_\gamma = \mathcal{P}\exp\left( \iint_\Sigma \mathcal{P}_{\tau_\xi}^{-1} \widetilde{\mathcal{R}}^{(\alpha,N)}(\xi) \mathcal{P}_{\tau_\xi} d\Sigma(\xi) \right) = I_{d \times d} + \iint_\Sigma A(\xi) d\Sigma(\xi) + \dots
		\label{eq:proof_stage2_stokes}
	\end{equation}
	By Grönwall bounds on parallel transport operators $\|\mathcal{P}_{\tau_\xi}\|_{\mathrm{op}} = 1 + \mathcal{O}(N^{-1/2})$ (Lemma~\ref{lem:ambrose_singer_bound}), the Dyson series expansion yields:
	\begin{equation}
		\mathrm{diam}_{\mathrm{op}}\left( \mathrm{Hol}_{\theta_0}^0\left(\widetilde{\nabla}^{(\alpha,N)}\right) \right) = \sup_{\gamma \subset \mathbb{K}} \left\| \mathcal{P}_\gamma - I_{d \times d} \right\|_{\mathrm{op}} \le \mathrm{Area}(\mathbb{K}) \cdot \mathcal{O}_p\left(N^{-1}\right) = \mathcal{O}_p\left(N^{-1}\right).
		\label{eq:proof_stage2_holonomy_final}
	\end{equation}
	Taking $N \to \infty$ forces $\mathrm{Hol}_{\theta_0}^0\left(\widetilde{\nabla}^{(\alpha,N)}\right) \to \{I_{d \times d}\}$, proving Statement (ii) and trivializing parallel transport.
	
	\paragraph{Stage 3: Proof of Le Cam Experiment Deficiency Convergence (Statement iii).}
	Consider the localized horizontal empirical experiment $\mathcal{E}_{N,H}(\mathbb{K}) \equiv \left( \mathcal{X}^N, \mathcal{A}^{\otimes N}, \left\{ P_{\theta_0 + \frac{1}{\sqrt{N}} P_{\theta_0}^H h}^{(N)} : h \in \mathbb{K} \right\} \right)$ and the limit Gaussian experiment $\mathcal{E}_\infty(\mathcal{B}; \mathbb{K}) \equiv \left( \mathbb{R}^d, \mathcal{B}^d, \left\{ \mathcal{N}\left(h, (g_0|_\mathcal{B})^{-1}\right) : h \in \mathbb{K} \right\} \right)$ (Definition~\ref{def:horizontal_localized_experiment}).
	
	By Lemma~\ref{lem:uniform_horizontal_log_likelihood_expansion}, the localized log-likelihood ratio process admits the exact functional decomposition:
	\begin{equation}
		\Lambda_{N,H}(h) = h^T \Delta_{N,H}(\theta_0) - \frac{1}{2} h^T (g_0|_\mathcal{B}) h + R_{N,H}(h),
		\label{eq:proof_stage3_lan_exp}
	\end{equation}
	where $\Delta_{N,H}(\theta_0) \xrightarrow{d} \mathcal{N}(0, g_0|_\mathcal{B})$ and $\sup_{h \in \mathbb{K}} |R_{N,H}(h)| = \mathcal{O}_p(N^{-1/2})$. By Lemma~\ref{lem:horizontal_leaf_contiguity}, probability measures $P_{\theta_N(h)}^{(N)}$ and $P_{\theta_0}^{(N)}$ are mutually contiguous.
	
	Applying Theorem~\ref{thm:global_experiment_deficiency_convergence}, we construct forward randomization Markov kernels $K_N(X^N) \equiv (g_0|_\mathcal{B})^{-1} \Delta_{N,H}(\theta_0)$. Bounding the total variation distance via Pinsker's inequality and KL divergence:
	\begin{equation}
		\delta\left( \mathcal{E}_{N,H}(\mathbb{K}), \, \mathcal{E}_\infty(\mathcal{B}; \mathbb{K}) \right) \le \sup_{h \in \mathbb{K}} \sqrt{\frac{1}{2} D_{\mathrm{KL}}\left( K_N P_{\theta_N(h)}^{(N)} \,||\, \mathcal{N}\left(h, (g_0|_\mathcal{B})^{-1}\right) \right)} = \sqrt{\mathcal{O}\left(N^{-1}\right)} = \mathcal{O}\left(N^{-1/2}\right).
		\label{eq:proof_stage3_deficiency_bound}
	\end{equation}
	Symmetric evaluation for reverse deficiency $\delta(\mathcal{E}_\infty, \mathcal{E}_{N,H}) = \mathcal{O}(N^{-1/2})$ yields:
	\begin{equation}
		\Delta\left( \mathcal{E}_{N,H}(\mathbb{K}), \, \mathcal{E}_\infty(\mathcal{B}; \mathbb{K}) \right) = \mathcal{O}\left(N^{-1/2}\right) \xrightarrow{N \to \infty} 0,
		\label{eq:proof_stage3_lecam_final}
	\end{equation}
	establishing Statement (iii).
	
	\paragraph{Stage 4: Proof of Cheeger-Gromov $C^\infty$-Convergence and Epistemological Identity (Statement iv).}
	By Proposition~\ref{prop:pillar_1_completeness} (Pillar I: Differential-Geometric Completeness), a smooth Riemannian-affine manifold $(\mathcal{M}, g, \nabla)$ is locally uniquely determined up to isometric affine diffeomorphisms by its metric tensor field $g$, connection symbols $\Gamma_{ij}^k$, and Riemann curvature tensor field $\mathcal{R}_{jmn}^i$ \cite{cheeger1970, gromov2007}.
	
	Combining Statements (i) and (ii):
	\begin{enumerate}
		\item Metric tensor field $\widetilde{G}_H^{(N)}(h) \xrightarrow{C^\infty} g_0|_\mathcal{B}$ uniformly across $\mathbb{K}$ (Theorem~\ref{thm:metric_stabilization});
		\item Connection symbols $\widetilde{\Gamma}_{ijk}^{(\alpha,N)}(h) \xrightarrow{C^\infty} \Gamma_{ijk}^{(0)} \equiv 0$ uniformly across $\mathbb{K}$ (Theorem~\ref{thm:connection_dissolution});
		\item Riemann curvature tensor $\widetilde{\mathcal{R}}_{ijmn}^{(\alpha,N)}(h) \xrightarrow{C^\infty} \mathcal{R}_{ijmn}^{(0)} \equiv 0$ uniformly across $\mathbb{K}$ (Theorem~\ref{thm:curvature_annihilation}).
	\end{enumerate}
	
	According to Cheeger-Gromov compactness and convergence theory for smooth Riemannian manifolds \cite{cheeger1986, gromov2007}, uniform $C^\infty$-bounds on metric components, connection symbols, and curvature components guarantee that the sequence of pulled-back manifolds $\psi_{N,H}^*(IG_N|_{H_\theta}) = (\mathbb{K}, \widetilde{G}_H^{(N)}, \widetilde{\nabla}^{(\alpha,N)})$ converges smoothly in the Cheeger-Gromov $C^\infty$-topology to the canonical flat tangent canvas $\mathcal{M}_\infty \equiv \left(T_{\theta_0}IG_1, g_0|_\mathcal{B}, \nabla^{(0)}\right)$.
	
	By Theorem~\ref{thm:theorem2_full} (Epistemological Equivalence Theorem), $\mathcal{M}_\infty$ is mathematically identical to Conventional Statistics ($\mathrm{CS}$). This completes the proof of Statement (iv) and establishes the Master Second Edge Theorem.
\end{proof}

\subsection{Epistemological Synthesis, Gauge Orbit Contraction, and Statistical Implications}
\label{subsec:epistemological_synthesis_master}

The completion of the Master Second Edge Theorem (Theorem~\ref{thm:master_second_edge}) marks a fundamental paradigm shift in mathematical statistics, differential geometry, and theoretical machine learning. Beyond providing a technical convergence proof, this section executes a deep epistemological synthesis, formalizing the topological contraction of vertical gauge orbits, the systematic elimination of non-identifiability pathologies in over-parameterized models, and the conceptual duality between the "{\it Curved Geometry of Finite Sample}" (finite-sample Riemannian geometry) and the "{\it Second Edge"} (asymptotic flat tangent collapse). In \cite{cheng_first_edge}, we have built the First Edge Theorem that as $N\to\infty$, IG lives at the edge or boundary of SMG geometry. Here the Second Edge Theorem proves that, as the sample size $N\to\infty$, CS lives at edge or boundary of IG. 

\subsubsection{Topological Gauge Orbit Contraction and Fiber Decoupling}
\label{subsubsec:gauge_orbit_contraction}

In Section~\ref{subsec:fiber_bundle_architecture}, we formalized the parameter domain as a smooth statistical fiber bundle $(\Theta, \mathcal{B}, \pi, \mathcal{F})$ (Definition~\ref{def:statistical_fiber_bundle}), where the $D$-dimensional ambient space $\Theta \subset \mathbb{R}^D$ projects onto a $d$-dimensional quotient base manifold $\mathcal{B} \equiv \Theta / \sim$ of observationally identifiable probability distributions ($d < D$). 

For each point $p \in \mathcal{B}$, the fiber $\mathcal{F}_p \equiv \pi^{-1}(p) \subset \Theta$ represents a $(D-d)$-dimensional vertical gauge orbit consisting of observationally equivalent parameters ($P_{\theta_1} = P_{\theta_2}$ $\mu$-a.e. for all $\theta_1, \theta_2 \in \mathcal{F}_p$).

We now prove that as $N \to \infty$, the vertical gauge orbits contract topologically under horizontal projection, decoupling unidentifiable parameter drift from active statistical inference.

\begin{lemma}[Topological Fiber Contraction and Gauge Decoupling]
	\label{lem:fiber_contraction}
	Let $(\Theta, \mathcal{B}, \pi, \mathcal{F})$ be the statistical fiber bundle equipped with the Fisher-compatible Ehresmann connection $T_\theta \Theta = H_\theta \oplus V_\theta$ (Definition~\ref{def:ehresmann_connection}). For any base distribution state $p_0 \in \mathcal{B}$ and any vertical fiber $\mathcal{F}_{p_0} = \pi^{-1}(p_0) \subset \Theta$:
	\begin{enumerate}[label=\textnormal{(\roman*)}]
		\item \textbf{Vanishing Vertical Metric Capacity:}
		The restriction of the joint $N$-sample Fisher Information Metric tensor $G^{(N)}$ to the vertical sub-bundle $V_\theta \equiv \mathrm{ker}(d\pi_\theta)$ vanishes identically for all $N \ge 1$:
		\begin{equation}
			G^{(N)}|_{V_\theta \times V_\theta} \equiv 0, \quad \forall \theta \in \mathcal{F}_{p_0}.
			\label{eq:vertical_metric_zero}
		\end{equation}
		
		\item \textbf{Topological Fiber Collapse onto Quotient Base Points:}
		Under the horizontal projection operator $P_\theta^H \equiv \mathrm{id}_{T_\theta \Theta} - \omega_\theta$, the vertical gauge orbit $\mathcal{F}_{p_0}$ contracts topologically to the single equivalence point $[p_0] \in \mathcal{B}$ in the metric topology induced by the restricted Fisher metric $g_0|_\mathcal{B}$:
		\begin{equation}
			P_\theta^H \left( \mathcal{F}_{p_0} \right) = \{\mathbf{0} \in H_\theta\}, \quad \forall \theta \in \mathcal{F}_{p_0}.
			\label{eq:fiber_point_collapse}
		\end{equation}
	\end{enumerate}
\end{lemma}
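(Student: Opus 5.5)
The plan is to obtain both parts almost directly from results already established, with the only real work being a precise interpretation of the set-valued statement in (ii).

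\textbf{Part (i).} Fix $\theta\in\mathcal{F}_{p_0}$ and $v,w\in V_\theta$. By Lemma~\ref{lem:additivity_fisher_metric}, $G^{(N)}(\theta)=N\,g^{(1)}(\theta)$. By Lemma~\ref{lem:vertical_score_vanishing}, $V_\theta\subseteq\ker(g^{(1)}_\theta)$, so $g^{(1)}_\theta(v,w)=0$. Hence $G^{(N)}_\theta(v,w)=N\cdot 0=0$ for every $N\ge 1$, which is \eqref{eq:vertical_metric_zero}. As a cross-check I would also argue directly from the $N$-sample score. The joint directional derivative $v(L_N)=\sum_{n}v(\log p(X_n;\theta))$ vanishes $P_\theta^{\otimes N}$-a.e. (exactly as in Step~2 of the proof of Lemma~\ref{lem:ambient_breakdown}). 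Therefore $G^{(N)}_\theta(v,w)=\mathbb{E}_\theta[v(L_N)\,w(L_N)]=0$, with no appeal to additivity.

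\textbf{Part (ii), infinitesimal level.} I would read $P^H_\theta(\mathcal{F}_{p_0})$ as $P_\theta^H$ acting on the tangent directions of the fiber. By Assumption~\ref{asm:bundle_submersion}, $\mathcal{F}_{p_0}$ is an embedded submanifold, and $T_\theta\mathcal{F}_{p_0}=\ker(d\pi_\theta)=V_\theta$. By Definition~\ref{def:connection_1form_projection}, $\ker(P_\theta^H)=V_\theta$, so $P_\theta^H(T_\theta\mathcal{F}_{p_0})=\{\mathbf{0}\}\subset H_\theta$. In particular, every infinitesimal gauge displacement is annihilated before it reaches the horizontal microscope chart $\psi_{N,H}$.

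\textbf{Part (ii), finite displacements.} To pass from tangent vectors to points of the fiber, I would use the length pseudo-metric induced by $g^{(1)}$ (equivalently by $G^{(N)}$). Take $\theta_1\in\mathcal{F}_{p_0}$ and a piecewise smooth curve $\gamma\subset\mathcal{F}_{p_0}$ from $\theta$ to $\theta_1$; locally such a curve exists within a fiber component. Its velocity satisfies $\dot\gamma(t)\in V_{\gamma(t)}$, so by Part (i) its length $\int_0^1\sqrt{G^{(N)}(\dot\gamma,\dot\gamma)}\,dt$ is zero for every $N$. Thus the whole fiber has diameter zero in the induced pseudo-metric. Passing to the metric quotient identifies $\mathcal{F}_{p_0}$ with a single point. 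I would then use Lemma~\ref{lem:horizontal_isomorphism} and Theorem~\ref{thm:horizontal_strict_positivity} to identify this metric quotient with $(\mathcal{B},g^{\mathcal{B}})$, whose metric near $p_0$ is the non-degenerate $g_0|_{\mathcal{B}}$. Hence the collapsed point is exactly $[p_0]$, and in the horizontal coordinates $h\in H_\theta$ it sits at the origin $\mathbf{0}$.

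\textbf{Main obstacle.} The hardest step is the finite-displacement version of (ii). Literally applying the linear operator $P^H_\theta$ to the chord $\theta_1-\theta$ need not give $\mathbf{0}$ when the fiber is curved: the chord is only vertical to first order, and its horizontal part is $O(\|\theta_1-\theta\|^2)$. I would therefore first state precisely that \eqref{eq:fiber_point_collapse} is meant in the two senses above: the tangent-level identity, and collapse in the induced metric topology. If a statement about points in a chart is wanted, I would use the rescaled map $\psi_{N,H}^{-1}$. Any fixed chord of the fiber then has horizontal component of order $O(\|\theta_1-\theta\|^2)$ before rescaling, and I would handle it by restricting to the $O(N^{-1/2})$-neighbourhood where the quadratic curvature of the fiber contributes a vanishing horizontal coordinate.
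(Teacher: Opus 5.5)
Your proposal is essentially the paper's proof. Part (i) is the paper's argument via $G^{(N)}=Ng^{(1)}$ and Lemma~\ref{lem:vertical_score_vanishing}. Part (ii) uses the same two ingredients: $\ker P^H_\theta=V_\theta=T_\theta\mathcal{F}_{p_0}$, and the vanishing $G^{(N)}$-length of curves lying inside the fiber. Your observation that $P^H_\theta(\theta_1-\theta)\neq\mathbf{0}$ for a curved fiber is correct. It shows that the paper's sentence about ``integrating along integral curves'' mapping the fiber onto $\mathbf{0}$ cannot be read literally, so your tangent-level/pseudo-metric reading of \eqref{eq:fiber_point_collapse} is the right one.

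One repair is needed, and the paper's proof needs it too. Zero length of in-fiber curves gives zero pseudo-diameter only on each connected component of $\mathcal{F}_{p_0}$, not on the whole fiber. Take a von Mises location $\theta^1$ with an idle coordinate $\theta^2$: the fiber $(\phi+2\pi\mathbb{Z})\times\mathbb{R}$ has components at positive $g^{(1)}$-pseudo-distance. There the metric quotient of $\Theta=\mathbb{R}^2$ is $\mathbb{R}$ rather than $\mathcal{B}=S^1$, so your identification of the metric quotient with $\mathcal{B}$ fails. Either assume connected fibers, or get the collapse onto $[p_0]$ directly from $\pi(\mathcal{F}_{p_0})=\{p_0\}$. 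For the latter, use that $\pi$ is $1$-Lipschitz from the $g^{(1)}$-pseudo-metric, since $g^{(1)}=\pi^*g^{\mathcal{B}}$.

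Your optional chart-level version has two further issues. It only gives horizontal coordinates of size $O(N^{-1/2})$ for fiber points inside the $O(N^{-1/2})$ window, not exact zero; a fixed chord with nonzero horizontal part blows up after rescaling. Also, $\psi_{N,H}$ is not invertible on $\Theta$, so use $\theta\mapsto\sqrt{N}\,P^H_{\theta_0}(\theta-\theta_0)$ instead.
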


\begin{proof}
	\textbf{Proof of (i):} Let $v, w \in V_\theta = \mathrm{ker}(d\pi_\theta)$ be vertical tangent vectors at $\theta \in \mathcal{F}_{p_0}$. By Lemma~\ref{lem:vertical_score_vanishing}, directional derivatives of the single-observation log-likelihood function along vertical vectors vanish $\mu$-almost everywhere:
	\begin{equation}
		v(\log p(X; \theta)) = 0, \quad w(\log p(X; \theta)) = 0 \quad P_\theta\text{-a.e.}
		\label{eq:proof_fiber_score_zero}
	\end{equation}
	By $N$-sample Fisher metric additivity $G^{(N)}(\theta) = N g^{(1)}(\theta)$ (Lemma~\ref{lem:additivity_fisher_metric}):
	\begin{equation}
		G^{(N)}(v, w) = N \cdot \mathbb{E}_\theta \left[ \big(v(\log p(X; \theta))\big) \cdot \big(w(\log p(X; \theta))\big) \right] = N \cdot \mathbb{E}_\theta [0 \cdot 0] = 0.
		\label{eq:proof_vertical_metric_eval}
	\end{equation}
	This proves Equation~\eqref{eq:vertical_metric_zero}.
	
	\textbf{Proof of (ii):} By Definition~\ref{def:connection_1form_projection}, the Ehresmann connection 1-form $\omega_\theta : T_\theta \Theta \to V_\theta$ acts as the canonical projection onto $V_\theta$ along $H_\theta$. For any vertical vector $v \in V_\theta$, $\omega_\theta(v) = v$. Applying the horizontal projection operator $P_\theta^H \equiv \mathrm{id}_{T_\theta \Theta} - \omega_\theta$:
	\begin{equation}
		P_\theta^H(v) = v - \omega_\theta(v) = v - v = \mathbf{0} \in H_\theta.
		\label{eq:proof_horizontal_proj_zero}
	\end{equation}
	Integrating Equation~\eqref{eq:proof_horizontal_proj_zero} along smooth integral curves in the fiber $\mathcal{F}_{p_0}$ maps the entire $(D-d)$-dimensional submanifold $\mathcal{F}_{p_0}$ onto the origin $\mathbf{0} \in H_\theta \cong T_{p_0} \mathcal{B}$. Thus, vertical gauge variations generate zero Riemannian geodesic distance ($d_{G^{(N)}}(\theta_1, \theta_2) = 0$ for all $\theta_1, \theta_2 \in \mathcal{F}_{p_0}$), collapsing the gauge orbit $\mathcal{F}_{p_0}$ to a single point $[p_0] \in \mathcal{B}$.
\end{proof}

\subsubsection{Resolution of Non-Identifiability Pathologies in Deep Learning and High-Dimensional Models}
\label{subsubsec:deep_learning_pathologies}

In modern statistical learning theory, over-parameterized models (such as deep neural networks, singular mixture models, and over-parameterized latent factor models) operate in regimes where $D \gg d$. Historically, this over-parameterization induces three severe mathematical pathologies that cause standard asymptotic theory to break down (Lemma~\ref{lem:ambient_breakdown}).

Table~\ref{tab:pathology_resolution_master} summarizes how the Master Second Edge Theorem and the Fisher-compatible Ehresmann connection framework systematically resolve these pathologies.

\begin{table}[h!]
	\centering
	\small
	\begin{tabular}{|p{3.2cm}|p{5.8cm}|p{6.0cm}|}
		\hline
		\textbf{Pathology Category} & \textbf{Ambient Parameter Space Pathology ($\Theta \subset \mathbb{R}^D$)} & \textbf{Horizontal Ehresmann Resolution ($H_\theta \cong T\mathcal{B}$)} \\ \hline
		\textbf{1. Metric Singularity \& Matrix Inverse Failure} & $\det g^{(1)}(\theta) = 0$ with $\mathrm{rank}(g^{(1)}) = d < D$. The matrix inverse $(g^{(1)})^{-1}$ fails to exist, blowing up Christoffel symbols and Riemann curvature tensor contractions. & $g_0|_\mathcal{B} \equiv P_\theta^H g^{(1)}(\theta) P_\theta^H$ is strictly positive-definite ($\lambda_{\min}(g_0|_\mathcal{B}) \ge \kappa_0 > 0$, Theorem~\ref{thm:horizontal_strict_positivity}). The restricted inverse matrix $(g_0|_\mathcal{B})^{-1}$ is well-defined and non-singular on $H_\theta$. \\ \hline
		\textbf{2. Variance Explosion \& Cramér-Rao Collapse} & Cramér-Rao lower bound along vertical gauge directions diverges to infinity: $N \cdot \mathrm{Var}_{\theta_0}(v^T \widehat{\theta}_N) \ge v^T (g^{(1)})^+ v = +\infty$ for non-zero $v \in V_{\theta_0}$. & For any physical, gauge-invariant estimator $\widehat{g}(X^N)$, operational estimation variance projected onto $H_\theta$ is bounded strictly by the horizontal inverse: $\mathrm{Var}(P^H \widehat{\theta}_N) \le \frac{1}{N} (g_0|_\mathcal{B})^{-1} + o(N^{-1})$ (Corollary~\ref{cor:gauge_invariant_lan}). \\ \hline
		\textbf{3. Topological Degeneracy \& Flat Energy Valleys} & Geodesic distance between distinct parameter states $\theta_1 \neq \theta_2$ on the same gauge fiber vanishes ($d_g(\theta_1, \theta_2) = 0$), destroying Hausdorff metric topology. & The horizontal leaf space $\mathcal{L}_{\theta_0} \cong \mathcal{B}$ inherits a non-degenerate, Hausdorff Riemannian metric $g|_\mathcal{B}$, restoring well-posed Gromov-Hausdorff topology ($d_{\mathrm{GH}} \to 0$, Theorem~\ref{thm:gh_leaf_collapse}). \\ \hline
	\end{tabular}
	\caption{Systematic Elimination of Non-Identifiability Pathologies under Horizontal Ehresmann Splitting.}
	\label{tab:pathology_resolution_master}
\end{table}

\begin{corollary}[Optimal Asymptotic Efficiency of Gauge-Invariant Estimators]
	\label{cor:gauge_invariant_efficiency}
	Let $\widehat{\theta}_N(X^N) \in \Theta \subset \mathbb{R}^D$ be a parameter estimator in an over-parameterized model ($D \gg d$). Under High-Order Local Regularity (Assumption~\ref{asm:local_regularity}), if the projected estimator $\widehat{h}_N \equiv \sqrt{N} P_{\theta_0}^H (\widehat{\theta}_N - \theta_0) \in H_{\theta_0} \cong \mathbb{R}^d$ is locally asymptotically unbiased for the identifiable base distribution $p \in \mathcal{B}$, then its asymptotic covariance matrix is strictly lower-bounded by the inverse horizontal Fisher metric:
	\begin{equation}
		\lim_{N \to \infty} N \cdot \mathrm{Var}_{\theta_0}\left( P_{\theta_0}^H \widehat{\theta}_N \right) \ge \left( g_0|_\mathcal{B} \right)^{-1},
		\label{eq:horizontal_cramer_rao}
	\end{equation}
	where equality is achieved if and only if $P_{\theta_0}^H \widehat{\theta}_N$ is asymptotically linear in the horizontally projected score vector:
	\begin{equation}
		P_{\theta_0}^H \left( \widehat{\theta}_N - \theta_0 \right) = \frac{1}{N} \left( g_0|_\mathcal{B} \right)^{-1} \sum_{n=1}^N P_{\theta_0}^H \nabla_\theta \log p(X_n; \theta_0) + o_p\left(N^{-1/2}\right).
		\label{eq:asymptotic_score_linearity_horizontal}
	\end{equation}
\end{corollary}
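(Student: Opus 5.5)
The plan is to pass to horizontal coordinates $h\in H_{\theta_0}\cong\mathbb{R}^d$ and run a finite-sample covariance (Cramér--Rao) inequality with the horizontally projected score $\Delta_{N,H}(\theta_0)$ as the test statistic. Only afterwards will I let $N\to\infty$. Since $\mathrm{Var}_{\theta_0}(P^H_{\theta_0}\widehat\theta_N)=N^{-1}\mathrm{Var}_{\theta_0}(\widehat h_N)$, statement \eqref{eq:horizontal_cramer_rao} is equivalent to $\liminf_N \mathrm{Var}_{\theta_0}(\widehat h_N)\ge (g_0|_\mathcal{B})^{-1}$ in the Loewner order. I will read the ``$\lim$'' as a $\liminf$ unless the limit exists. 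By Theorem~\ref{thm:horizontal_strict_positivity}, $g_0|_\mathcal{B}$ is a genuine positive-definite $d\times d$ matrix on $H_{\theta_0}$, so its inverse makes sense. By Lemma~\ref{lem:vertical_score_vanishing}, vertical directions contribute nothing to the score. Hence no pseudo-inverse appears, and the pathology of Lemma~\ref{lem:ambient_breakdown} is avoided.

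\textbf{Step 1 (derivative of the mean equals a covariance).} I interpret local asymptotic unbiasedness as $m_N(h)\equiv\mathbb{E}_{\theta_N(h)}[\widehat h_N]=h+r_N(h)$ with $\partial_h r_N(0)\to 0$, where $\theta_N(h)=\theta_0+N^{-1/2}P^H_{\theta_0}h$. Using the $L_2$-differentiability in Assumption~\ref{asm:local_regularity}(i) to differentiate under the integral, together with $\partial_{h}\log p_N(X^N;\theta_N(h))|_{h=0}=\Delta_{N,H}(\theta_0)$ (as in \eqref{eq:linear_term_derivation}), I obtain
\[
\partial_h m_N(0)=\mathbb{E}_{\theta_0}\big[\widehat h_N\,\Delta_{N,H}(\theta_0)^\top\big]=\mathrm{Cov}_{\theta_0}\big(\widehat h_N,\Delta_{N,H}(\theta_0)\big),
\]
where the last equality uses that the score has zero mean. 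Therefore $C_N\equiv\mathrm{Cov}(\widehat h_N,\Delta_{N,H})\to I_d$. Also $\mathrm{Var}_{\theta_0}(\Delta_{N,H})=g_0|_\mathcal{B}$ exactly for every $N$, by \eqref{eq:score_cov_g0_sec7}.

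\textbf{Step 2 (covariance inequality and equality case).} Set $W_N\equiv\widehat h_N-C_N(g_0|_\mathcal{B})^{-1}\Delta_{N,H}$. Expanding $\mathrm{Var}(W_N)\succeq 0$ gives $\mathrm{Var}(\widehat h_N)\succeq C_N(g_0|_\mathcal{B})^{-1}C_N^\top$. Letting $C_N\to I_d$ yields \eqref{eq:horizontal_cramer_rao}.

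For the equality case I will use the identity
\[
\mathrm{Var}(W_N)=\mathrm{Var}(\widehat h_N)-C_N(g_0|_\mathcal{B})^{-1}C_N^\top .
\]
If equality holds in the limit, then $\mathrm{Var}(W_N)\to0$. Since $\mathbb{E}W_N=\mathbb{E}\widehat h_N\to 0$, Chebyshev's inequality gives $W_N=o_p(1)$. Combined with $C_N\to I_d$ and $\Delta_{N,H}=\mathcal{O}_p(1)$ (Lemma~\ref{lem:uniform_horizontal_log_likelihood_expansion}), this yields $\widehat h_N=(g_0|_\mathcal{B})^{-1}\Delta_{N,H}+o_p(1)$. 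Dividing by $\sqrt N$ gives exactly \eqref{eq:asymptotic_score_linearity_horizontal}.

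Conversely, suppose \eqref{eq:asymptotic_score_linearity_horizontal} holds. Then $\widehat h_N=(g_0|_\mathcal{B})^{-1}\Delta_{N,H}+o_p(1)$, and under uniform integrability of $\|\widehat h_N\|^2$ the variance converges to $(g_0|_\mathcal{B})^{-1}\,g_0|_\mathcal{B}\,(g_0|_\mathcal{B})^{-1}=(g_0|_\mathcal{B})^{-1}$.

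\textbf{Main obstacle.} The delicate point is Step 1: both the interchange of $\partial_h$ with $\mathbb{E}$ and the meaning of ``asymptotically unbiased'' require control of the second moments of $\widehat h_N$. The same uniform integrability is needed for the converse direction of Step 2, because convergence in probability alone does not control variances. My first attempt is to impose, as part of the hypothesis, that $\widehat h_N$ is locally unbiased with $\sup_N\mathbb{E}_{\theta_0}\|\widehat h_N\|^{2+\epsilon}<\infty$. With the finite-$N$ differentiability supplied by Assumption~\ref{asm:local_regularity}, this makes the argument exact. If that proves too restrictive, the fallback is the Hájek--Le Cam convolution theorem applied to the LAN experiment of Corollary~\ref{cor:gauge_invariant_lan}, with regularity of $\widehat h_N$ along horizontal contiguous alternatives (via Le Cam's third lemma and Lemma~\ref{lem:horizontal_leaf_contiguity}). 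This yields the bound for the limiting law's covariance, and equality occurs iff the independent convolution component is degenerate, which again gives \eqref{eq:asymptotic_score_linearity_horizontal}.
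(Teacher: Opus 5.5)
Your proof is correct, given the moment conditions you already flag, but it is not the paper's argument. The paper never touches finite-sample variances. It takes the deficiency convergence of Corollary~\ref{cor:gauge_invariant_lan} to the Gaussian shift $\mathcal{N}(h,(g_0|_\mathcal{B})^{-1})$, cites H\'ajek's local asymptotic minimax theorem to declare $(g_0|_\mathcal{B})^{-1}$ the lower bound for quadratic risk, and remarks that score-linear estimators attain it.

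You instead run a Cram\'er--Rao covariance inequality directly in horizontal coordinates. You use only two facts: the horizontal score is centered with covariance exactly $g_0|_\mathcal{B}$ for every $N$, and the mean map $h\mapsto\mathbb{E}_{\theta_N(h)}\widehat h_N$ can be differentiated under the integral. The identity $\mathrm{Var}(W_N)=\mathrm{Var}(\widehat h_N)-C_N(g_0|_\mathcal{B})^{-1}C_N^\top$ then gives the bound and both halves of the equality characterization at once.

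What your route buys:
\begin{itemize}
\item It controls $\mathrm{Var}_{\theta_0}$ at the single point $\theta_0$, which is what \eqref{eq:horizontal_cramer_rao} literally asserts.
\item It uses the unbiasedness hypothesis exactly where it matters: the condition $C_N\to I_d$ is what rules out Hodges-type superefficiency.
\item By contrast, the minimax theorem as cited only bounds the local maximal risk $\sup_h$. By itself it gives neither the pointwise inequality at $\theta_0$ nor the ``only if'' half of the equality claim, and the paper's proof does not address that half.
\end{itemize}

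The paper's route needs no moment assumptions. Yours must make explicit what the statement leaves implicit:
\begin{itemize}
\item The usual $L_2$-differentiability lemma behind Step~1 needs $\limsup_{h\to0}\mathbb{E}_{\theta_N(h)}\|\widehat h_N\|^2<\infty$ at each fixed $N$. So state your moment condition under the local alternatives, not only under $\theta_0$.
\item The equality step needs $\mathbb{E}_{\theta_0}\widehat h_N\to0$, not just $\partial_h r_N(0)\to0$.
\item The converse direction needs uniform integrability.
\end{itemize}

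Your convolution-theorem fallback is the right limit-experiment replacement for the paper's minimax citation. Combine it with lower semicontinuity of the variance under weak convergence: write $\mathrm{Var}(X)=\tfrac12\mathbb{E}(X-X')^2$ with $X'$ an independent copy and apply Fatou. This yields the $\liminf$ bound without any uniform integrability. Degeneracy at $0$ of the independent convolution factor is then equivalent to \eqref{eq:asymptotic_score_linearity_horizontal}.
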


\begin{proof}
	By Corollary~\ref{cor:gauge_invariant_lan}, localized statistical experiments pulled back along $P_{\theta_0}^H$ converge in Le Cam deficiency distance to the limit Gaussian shift experiment $\mathcal{E}_\infty(\mathcal{B}; \mathbb{K}) = \left(\mathbb{R}^d, \mathcal{B}^d, \left\{\mathcal{N}\left(h, (g_0|_\mathcal{B})^{-1}\right)\right\}\right)$. Applying Hajek's Local Asymptotic Minimax Theorem \cite{hajek1972, vandervaart1998} to the non-singular limit experiment $\mathcal{E}_\infty(\mathcal{B}; \mathbb{K})$ enforces $(g_0|_\mathcal{B})^{-1}$ as the strict lower bound for quadratic estimation risk on $H_{\theta_0}$. Asymptotic linearity (Equation~\eqref{eq:asymptotic_score_linearity_horizontal}) achieves this lower bound, completing the proof.
\end{proof}

\subsubsection{The "Second Edge": Cramér-Rao Boundary vs. Asymptotic Geometric Collapse}
\label{subsubsec:first_vs_second_edge}

We conclude this epistemological synthesis by formalizing the fundamental distinction between the {\it Curved Geometry of Finite Sample} and the {\it Second Edge} in statistical physics and information geometry.

\begin{definition}[The Dual Edge Framework]
	\label{def:dual_edge_framework}
	In the structured evolution of information manifolds $IG_N = (\Theta, G^{(N)}, \nabla^{(\alpha,N)})$, statistical inference is bounded by two distinct mathematical thresholds:
	\begin{enumerate}[label=\textnormal{(\roman*)}]
		\item \textbf{The Curved Geometry of Finite Sample (Finite-Sample Cramér-Rao Information Boundary):}
		Defined in finite-sample regimes ($N < \infty$), the Curved Geometry of Finite Sample represents the point-wise lower bound on estimation variance enforced by the Fisher Information Metric $g^{(1)}(\theta_0)$ via the Cramér-Rao inequality:
		\begin{equation}
			\mathrm{Var}_{\theta_0}(\widehat{\theta}) \ge \frac{1}{N} \left[ g^{(1)}(\theta_0) \right]^{-1}.
			\label{eq:first_edge_cramer_rao}
		\end{equation}
		At the Curved Geometry of Finite Sample, the manifold $IG_N$ remains intrinsically curved ($\mathcal{R}^{(\alpha,N)} \neq 0$), parallel transport is path-dependent ($\Gamma^{(\alpha,N)} \neq 0$), and first-order efficient estimators suffer second-order information loss $\Delta I_N = \gamma_e^2 + \frac{1}{2}\gamma_m^2 + \mathcal{O}(N^{-1})$ due to non-zero Efron statistical curvatures ($\gamma_e, \gamma_m > 0$) \cite{efron1975, amari1982}.
		
		\item \textbf{The Second Edge (Global Asymptotic Geometric Collapse):}
		Defined in the thermodynamic limit as $N \to \infty$, the Second Edge represents the structural phase transition where the dynamic sequence of curved, expanding manifolds $IG_N|_{H_\theta}$ collapses globally onto the canonical flat tangent canvas $\mathcal{M}_\infty \equiv \mathrm{CS}$.
		At the Second Edge:
		\begin{itemize}
			\item Riemannian curvature is completely annihilated ($\widetilde{\mathcal{R}}^{(\alpha,N)} \to 0$ at rate $\mathcal{O}_p(N^{-1})$);
			\item Connection friction dissolves ($\widetilde{\Gamma}^{(\alpha,N)} \to 0$ at rate $\mathcal{O}(N^{-1/2})$);
			\item Holonomy collapses to the identity ($\mathrm{Hol}^0 \to \{I_{d \times d}\}$);
			\item Metric capacity freezes ($\widetilde{G}_H^{(N)} \to g_0|_\mathcal{B}$);
			\item Second-order information loss vanishes identically ($\Delta I_N = 0$).
		\end{itemize}
	\end{enumerate}
\end{definition}

Table~\ref{tab:first_vs_second_edge_comparison} provides a complete architectural comparison highlighting the physical and mathematical transformations occurring between the Curved Geometry of Finite Sample and the Second Edge.

\begin{table}[h!]
	\centering
	\small
	\begin{tabular}{|p{3.5cm}|p{5.5cm}|p{5.8cm}|}
		\hline
		\textbf{Theoretical Feature} & \textbf{The Curved Geometry of Finite Sample ($N < \infty$, Finite Samples)} & \textbf{The Second Edge ($N \to \infty$, Thermodynamic Limit)} \\ \hline
		\textbf{Ontological Nature} & Point-wise variance lower bound on a fixed, curved manifold $IG_N$. & Global phase transition and structural manifold collapse $IG_N \to \mathcal{M}_\infty \equiv \mathrm{CS}$. \\ \hline
		\textbf{Manifold Geometry} & Intrinsically curved Riemannian-affine space ($\mathcal{R}^{(\alpha)} \neq 0$). & Flat Euclidean-Fisher tangent space $\mathcal{M}_\infty$ ($\mathcal{R}^{(0)} \equiv 0$). \\ \hline
		\textbf{Parallel Transport} & Path-dependent transport with non-trivial holonomy ($\mathrm{Hol}^0 \neq \{I\}$). & Path-independent transport with trivial holonomy ($\mathrm{Hol}^0 = \{I_{d \times d}\}$). \\ \hline
		\textbf{Test Trinity Behavior} & Score ($W_S$), LRT ($W_{LR}$), and Wald ($W_W$) tests diverge due to $\Gamma_{ijk}^{(\alpha)} \neq 0$. & Test trinity becomes unconditionally identical: $W_S = W_{LR} = W_W + o_p(1)$. \\ \hline
		\textbf{Information Loss} & Second-order loss exists: $\Delta I_N = \gamma_e^2 + \frac{1}{2}\gamma_m^2 + \mathcal{O}(N^{-1}) > 0$. & Zero second-order information loss: $\Delta I_N = 0$ due to spatial metric freezing ($\nabla g = 0$). \\ \hline
		\textbf{Decision Paradigm} & Complex finite-sample decision rules on curved manifolds. & Universal Gaussian shift experiment $\mathcal{E}_\infty = \left(\mathbb{R}^d, \mathcal{B}^d, \{\mathcal{N}(h, g_0^{-1})\}\right)$. \\ \hline
	\end{tabular}
	\caption{Architectural Comparison between the Curved Geometry of Finite Sample and the Second Edge.}
	\label{tab:first_vs_second_edge_comparison}
\end{table}

\begin{philosophical}[Conventional Statistics as an Invariant Attractor State]
	The Master Second Edge Theorem demonstrates that Conventional Statistics ($\mathrm{CS}$) is not an ad-hoc, heuristic linear approximation invented for computational convenience. Instead, $\mathrm{CS} \equiv \mathcal{M}_\infty$ is the unique, mathematically invariant, zero-curvature attractor state toward which all regular parametric statistical manifolds $IG_N$ dynamically collapse under $N \to \infty$. 
	
	By proving that structural geometric collapse ($IG_N \xrightarrow{\mathrm{geom}} \mathcal{M}_\infty$) and operational decision condensation ($IG_N \xrightarrow{\mathrm{oper}} \mathcal{M}_\infty$) are two mathematically equivalent projections of a single underlying phase transition (Theorem~\ref{thm:master_second_edge}), the Second Edge Theorem establishes a unified foundation bridging differential geometry, geometric analysis, and asymptotic decision theory.
\end{philosophical}

\newpage
\section{Modern Statistical Implications of the 2nd Edge Theorem}
\label{sec:section8_implications}

\subsection{Historical Paradigm Rift: Aesthetic Geometry vs. Operational Utility}
\label{subsec:ig_vs_cs_dilemma}

For over half a century, mathematical statistics has experienced a fundamental epistemological rift between Conventional Statistics ($\text{CS}$) \cite{fisher1922, lecam1986, vandervaart1998} and Information Geometry ($\text{IG}$) \cite{amari1985, amari2000, rao1945}. While Information Geometry is widely celebrated as a triumph of mathematical beauty—synthesizing Riemannian geometry, dual affine connections, and tensor calculus over statistical manifolds—the vast majority of practicing statisticians treat $\text{IG}$ as an aesthetic luxury rather than an essential operational tool \cite{kass1997}.

This widespread skepticism among mainstream statisticians stems from three deep-seated structural reasons:

\begin{enumerate}[leftmargin=*, label={\bf (\arabic*)}]
	\item \textbf{High Formalism with Asymptotic Redundancy:} 
	In classical decision theory \cite{lecam1960, lecam1986}, large-sample inference is governed by Local Asymptotic Normality (LAN). Under the scale transformation $h = \sqrt{N}(\theta - \theta_0)$, the localized log-likelihood ratio process $\Lambda_N(h)$ converges weakly to a linear-quadratic Gaussian shift experiment:
	\begin{equation}
		\Lambda_N(h) = h^\top \Delta_N(\theta_0) - \frac{1}{2} h^\top g^{(1)}(\theta_0) h + o_p(1), \quad \Delta_N(\theta_0) \xrightarrow{d} \mathcal{N}\left(0, g^{(1)}(\theta_0)\right).
	\end{equation}
	Because classical asymptotic results (such as first-order efficiency of the Maximum Likelihood Estimator, asymptotic optimality of the likelihood ratio test, and the Cramér-Rao bound) depend solely on the point evaluation of the frozen Fisher Information Metric $g_0 \equiv g^{(1)}(\theta_0)$, statisticians view finite-sample manifold curvature $\mathcal{R}^{(\alpha)}$ and connection Christoffel symbols $\Gamma_{ijk}^{(\alpha)}$ as higher-order nuisances of order $\mathcal{O}(N^{-1})$ and $\mathcal{O}(N^{-1/2})$ \cite{efron1975, amari1982}. Consequently, the sophisticated differential-geometric apparatus of $\text{IG}$ appears to offer no practical gain over standard flat Euclidean asymptotic calculations.
	
	\item \textbf{Absence of Actionable Algorithms for Complex Data:} 
	Traditional Information Geometry focuses primarily on well-behaved, finite-dimensional, strictly identifiable parametric models (such as regular exponential families and mixture models) \cite{amari2000, chentsov1982}. However, when applied to modern complex problems—such as non-parametric estimation, high-dimensional inference, or real-time online learning—classical $\text{IG}$ often yields non-linear geodesic differential equations that are computationally intractable, offering few actionable algorithms for real-world statistical computing.
	
	\item \textbf{Breakdown in Over-Parameterized and Singular Regimes:} 
	Standard $\text{IG}$ relies critically on the strict positive-definiteness of the single-sample Fisher Information Metric $g^{(1)}(\theta)$. In modern machine learning, deep neural networks, and singular latent variable models, the parameter dimension $D$ vastly exceeds the sample size $N$ ($D \gg N$), inducing massive non-identifiability gauge orbits where $\det g^{(1)}(\theta) = 0$ \cite{watanabe2009, stoica2001}. When $g^{(1)}$ is rank-deficient, classical Riemannian metrics and inverse contractions break down entirely, reinforcing the perception that $\text{IG}$ is an idealized theoretical artifact restricted to regular non-singular models.
\end{enumerate}

\subsection{Why the 2nd Edge Theorem Transcends Conventional Statistics}
\label{subsec:transcending_cs}

The Second Edge Theorem radically alters this classical paradigm. It demonstrates that Conventional Statistics ($\text{CS}$) is not an independent or self-contained baseline of statistical reality, but rather the flat, zero-curvature limit canvas $\mathcal{M}_\infty \equiv (T_{\theta_0}IG_1, g_0, \nabla^{(0)})$ onto which the dynamic, curved information manifold sequence $IG_N = (\Theta, G^{(N)}, \nabla^{(\alpha,N)})$ globally collapses as $N \to \infty$.

Crucially, \textbf{conventional statistical models cannot prove the Second Edge Theorem}. Conventional statistics operates \emph{inside} the static, flat limit experiment $\mathcal{E}_\infty$, taking LAN and asymptotic Euclidean flatness for granted as local approximations \cite{lecam1986, vandervaart1998}. Conventional statistics lacks the analytical machinery required to establish the 2nd Edge Theorem for three fundamental reasons:

\begin{enumerate}[leftmargin=*, label={\bf (\roman*)}]
	\item \textbf{Lack of Manifold Sequence Mechanics:} 
	$\text{CS}$ treats $N \to \infty$ as a sequence of scalar random variables or probability measures concentrating around a point $\theta_0$. It possesses no framework to model $IG_N$ as a dynamic sequence of Riemannian-affine manifolds undergoing metric capacity scaling $G^{(N)} = N g^{(1)}$, connection dissolution $\widetilde{\Gamma}^{(\alpha,N)} \sim \mathcal{O}(N^{-1/2})$, and quadratic curvature decay $\widetilde{\mathcal{R}}^{(\alpha,N)} \sim \mathcal{O}_p(N^{-1})$.
	
	\item \textbf{Absence of Metric-Topological and Holonomy Convergence Machinery:} 
	Proving the global collapse $IG_N \to \mathcal{M}_\infty$ requires proving:
\begin{itemize}
\item \text{Gromov-Hausdorff Metric Leaf Collapse:} \begin{equation}
\lim_{N\to\infty} d_{GH}\left((\mathbb{K}, d_{\widetilde{G}_H^{(N)}}), (\mathbb{K}, d_{g_0|_{\mathcal{B}}})\right) = 0, \label{eq:gh_collapse_sec8} 
\end{equation} 
\item \text{Accelerated Holonomy Annihilation:}
\begin{equation}
\lim_{N\to\infty} \text{diam}_{op}\left(\text{Hol}_{\theta_0}^0(\widetilde{\nabla}^{(\alpha,N)})\right) = 0 \iff \text{Hol}_{\theta_0}^0 \to \{I_{d \times d}\}. \label{eq:holonomy_collapse_sec8}
\end{equation}
\end{itemize}
	Conventional statistics possesses no differential-topological tools (such as Gromov-Hausdorff metric spaces \cite{gromov2007, burago2001}, Cheeger-Gromov $C^\infty$-manifold convergence \cite{cheeger1986}, or the Ambrose-Singer Holonomy Theorem \cite{ambrose1953}) to evaluate metric space distortion or the trivialization of parallel transport holonomy groups.
	
	\item \textbf{Inability to Resolve Singular Fiber Degeneracies:} 
	When $\det g^{(1)}(\theta_0) = 0$, conventional statistics suffers from catastrophic breakdown: parameter variances along kernel directions $v \in \ker(g^{(1)})$ diverge to infinity ($N \text{Var}(v^\top \hat{\theta}_N) \to +\infty$), rendering the classical Cramér-Rao bound useless \cite{stoica2001}. $\text{CS}$ cannot resolve this without the Fisher-Compatible Ehresmann Connection Instrument, which constructs a smooth Fiber Bundle $(\Theta, \mathcal{B}, \pi, \mathcal{F})$ and projects localized operations onto the non-singular horizontal distribution $H_\theta \equiv V_\theta^{\perp_g}$ via $P_\theta^H = \text{id} - \omega_\theta$.
\end{enumerate}

\subsection{The Grand Hierarchy: $CS \subset \partial IG \subset \partial SMG \subset SMG$}
\label{subsec:dual_edge_hierarchy}

By unifying the \textbf{First Edge Theorem} ($IG \subset \partial SMG$) and the \textbf{Second Edge Theorem} ($CS \subset \partial IG$), we establish the grand structural architecture of statistical science:

\begin{enumerate}[leftmargin=*, label={\bf 1.}]
	\item \textbf{The First Edge Theorem ($IG \subset \partial SMG$):} 
	Proves that finite-sample parametric Information Geometry ($IG$), which restricts attention to closed parametric families $\mathcal{P} = \{P_\theta : \theta \in \Theta\}$, constitutes a finite-dimensional boundary slice ($\partial SMG$) of the ambient, infinite-dimensional, non-parametric, open-system Statistical Manifold Geometry ($SMG$). $SMG$ encompasses non-parametric probability measures, open environment-system interactions, and active dynamic connection transport.
	
	\item \textbf{The Second Edge Theorem ($CS \subset \partial IG$):} 
	Proves that as sample size $N \to \infty$, the sequence of curved joint information manifolds $IG_N$ undergoes global geometric phase transition and collapses onto the flat, canonical tangent space $\mathcal{M}_\infty \equiv CS$. Conventional Statistics ($CS$) is the zero-curvature asymptotic boundary slice ($\partial IG$) of $IG$.
\end{enumerate}

Combining both results reveals the \textbf{Nested Dual-Edge Hierarchy}:
\begin{equation}
	\mathcal{M}_\infty \equiv \text{CS} \quad \subset \quad \partial IG_N \quad \subset \quad IG_N \quad \subset \quad \partial SMG \quad \subset \quad SMG.
	\label{eq:grand_hierarchy}
\end{equation}

\begin{philosophical}[Decades of Statistics as Boundary Phenomena]
	\label{phil:boundary_phenomena}
	The Nested Dual-Edge Hierarchy \eqref{eq:grand_hierarchy} delivers a profound epistemological realization: \textbf{both Conventional Statistics ($\text{CS}$) and Information Geometry ($\text{IG}$) live exclusively at the boundary edges of the vast Statistically  Meaningful Geometry ($SMG$)}. 
	
	Conventional Statistics ($\text{CS}$) is two steps removed from ambient statistical reality. $\text{CS}$ operates strictly at the asymptotic boundary where sample-size thermodynamic cooling $\beta_N = 1/\sqrt{N}$ has frozen the metric tensor and completely quenched manifold curvature ($\mathcal{R}^{(0)} \equiv 0$). Information Geometry ($\text{IG}$) relaxes the flatness restriction by incorporating finite-sample curvature ($\mathcal{R}^{(\alpha)} \neq 0$), but remains trapped on parametric boundary slices ($\partial SMG$). The true, unrestricted space of data, complex systems, and open environments is governed by $SMG$.
\end{philosophical}

\subsection{Toward a New Statistical Paradigm: Methodology for Over-Parameterization and Beyond}
\label{subsec:new_paradigm_overparam}

The Dual-Edge Hierarchy \eqref{eq:grand_hierarchy} is not merely a theoretical synthesis; it dictates a revolutionary new paradigm and actionable methodology for modern statistical challenges that completely upturns classical $\text{CS}$ and traditional $\text{IG}$.

\subsubsection{Addressing Over-Parameterization and High-Dimensionality ($D \gg N$)}

In modern artificial intelligence, deep learning, large language models, and high-dimensional genomic inference, models operate in regimes where the parameter count $D$ vastly exceeds the sample size $N$ ($D \gg N$). In these over-parameterized regimes:
\begin{itemize}
	\item Classical $\text{CS}$ fails because asymptotic limits ($N \to \infty$ with fixed $D$) are invalid, and sample covariance matrices are strictly singular.
	\item Traditional $\text{IG}$ fails because the ambient Fisher metric has a non-trivial kernel $\ker(g^{(1)}) \neq \{0\}$, causing geometric curvature expressions to blow up.
\end{itemize}

Under our new $SMG$/Ehresmann framework, we resolve over-parameterization by exploiting the geometric fiber bundle structure $(\Theta, \mathcal{B}, \pi, \mathcal{F})$, projecting statistical operations onto the non-singular horizontal carriage $H_\theta \equiv V_\theta^{\perp_g}$ and providing three actionable methodologies:

\begin{methodology}[Gauge-Invariant Horizontal Optimization (Ehresmann Stochastic Gradient Descent)]
	\label{meth:esgd}
	In over-parameterized deep learning architectures, parameter updates along vertical gauge fibers $V_\theta = \ker(d\pi_\theta)$ generate zero reduction in empirical loss while causing representation drift and numerical instability. By constructing the horizontal projection operator $P_\theta^H = \text{id} - \omega_\theta$, we formulate \textbf{Ehresmann Stochastic Gradient Descent (E-SGD)}:
	\begin{equation}
		\theta_{t+1} = \theta_t - \eta_t \, P_{\theta_t}^H \left[ \left(g_{\theta_t}^{(1)}|_{H_{\theta_t}}\right)^{-1} \nabla_\theta \mathcal{L}_N(X^N; \theta_t) \right],
		\label{eq:esgd_update}
	\end{equation}
	where $g_{\theta_t}^{(1)}|_{H_{\theta_t}}$ is the strictly positive-definite restricted horizontal Fisher metric. E-SGD restricts optimization strictly to the horizontal leaf $\mathcal{L}_{\theta_0} \cong \mathcal{B}$, eliminating unidentifiable gauge drift and accelerating convergence.
\end{methodology}

\begin{methodology}[Curvature-Aware Geometric Regularization]
	\label{meth:geom_regularization}
	To prevent over-fitting in finite-sample over-parameterized models, we introduce explicit geometric penalization based on the pulled-back connection symbols $\widetilde{\Gamma}_{ijk}^{(\alpha,N)}$ and Riemann curvature tensor $\widetilde{\mathcal{R}}_{ijmn}^{(\alpha,N)}$. The geometrically regularized objective function is:
	\begin{equation}
		\mathcal{J}(\theta) = \mathcal{L}_N(X^N; \theta) + \lambda_1 \left\| \widetilde{\Gamma}^{(\alpha,N)}(\theta) \right\|_F^2 + \lambda_2 \left\| \widetilde{\mathcal{R}}^{(\alpha,N)}(\theta) \right\|_F^2,
		\label{eq:geom_regularization_loss}
	\end{equation}
	where $\|\cdot\|_F$ denotes the tensor Frobenius norm. By actively penalizing connection friction and Riemannian curvature, \eqref{eq:geom_regularization_loss} forces the finite-sample manifold $IG_N$ to accelerate its phase transition toward the flat, optimal generalization canvas $\mathcal{M}_\infty \equiv CS$, providing a principled geometric defense against over-fitting.
\end{methodology}

\begin{methodology}[Open-System Dynamic Inference in SMG]
	\label{meth:smg_open_systems}
	Real-world data generation frequently violates the i.i.d. closed-system assumption of $\text{CS}$ due to non-stationary environments, distribution shifts, and active system-environment feedback loops. $SMG$ models data dynamics as an open thermodynamic system governed by the Triadic Variable Spectrum $\mathcal{T}_N = (\mathcal{E}_N, \mathcal{S}_N, \mathcal{F}_N)$. By tracking the environmental measure concentration $\beta_N$ against system metric capacity expansion $\alpha_N$, $SMG$ provides robust, out-of-distribution dynamic inference mechanisms that adaptively update connection parallel transport across shifting environmental manifolds.
\end{methodology}

\section{Future Research Directions: Generative AI Applications and Beyond}
\label{sec:future_directions}

The Master Second Edge Theorem (Theorem \ref{thm:master_second_edge}) establishes that the sequence of curved $N$-sample joint information manifolds $IG_N\vert{}_{H_\theta} = (\Theta\vert{}_{H_\theta}, G^{(N)}\vert{}_{H_\theta}, \nabla^{(\alpha,N)}\vert{}_{H_\theta})$ undergoes an asymptotic phase transition and global geometric collapse onto the flat canonical tangent canvas $\mathcal{M}_\infty \equiv (T_{\theta_0}IG_1, g_0\vert{}_{\mathcal{B}}, \nabla^{(0)}) \equiv \text{CS}$ as $N \to \infty$ \cite{amari2000, lecam1986}.

By formalizing the systematic decay of Amari connection friction $\widetilde{\Gamma}^{(\alpha,N)} = \mathcal{O}(N^{-1/2})$, accelerated quadratic Riemann curvature annihilation $\widetilde{\mathcal{R}}^{(\alpha,N)} = \mathcal{O}_p(N^{-1})$, and restricted holonomy group collapse $\mathrm{Hol}_{\theta_0}^0(\widetilde{\nabla}^{(\alpha,N)}) \to \{I_{d \times d}\}$ \cite{ambrose1953, cheeger1986}, this mathematical framework bridges finite-sample differential geometry and asymptotic decision theory.

In this section, we extend the Second Edge Theorem to modern frontiers, focusing on Generative Artificial Intelligence (Diffusion Models, Flow Matching, and Transformer Architectures), Singular Learning Theory, and Non-Equilibrium Open-System Information Geometry.

\subsection{Information-Geometric Phase Transitions in Score-Based Diffusion and Flow Matching}
\label{subsec:future_diffusion_flow}

Score-based generative diffusion models \cite{song2021} and continuous-time flow matching frameworks \cite{lipman2023} generate high-dimensional data distributions by reversing a continuous-time stochastic degradation process. We establish the information-geometric foundation governing the reverse-time probability trajectories over the statistical fiber bundle $(\Theta, \mathcal{B}, \pi, \mathcal{F})$ \cite{lee2013}.

\begin{definition}[Continuous Diffusion Stochastic Differential Equation on Information Manifolds]
	\label{def:diffusion_sde}
	Let $X_t \in \mathbb{R}^D$ ($t \in [0, T]$) be a continuous-time stochastic process governed by the forward Ito stochastic differential equation (SDE):
	\begin{equation}
		dX_t = f(X_t, t) , dt + g(t) , dW_t, \quad X_0 \sim p_0(x),
		\label{eq:forward_sde}
	\end{equation}
	where $f(\cdot, t): \mathbb{R}^D \to \mathbb{R}^D$ is a smooth drift vector field, $g(t) \in \mathbb{R}$ is a scalar diffusion coefficient, and $W_t$ is a $D$-dimensional standard Brownian motion. Let $p_t(x)$ denote the marginal probability density of $X_t$ under Lebesgue measure $d\mu(x)$.
\end{definition}

By Anderson's reverse-time theorem \cite{anderson1982}, the reverse process trajectory satisfies the reverse time-dependent SDE:
\begin{equation}
	dX_t = \left[ f(X_t, t) - g(t)^2 \nabla_x \log p_t(X_t) \right] dt + g(t) , d\bar{W}_t,
	\label{eq:reverse_sde}
\end{equation}
where $\bar{W}_t$ is a backward standard Brownian motion and $\nabla_x \log p_t(x)$ is the score 1-form field \cite{amari1985}.

\begin{theorem}[Duality Between Diffusion Time $t$ and Sample Scaling $N$]
	\label{thm:diffusion_dilation_duality}
	Let $IG_N(t) \equiv (\Theta, G^{(N)}(t), \nabla^{(\alpha,N)}(t))$ denote the dynamic information manifold associated with the time-marginal family $\mathcal{P}_t = \{ p_t(x;\theta) : \theta \in \Theta \}$. Under the variance-preserving (VP) diffusion process where $f(x,t) = -\frac{1}{2}\beta(t)x$ and $g(t) = \sqrt{\beta(t)}$, as diffusion time $t \to T \to \infty$, the score field $\nabla_x \log p_t(x;\theta)$ degrades to a standard Gaussian distribution $\mathcal{N}(0, I_{D \times D})$.
	
	The forward diffusion time evolution $t \to T$ induces an exact geometric phase transition mathematically dual to the sample size limit $N \to \infty$:
	\begin{enumerate}[label=\textnormal{(\roman*)}]
		\item \textbf{Metric Freezing Along Diffusion Time:} The time-dependent single-sample Fisher metric tensor $g_{ij}^{(1)}(\theta; t) \equiv \mathbb{E}_{p_t} \left[ \frac{\partial \log p_t(X;\theta)}{\partial \theta^i} \frac{\partial \log p_t(X;\theta)}{\partial \theta^j} \right]$ satisfies exponential metric decay:
		\begin{equation}
			\left| g^{(1)}(\theta; t) - e^{-\int_0^t \beta(s)ds} g^{(1)}(\theta; 0) \right|*\infty = \mathcal{O}\left(e^{-\int_0^t \beta(s)ds}\right) \longrightarrow 0 \quad \text{as } t \to \infty.
		\end{equation}
		\item \textbf{Curvature Annihilation Under Reverse-Time Flow:} Under reverse-time score-matching estimation with empirical sample size $N$, the pulled-back Amari-Riemann curvature tensor $\widetilde{\mathcal{R}}*{ijmn}^{(\alpha,N)}(t)$ on compact horizontal coordinate cages $\mathbb{K} \subset H_{\theta_0}$ satisfies the joint space-time collapse law:
		\begin{equation}
			\sup_{h \in \mathbb{K}} \left| \widetilde{\mathcal{R}}*{ijmn}^{(\alpha,N)}(h; t) \right|*{\mathrm{op}} = \mathcal{O}_p\left( \frac{e^{-\int_0^t \beta(s)ds}}{N} \right) \xrightarrow{N \to \infty, , t \to \infty} 0.
		\end{equation}
	\end{enumerate}
\end{theorem}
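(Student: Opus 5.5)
The plan is to make everything explicit through the Gaussian-convolution representation of the variance-preserving marginal. Set $a_t \equiv \exp\!\left(-\tfrac12\int_0^t \beta(s)\,ds\right)$, so that $a_t^2 = e^{-\int_0^t\beta(s)ds}$. Solving \eqref{eq:forward_sde} with $f(x,t)=-\tfrac12\beta(t)x$ and $g(t)=\sqrt{\beta(t)}$ gives $X_t = a_t X_0 + \sqrt{1-a_t^2}\,Z$ with $Z\sim\mathcal{N}(0,I_{D\times D})$. Hence
\[
p_t(x;\theta)=\int \varphi_{1-a_t^2}(x-a_t y)\,p_0(y;\theta)\,d\mu(y),
\]
where $\varphi_{\sigma^2}$ is the centred Gaussian density with covariance $\sigma^2 I$. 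I would first establish the standard posterior-score identity
\[
\partial_{\theta^i}\log p_t(x;\theta)=\mathbb{E}\!\left[\partial_{\theta^i}\log p_0(X_0;\theta)\,\middle|\,X_t=x\right].
\]
Differentiation under the integral is justified by Assumption~\ref{asm:local_regularity} applied to $p_0$. The degradation to $\mathcal{N}(0,I_{D\times D})$ then follows directly from the representation as $a_t\to 0$.

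\textbf{Statement (i).} I would not try to prove $g^{(1)}(\theta;t)\approx a_t^2\,g^{(1)}(\theta;0)$. This is generally false: the leading term is $a_t^2\,\partial_\theta m(\theta)^\top\partial_\theta m(\theta)$ with $m(\theta)=\mathbb{E}_\theta X_0$, not $a_t^2\,g^{(1)}(\theta;0)$. Fortunately the stated bound only requires $\|g^{(1)}(\theta;t)\|_\infty=\mathcal{O}(a_t^2)$, because the triangle inequality then gives
\[
\|g^{(1)}(t)-a_t^2 g^{(1)}(0)\|_\infty\le \|g^{(1)}(t)\|_\infty+a_t^2\|g^{(1)}(0)\|_\infty=\mathcal{O}(a_t^2).
\]
To get $\|g^{(1)}(t)\|=\mathcal{O}(a_t^2)$, I would expand the posterior-score identity in the small parameter $a_t$. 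The posterior of $X_0$ given $X_t=x$ equals the prior tilted by
\[
\exp\!\left(\frac{a_t x^\top y}{1-a_t^2}-\frac{a_t^2\|y\|^2}{2(1-a_t^2)}\right).
\]
Its first-order expansion, combined with $\mathbb{E}_\theta[\partial_\theta\log p_0]=0$, gives
\[
\partial_\theta\log p_t(x;\theta)=a_t\,x^\top\partial_\theta m(\theta)+\mathcal{O}(a_t^2(1+\|x\|^2)).
\]
Squaring and taking expectation under $p_t$ then yields $\mathcal{O}(a_t^2)$. This requires finite moments of $X_0$, which I would add as a standing moment hypothesis.

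\textbf{Statement (ii).} At fixed $t$ the family $\mathcal{P}_t$ is again i.i.d., so Lemma~\ref{lem:accelerated_curvature_annihilation} (equivalently Theorem~\ref{thm:universal_scaling_law} with $r=4$) applies verbatim. The horizontal version uses the projection of Lemma~\ref{lem:horizontal_pulled_back_metric_algebraic}. Together these give
\[
\widetilde{\mathcal{R}}^{(\alpha,N)}(h;t)=\tfrac1N\,\mathcal{R}^{(\alpha,1)}\!\left(\theta_0+P^H_{\theta_0}h/\sqrt N;t\right),
\]
so it remains to show $\sup_{\theta\in\mathcal{U}(\theta_0)}\|\mathcal{R}^{(\alpha,1)}(\theta;t)\|=\mathcal{O}(a_t^2)$. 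I would use the coordinate formula \eqref{eq:riemann_full_formula_ign} at $N=1$ and bound its two pieces separately, using the same $a_t$-expansion of the score and its $\theta$-derivatives.

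For the derivative term, the expectations in \eqref{eq:amari_christoffel_def} each carry at least two score factors or a Hessian of $\log p_t$, which gives $\Gamma^{(\alpha,1)}(t)=\mathcal{O}(a_t^2)$ and hence $\partial\Gamma=\mathcal{O}(a_t^2)$. For the quadratic term $g^{st}\Gamma\Gamma$, the inverse horizontal metric is $\mathcal{O}(a_t^{-2})$, provided the leading coefficient $\partial_\theta m^\top\partial_\theta m$ is nondegenerate on $H_{\theta_0}$. With $\Gamma\Gamma=\mathcal{O}(a_t^4)$, the product is $\mathcal{O}(a_t^2)$.

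The main obstacle is exactly this quadratic term. A lower bound $\lambda_{\min}(g^{(1)}(t)|_H)\gtrsim a_t^2$ uniformly in $\theta$ is needed, and it does not follow from Theorem~\ref{thm:horizontal_strict_positivity}, whose constant $\kappa_0$ is $t$-independent. If the mean map degenerates on part of $H_{\theta_0}$, the leading order shifts to $a_t^4$ times a covariance derivative. In that case I would redo the expansion to the first nonvanishing order $a_t^{2k}$ and check that the $\Gamma\Gamma$ scaling, $a_t^{4k}\cdot a_t^{-2k}$, still matches.

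I expect this to give the claimed rate $\mathcal{O}_p(a_t^2/N)$ only under such a nondegeneracy assumption. I would state that assumption explicitly rather than claim the theorem unconditionally.
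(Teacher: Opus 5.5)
Your argument for (i) departs from the paper's, and yours is the one that actually works. The paper's Step~3 asserts $g^{(1)}(\theta;t)=a_t^2\,g^{(1)}(\theta;0)+\mathcal{O}(a_t^3)$. As you observe, the leading coefficient is in fact $\partial_\theta m^\top\partial_\theta m$. Already for $X_0\sim\mathcal{N}(\theta,s^2)$ with $s\neq 1$ one gets $g^{(1)}(\theta;t)=a_t^2+\mathcal{O}(a_t^4)$ rather than $a_t^2/s^2+\mathcal{O}(a_t^3)$.

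The right-hand side of (i) is only $\mathcal{O}(a_t^2)$. So your bound $\|g^{(1)}(\theta;t)\|=\mathcal{O}(a_t^2)$, together with the triangle inequality, is exactly what the statement needs. The paper's sharper ``freezing'' identity is both unnecessary and false.

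One technical caveat: the first-order expansion of the tilt $\exp(a_t x^\top y/(1-a_t^2)-\cdots)$ is not uniform in $x$. Your standing hypothesis must therefore control the region $\|x\|\gtrsim a_t^{-1}$; sub-Gaussian or compactly supported $X_0$ suffices. Finite polynomial moments alone do not obviously give the remainder bound in $L^2(p_t)$.

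For (ii) you share the paper's skeleton: the $r=4$ scaling law reduces everything to $\sup_\theta\|\mathcal{R}^{(\alpha,1)}(\theta;t)\|=\mathcal{O}(a_t^2)$, which the paper's Step~4 merely asserts. You try to justify this and stall on the term $g^{kl}\Gamma\Gamma$. You close it only under an added nondegeneracy hypothesis on $\partial_\theta m$, so, as you concede, (ii) is not proved as stated.

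Your fallback for the degenerate case does not rescue it either. If some horizontal directions carry metric of order $a_t^2$ and others $a_t^4$, crude order counting gives only $|g^{-1}|\,|\Gamma|^2=\mathcal{O}(a_t^{-4}\cdot a_t^{4})=\mathcal{O}(1)$.

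The missing idea is that this contraction needs no spectral lower bound at all:
\begin{itemize}
\item Write $\Gamma^{(\alpha)}_{ij,k}=\langle F^{(\alpha)}_{ij},\partial_k\ell_t\rangle_{L^2(p_t)}$ with $F^{(\alpha)}_{ij}=\partial_i\partial_j\ell_t+\tfrac{1-\alpha}{2}\,\partial_i\ell_t\,\partial_j\ell_t$.
\item Since $g^{(1)}(\theta;t)$ is the Gram matrix of the horizontal scores, $\sum_{k,l}g^{kl}\,\Gamma^{(\alpha)}_{mj,k}\Gamma^{(-\alpha)}_{ni,l}=\langle\Pi F^{(\alpha)}_{mj},\Pi F^{(-\alpha)}_{ni}\rangle_{L^2(p_t)}$, where $\Pi$ is the orthogonal projection onto their span.
\item Cauchy--Schwarz bounds this by $\|F^{(\alpha)}_{mj}\|\,\|F^{(-\alpha)}_{ni}\|$.
\item The function $\partial_i\partial_j p_t/p_t=\mathbb{E}[(\partial_i\partial_j p_0/p_0)(X_0)\mid X_t]$ is a conditional expectation of a centred function. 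It is therefore $\mathcal{O}(a_t)$ in $L^2(p_t)$ by the same contraction you use for the score.
\item The product $\partial_i\ell_t\,\partial_j\ell_t$ is $\mathcal{O}(a_t^2)$, given the $L^4$ version of your score bound.
\end{itemize}
Hence $\|F\|=\mathcal{O}(a_t)$, and the quadratic term is $\mathcal{O}(a_t^2)$ whatever the degeneracy pattern of $\partial_\theta m$.

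Combined with your estimate $\partial\Gamma=\mathcal{O}(a_t^2)$, this gives (ii) without the extra hypothesis. The only property of the metric used is pointwise invertibility on $H$. That holds for every $t$ once it holds at $t=0$, because convolution with a Gaussian is injective. For $\alpha=0$ this is simply the Gauss equation for the isometric immersion $\theta\mapsto 2\sqrt{p_t(\cdot;\theta)}$ into the flat space $L^2(\mu)$.
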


\begin{proof}
	We execute a four-step mathematical deduction.
	
	\textbf{Step 1: Marginal Density Evolution under Gaussian Kernels.} Under the VP-SDE, the transition kernel $p_{t\vert{}0}(x_t \vert{} x_0)$ is exact Gaussian:
	\begin{equation}
		p_{t|0}(x_t | x_0) = \mathcal{N}\left( x_t; , \alpha_t x_0, , \sigma_t^2 I_{D \times D} \right),
	\end{equation}
	where $\alpha_t \equiv \exp\left( -\frac{1}{2}\int_0^t \beta(s)ds \right)$ and $\sigma_t^2 \equiv 1 - \alpha_t^2$. The marginal density is the convolution $p_t(x;\theta) = \int p_0(x_0;\theta) \mathcal{N}\left(x; \alpha_t x_0, \sigma_t^2 I\right) dx_0$.
	
	\textbf{Step 2: Score Function Expansion.} Differentiating the marginal log-density with respect to parameter $\theta^i$:
	\begin{equation}
		\frac{\partial \log p_t(x;\theta)}{\partial \theta^i} = \frac{1}{p_t(x;\theta)} \int \frac{\partial p_0(x_0;\theta)}{\partial \theta^i} \mathcal{N}\left(x; \alpha_t x_0, \sigma_t^2 I\right) dx_0.
	\end{equation}
	As $t \to \infty$, $\alpha_t \to 0$ and $\sigma_t^2 \to 1$. A Taylor expansion of the kernel around $x_0 = 0$ yields:
	\begin{equation}
		\mathcal{N}\left(x; \alpha_t x_0, \sigma_t^2 I\right) = \mathcal{N}(x; 0, I) \left( 1 + \alpha_t x^\top x_0 + \mathcal{O}(\alpha_t^2) \right).
	\end{equation}
	Substituting this expansion into the score derivative and integrating over $x_0$:
	\begin{equation}
		\int \frac{\partial p_0(x_0;\theta)}{\partial \theta^i} dx_0 = \frac{\partial}{\partial \theta^i}(1) = 0,
	\end{equation}
	the leading non-vanishing term scales directly with $\alpha_t$:
	\begin{equation}
		\frac{\partial \log p_t(x;\theta)}{\partial \theta^i} = \alpha_t \cdot \mathbb{E}_{X_0 \sim p_0(\cdot;\theta)} \left[ \frac{\partial \log p_0(X_0;\theta)}{\partial \theta^i} x^\top X_0 \right] + \mathcal{O}(\alpha_t^2).
	\end{equation}
	
	\textbf{Step 3: Fisher Metric Exponential Decay.} Evaluating the Fisher metric quadratic form $g_{ij}^{(1)}(\theta; t) = \mathbb{E}_{X_t \sim p_t} \left[ \frac{\partial \log p_t(X_t;\theta)}{\partial \theta^i} \frac{\partial \log p_t(X_t;\theta)}{\partial \theta^j} \right]$:
	\begin{equation}
		g_{ij}^{(1)}(\theta; t) = \alpha_t^2 \cdot g_{ij}^{(1)}(\theta; 0) + \mathcal{O}(\alpha_t^3) = e^{-\int_0^t \beta(s)ds} g_{ij}^{(1)}(\theta; 0) + \mathcal{O}\left(e^{-\frac{3}{2}\int_0^t \beta(s)ds}\right),
	\end{equation}
	proving statement (i).
	
	\textbf{Step 4: Joint Curvature Collapse.} Applying the Universal Tensor Valence Scaling Law (Theorem \ref{thm:universal_scaling_law}) for $r=4$ covariant Riemann curvature tensor fields on $IG_N(t)$, under sample size scaling $N$ and time parameter $t$:
	\begin{equation}
		\widetilde{\mathcal{R}}*{ijmn}^{(\alpha,N)}(h; t) = \frac{1}{N} \mathcal{R}*{ijmn}^{(\alpha,1)}\left(\theta_0 + \frac{h}{\sqrt{N}}; t\right).
	\end{equation}
	Since $\mathcal{R}_{ijmn}^{(\alpha,1)}(\theta; t) = \mathcal{O}(\alpha_t^2) = \mathcal{O}\left(e^{-\int_0^t \beta(s)ds}\right)$, we obtain:
	\begin{equation}
		\sup_{h \in \mathbb{K}} \left| \widetilde{\mathcal{R}}*{ijmn}^{(\alpha,N)}(h; t) \right|*{\mathrm{op}} \le \frac{M_{\mathcal{R}}}{N} e^{-\int_0^t \beta(s)ds} = \mathcal{O}_p\left( \frac{e^{-\int_0^t \beta(s)ds}}{N} \right) \xrightarrow{N \to \infty, , t \to \infty} 0.
	\end{equation}
	This completes the proof.
\end{proof}

\subsection{Attention Manifolds, Transformer Dynamics, and Asymptotic Holonomy Collapse}
\label{subsec:future_transformer}

Transformer architectures \cite{vaswani2017} compute context-dependent token representations via soft-attention probability distributions defined over sequence length $L$. We formalize the sequence length $L$ as an information-geometric dimension and prove the collapse of parallel transport holonomy across deep attention layers.

\begin{definition}[Self-Attention Information Manifold]
	Let $X \in \mathbb{R}^{L \times d_{model}}$ denote a sequence matrix of $L$ tokens. For a multi-head self-attention layer with Query, Key, and Value projection matrices $W_Q, W_K, W_V \in \mathbb{R}^{d_{model} \times d_k}$, the attention weight matrix $A(X) \in \mathbb{R}^{L \times L}$ is defined row-wise via the softmax operator on the simplex $\Delta^{L-1}$:
	\begin{equation}
		A_{ij}(X) = \mathrm{softmax}\left( \frac{(X W_Q)*i (X W_K)*j^\top}{\sqrt{d_k}} \right) = \frac{\exp\left( \frac{\langle q_i, k_j \rangle}{\sqrt{d_k}} \right)}{\sum*{m=1}^L \exp\left( \frac{\langle q_i, k_m \rangle}{\sqrt{d_k}} \right)}.
		\label{eq:softmax_attention_def}
	\end{equation}
	The row vectors $A_i(X) = (A*{i1}, \dots, A_{iL}) \in \Delta^{L-1}$ define a family of conditional categorical probability measures over the token index space $\mathcal{L} \equiv \{1, \dots, L\}$.
\end{definition}

\begin{theorem}[Attention-Induced Holonomy Collapse Across Deep Layers]
	\label{thm:transformer_holonomy_collapse}
	Let $\mathcal{M}_{Attn}^{(d_k)} \equiv (\Theta_{Attn}, g^{(Attn)}, \nabla^{(\alpha)})$ denote the information manifold of self-attention parameter matrices $\Theta_{Attn} = (W_Q, W_K)$. As layer depth $D_{layer} \to \infty$ or sequence length $L \to \infty$, the softmax attention metric tensor $g^{(Attn)}$ concentrates onto a low-rank sub-manifold, forcing the restricted holonomy group $\mathrm{Hol}^0(\nabla^{(\alpha)})$ along attention layers to collapse to the trivial identity group.
	
	Specifically, under the Fisher-compatible Ehresmann connection splitting $T \Theta_{Attn} = H \oplus V$, the horizontal holonomy operator $\mathcal{P}_{\gamma}^{(\ell)}$ across layer $\ell \in \{1, \dots, D_{layer}\}$ satisfies:
	\begin{equation}
		\lim_{D_{layer} \to \infty} \max_{1 \le \ell \le D_{layer}} \mathrm{diam}*{\mathrm{op}}\left( \mathrm{Hol}*{\theta_0}^0\left( \nabla^{(\alpha, \ell)} \right) \right) = 0 \iff \mathrm{Hol}*{\theta_0}^0\left( \nabla^{(\alpha, \ell)} \right) \xrightarrow{D*{layer} \to \infty} { I_{d_k \times d_k} }.
	\end{equation}
\end{theorem}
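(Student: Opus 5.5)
The plan is to reduce the statement to the machinery already used for Theorem~\ref{thm:holonomy_annihilation}: a uniform curvature bound on each layer's horizontal leaf, combined with the Grönwall transport bounds of Lemma~\ref{lem:ambrose_singer_bound} and the non-Abelian Stokes estimate
\[
\|\mathcal{P}_\gamma - I\|_{\mathrm{op}} \le \mathrm{Area}(\mathbb{K})\,\sup\|\widetilde{\mathcal{R}}\|_{\mathrm{op}}\exp\bigl(\mathrm{Area}(\mathbb{K})\sup\|\widetilde{\mathcal{R}}\|_{\mathrm{op}}\bigr).
\]
If I can show that $\sup_{\ell}\sup_{h\in\mathbb{K}}\|\widetilde{\mathcal{R}}^{(\alpha,\ell)}(h)\|_{\mathrm{op}}\to 0$ and $\sup_\ell \sup|\widetilde{\Gamma}^{(\alpha,\ell)}|\to 0$, then taking the maximum over $\ell$ in that estimate gives the displayed diameter limit directly. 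The equivalence with $\mathrm{Hol}^0\to\{I_{d_k\times d_k}\}$ then follows by the same definition of $\mathrm{diam}_{\mathrm{op}}$ used before.

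\textbf{Step 1: gauge.} First I would handle the non-identifiability that is intrinsic to attention. The logits depend on $(W_Q,W_K)$ only through $W_QW_K^\top$, so $GL(d_k)$ acts by $(W_Q,W_K)\mapsto(W_QM,W_KM^{-\top})$ with fibers on which the row measures $A_i(X)$ are constant. By Lemma~\ref{lem:vertical_score_vanishing}, these orbit directions lie in $\ker g^{(Attn)}$. I would therefore take $V=\ker d\pi$ for $\pi(W_Q,W_K)=W_QW_K^\top$ and form the Fisher-compatible splitting of Definition~\ref{def:ehresmann_connection}. Theorem~\ref{thm:horizontal_strict_positivity} then yields $\kappa_0>0$ on $H$, assuming the categorical Fisher metric of the softmax is nondegenerate on the image.

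\textbf{Step 2: the scaling parameter.} Next I would identify the analogue of $N$. Treating the $L$ rows $A_i(X)$ as (approximately independent) categorical observations, the layer log-likelihood is additive over tokens. Hence $G^{(L)}=L\,g^{(1)}$ and likewise $\Gamma^{(L)}=L\,\Gamma^{(1)}$ and $\mathcal{R}^{(L)}=L\,\mathcal{R}^{(1)}$, exactly as in Lemma~\ref{lem:accelerated_curvature_annihilation}. Pulling back by $\psi_{L,H}(h)=\theta_0+P^H h/\sqrt{L}$ and invoking Theorem~\ref{thm:universal_scaling_law} with $r=3,4$ gives $\widetilde{\Gamma}=O(L^{-1/2})$ and $\widetilde{\mathcal{R}}=O(L^{-1})$ for each layer. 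For the depth limit I would impose the standard residual scaling, in which each layer is a perturbation of size $1/\sqrt{D_{layer}}$ of a fixed base parameter. This plays the role of $\psi_N$ with $N=D_{layer}$, so the same valence law gives $O(D_{layer}^{-1})$ curvature per layer. Uniformity in $\ell$ requires one envelope constant $M_{\mathcal{R}}$ valid for all layers, which I would obtain from a uniform bound on the weights over a compact neighbourhood.

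\textbf{Step 3: the main obstacle.} I expect the hard step to be justifying the bounded-geometry hypothesis behind Step 2 as the softmax concentrates. The theorem itself claims that $g^{(Attn)}$ collapses onto a low-rank submanifold, and attention rank collapse pushes $A_i$ toward vertices of $\Delta^{L-1}$. There the categorical Fisher metric $\mathrm{diag}(A_i)-A_iA_i^\top$ degenerates, so $\kappa_0$ could shrink with $L$ or depth. That would destroy both the Step 1 positivity and the $\sqrt{\kappa_0}$ constants in the transport bounds. My first attempt would be to redefine the vertical bundle adaptively so that it absorbs the asymptotically null directions, keeping $\kappa_0$ bounded below on the surviving horizontal space. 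If that fails, I would add an explicit assumption of bounded attention temperature (logits in a compact set), under which all categorical Fisher quantities stay uniformly nondegenerate and the argument above goes through.
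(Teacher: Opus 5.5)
The gap is in how your plan treats the regime the statement is about. Both the statement and the paper's proof make attention concentration the engine of the collapse. With depth, rows $A_i\to e_{j^*(i)}$, so $\mathrm{Diag}(A_i)-A_iA_i^\top\to0$ and $\|g^{(Attn;\ell)}\|_{\mathrm{op}}=O(e^{-\gamma\ell})$. The paper then asserts $\|\mathcal{R}^{(\alpha,\ell)}\|_{\mathrm{op}}=O(e^{-2\gamma\ell})$ and feeds this into Lemma~\ref{lem:ambrose_singer_bound} and the surface-ordered exponential. The layer index $\ell$ is the decay parameter, and the paper uses no lower bound $\kappa_0$, no microscope map, no additivity over tokens, and in fact never uses the splitting $H\oplus V$.

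Your reduction to Theorem~\ref{thm:holonomy_annihilation} needs exactly what this concentration destroys: a uniform $\kappa_0>0$ and uniform envelopes $M_\Gamma, M_{\mathcal{R}}$. The bounded-temperature fallback keeps logits in a compact set, hence keeps each $A_i$ away from the vertices of $\Delta^{L-1}$. That rules out the concentration onto a low-rank submanifold asserted in the statement, so you would be proving a different theorem.

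The adaptive vertical bundle does not rescue this. Directions that are only asymptotically null are not gauge directions, since the score is small but not identically zero along them. So Lemma~\ref{lem:vertical_score_vanishing} and Theorem~\ref{thm:horizontal_strict_positivity} do not apply to them. Moreover, in the regime the paper works in, $\|g^{(Attn;\ell)}\|_{\mathrm{op}}\to0$ in every direction, so nothing would survive in $H$.

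Your worry itself is well-founded. The curvature that generates holonomy contains the inverse metric (the $G_{(N)}^{st}$ term in \eqref{eq:riemann_full_formula_ign}). Passing from metric decay to curvature decay therefore requires controlling $\Gamma^{(\alpha)}_{ijk}$ and $g^{-1}$ jointly near a vertex, a step the paper's Step 3 dispatches in one line. But assuming nondegeneracy sidesteps that work rather than doing it.

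Step 2 also fails as written. Theorem~\ref{thm:universal_scaling_law} requires $T^{(L)}=L\,T^{(1)}$ with $T^{(1)}$ independent of $L$, and attention rows supply no such $T^{(1)}$. They are not identically distributed, since row $i$ depends on $q_i$. They are not independent, since they share the keys. And each row is a law on $\{1,\dots,L\}$ whose normalizer involves all $L$ keys, so the per-row $g^{(1)},\Gamma^{(1)},\mathcal{R}^{(1)}$ change with $L$. Before any envelope constant is available, you would need an $L$-uniform law of large numbers over keys for these tensors and their derivatives.

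For depth there is no analogue of the additivity factor at all, since a single layer's tensors are not a sum of $D_{layer}$ copies. The residual $1/\sqrt{D_{layer}}$ dilation is an added hypothesis. Pulling back \emph{any} smooth connection by $h\mapsto\theta_0+h/\sqrt{D_{layer}}$ already makes its coefficients $O(D_{layer}^{-1/2})$ and its $(1,3)$ curvature $O(D_{layer}^{-1})$ on $\mathbb{K}$. That step therefore uses nothing about the softmax and cannot make the collapse attention-induced.

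Your Step 1 gauge analysis, showing that the $GL(d_k)$ orbits $(W_Q,W_K)\mapsto(W_QM,W_KM^{-\top})$ lie in $\ker g^{(Attn)}$, is a genuine addition the paper lacks. It does not close these gaps.
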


\begin{proof}
	We execute a three-step mathematical deduction.
	
	\textbf{Step 1: Softmax Metric Concentration.} The Fisher Information Metric tensor components $g_{ab}^{(Attn)}$ for token $i$ with parameter vector $\theta = \mathrm{vec}(W_Q W_K^\top)$ are derived from the categorical log-likelihood derivatives of the softmax distribution $A_i \in \Delta^{L-1}$:
	\begin{equation}
		g_{ab}^{(Attn)}(\theta) = \sum_{j=1}^L A_{ij}(\theta) \left[ \frac{\partial \log A_{ij}(\theta)}{\partial \theta^a} \right] \left[ \frac{\partial \log A_{ij}(\theta)}{\partial \theta^b} \right] = \sum_{j=1}^L A_{ij}(\theta) \frac{\partial \eta_j}{\partial \theta^a} \frac{\partial \eta_j}{\partial \theta^b} - \left( \sum_{j=1}^L A_{ij}(\theta) \frac{\partial \eta_j}{\partial \theta^a} \right) \left( \sum_{m=1}^L A_{im}(\theta) \frac{\partial \eta_m}{\partial \theta^b} \right),
	\end{equation}
	where $\eta_j \equiv \frac{\langle q_i, k_j \rangle}{\sqrt{d_k}}$. In matrix notation, this is the covariance matrix of score derivatives under categorical measure $A_i$:
	\begin{equation}
		g^{(Attn)}(\theta) = J_\eta^\top \left[ \mathrm{Diag}(A_i) - A_i A_i^\top \right] J_\eta,
	\end{equation}
	where $J_\eta \in \mathbb{R}^{L \times d_k}$ is the Jacobian matrix of linear key projections.
	
	\textbf{Step 2: Layer-wise Softmax Contraction.} In deep Transformer architectures ($D_{layer} \gg 1$), self-attention maps exhibit value-rank contraction and entropy reduction (oversmoothing / token phase transition \cite{watanabe2009}). As layer depth increases, $A_{ij} \to \delta_{j, j^*(i)}$ (one-hot extreme point of the simplex $\Delta^{L-1}$).
	
	Evaluating the covariance matrix $\mathrm{Diag}(A_i) - A_i A_i^\top$ at a deterministic vertex $e_k = (0,\dots,1,\dots,0)^\top$:
	\begin{equation}
		\lim_{A_i \to e_k} \left[ \mathrm{Diag}(e_k) - e_k e_k^\top \right] = 0_{L \times L}.
	\end{equation}
	Thus, the single-layer metric tensor component vanishes:
	\begin{equation}
		\left| g^{(Attn; , \ell)}(\theta) \right|_{\mathrm{op}} = \mathcal{O}\left( e^{-\gamma \ell} \right) \longrightarrow 0 \quad \text{as } \ell \to \infty.
	\end{equation}
	
	\textbf{Step 3: Holonomy Collapse via Ambrose-Singer.} By Lemma  \ref{lem:ambrose_singer_bound} and the Ambrose-Singer Holonomy Theorem \cite{ambrose1953}, the Lie algebra generators $A \in \mathfrak{hol}_0(\nabla^{(\alpha, \ell)})$ are bounded by the Riemann curvature tensor:
	\begin{equation}
		| A |*{\mathrm{op}} \le C*\mathbb{K}^{(\mathcal{R})} \left| \mathcal{R}^{(\alpha, \ell)}(\theta) \right|*{\mathrm{op}}.
	\end{equation}
	Since the Riemann curvature tensor $\mathcal{R}^{(\alpha, \ell)}$ is built from quadratic products and partial derivatives of $g^{(Attn; \, \ell)}$, metric concentration forces exponential curvature decay:
	\begin{equation}
		\left| \mathcal{R}^{(\alpha, \ell)}(\theta) \right|*{\mathrm{op}} = \mathcal{O}\left( e^{-2\gamma \ell} \right).
	\end{equation}
	Applying the surface-ordered exponential expansion along layer loops $\gamma$:
	\begin{equation}
		\mathrm{diam}*{\mathrm{op}}\left( \mathrm{Hol}*{\theta_0}^0\left( \nabla^{(\alpha, \ell)} \right) \right) \le \mathrm{Area}(\Sigma) \cdot \mathcal{O}\left( e^{-2\gamma \ell} \right) \xrightarrow{\ell \to \infty} 0.
	\end{equation}
	This establishes the complete collapse of parallel transport holonomy across deep Transformer attention layers.
\end{proof}


\newpage

\end{document}